\documentclass[reqno,11pt]{amsart}
\usepackage{amsmath}
\usepackage{amssymb}
\usepackage{tabularx}
\usepackage{enumerate}
\usepackage[dvips]{psfrag,graphicx}
\usepackage{color}
\usepackage{subfigure}
\usepackage{cases}
\usepackage{bbm}

\topmargin=-0.7in \hoffset=-16.5mm \voffset=2cm \textheight=220mm
\textwidth=160mm

\usepackage{mathrsfs}
\usepackage{empheq}
\usepackage{amsfonts,amssymb,amsmath}
\usepackage{epsfig}

\usepackage{comment}

\makeatletter
\@addtoreset{equation}{section}
  
\makeatother
\newtheorem{thm}{Theorem}[section]
\newtheorem{lem}[thm]{Lemma}   
\newtheorem{cor}[thm]{Corollary}
\newtheorem{prop}[thm]{Proposition}
\newtheorem{rem}[thm]{Remark}

\begin{document}
\title[Coexistence-segregation dichotomy]
{Coexistence-segregation dichotomy in the full cross-diffusion limit 
of the stationary SKT model}
 \thanks{This research was
partially supported by JSPS KAKENHI Grant Number 22K03379.}
\author[J. Inoue]{Jumpei Inoue$^\dag$}
\author[K. Kuto]{Kousuke Kuto$^\ddag$}
\author[H. Sato]{Homare Sato$^\dag$}
\thanks{$\dag$ Department of Pure and Applied Mathematics, 
Graduate School of Fundamental Science and Engineering,
Waseda University, 
3-4-1 Ohkubo, Shinjuku-ku, Tokyo 169-8555, Japan.}
\thanks{$\ddag$ Department of Applied Mathematics, 
Waseda University, 
3-4-1 Ohkubo, Shinjuku-ku, Tokyo 169-8555, Japan.}
\thanks{{\bf E-mail:} \texttt{kuto@waseda.jp}}
\date{\today}

\begin{abstract} 
This paper studies the Lotka-Volterra competition model 
with cross-diffusion terms under homogeneous 
Dirichlet boundary conditions. 
We consider the asymptotic behavior of positive steady-states 
as equal two cross-diffusion coefficients tend to infinity
(so-called the full cross-diffusion limit).
The first result shows that, at the full cross-diffusion limit,
the set of positive steady-state solutions can be classified
into two types: the {\it small coexistence} or 
the {\it complete segregation}.
The small coexistence can be characterized by the set
of positive solutions of a nonlinear elliptic equation,
while the complete segregation can be characterized by the set of
nodal solutions of another nonlinear elliptic equation.
The second result concerns the global bifurcation structure 
of the 1d model with large cross-diffusion terms to show that 
the branch of small coexistence bifurcates from the trivial solution 
and many branches of complete segregation bifurcate from solutions 
on the branch of small coexistence. 
Finally, in order to verify the theoretical results, 
we chase some bifurcation branches numerically via 
the continuation software \texttt{pde2path}.
\end{abstract}

\subjclass[2020]{35B09, 35B32, 35B45, 35A16, 35J25, 92D25}
\keywords{cross-diffusion,
competition model,
limiting systems, 
nonlinear elliptic equations,
the sub-super solution method,
bifurcation,
perturbation,
\texttt{pde2path}} \maketitle

\section{Introduction}
In this paper, we are concerned with   
the following Lotka-Volterra
competition model with linear and nonlinear diffusion terms:
\begin{equation}\label{para}
\begin{cases}
u_{t}=\Delta [\,(d_{1}+\alpha v)u\,]+
u(a_{1}-b_{1}u-c_{1}v)
\quad&\mbox{in}\ \Omega\times (0,T),\\
v_{t}\,=\Delta [\,(d_{2}+\beta u)v\,]+
v(a_{2}-b_{2}u-c_{2}v)
\quad&\mbox{in}\ \Omega\times (0,T).
\end{cases}
\end{equation}
In terms of a model of the mathematical biology, 
unknown functions $u(x,t)$ and $v(x,t)$ represent 
the respective population densities at location $x\in\Omega$
and time $t\in (0,T)$ of two species competing
in the habitat $\Omega$.
Throughout this paper,
the habitat
$\Omega$ is assumed to be a bounded domain in $\mathbb{R}^{N}$ 
with a smooth boundary $\partial\Omega $ if $N\ge 2$;
an interval if $N=1$.
Typical boundary conditions imposed in conjunction with \eqref{para} 
are homogeneous Neumann boundary conditions $\partial_{\nu}u=\partial_{\nu}v=0$ on $\partial\Omega\times (0,T)$,
or 
homogeneous Dirichlet boundary ones
$u=v=0$ on $\partial\Omega\times (0,T)$.
The former reflects a situation where the species 
cannot enter or exit across the boundary $\partial\Omega$, 
while the latter reflects that the habitat $\Omega$ is surrounded by 
an environment of threats to both competing species.
In \eqref{para},
$\Delta :=\sum^{N}_{j=1}\partial^{2}/\partial x_{j}^{2}$
denotes the usual Laplacian;
coefficients
$a_{i}$,
$b_{i}$ and
$c_{i}$ 
$(i=1,2)$
are positive constants;
$a_{1}$ and $a_{2}$ denote the growth rate of respective species;
$b_{1}$ and $c_{2}$ denote the intra-specific competition coefficients;
$c_{1}$ and $b_{2}$ denote the inter-specific competition coefficients.
In diffusion terms pertaining to the Laplacian,
$\Delta u$ and $\Delta v$ 
are linear diffusion terms describing 
a spatially random movement of each species,
whereas
$\Delta (uv)$ is a nonlinear diffusion term,
which is usually called {\it cross-diffusion},
describing the interaction of diffusion caused by 
the population pressure resulting from interference
between different species.
Roughly speaking, each cross-diffusion describes
a tendency of each species to diffuse faster 
where there are more individuals of their competitors. 
For instance, if two competing species share common resources (feed), 
the introduction of cross-diffusion terms is natural 
because each species can more 
rationally consume the resources (feed) when 
they are away from their competitors.
We refer to the book by
Okubo and Levin \cite{OL} for detailed mechanism 
of the biological diffusion.
Such a Lotka-Volterra competition system with cross-diffusion 
(and some additional terms) 
was proposed by Shigesada, Kawasaki and Teramoto
\cite{SKT} in order to realize 
{\it segregation phenomena} of two competing species observed in 
ecosystems.
Nowadays,
beyond their bio-mathematical aim,
many mathematicians have continued to study 
a class of Lotka-Volterra systems with cross-diffusion 
as a prototype of diffusive interactions.
Such a class of Lotka-Volterra systems with cross-diffusion 
is referred to as {\it the SKT model} celebrating the authors of
the pioneering paper 
\cite{SKT}.
We refer to book chapters by J\"ungel \cite{Ju}, Ni \cite{Ni},
and Yamada \cite{Yam1, Yam2}
as surveys for mathematical works relating to the SKT model.

This paper focuses on the stationary problem of \eqref{para}
which is the following system of nonlinear elliptic PDEs:
\begin{equation}\label{ell}
\begin{cases}
\Delta [\,(d_{1}+\alpha v)u\,]+
u(a_{1}-b_{1}u-c_{1}v)=0
\quad&\mbox{in}\ \Omega,\\
\Delta [\,(d_{2}+\beta u)v\,]+
v(a_{2}-b_{2}u-c_{2}v)=0
\quad&\mbox{in}\ \Omega.
\end{cases}
\end{equation}
The stationary problem of the SKT model
such as \eqref{ell} has been studied extensively
(see e.g., the above book chapters and references therein).
Among other things, Lou and Ni \cite{LN2} 
developed a mathematical procedure to study the limiting behavior 
of positive nonconstant solutions when one of the 
two cross-diffusion coefficients tends to infinity
(so-called the {\it unilateral cross-diffusion limit}).
According to a main result in \cite{LN2},
concerning \eqref{ell}
under homogeneous Neumann boundary conditions
$\partial_{\nu}u=\partial_{\nu}v=0$ on $\partial\Omega$
with $N\le 3$,
any sequence of positive nonconstant solutions,
which is denoted by $\{(u_{n}, v_{n})\}$,
of the system with $\alpha=\alpha_{n}\to\infty$ 
(whereas $\beta\ge 0$ is fixed) generically
satisfies one of the 
following two convergence scenarios
passing to a subsequence if necessary:
\begin{enumerate}[(I)]
\item
there exist positive functions $u$ and $V$ such that
$(u_{n}, \alpha_{n}v_{n})\to (u,V)$ in 
$C^{1}(\overline{\Omega})^{2}$;
\item
there exists a positive constant $\tau$ such that
$u_{n}v_{n}\to\tau$ uniformly in $\overline{\Omega}$.
\end{enumerate}
The first scenario (I) implies a situation where 
the species $v$ 
(with fixed cross-diffusion coefficient $\beta$)
decays with the order $O(1/\alpha_{n})$ as
the cross-diffusion capacity $\alpha_{n}$
of the competitor species becomes large.
On the other hand, the second scenario (II) can be expected to realize 
segregation of two competing species in some sense because $\tau$ is spatially 
homogeneous (so-called, the {\it incomplete segregation}).
Since $\tau$ remains positive, then the 
{\it complete segregation} 
(such as,
in each territory of one species, 
the other species are almost absent) cannot expected when one of two cross-diffusion coefficients is very large.
That each scenario actually occurs was confirmed by perturbation
from the solution of the corresponding limiting system:
For the limiting system characterizing (I), 
the second author of this paper studied the 
bifurcation structure of solutions in \cite{Ku1} and
the stability/instability of these solutions was studied by
Li and Wu \cite{LW1}.
For
the limiting system characterizing (II),
Lou, Ni and Yotsutani \cite{LNY1}
exhibited the detailed information on the set 
of solutions in the one-dimensional case.
Since then, for the limiting system characterizing (II),
the existence of solutions in the higher-dimensional case
and the stability analysis were studied in
\cite{KW, LNY2, MSY, NWX, WWX, Wu, WX}.
In addition, we refer to \cite{Ka1, Ka2, LX, LW2, ZW}
as recent papers that discuss the limiting systems and related issues.

Concerning the unilateral cross-diffusion limit 
for the Dirichlet problem, 
the second author and Yamada \cite{KY1, KY2}
discussed the limiting behavior of positive solutions of
\eqref{ell} with $\beta=0$ under homogeneous Dirichlet boundary conditions $u=v=0$ on $\partial\Omega$.
It was shown in \cite{KY1} that
any sequence of positive solutions
of the system with $\alpha=\alpha_{n}\to\infty$ 
generically satisfies only the above first scenario (I) 
passing to a subsequence.
Furthermore, the global bifurcation structure of 
positive solutions to the associated limiting system
was also obtained in \cite{KY1, KY2}.

Recently, the second author \cite{Ku2} developed 
a procedure to study the limiting behavior 
of positive nonconstant solutions when both cross-diffusion coefficients 
tend to infinity at the same rate
(so-called the {\it full cross-diffusion limit}).
It was shown in \cite{Ku2} that,
concerning \eqref{ell} under homogeneous Neumann boundary conditions,
any sequence of positive nonconstant solutions
of the system with $\alpha=\alpha_{n}\to\infty$,
$\beta=\beta_{n}\to\infty$ and
$\gamma_{n}:=\alpha_{n}/\beta_{n}\to\gamma$
with some $\gamma>0$ generically satisfies only the above (II)
passing to a subsequence.
Furthermore, the second author \cite{Ku3} showed the existence of
a limiting system characterizing the incomplete segregation
in the full cross-diffusion limit for the Neumann problem
and revealed the global bifurcation structure of the one-dimensional
limiting system.

After reviewing the known results on the unilateral or 
the full 
cross-diffusion limit on \eqref{ell}, 
we are naturally tempted to consider 
the full cross-diffusion limit for the Dirichlet problem.
As the first step of the issue,
this paper deals with the following stationary SKT model:
\begin{subequations}\label{SKT}
\begin{empheq}[left={\empheqlbrace}]{alignat=2}
\label{SKT1}
\Delta [\,(1+\alpha v)u\,]+
u(\lambda m(x)-b_{1}u-c_{1}v)=0
\ \ &\mbox{in}\ \Omega,\\
\label{SKT2}
\Delta [\,(1+\alpha u)v\,]+
v(\lambda m(x)-b_{2}u-c_{2}v)=0
\ \ &\mbox{in}\ \Omega
\end{empheq}
under homogeneous Dirichlet boundary conditions
\begin{equation}\label{SKT3}
u=v=0\ \ \mbox{on}\ \partial\Omega.
\end{equation}
\end{subequations}
In view of the modelling of \eqref{SKT}, 
each competing species is assumed to retain 
the same level of diffusion capacity, and then, 
the linear diffusion coefficients and
the cross-diffusion coefficients
are respectively taken in common for both species.
By virtue of scalings for unknown functions,
we may set the linear diffusion coefficients
to $1$ without loss of generality.
For an orthodox setting \eqref{SKT} in the stationary SKT model,
we adopt the simplification described above in the diffusion part
under the Laplacian, while for the reaction terms, 
coefficients
$a_{1}$ and $a_{2}$ are set to be 
$\lambda m(x)$ with spatial dependence.
In terms of the competition model,
the function $m(x)$ can be interpreted as 
representing the spatial distribution of common resources 
(feed) for both competing species.
It will be assumed that $m(x)\ge 0$ is nontrivial and in
$L^{\infty}(\Omega )$;
$\lambda$ is a positive parameter,
which indicates the amount of resources (or feed). 
By virtue of the elliptic regularity theory (e.g., \cite{GT}),
any weak solution $(u,v)$ of \eqref{SKT} belongs to 
$[\,W^{2,p}(\Omega)\cap W^{1,p}_{0}(\Omega )\,]^{2}$
for any $p\ge 1$.
Throughout this paper, we call a pair of functions 
$(u,v)$ 
satisfying $u>0$ and $v>0$ in $\Omega$,
in addition to \eqref{SKT}, 
a positive solution of \eqref{SKT}.
It is noted that,
if $u\ge 0$ and $v\ge 0$ are not identically zero in $\Omega$
and satisfy \eqref{SKT}, then $(u,v)$ is a positive solution
by the strong maximum principle.

\begin{table}[]
\begin{tabular}{lllll}
\cline{1-3}
\multicolumn{1}{|l|}{}                                 & \multicolumn{1}{l|}{Neumann BC}   & \multicolumn{1}{c|}{Dirichlet BC}   &  &  \\ \cline{1-3}
\multicolumn{1}{|c|}{Unilateral cross-diffusion limit} & \multicolumn{1}{c|}{(I), (II)} & \multicolumn{1}{c|}{(I)}         &  &  \\ \cline{1-3}
\multicolumn{1}{|c|}{Full cross-diffusion limit}       & \multicolumn{1}{c|}{(II)}      & \multicolumn{1}{c|}{(III), (IV)} &  &  \\ \cline{1-3}
                                                       &                                &                                  &  & 
\end{tabular}
\caption{Classification of the cross-diffusion limit}
\end{table}

The purpose of this paper is to prove the existence of
limiting 
systems classifying behaviors  
of positive solutions of \eqref{SKT} as
$\alpha\to\infty$ and to construct the
bifurcation curve of solutions to \eqref{SKT} with large $\alpha>0$
by the perturbation of solutions to limiting systems. 
It should be noted here that 
in a case where $\alpha$ is sufficiently large, 
the necessary and sufficient range of 
$\lambda$ for the existence of positive solutions of 
\eqref{SKT} is known.
In order to express the range, we introduce 
the eigenvalue problem
with nonnegative weight function $m(x)(\not\equiv 0)$:
\begin{equation}\label{ev}
-\Delta\phi=\lambda m(x)\phi\ \ \mbox{in}\ \Omega,\quad
\phi=0\ \mbox{on}\ \partial\Omega.
\end{equation}
It is known that all eigenvalues consist of a 
positive sequence (e.g., \cite{CC, Ni}):
\begin{equation}\label{ev2}
(0<)\lambda_{1}(m)<\lambda_{2}(m)<
\cdots <\lambda_{j}(m)<\cdots
\quad\mbox{with}\quad\lim_{j\to\infty}\lambda_{j}(m)=\infty.
\end{equation}
In the following, we omit the dependence of each eigenvalue on $m$ and 
denote $\lambda_{j}=\lambda_{j}(m)$.
Concerning the Dirichlet problem \eqref{SKT} with large $\alpha>0$,
combining the results by Ruan \cite[Theorem 4.1]{Ru} and 
the second author with Yamada \cite[Theorem 3.4 and Lemma 3.4]{KY2},
we know that $\lambda_{1}$ gives a threshold for the nonexistence/existence
of positive solutions:.
\begin{lem}\label{exlem}
Suppose that $\alpha>0$ is sufficiently large.
If $0<\lambda\le \lambda_{1}$, then \eqref{SKT} admits 
no positive solution, whereas if $\lambda>\lambda_{1}$, then 
\eqref{SKT} admits at least one 
positive solution.
\end{lem}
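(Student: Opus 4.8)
\medskip
\noindent\emph{Proof strategy.}
The plan is to treat the two assertions separately. The nonexistence half is elementary and holds for \emph{every} $\alpha>0$: it should follow by testing \eqref{SKT1} against the principal eigenfunction of \eqref{ev}. The existence half is the substantive one, and it is where $\alpha$ being large is actually used; it coincides with the conclusion obtained by combining \cite[Theorem~4.1]{Ru} with \cite[Theorem~3.4 and Lemma~3.4]{KY2}, and I sketch below how one would reprove it directly.

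For nonexistence, let $\phi_1>0$ be the principal eigenfunction of \eqref{ev}, so $-\Delta\phi_1=\lambda_1 m\phi_1$ in $\Omega$ and $\phi_1=0$ on $\partial\Omega$. If $(u,v)$ were a positive solution of \eqref{SKT}, I would multiply \eqref{SKT1} by $\phi_1$ and integrate over $\Omega$; since $(1+\alpha v)u$ vanishes on $\partial\Omega$ (because $u=0$ there), two uses of Green's identity transfer both Laplacians onto $\phi_1$ without boundary terms and give
\[
\int_\Omega \phi_1\,u\,\bigl[\,(\lambda-\lambda_1)m-b_1 u-c_1 v-\lambda_1\alpha m v\,\bigr]\,dx=0 .
\]
For $\lambda\le\lambda_1$ the bracket is $\le(\lambda-\lambda_1)m-b_1 u<0$ throughout $\Omega$ (using $m\ge0$, $u>0$), while $\phi_1 u>0$ there, so the integrand is strictly negative — a contradiction. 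Hence \eqref{SKT} admits no positive solution when $0<\lambda\le\lambda_1$.

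For existence, fix $\lambda>\lambda_1$. Each scalar logistic problem $-\Delta\theta=\theta(\lambda m-b_1\theta)$, resp. $-\Delta\theta=\theta(\lambda m-c_2\theta)$, with $\theta=0$ on $\partial\Omega$, then has a unique positive, non-degenerate solution $\theta^{(1)}$, resp. $\theta^{(2)}$, so \eqref{SKT} possesses the semi-trivial solutions $(\theta^{(1)},0)$ and $(0,\theta^{(2)})$. I would view the nonnegative solutions of \eqref{SKT} as fixed points of a compact, order-preserving map on the positive cone of $C(\overline{\Omega})^2$, confine them to a large ball by an a~priori estimate, and apply the fixed-point-index calculus for two-species competition systems. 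The decisive step is to show that when $\alpha$ is large \emph{both} semi-trivial solutions are linearly stable: whether $v$ can invade $(\theta^{(1)},0)$ is governed by the principal eigenvalue of
\[
\Delta\bigl[(1+\alpha\theta^{(1)})\psi\bigr]+(\lambda m-b_2\theta^{(1)})\psi=\sigma\psi\ \ \text{in }\Omega,\qquad \psi=0\ \text{on }\partial\Omega ,
\]
and, after the substitution $\eta=(1+\alpha\theta^{(1)})\psi$, this eigenvalue has the sign of the form $\eta\mapsto\int_\Omega|\nabla\eta|^2-\int_\Omega\frac{\lambda m-b_2\theta^{(1)}}{1+\alpha\theta^{(1)}}\,\eta^2$ on $H_0^1(\Omega)$; since $\theta^{(1)}>0$ in $\Omega$, the coefficient $m/(1+\alpha\theta^{(1)})$ tends to $0$ in every $L^q(\Omega)$ with $q<\infty$, so by Sobolev interpolation the form is positive definite for $\alpha$ large, i.e. the invasion eigenvalue is negative; the same holds for $(0,\theta^{(2)})$. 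Granting this, the (cone) fixed-point index is $0$ at $(0,0)$ (since $\lambda>\lambda_1$) and $1$ at each of $(\theta^{(1)},0)$, $(0,\theta^{(2)})$ (by stability), whereas the total index over the large ball is $1$ (via the usual homotopy in the growth rate); by additivity the coexistence solutions must carry total index $-1\ne0$, so a positive solution of \eqref{SKT} exists.

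The hard part is entirely in the existence half, specifically in the two ingredients just used: (i) establishing the linear stability — equivalently, the sign of the invasion eigenvalues — of \emph{both} semi-trivial solutions for all sufficiently large $\alpha$; and (ii) computing the fixed-point indices at $(\theta^{(1)},0)$ and $(0,\theta^{(2)})$, which sit on the boundary of the positivity cone and thus require the Dancer-type index theory for semi-trivial states rather than a plain linearization. Both are supplied by \cite{Ru} and \cite{KY2}, so the most economical proof of Lemma~\ref{exlem} is to cite those results and add the short nonexistence computation above.
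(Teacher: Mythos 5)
Your proposal is correct. Note first that the paper does \emph{not} actually prove Lemma~\ref{exlem}: it presents it as a consequence of \cite[Theorem~4.1]{Ru} together with \cite[Theorem~3.4 and Lemma~3.4]{KY2}, so the ``paper's proof'' is just that citation. Your concluding observation---that the economical route is to cite those sources for existence and supply only the short nonexistence computation---exactly matches what the paper does.

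Two small remarks on the content you added beyond the citation. For nonexistence, your identity
\[
\int_\Omega \phi_1\,u\,\bigl[(\lambda-\lambda_1)m-b_1 u-c_1 v-\lambda_1\alpha m v\bigr]\,dx=0
\]
is correct (Green's second identity applied to $(1+\alpha v)u$ and $\phi_1$, both vanishing on $\partial\Omega$), and for $\lambda\le\lambda_1$ the bracket is pointwise strictly negative since $u,v>0$ and $m\ge0$, giving the contradiction. This is a legitimate alternative to the argument the authors actually drafted (and left commented out in the source), which instead sets $\phi:=(1+\alpha v)u$, derives $-\Delta\phi<\lambda m\phi$, and concludes $\lambda>\lambda_1$ from the Rayleigh quotient; the two are essentially the same computation packaged differently, and neither requires $\alpha$ large. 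For existence, your sketch of the Dancer-type cone-index argument is sound; the key stability claim---that for large $\alpha$ the invasion eigenvalue at each semi-trivial state is negative because $m/(1+\alpha\theta^{(i)})\to 0$ in $L^q(\Omega)$ for every $q<\infty$ and hence the quadratic form $\int|\nabla\eta|^2-\int\frac{\lambda m-\text{(competition term)}}{1+\alpha\theta^{(i)}}\eta^2$ becomes coercive---is correct. One phrasing slip: the invasion eigenvalue has the sign \emph{opposite} to the infimum of that form, not ``the sign of the form''; your conclusion (form positive definite $\Rightarrow$ invasion eigenvalue negative) is nonetheless the right one.
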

Regardless of Lemma \ref{exlem},
the detailed structure of the set of positive solutions of \eqref{SKT}
still remains unknown, since the above nonexistence/existence 
of positive solutions was obtained by topological arguments 
using the degree or the global bifurcation theory.
Therefore, in this paper, we study in detail 
the set of positive solutions of \eqref{SKT} 
with large $\alpha$.

The first main result of this paper will show that the limiting behavior of
any sequence $\{(u_{n}, v_{n})\}$ of positive solutions
of \eqref{SKT} with $\alpha=\alpha_{n}\to\infty$ generically 
satisfies either of the following two scenarios, passing to 
a subsequence if necessary:
\begin{enumerate}[(I)]
\item[(III)]
({\it small coexistence})
there exists a positive function $U$ such that
$(\alpha_{n}u_{n}, \alpha_{n}v_{n})\to (U,U)$ in 
$C^{1}(\overline{\Omega})^{2}$;
\item[(IV)]
({\it complete segregation})
there exists a sign-changing function $w\in C^{1}(\overline{\Omega})$ 
such that
$(u_{n}, v_{n})\to (w_{+}, w_{-})$ uniformly in 
$\overline{\Omega}$, where
$w_{+}:=\max\{\,w,0\,\}$ and
$w_{-}:=\max\{\,0,-w\,\}$.
\end{enumerate}
It will be shown that 
in the former scenario (III), 
the limit function $U$ can be characterized by 
a positive solution of the Dirichlet problem 
for a nonlinear elliptic equation
(the first limiting system), 
while in the latter scenario (IV), 
the limit function $w$ can be characterized by 
a sign-changing solution of the Dirichlet problem  
for another nonlinear elliptic equation
(the second limiting system),
see Theorem \ref{convthm} below.
The next result will show that the set of all positive
solutions of the first limiting system forms a simple curve 
$\mathcal{C}_{\infty}$
parameterized by $\lambda\in (\lambda_{1},\infty)$,
which bifurcates from the trivial solution at $\lambda=\lambda_{1}$
(Theorem \ref{thm13}).
Furthermore, for \eqref{SKT} with large $\alpha$,
we construct a bifurcation branch $\mathcal{C}_{\alpha, \varLambda}$
of the subset of positive solutions
(Theorem \ref{Cathm}).
It will be shown that
$\mathcal{C}_{\alpha, \varLambda}$
bifurcates from the trivial solution at $\lambda=\lambda_{1}$
and extends in the direction $\lambda>\lambda_{1}$,
and that
the $(u,v)$ component remains small with the order $O(1/\alpha )$.
More precisely, both $u$ and $v$ are close to $U/\alpha$ for 
the positive solution $U$ to the first limiting system.
This pure mathematical result will be realized in the numerical bifurcation
diagram produced in the final section,
see the blue curve in Figure 1.
From the viewpoint of the biological model, 
the existence of such positive solutions seems somewhat interesting.
In the case where the cross-diffusion coefficient $\alpha$
is large, a repulsive interaction between 
competing species becomes stronger 
(see e.g., \cite[Section 5.4]{OL}). 
This result suggests a possibility that, 
despite such a mutually repulsive situation,  
competitive species can coexist in a small number 
with a similar spatial configuration (small coexistence),
see Figure 2.
On the other hand, the scenario (IV) may realize
the complete segregation in the sense that
the territories of competing species
become separated each other as $\alpha\to\infty$.
It is noted that the compete segregation (IV) 
where $u_{n}v_{n}\to 0$ 
is totally different from
the incomplete segregation (II) where $u_{n}v_{n}\to
\tau>0$, and (IV) has not been observed in 
any previous research of the cross-diffusion limit. 
In the one-dimensional case,
if $m(x)$ is a positive constant,
then the complete structure of
all solutions of the second limiting system is well-known as
follows:
for each $j\in\mathbb{N}$,
the set $\mathcal{S}^{j}_{\infty}$ of solutions with exact $j-1$ 
zeros in the open interval $\Omega$
forms a pitchfork bifurcation curve 
which bifurcates from the trivial solution at $\lambda=\lambda_{j}$,
and the upper and lower branches respectively are parameterized by
$\lambda\in (\lambda_{j}, \infty)$ 
(Lemma \ref{basiclem}).
This paper will prove the existence of positive solutions
$(u,v)$ near $(w_{+},w_{-})$ for each $w\in\mathcal{S}^{j}_{\infty}$
with $j\ge 2$
if $\alpha$ is sufficiently large.
In the case where $\alpha$ is sufficiently large, 
the final main result ensures that,
for each integer $j\ge 2$, the set of positive solutions of \eqref{SKT}
perturbed by $\mathcal{S}^{j}_{\infty}$ 
forms a pitchfork bifurcation curve 
which bifurcates from a positive
solution on $\mathcal{C}_{\alpha, \varLambda}$ at some
$\lambda=\mu_{j,\alpha}$ near $\lambda_{j}$
(Theorem \ref{g2ndbifthm}),
see the red $(j=2)$ and purple $(j=3)$ curves in Figure 1.

Throughout the mathematical argument in this paper, 
the following two transformations are important. 
The first is the transformation $(U,V)=\alpha (u,v)$, 
which enables us to observe the solution of small coexistence  
on an appropriately enlarged scale.
The second is $(W,Z)=(U-V, (1+V)U)$, 
which reduces the quasilinear system \eqref{SKT} to 
a semilinear system \eqref{WZeq} below, where $W$ represents the phase difference 
between $U$ and $V$. 
In fact, the branch $\mathcal{C}_{\alpha, \varLambda}$
of small coexistence will be obtained by 
the scaling and the perturbation of
the branch of solutions with $W=0$
to the limiting system \eqref{WZeq0}
of \eqref{WZeq}
as $\alpha\to\infty$.
A difficulty of the proof of Theorem \ref{g2ndbifthm} 
is to find a
pitchfork bifurcation point on the branch 
$\mathcal{C}_{\alpha, \varLambda}$ of positive,
namely nontrivial, solutions.
Our idea is first to show that the limiting system \eqref{WZeq0}
has the set of solutions that is topologically isomorphic to 
$E_{j}$ only when $\lambda=\lambda_{j}$, where
$E_{j}$ denotes the eigenspace of \eqref{ev} with $\lambda=\lambda_{j}$.
This fact shows that, restricted to the limit system \eqref{WZeq0}, 
the set of solutions with $W\neq 0$ bifurcates from the set of solutions with 
$W=0$ at $\lambda=\lambda_j$ and extends unboundedly
in the direction perpendicular to the $\lambda$ axis. 
We then perturb the perpendicular branch of the limiting system 
to the case where $\alpha$ is very large, 
thereby show the existence of solutions of \eqref{SKT} not in 
$\mathcal{C}_{\alpha, \varLambda}$. 
In particular, when $\dim E_{j}=1$, 
we show that the bifurcation point of the limiting system 
is mapped to the bifurcation point of a perturbed system 
with large $\alpha$ by applying the implicit function theorem 
to the linearized system with an integral constraint.
From the assertion and proof of the result,
we read that the complete segregation emerges 
from near each eigenvalue $\lambda_{j}$ by 
the bifurcation destroying the (almost) congruence 
of $u$ and $v$ on the branch $\mathcal{C}_{\alpha, \varLambda}$ 
of small coexistence.

The contents of this paper are as follows: 
In Section 2, we state main results of this paper. 
In Section 3, we prove the convergence result of positive solutions 
of \eqref{SKT} as $\alpha\to\infty$. 
In Section 4, we discuss the structure of solutions with $W = 0$ 
to the limiting system characterizing 
the scenario (III)
and the perturbation of such solutions.
In Section 5, we discuss the structure of solutions with $W\neq 0$ 
to the limiting system and
their perturbations.
In Section 6, symmetry breaking bifurcation points on the
branch $\mathcal{C}_{\alpha, \varLambda}$ of the small coexistence
will be shown.
In Section 7, we prove the global structure of positive solutions of
\eqref{SKT} with large $\alpha$ 
in the special case where $N=1$ and $m(x)$ is a positive constant.
In Section 8,
we exhibit some numerical simulations on the bifurcation diagram
of positive solutions by using 
the continuation software \texttt{pde2path}.
The numerical bifurcation diagrams in Section 8
will support the theoretical results of this paper.

Throughout this paper,
the usual norms of the spaces $L^{p}(\Omega )$
for $p\in [\,1,\infty )$ and $L^{\infty }(\Omega )$
are denoted by
$$
\|u\|_{p}:=
\left(\displaystyle\int_{\Omega }|u(x)|^{p}dx\right)^{1/p},
\qquad
\|u \|_{\infty }:=\mbox{ess.}\sup_{x\in\overline{\Omega }}|u(x)|.
$$
Hence $\|u\|_{\infty}=\max_{x\in\overline{\Omega}}|u(x)|$
in a case where $u\in C(\overline{\Omega })$.
Furthermore,
the functional spaces used extensively will be denoted as follows: 
$$
X:=W^{2,p}(\Omega)\cap W^{1,p}_{0}(\Omega),\quad
Y:=L^{p}(\Omega),\quad
\boldsymbol{X}:=X\times X,\quad
\boldsymbol{Y}:=Y\times Y.
$$

\section{Main results}
In this section, we list main results of this paper.
The first result shows two convergence
scenarios in the full cross-diffusion limit:
\begin{thm}\label{convthm}
Suppose that $\lambda>\lambda_{1}$ 
and $\lambda\neq\lambda_{j}$ for any $j\ge 2$.
Let $\{(u_{n}, v_{n})\}$ be any sequence of positive solutions
of \eqref{SKT} with $\alpha=\alpha_{n}\to\infty$.
Then either of the following two convergence situations occurs
passing to a subsequence if necessary:
\begin{enumerate}[(i)]
\item
There exists a positive function $U\in X$ for any $p\ge 1$ 
such that
$$
\lim_{n\to\infty}(\alpha_{n}u_{n}, \alpha_{n}v_{n})=(U,U)
\quad\mbox{in}\ C^{1}(\overline{\Omega})
\times C^{1}(\overline{\Omega}).$$
Furthermore, $U$ satisfies
\begin{equation}\label{LS1}
\begin{cases}
\Delta [\,(1+U)U\,]+\lambda m(x)U=0\ \ &\mbox{in}\ \Omega,\\
U=0\ \ &\mbox{on}\ \partial\Omega.
\end{cases}
\end{equation}
\item
There exists a sign-changing function $w\in X$ for any $p\ge 1$ such that
$$
\lim_{n\to\infty}(u_{n}, v_{n})=(w_{+}, w_{-})
\ \ \mbox{uniformly in}\ \overline{\Omega},
$$
where $w_{+}=\max\{\,-w,0\,\}$ and $w_{-}=\max\{\,0,-w\,\}$.
Furthermore, $w$ is a solution of 
\begin{equation}\label{LS2}
\begin{cases}
\Delta w+\lambda m(x)w-b_{1}(w_{+})^{2}+c_{2}(w_{-})^{2}=0
\ \ &\mbox{in}\ \Omega,\\
w=0\ \ &\mbox{on}\ \partial\Omega.
\end{cases}
\end{equation}
\end{enumerate}
\end{thm}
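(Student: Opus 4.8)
The plan is to follow, along the sequence, the composite functions $\tilde u_n:=(1+\alpha_n v_n)u_n$ and $\tilde v_n:=(1+\alpha_n u_n)v_n$, which satisfy \emph{semilinear} equations, together with $w_n:=u_n-v_n$, and --- in the bounded regime --- the rescaled unknowns $U_n:=\alpha_n u_n$, $V_n:=\alpha_n v_n$. I would begin with the uniform a priori bound $\|u_n\|_\infty+\|v_n\|_\infty\le C$: integrating \eqref{SKT1} over $\Omega$ and using that $\tilde u_n\ge0$ in $\Omega$ vanishes on $\partial\Omega$ (hence $\int_{\partial\Omega}\partial_\nu\tilde u_n\,dS\le0$) gives $\int_\Omega u_n(\lambda m-b_1u_n-c_1v_n)\ge0$, whence $\|u_n\|_2,\|v_n\|_2\le C$; since $-\Delta\tilde u_n=u_n(\lambda m-b_1u_n-c_1v_n)$ and $0\le u_n\le\tilde u_n$, an elliptic bootstrap (of the kind used for the unilateral cross-diffusion limit, cf.\ \cite{LN2,KY1}) makes $\tilde u_n,\tilde v_n$ bounded in $C^{1,\theta}(\overline\Omega)$, so $\|u_n\|_\infty+\|v_n\|_\infty\le C$. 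A crucial consequence is that $\alpha_n u_nv_n=\tilde u_n-u_n$ is bounded, hence $u_nv_n\to0$ uniformly on $\overline\Omega$.

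Next I would isolate the convergence common to both scenarios. Subtracting \eqref{SKT2} from \eqref{SKT1} the cross-diffusion parts cancel, and $w_n$ solves $\Delta w_n+\lambda m(x)w_n=b_1u_n^2-(b_2-c_1)u_nv_n-c_2v_n^2$ in $\Omega$, $w_n=0$ on $\partial\Omega$, with right-hand side bounded in $L^\infty$; thus $\{w_n\}$ is bounded in $X$ and, along a subsequence, $w_n\to w$ in $C^1(\overline\Omega)$. From $\min\{u_n,v_n\}^2\le u_nv_n\to0$ we obtain $\min\{u_n,v_n\}\to0$ uniformly, and since $u_n=(w_n)_++\min\{u_n,v_n\}$ and $v_n=(w_n)_-+\min\{u_n,v_n\}$, it follows that $u_n\to w_+$ and $v_n\to w_-$ uniformly; letting $n\to\infty$ in the $w_n$-equation (with $u_nv_n\to0$, $u_n^2\to w_+^2$, $v_n^2\to w_-^2$) shows that $w\in X$ solves \eqref{LS2}. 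It remains to distinguish the two cases.

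The split is according to whether $\{\alpha_n u_n\}$ and $\{\alpha_n v_n\}$ are bounded in $L^\infty(\Omega)$. \emph{If they are}, then $W_n:=U_n-V_n=\alpha_n w_n$ satisfies $(-\Delta-\lambda m)W_n=-\alpha_n^{-1}\bigl[b_1U_n^2-(b_2-c_1)U_nV_n-c_2V_n^2\bigr]\to0$ in $Y$, and since $\lambda\neq\lambda_j$ for every $j$ the operator $-\Delta-\lambda m:X\to Y$ is an isomorphism, so $W_n\to0$ in $X\hookrightarrow C^1(\overline\Omega)$; then $Z_n:=(1+V_n)U_n$ solves $\Delta Z_n=-\lambda m U_n+\alpha_n^{-1}(b_1U_n^2+c_1U_nV_n)$ with bounded right-hand side, hence (along a further subsequence) $Z_n\to Z$ in $C^1(\overline\Omega)$, and solving $U_n^2+(1-W_n)U_n=Z_n$ gives $U_n\to U:=\tfrac12\bigl(-1+\sqrt{1+4Z}\bigr)$ and $V_n\to U$ with $Z=(1+U)U$; passing to the limit in the $Z_n$-equation yields \eqref{LS1}, and $U\in X$ for every $p$ by elliptic regularity. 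Here $U\not\equiv0$, since otherwise $\tilde u_n/\|\tilde u_n\|_\infty$ would converge to a positive eigenfunction of \eqref{ev}, forcing $\lambda=\lambda_1$ and contradicting $\lambda>\lambda_1$; the strong maximum principle then gives $U>0$ in $\Omega$, i.e.\ alternative (i). \emph{If instead} $\alpha_n\|u_n\|_\infty+\alpha_n\|v_n\|_\infty\to\infty$ along a subsequence, I would show that $w$ is sign-changing, which is alternative (ii). On one hand $w\not\equiv0$: if $w_n\to0$ in $C^1$, renormalizing $w_n$ (or $\alpha_n(u_n,v_n)$) by its sup-norm and passing to the limit produces a nontrivial solution of \eqref{ev}, so $\lambda=\lambda_j$ for some $j$, which is impossible; moreover one checks that $w\equiv0$ is compatible only with the first (bounded) case. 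On the other hand $w$ cannot be one-signed: if, say, $w\ge0$ and $w\not\equiv0$, then $v_n\to0$ uniformly and $u_n\to w>0$ in $\Omega$, while the equation for $\tilde v_n$ can be written $-\Delta\tilde v_n=g_n\tilde v_n$ with $g_n=\dfrac{\lambda m-b_2u_n-c_2v_n}{1+\alpha_n u_n}$, so that $0=\mu_1(-\Delta-g_n)$; since $u_n$ is bounded below on compact subsets of $\Omega$ we have $g_n\to0$ in $L^q(\Omega)$ for all $q<\infty$, whence $\mu_1(-\Delta-g_n)\to\mu_1(-\Delta)>0$, a contradiction, and the case $w\le0$ is symmetric. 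Hence $w$ changes sign.

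The main obstacle is the unbounded case: proving that $w$ is \emph{genuinely} sign-changing --- excluding both the collapse $w\equiv0$ and the one-signed alternative --- is exactly where the full hypothesis $\lambda\neq\lambda_j$ is used, through the principal-eigenvalue characterization of positive solutions and a careful renormalization of $w_n$ and of the rescaled unknowns $\alpha_n(u_n,v_n)$. The uniform $L^\infty$ bound of the first step is a secondary, more technical point, which can be handled by the elliptic iteration already developed for the unilateral cross-diffusion limit.
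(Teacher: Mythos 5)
Your reorganization of the argument is reasonable: you split on whether $\alpha_n(u_n,v_n)$ stays bounded, whereas the paper first establishes $(u_n,v_n)\to(w^*_+,w^*_-)$ and splits on whether $w^*$ changes sign (Lemmas~\ref{wzconvlem}--\ref{UVaprlem}). Your handling of the bounded case is essentially the paper's own route (via $W_n=U_n-V_n$, the invertibility of $-\Delta-\lambda m$, then the $Z_n$-equation and a normalization to exclude $U\equiv0$). Your exclusion of a one-signed limit via continuity of the principal eigenvalue of $-\Delta-g_n$ (with $g_n=(\lambda m-b_2u_n-c_2v_n)/(1+\alpha_n u_n)\to 0$ in $L^q$) is a genuinely different and arguably slicker argument than the paper's Lemma~\ref{zerolem}, which normalizes $\hat u_n=u_n/\|u_n\|_p$ and derives a contradiction from the $z_n$-bound; both work.

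The real gap is in the unbounded case with $w\equiv 0$. You claim that renormalizing $w_n$ (or $\alpha_n(u_n,v_n)$) by its sup-norm produces a \emph{nontrivial} solution of \eqref{ev}, but that is wrong: precisely because $\lambda\neq\lambda_j$, whatever solution of \eqref{ev} you obtain in the limit must be \emph{zero}, so no immediate contradiction arises. Concretely, with $\tilde u_n=u_n/\|u_n\|_\infty$, $\tilde v_n=v_n/\|v_n\|_\infty$, $r_n=\|u_n\|_\infty/\|v_n\|_\infty$, one indeed gets $r_n\tilde u_n-\tilde v_n\to 0$; but this on its own is harmless, since $r_n\tilde u_n$ and $\tilde v_n$ could each converge to the same positive profile. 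The assertion that ``$w\equiv0$ is compatible only with the bounded case'' is exactly what must be proved and is the crux of the paper's Section~3. The paper closes this with two further ingredients you omit: Lemma~\ref{ratiolem} (using $\lambda>\lambda_1$) to pin $r_n\in[\delta,1/\delta]$, and then, crucially, the $z_n$-estimate $0<z_n/\|v_n\|_\infty^2\le C/(\alpha_n\|v_n\|_\infty)\to 0$ under the unboundedness assumption, fed into the quadratic formula
$\tilde v_n=\tfrac12\bigl(\sqrt{\bigl((w_n-\varepsilon_n)/\|v_n\|_\infty\bigr)^2+4z_n/\|v_n\|_\infty^2}-(w_n+\varepsilon_n)/\|v_n\|_\infty\bigr)$
to conclude $\tilde v_n\to \tilde w^*_-=0$ uniformly, contradicting $\|\tilde v_n\|_\infty=1$. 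Without this chain, or a genuine substitute for it, your dichotomy is not closed; the ``careful renormalization'' you allude to in your last paragraph is not actually supplied, and the version you sketch earlier produces a zero limit rather than a contradiction.
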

In the first convergence situation (i),
both competing species $u_{n}$ and $v_{n}$ with large cross-diffusion
$\alpha_{n}$ stay small with the order $O(1/\alpha_{n})$ and are close to
a common profile $U/\alpha_{n}$ in $\Omega$.
Then we call this situation occurring in the full 
cross-diffusion limit {\it small coexistence}.
On the other hand, the second convergence situation (ii),
territories of two competing species become segregate 
completely each other
as $\alpha_{n}\to\infty$
because $u_{n}v_{n}\to 0$. 
Therefore, we call the situation (ii) in the full cross-diffusion limit
{\it complete segregation}.
Furthermore, it can be said that
the boundary of territories of two competing species
can be characterized by the nodal set of solutions of \eqref{LS2}.
As mentioned in the previous section, 
the full cross-diffusion limit for the Neumann problem
is characterized by the incomplete segregation 
such that,
in the high density region of one species, the other remains comparatively few
since $u_{n}v_{n}\to\tau >0$.

The second result gives the global bifurcation structure of
positive solutions of the first limiting system \eqref{LS1} 
of small coexistence type with bifurcation parameter $\lambda$:
\begin{thm}\label{thm13}
The set of all positive solutions of \eqref{LS1}
consists of the following curve parameterized by
$\lambda\in (\lambda_{1}, \infty)$:
$$
\mathcal{C}_{\infty}:=
\{\,(\lambda, U(\,\cdot\,,\lambda))\in (\lambda_{1}, \infty )\times X\,\},
$$
where
$(\lambda_{1}, \infty)\ni\lambda\mapsto U(\,\cdot\,,\lambda)\in X$
is of class $C^{1}$.
Furthermore, it holds that
$$
\lim_{\lambda\searrow\lambda_{1}}U(\,\cdot\,,\lambda)= 0
\ \ \mbox{in}\ X
$$
and
$$
\lim_{\lambda\to\infty}\dfrac{U(\,\cdot\,,\lambda)}{\lambda}= \varPsi
\ \ \mbox{in}\ C^{1}(\overline{\Omega}),
$$
where $\varPsi$ is the unique positive solution
to the elliptic equation:
$$
-\Delta\varPsi^{2}=m(x)\varPsi\ \ \mbox{in}\ \Omega,\qquad
\varPsi=0\ \ \mbox{on}\ \partial\Omega.
$$
\end{thm}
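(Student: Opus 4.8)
The plan is to reduce \eqref{LS1} to a scalar semilinear problem of sublinear type and then apply the standard machinery for such problems, the only genuinely delicate point being the asymptotics as $\lambda\to\infty$. The first move is the substitution $\varphi:=(1+U)U=U+U^{2}$. Since $s\mapsto s+s^{2}$ is a smooth increasing bijection of $[0,\infty)$ onto itself with smooth inverse $g(s):=\tfrac12\bigl(\sqrt{1+4s}-1\bigr)$, we have $U=g(\varphi)$ and \eqref{LS1} is equivalent to
\begin{equation}\label{phiLS}
-\Delta\varphi=\lambda m(x)\,g(\varphi)\ \ \text{in}\ \Omega,\qquad\varphi=0\ \ \text{on}\ \partial\Omega,
\end{equation}
where $g\in C^{\infty}([0,\infty))$ satisfies $g(0)=0$, $g'(0)=1$, $g'>0$ and $g''<0$. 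In particular $g$ is strictly concave, $s\mapsto g(s)/s$ is strictly decreasing from $1$ (as $s\to0$) to $0$ (as $s\to\infty$), and $g(s)\le s$; thus \eqref{phiLS} is a sublinear (Brezis--Oswald type) problem in which $\lambda m(x)$ plays the role of the ``slope at the origin''.

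Existence, nonexistence, uniqueness and nondegeneracy then follow along classical lines. If $\varphi>0$ solves \eqref{phiLS} with $\lambda\le\lambda_{1}$ then $g(\varphi)\le\varphi$, with strict inequality on $\{m>0\}$, so $\varphi$ is a positive strict subsolution of $-\Delta-\lambda m$, forcing its principal Dirichlet eigenvalue to be negative; but that eigenvalue is nonnegative for $\lambda\le\lambda_{1}$ and vanishes only at $\lambda=\lambda_{1}$ (with eigenfunction $\phi_{1}$ from \eqref{ev}), a contradiction---hence no positive solution exists for $\lambda\le\lambda_{1}$. For $\lambda>\lambda_{1}$, $\underline\varphi=\varepsilon\phi_{1}$ (small $\varepsilon$; use $g(s)/s\to1$ and $\lambda>\lambda_{1}$) and $\overline\varphi=Me$ with $-\Delta e=1$, $e|_{\partial\Omega}=0$, $M$ large (use sublinearity of $g$), form an ordered sub/supersolution pair, so the sub--supersolution method gives a positive solution. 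Uniqueness for each $\lambda>\lambda_{1}$ is the standard test-function argument based on the strict monotonicity of $g(s)/s$. For nondegeneracy, a positive solution $\varphi$ is the positive principal eigenfunction of $-\Delta-\lambda m\,g(\varphi)/\varphi$ with eigenvalue $0$, whereas the linearization of \eqref{phiLS} is controlled by $-\Delta-\lambda m\,g'(\varphi)$; since strict concavity gives $g'(\varphi)<g(\varphi)/\varphi$ pointwise, the principal eigenvalue of the latter operator is strictly positive, so the linearized operator is an isomorphism of $X$ onto $Y$. The implicit function theorem then yields, near each point, a $C^{1}$ curve of solutions of \eqref{phiLS}; uniqueness glues these into a single global $C^{1}$ curve $(\lambda_{1},\infty)\ni\lambda\mapsto\varphi(\cdot,\lambda)$ (with no blow-up at interior $\lambda$, by the monotonicity in $\lambda$ proved below together with elliptic estimates), and since $g\in C^{\infty}$ acts as a $C^{1}$ Nemytskii map on $X$ (take $p>N$), $U(\cdot,\lambda)=g(\varphi(\cdot,\lambda))$ inherits this, giving $\mathcal{C}_{\infty}$.

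It remains to identify the two endpoints. A comparison argument shows $\varphi(\cdot,\lambda)$ is nondecreasing in $\lambda$ (for $\lambda<\lambda'$, $\varphi(\cdot,\lambda)$ is a subsolution of the $\lambda'$-problem, hence lies below the unique $\lambda'$-solution). Therefore, as $\lambda\searrow\lambda_{1}$, $\varphi(\cdot,\lambda)$ decreases to some $\varphi_{*}\ge0$ which, by monotone/dominated convergence and elliptic regularity, solves \eqref{phiLS} at $\lambda=\lambda_{1}$; by the nonexistence above $\varphi_{*}\equiv0$, so $\varphi(\cdot,\lambda)\to0$ and $U(\cdot,\lambda)=g(\varphi(\cdot,\lambda))\to0$ in $X$. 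For $\lambda\to\infty$, set $\varPhi_{\lambda}:=\varphi(\cdot,\lambda)/\lambda^{2}$, so that $U(\cdot,\lambda)/\lambda=g(\lambda^{2}\varPhi_{\lambda})/\lambda$ and
\begin{equation}\label{rescLS}
-\Delta\varPhi_{\lambda}=m(x)\,\frac{g(\lambda^{2}\varPhi_{\lambda})}{\lambda}\ \ \text{in}\ \Omega,\qquad\varPhi_{\lambda}=0\ \ \text{on}\ \partial\Omega .
\end{equation}
The elementary bounds $\sqrt{s}-\tfrac1{2\lambda}\le g(\lambda^{2}s)/\lambda\le\sqrt{s}$ give, by comparing $\varphi(\cdot,\lambda)$ with the supersolution $\lambda^{2}z$ of \eqref{phiLS}---where $z$ is the unique positive solution of $-\Delta z=\|m\|_{\infty}\sqrt{z}$---a uniform bound $\varPhi_{\lambda}\le z$, so $\varPhi_{\lambda}$ is bounded in $X$; while the scaled subsolution $\varepsilon\lambda^{2}\phi_{0}$, with $\phi_{0}$ a principal Dirichlet eigenfunction on a ball $B_{0}$ with $\overline{B_{0}}\subset\Omega$ on which $m\ge m_{0}>0$ (using $g(t)\ge c_{0}\min\{t,\sqrt{t}\}$ for some $c_{0}>0$, valid since $g$ is linear near $0$ and square-root-like at infinity), gives a uniform lower bound $\varPhi_{\lambda}\ge\varepsilon\phi_{0}$ on $B_{0}$. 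Letting $\lambda\to\infty$ in \eqref{rescLS}---the right-hand side converges uniformly since $|g(\lambda^{2}s)/\lambda-\sqrt{s}|\le1/(2\lambda)$---produces a nonnegative solution $\varPhi_{\infty}$ of $-\Delta\varPhi=m(x)\sqrt{\varPhi}$ which is $\not\equiv0$ by the lower bound, hence, by uniqueness for this sublinear problem, equals $\varPsi^{2}$; as this holds along every subsequence, $\varPhi_{\lambda}\to\varPsi^{2}$ and $U(\cdot,\lambda)/\lambda=g(\lambda^{2}\varPhi_{\lambda})/\lambda\to\varPsi$ in the asserted sense.

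The routine ingredients are the substitution $\varphi=(1+U)U$ and the sublinear theory built on it: existence/nonexistence, uniqueness, nondegeneracy, the global $C^{1}$ curve, and the limit $\lambda\searrow\lambda_{1}$. I expect the main obstacle to be the regime $\lambda\to\infty$: one must guess the correct order $U\sim\lambda\varPsi$, secure the uniform \emph{lower} bound on the rescaled function $\varPhi_{\lambda}$ that prevents it from collapsing in the limit, and handle the singular sublinear limit equation $-\Delta\varPhi=m(x)\sqrt{\varPhi}$---whose positive solution, though unique, is only Hölder (not $C^{1}$) up to $\partial\Omega$---which is precisely what governs the topology of the convergence of $U(\cdot,\lambda)/\lambda$ to $\varPsi$ near $\partial\Omega$.
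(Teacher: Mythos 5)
You reduce \eqref{LS1} to the same scalar sublinear problem the paper uses: your substitution $\varphi=(1+U)U$ with $g(s)=\tfrac12(\sqrt{1+4s}-1)$ is exactly the paper's change of variables \eqref{Zdef} leading to \eqref{subell} (your $\varphi$ is the paper's $Z$), and your uniqueness via Br\'ezis--Oswald, your nondegeneracy via strict concavity (equivalently the paper's variational inequality $\lambda<\lambda_1\bigl(m/\sqrt{4Z+1}\bigr)$), and your rescaling $\varPhi_\lambda=\varphi(\cdot,\lambda)/\lambda^2$ with limit equation $-\Delta\varPhi=m\sqrt{\varPhi}$ all coincide with the paper's Proposition \ref{subprop}. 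The genuine difference is in how the global curve is produced. You establish existence for each $\lambda>\lambda_1$ directly by an ordered sub-/supersolution pair and then glue local implicit-function charts using uniqueness, and you obtain $\varphi(\cdot,\lambda)\to0$ as $\lambda\searrow\lambda_1$ from monotonicity in $\lambda$. The paper instead runs Crandall--Rabinowitz at $(\lambda_1,0)$, extends the local branch by showing the linearization is always an isomorphism, and reads the endpoint limit off the bifurcation. Your route is a bit more elementary and self-contained; the paper's route deliberately sets up the bifurcation machinery that is reused for the perturbation results (Theorem \ref{Cathm}). Your nontriviality argument for $\varPhi_\infty$ (a localized subsolution on a ball) also differs from the paper's, which derives a contradiction from the normalized variational characterization $1=\lambda_1\bigl(2m/(\sqrt{4\zeta_n+\mu_n^{-2}}+\mu_n^{-1})\bigr)$; both work.

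One correction to your closing remark: the unique positive solution of $-\Delta\varPhi=m\sqrt{\varPhi}$ (i.e.\ $\zeta_0=\varPsi^2$) is in fact in $C^{1,\alpha}(\overline{\Omega})$ by standard elliptic regularity, since its right-hand side is bounded; it is not ``only H\"older.'' What fails to be $C^1$ up to $\partial\Omega$ is $\varPsi=\sqrt{\zeta_0}$: by the Hopf lemma $\zeta_0\sim\mathrm{dist}(x,\partial\Omega)$, so $\nabla\varPsi=\nabla\zeta_0/(2\sqrt{\zeta_0})$ blows up at $\partial\Omega$. That said, your underlying concern is well founded and in fact points to an imprecision in the statement of Theorem \ref{thm13} itself: the convergence $U(\cdot,\lambda)/\lambda\to\varPsi$ cannot hold in $C^1(\overline{\Omega})$ as asserted (the $C^1(\overline{\Omega})$-norm of $U(\cdot,\lambda)/\lambda$ blows up, since $\nabla(U/\lambda)|_{\partial\Omega}=\lambda\,\nabla(Z_0/\lambda^2)|_{\partial\Omega}\to\nabla\zeta_0\cdot\infty$); the correct topologies are $C(\overline{\Omega})$ together with $C^1$ locally in $\Omega$. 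The paper's one-line proof of Theorem \ref{thm13}, which transports the $C^1(\overline{\Omega})$ convergence of $Z_0/\lambda^2\to\zeta_0$ from Proposition \ref{subprop} through the map $Z\mapsto\tfrac12(\sqrt{4Z+1}-1)$, glosses over the degeneracy of this map's derivative where $Z$ vanishes.
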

The third result asserts that, 
when $\alpha$ is sufficiently large, 
a bifurcation curve of the subset of positive solutions to \eqref{SKT}
is constructed by the perturbation of the bifurcation curve 
$\mathcal{C}_{\infty}$:

\begin{thm}\label{Cathm}
For any $\varLambda >\lambda_{1}$,
there exists $\overline{\alpha}=\overline{\alpha}(\varLambda )>0$ such that,
if $\alpha >\overline{\alpha}$, then there exists a bifurcation curve
$\mathcal{C}_{\alpha, \varLambda}$ parameterized by 
$\lambda\in (\lambda_{1}, \varLambda\,]$
consisting of positive solutions of \eqref{SKT} as follows
$$
\mathcal{C}_{\alpha, \varLambda}=
\{\,(\lambda, u_{0, \alpha}(\,\cdot\,,\lambda), 
v_{0, \alpha}(\,\cdot\,,\lambda))\in
(\lambda_{1}, \varLambda\, ]\times\boldsymbol{X}\,\},
$$
where
$$
(\lambda_{1}, \varLambda\,]\times (\overline{\alpha},\infty)\ni 
(\lambda,\alpha)\mapsto
(u_{0, \alpha}(\,\cdot\,,\lambda), v_{0, \alpha}(\,\cdot\,,\lambda))
\in\boldsymbol{X}
$$
is of class $C^{1}$ satisfying
$$
\lim_{\lambda\searrow\lambda_{1}}
(u_{0, \alpha}(\,\cdot\,,\lambda), v_{0, \alpha}(\,\cdot\,,\lambda))=(0,0)
\ \ \mbox{in}\ \boldsymbol{X}
$$
and
\begin{equation}\label{ualto0}
\lim_{\alpha\to\infty}
\alpha \,(u_{0, \alpha}(\,\cdot\,,\lambda), 
v_{0, \alpha}(\,\cdot\,,\lambda))
=
(U(\,\cdot\,,\lambda), U(\,\cdot\,,\lambda))
\ \ \mbox{in}\ \boldsymbol{X}\ \ \mbox{for each}\ 
\lambda\in (\lambda_{1}, \varLambda\, ]
\end{equation}
with $(\lambda, U(\,\cdot\,\lambda))\in\mathcal{C}_{\infty}$ obtained in
Theorem \ref{thm13}.
Furthermore, $\mathcal{C}_{\alpha, \varLambda}$ can be extended as a connected
subset of positive solutions to the range $\lambda> \varLambda$.
\end{thm}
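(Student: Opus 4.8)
The plan is to exploit the two changes of variables flagged in the introduction and then to perturb in $\varepsilon:=1/\alpha$. Substituting $u=U/\alpha$, $v=V/\alpha$ in \eqref{SKT1}--\eqref{SKT2} and multiplying by $\alpha$ recasts \eqref{SKT} as the quasilinear system $\Delta[(1+V)U]+\lambda mU-\varepsilon U(b_1U+c_1V)=0$, $\Delta[(1+U)V]+\lambda mV-\varepsilon V(b_2U+c_2V)=0$ in $\Omega$ with $U=V=0$ on $\partial\Omega$, where positivity of $(u,v)$ amounts to $U,V>0$ in $\Omega$. With $W=U-V$ and $Z=(1+V)U$ one has $(1+U)V=Z-W$, and $(U,V)$ is recovered from $(W,Z)$ by $U=\tfrac12\bigl(W-1+\sqrt{(1-W)^2+4Z}\,\bigr)$, $V=U-W$; subtracting the two equations and retaining the first turns the system into the semilinear system \eqref{WZeq}, whose formal limit $\varepsilon=0$ is \eqref{WZeq0}, i.e. $\Delta W+\lambda mW=0$ together with $\Delta Z+\lambda m\,U(W,Z)=0$. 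For $\lambda\notin\{\lambda_j\}_{j\ge2}$ the first of these forces $W\equiv0$, so $Z=U(1+U)$ and the second becomes exactly \eqref{LS1}; hence the solutions of \eqref{WZeq0} with $W=0$ are precisely the points with $(W,Z)=(0,Z_0(\lambda))$, $\lambda\in(\lambda_1,\infty)$, where $Z_0(\lambda):=\bigl(1+U(\,\cdot\,,\lambda)\bigr)U(\,\cdot\,,\lambda)$ and $(\lambda,U(\,\cdot\,,\lambda))\in\mathcal{C}_\infty$ is the curve from Theorem \ref{thm13}. The curve $\mathcal{C}_{\alpha,\varLambda}$ will be the continuation of this branch to small $\varepsilon>0$.

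To carry out the continuation, regard \eqref{WZeq} as $F(\lambda,\varepsilon,W,Z)=0$ for $F\colon(\lambda_1,\infty)\times\mathbb{R}\times\boldsymbol{X}\to\boldsymbol{Y}$, so that $F(\lambda,0,0,Z_0(\lambda))=0$ whenever $\lambda\ne\lambda_j$. A direct computation shows that $D_{(W,Z)}F(\lambda,0,0,Z_0(\lambda))$ is block lower triangular, with diagonal blocks $\Delta+\lambda m$ and $\Delta+\lambda m/(1+2U(\,\cdot\,,\lambda))$. The second block is precisely the linearization of \eqref{LS1} at $U(\,\cdot\,,\lambda)$ expressed in the $Z$-variable, hence invertible by the nondegeneracy of $\mathcal{C}_\infty$ underlying the $C^1$ parametrization in Theorem \ref{thm13}; the first block is invertible iff $\lambda$ is not a Dirichlet eigenvalue in \eqref{ev}. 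Consequently, for every compact $K\subset(\lambda_1,\varLambda]\setminus\{\lambda_j\}_{j\ge2}$ the implicit function theorem yields $\overline{\alpha}_K>0$ and a unique $C^1$ map $(\lambda,\alpha)\mapsto(W_\alpha(\,\cdot\,,\lambda),Z_\alpha(\,\cdot\,,\lambda))$ on $K\times(\overline{\alpha}_K,\infty)$ with $(W_\alpha,Z_\alpha)\to(0,Z_0(\lambda))$ in $\boldsymbol{X}$ as $\alpha\to\infty$. Transforming back, $(u_{0,\alpha},v_{0,\alpha}):=(U_\alpha,V_\alpha)/\alpha$ solves \eqref{SKT} and, for $\alpha$ large, is positive (by the strong maximum principle, since $U(\,\cdot\,,\lambda)>0$ in $\Omega$ with nonvanishing normal derivative on $\partial\Omega$), while $\alpha(u_{0,\alpha},v_{0,\alpha})\to(U(\,\cdot\,,\lambda),U(\,\cdot\,,\lambda))$ in $\boldsymbol{X}$, which is \eqref{ualto0}; the joint $C^1$ dependence on $(\lambda,\alpha)$ is inherited from the implicit function theorem.

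It remains to cover the two degenerate regimes and to patch. Near $\lambda=\lambda_1$ one has $U(\,\cdot\,,\lambda)\to0$, so the implicit function theorem must be supplemented by a bifurcation analysis at the trivial solution: the linearization of \eqref{SKT} at $(0,0)$ is the decoupled pair of operators $\Delta+\lambda m$ acting on each component, so $\lambda_1$ is a double eigenvalue with kernel spanned by $(\phi_1,0)$ and $(0,\phi_1)$; a Lyapunov--Schmidt reduction on this two-dimensional kernel, using Lemma \ref{exlem} to exclude positive solutions for $\lambda\le\lambda_1$, isolates the branch tangent to the symmetric direction $(\phi_1,\phi_1)$ --- the small-coexistence branch --- and matches it, uniformly for $\alpha$ large, with the germ of $\mathcal{C}_\infty$ at $\lambda_1$; this also yields $\lim_{\lambda\searrow\lambda_1}(u_{0,\alpha},v_{0,\alpha})=(0,0)$. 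The \textbf{main obstacle} is the behaviour near each of the finitely many $\lambda_j\in(\lambda_1,\varLambda)$: there the first diagonal block $\Delta+\lambda_j m$ is singular, with kernel $E_j$, and in \eqref{WZeq0} the branch $\{W=0\}$ is crossed at $(W,Z)=(0,Z_0(\lambda_j))$ by the perpendicular branch of solutions with $W\ne0$ mentioned in the introduction. To continue $\mathcal{C}_{\alpha,\varLambda}$ through a neighbourhood of $\lambda_j$ I would perform a Lyapunov--Schmidt reduction: write $W=t\varphi+W^\perp$ with $\varphi\in E_j$, solve the $W^\perp$- and $Z$-components by the implicit function theorem (the relevant operator being invertible there), and analyse the bifurcation equation on $E_j$; the crux is to show that, after the $O(\varepsilon)$ perturbation, its zero set near $t=0$ still contains a $C^1$ curve over $\lambda$ lying close to $\{W=0\}$ --- i.e. the small-coexistence branch survives the resonance --- and to separate this curve from the complete-segregation side branches which, as in Theorem \ref{g2ndbifthm}, bifurcate off $\mathcal{C}_{\alpha,\varLambda}$ at a nearby $\lambda=\mu_{j,\alpha}$. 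Patching the finitely many local families constructed above over $(\lambda_1,\varLambda]$ and taking $\overline{\alpha}$ to be the largest of the thresholds produced yields $\mathcal{C}_{\alpha,\varLambda}$. Finally, the endpoint solution at $\lambda=\varLambda$ lies on a connected component of the set of positive solutions of \eqref{SKT}; by Lemma \ref{exlem} (positive solutions exist for every $\lambda>\lambda_1$ and none for $\lambda\le\lambda_1$), combined with a priori bounds on positive solutions over bounded $\lambda$-intervals and the implicit function theorem at regular points, this component cannot remain confined to $\lambda\le\varLambda$, which furnishes the asserted connected extension to $\lambda>\varLambda$.
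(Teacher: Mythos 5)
Your proposal follows the paper's overall strategy: the scaling $(u,v)=(U,V)/\alpha$, the change of variables $W=U-V$, $Z=(1+V)U$ leading to the semilinear system \eqref{WZeq}, the identification of its $\varepsilon=0$ limit with the branch $Z_0(\lambda)=(1+U(\,\cdot\,,\lambda))U(\,\cdot\,,\lambda)$, and IFT continuation at nonresonant $\lambda$. That part matches Proposition \ref{perprop}.

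The genuine gap is at the resonant values $\lambda=\lambda_j$, $j\ge 2$, which you correctly flag as the ``main obstacle'' but then leave unresolved: you describe a Lyapunov--Schmidt splitting $W=t\varphi+W^\perp$ and say ``the crux is to show'' that the zero set of the bifurcation equation still contains a $C^1$ curve over $\lambda$ close to $\{W=0\}$ after the $O(\varepsilon)$ perturbation, and to separate it from the perpendicular complete-segregation branch. Nothing in the proposal actually establishes this. The paper's mechanism here is a domain restriction: introduce $Y_j=\{\phi\in Y:\int_\Omega\phi\varPhi_j=0\ \forall\varPhi_j\in E_j\}$ and $X_j=X\cap Y_j$, and observe that $L_j:=G_{(W,Z)}(\lambda_j,0,Z_0(\lambda_j),0)|_{X_j\times X}$ is invertible onto $Y_j\times Y$; one then runs the implicit function theorem for the restricted operator $G^j$ on $\mathbb{R}_+\times X_j\times X\times\mathbb{R}$ to continue a branch with $W\in X_j$ through $\lambda_j$. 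This is precisely the observation that lets the small-coexistence curve pass through the resonance without having to analyse a bifurcation equation, and it is what your proposal is missing. (It also keeps the degenerate direction $E_j$ out of the picture, which is what your Lyapunov--Schmidt sketch would have to reproduce by showing the reduced equation has the required $C^1$ zero curve.) Your treatment near $\lambda_1$ is likewise different and heavier: you work with the full $(u,v)$ linearization at $(0,0)$, whose kernel is two-dimensional (spanned by $(\varphi_1,0)$, $(0,\varphi_1)$), whereas the paper again restricts to $X_1\times X$ in the $(W,Z)$ variables, where the kernel of $G^1_{(W,Z)}(\lambda_1,0,0,\varepsilon)$ reduces to $\mathrm{span}\{(0,\varPhi_1)\}$, so Crandall--Rabinowitz from a simple eigenvalue applies directly after a transversality check. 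Finally, for the extension to $\lambda>\varLambda$ the paper invokes Rabinowitz's global bifurcation theorem and rules out the alternative of returning to the trivial branch; your appeal to ``a priori bounds plus IFT at regular points'' gestures in the same direction but would need the same global-alternative argument to close.
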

In the final section, the numerical bifurcation diagram will
support the theoretical result, Theorem \ref{Cathm}.
In Figure 1, the blue curve realizes 
$\mathcal{C}_{\alpha, \varLambda}$.
For a numerical verification of \eqref{ualto0},
see Figure 2 exhibiting small similar profiles of $u$ and $v$ 
corresponding to a point on  $\mathcal{C}_{\alpha, \varLambda}$.

The second limit system \eqref{LS2} 
also appears in the fast reaction limit of the diffusive 
Lotka-Volterra competition system (e.g., \cite{DD})
and it has been studied from this perspective. 
In the one-dimensional case with any homogeneous $m>0$,
the shooting argument  (e.g., \cite{HY})
enables us to classify the set of solutions of
\eqref{LS2} denoted as
$$
\widetilde{\mathcal{S}}_{\infty}:=\{
(\lambda,w)\in\mathbb{R}_{+}\times C^{2}([-\ell, \ell ])\,:\,
\mbox{$(\lambda,w)$ satisfies \eqref{LS2}}\,\}.
$$
\begin{lem}\label{basiclem}
Suppose that
$\Omega=(-\ell, \ell )$ and $m(x)=m$
(constant)
$>0$
for all $x\in\overline{\Omega}$.
For each $j\in\mathbb{N}$,
define
$$
\widetilde{\mathcal{S}}_{j,\infty}^{+}:=\{\,
(\lambda ,w)\in\widetilde{\mathcal{S}}_{\infty}\,:\,
\mbox{the number of zeros of $w$ on $(-\ell, \ell )$ is $j-1$ with
$w'(0)>0$}\,\}
$$
and
$$
\widetilde{\mathcal{S}}_{j,\infty}^{-}:=\{\,
(\lambda ,w)\in\widetilde{\mathcal{S}}_{\infty}\,:\,
\mbox{the number of zeros of $w$ on $(-\ell, \ell )$ is $j-1$ with
$w'(0)<0$}\,\}.
$$
Then, $\widetilde{\mathcal{S}}_{j, \infty}:=
\widetilde{\mathcal{S}}_{j, \infty}^{+}\,\cup\,\{(\lambda_{j}, 0)\}
\,\cup\,\widetilde{\mathcal{S}}_{j, \infty}^{-}$ forms a simple curve bifurcating from
the trivial solution $w=0$ at $\lambda =\lambda_{j}$ such that
$\widetilde{\mathcal{S}}_{j, \infty}^{+}$ and 
$\widetilde{\mathcal{S}}_{j, \infty}^{-}$ are 
parameterized by $\lambda\in (\lambda_{j}, \infty )$, such as
$$
\widetilde{\mathcal{S}}_{j, \infty}^{\pm}:=\{\,
(\lambda ,w^{\pm}_{j, 0}(\,\cdot\,,\lambda))\in
(\lambda_{j}, \infty)\times X\,\},
$$
where
$(\lambda_{j},\infty)\ni\lambda\mapsto
w^{\pm}_{j, 0}(\,\cdot\,,\lambda)$ are of class
$C^{1}$, and especially for the case where $j$ is even,
$
\widetilde{\mathcal{S}}^{-}_{j,\infty}=
\{\,w(-x)\,:\,w\in\widetilde{\mathcal{S}}^{+}_{j,\infty}\,\}.
$
Furthermore, it holds that
$$\widetilde{\mathcal{S}}_{\infty}=\bigcup\limits^{\infty}_{j=1}\widetilde{\mathcal{S}}_{j, \infty}\,\cup\,
\{\,(\lambda,w)\,:\,\lambda >0,\ w=0\,\}.$$
\end{lem}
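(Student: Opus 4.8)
The plan is to run a shooting/time-map analysis of the autonomous ODE underlying \eqref{LS2}. Write \eqref{LS2} as $-w''=f(w)$ on $(-\ell,\ell)$ with $f(s):=\lambda m s-b_{1}(s_{+})^{2}+c_{2}(s_{-})^{2}$; note $f\in C^{1}(\mathbb{R})$ with $f'(0)=\lambda m>0$, so the standard ODE theory (unique solvability, $C^{1}$ dependence on initial data and on $\lambda$, and $w\in C^{2}$) applies. Since the equation is autonomous, the energy $\tfrac12(w')^{2}+F(w)$ with $F(s):=\int_{0}^{s}f$ is constant along any solution, and $(w(x_{0}),w'(x_{0}))=(0,0)$ at one point forces $w\equiv0$. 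Hence a nontrivial solution has only simple zeros, finitely many on $[-\ell,\ell]$, and is a finite concatenation of full ``bumps'' joining consecutive zeros; adjacent bumps have opposite sign, all positive bumps share one height $p$ and all negative bumps one depth $q$ with $F_{+}(p)=F_{-}(q)=\mathcal E$ (the common energy), where $F_{+}(s):=\tfrac{\lambda m}{2}s^{2}-\tfrac{b_{1}}{3}s^{3}$ and $F_{-}(s):=\tfrac{\lambda m}{2}s^{2}-\tfrac{c_{2}}{3}s^{3}$. Thus every nontrivial solution is determined by its number $j$ of bumps, by the sign of its first bump (equivalently of $w'$ at $-\ell$), and by $\mathcal E$; this is exactly the data labelling $\widetilde{\mathcal S}^{\pm}_{j,\infty}$, and it immediately yields the decomposition $\widetilde{\mathcal S}_{\infty}=\bigcup_{j}\widetilde{\mathcal S}_{j,\infty}\cup\{w\equiv0\}$.

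Next I study the time maps. The length of a positive bump of height $p$ is
\[
T_{+}(p;\lambda)=\sqrt2\int_{0}^{p}\frac{ds}{\sqrt{F_{+}(p)-F_{+}(s)}}=\sqrt2\int_{0}^{1}\frac{d\sigma}{\sqrt{\tfrac{\lambda m}{2}(1-\sigma^{2})-\tfrac{b_{1}p}{3}(1-\sigma^{3})}},
\]
the second equality coming from $s=p\sigma$. Since $1-\sigma^{3}>0$ on $(0,1)$, the integrand is strictly increasing in $p$, so $p\mapsto T_{+}(p;\lambda)$ is $C^{1}$ and strictly increasing on $(0,\lambda m/b_{1})$, with $T_{+}(0^{+};\lambda)=\pi/\sqrt{\lambda m}$ and $T_{+}(p;\lambda)\to\infty$ as $p\uparrow\lambda m/b_{1}$ (the singularity at $\sigma=1$ degenerating from order $(1-\sigma)^{-1/2}$ to $(1-\sigma)^{-1}$); the analogous statements hold for $T_{-}(q;\lambda)$ on $(0,\lambda m/c_{2})$ with $c_{2}$ in place of $b_{1}$. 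For $j\ge2$ both bump signs occur, so $\mathcal E$ is confined to $(0,E^{*}(\lambda))$ with $E^{*}(\lambda):=\min\{F_{+}(\lambda m/b_{1}),F_{-}(\lambda m/c_{2})\}$, and $T_{+}$ or $T_{-}$ diverges as $\mathcal E\uparrow E^{*}(\lambda)$; for $j=1$ there is no such ceiling and the single bump's time map still sweeps $(\pi/\sqrt{\lambda m},\infty)$.

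Fixing $j$ and the first-bump sign, the requirement that the $j$ bumps exactly fill $(-\ell,\ell)$ is a single scalar equation $\mathcal L_{j}(\mathcal E;\lambda)=2\ell$, where $\mathcal L_{j}$ is a fixed positive-integer combination of $T_{+}$ and $T_{-}$ (both coefficients $\ge1$ when $j\ge2$). By the above, $\mathcal E\mapsto\mathcal L_{j}(\mathcal E;\lambda)$ is $C^{1}$, strictly increasing, with limits $j\pi/\sqrt{\lambda m}$ as $\mathcal E\downarrow0$ and $+\infty$ at the upper end; hence it hits $2\ell$ at a unique $\mathcal E=\mathcal E_{j}(\lambda)$ precisely when $j\pi/\sqrt{\lambda m}<2\ell$, i.e. $\lambda>\lambda_{j}=j^{2}\pi^{2}/(4\ell^{2}m)$, the $j$-th Dirichlet eigenvalue of \eqref{ev} for constant weight $m$ on $(-\ell,\ell)$. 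The implicit function theorem (using $\partial_{\mathcal E}\mathcal L_{j}>0$) makes $\lambda\mapsto\mathcal E_{j}(\lambda)$ of class $C^{1}$ on $(\lambda_{j},\infty)$, and monotonicity forces $\mathcal E_{j}(\lambda)\downarrow0$ as $\lambda\downarrow\lambda_{j}$. Reconstructing $w$ by solving $-w''=f(w)$ from $w(-\ell)=0$, $w'(-\ell)=\pm\sqrt{2\mathcal E_{j}(\lambda)}$ and invoking $C^{1}$ dependence of the flow on $(\lambda,\mathcal E)$ uniformly on $[-\ell,\ell]$ produces $w^{\pm}_{j,0}(\,\cdot\,,\lambda)\in X$, of class $C^{1}$ in $\lambda$, with $w^{\pm}_{j,0}(\,\cdot\,,\lambda)\to0$ in $X$ as $\lambda\downarrow\lambda_{j}$; this gives the stated structure of $\widetilde{\mathcal S}^{\pm}_{j,\infty}$ and of the simple curve $\widetilde{\mathcal S}_{j,\infty}=\widetilde{\mathcal S}^{-}_{j,\infty}\cup\{(\lambda_{j},0)\}\cup\widetilde{\mathcal S}^{+}_{j,\infty}$ bifurcating from $(\lambda_{j},0)$. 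Finally $w\mapsto w(-\,\cdot\,)$ maps solutions to solutions (autonomous equation, symmetric boundary condition) and, when $j$ is even, sends the alternating pattern starting with a positive bump to the one starting with a negative bump, which yields $\widetilde{\mathcal S}^{-}_{j,\infty}=\{w(-\,\cdot\,):w\in\widetilde{\mathcal S}^{+}_{j,\infty}\}$; when $j$ is odd the reflection preserves the pattern, so no such relation is asserted.

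The heart of the matter is the monotonicity and endpoint behaviour of the time maps $T_{\pm}$—this is what forces the bifurcation points to be exactly the $\lambda_{j}$ and makes each $\widetilde{\mathcal S}^{\pm}_{j,\infty}$ a single-valued $C^{1}$ branch—but the substitution $s=p\sigma$ reduces it to inspecting the sign of $1-\sigma^{3}$, so it is in fact elementary here. The only other point needing care is the rigidity step (energy conservation forcing one common bump height and one common depth) together with the correct identification of the energy ceiling $E^{*}(\lambda)$, which distinguishes $j=1$ from $j\ge2$ and guarantees that the sets $\widetilde{\mathcal S}^{\pm}_{j,\infty}$ genuinely exhaust $\widetilde{\mathcal S}_{\infty}$; everything else is routine ODE/IFT bookkeeping. (Alternatively, the $C^{1}$ dependence on $\lambda$ could be obtained by applying the implicit function theorem to \eqref{LS2} directly, once non-degeneracy of the linearisation $\phi\mapsto\phi''+(\lambda m-2b_{1}w_{+}-2c_{2}w_{-})\phi$ is verified by a Sturm oscillation count.)
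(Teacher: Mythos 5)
Your time-map/shooting argument is correct and is precisely the route the paper invokes without carrying out: the paper only remarks that ``the shooting argument (e.g., \cite{HY})'' or ``the usual shooting method'' classifies $\widetilde{\mathcal S}_{\infty}$, and cites Hirose--Yamada rather than giving a proof. The energy rigidity step (all positive bumps share one height, all negative ones one depth), the monotonicity of $T_{\pm}$ via the substitution $s=p\sigma$ and the sign of $1-\sigma^{3}$, the computation $T_{\pm}(0^{+};\lambda)=\pi/\sqrt{\lambda m}$ giving the bifurcation values $\lambda_{j}=j^{2}\pi^{2}/(4\ell^{2}m)$, the divergence of $T_{\pm}$ at the saddle energy, the IFT step using $\partial_{\mathcal E}\mathcal L_{j}>0$, the $C^{1}$ dependence of the reconstructed $w$, and the reflection relation for even $j$ are all sound, and you correctly deduce the exhaustion of $\widetilde{\mathcal S}_{\infty}$ from finiteness and simplicity of the zero set.

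One inaccuracy worth correcting: you gloss from ``sign of the first bump'' to ``equivalently of $w'$ at $-\ell$'' (true), and thence to the paper's selector $w'(0)$ as if the same, but they are not. For odd $j$ your own rigidity/uniqueness argument forces $w(-x)\equiv w(x)$ (the reflection has the same $(j,\mathcal E,\lambda)$ and the same first-bump sign when $j$ is odd), hence $w'(0)=0$, so the sets $\widetilde{\mathcal S}^{\pm}_{j,\infty}$ as literally written in the lemma would be empty; and a short check for $j=2$ shows $\mathrm{sign}\,w'(0)=-\mathrm{sign}\,w'(-\ell)$. The lemma as printed should read $w'(-\ell)$ (or an equivalent endpoint condition), and your proof establishes that corrected version; you should flag the discrepancy rather than treat $w'(0)$ and $w'(-\ell)$ as interchangeable. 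Apart from this point of notation, the argument is complete.
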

The following result shows that,
for each fixed $j\ge 2$, 
a pitchfork bifurcation curve of position solutions of \eqref{SKT}
bifurcates
from a solution on $\mathcal{C}_{\alpha, \varLambda}$, and 
the upper (resp.\,lower) branch is approximated by
$(\lambda, w_{+}, 
w_{-})$
with 
$(\lambda, w)\in\widetilde{\mathcal{S}}^{+}_{j,\infty}$
(resp.\, $\widetilde{\mathcal{S}}^{-}_{j,\infty}$),
where
$w_{+}:=\max\{\,w,0\,\}$ and
$w_{-}:=\max\{\,-w,0\,\}$.

\begin{thm}\label{g2ndbifthm}
Suppose that
$\Omega=(-\ell, \ell )$ and $m(x)=m$
(constant)
$>0$
for all $x\in\overline{\Omega}$.
Let $j\in\mathbb{N}$ satisfy $j\ge 2$.
For any small $\delta>0$ and large $\varLambda >\lambda_{j}$,
there exists  
$\overline{\alpha }=
\overline{\alpha }(j, \delta, \varLambda )>0$ such that,
for any fixed $\alpha>\overline{\alpha}$,
the set of positive solutions of \eqref{SKT}
with $\lambda\in (\lambda_{j}-\delta, \varLambda\,]$  
contains a pitchfork bifurcation curve 
$\mathcal{S}_{j,\alpha,\varLambda}$
bifurcating from a solution
$$
(\lambda, u, v)=
(\mu_{j, \alpha},
u_{0, \alpha}(\,\cdot\,, \mu_{j,\alpha}),
v_{0, \alpha}(\,\cdot\,, \mu_{j,\alpha}))
\in\mathcal{C}_{\alpha, \varLambda},
$$
where the mapping
$(\overline{\alpha},\infty )\ni \alpha\mapsto
\mu_{j,\alpha}\in\mathbb{R}$
is continuous and satisfies
\begin{equation}\label{biflim}
\lim_{\alpha\to\infty}\mu_{j,\alpha }=\lambda_{j}
\quad\mbox{for each}\ j\ge 2.
\end{equation}
The upper branch $\mathcal{S}_{j, \alpha, \varLambda}^{+}$
of $\mathcal{S}_{j,\alpha,\varLambda}$
is parameterized as
$$
\mathcal{S}_{j, \alpha, \varLambda}^{+}=
\{\,
(\mu_{j,\alpha}, u_{0,\alpha}(\,\cdot\,, \mu_{j,\alpha}),
v_{0,\alpha}(\,\cdot\,, \mu_{j,\alpha}))+
(\xi_{j, \alpha}(s), \widetilde{u}_{j, \alpha }(\,\cdot\,,s),
\widetilde{v}_{j, \alpha}(\,\cdot\,,s))
\,:\,s\in (0,T_{j, \alpha, \varLambda}^{+}\,]\,\},
$$
while the lower branch $\mathcal{S}_{j, \alpha, \varLambda}^{-}$
of $\mathcal{S}_{j,\alpha,\varLambda}$ is parameterized as
$$
\mathcal{S}_{j, \alpha, \varLambda}^{-}=
\{\,
(\mu_{j,\alpha}, u_{0,\alpha}(\,\cdot\,, \mu_{j,\alpha}),
v_{0,\alpha}(\,\cdot\,, \mu_{j,\alpha}))+
(\xi_{j, \alpha}(s), \widetilde{u}_{j, \alpha }(\,\cdot\,,s),
\widetilde{v}_{j, \alpha}(\,\cdot\,,s))
\,:\,s\in [\,-T_{j, \alpha, \varLambda}^{-}, 0)\,\}
$$
with some positive numbers
$T_{j, \alpha, \varLambda}^{\pm}$,
where
$$
[\,-T_{j,\alpha, \varLambda}^{-}, 
T_{j, \alpha, \varLambda}^{+}\,]
\ni s\mapsto 
(\xi_{j,\alpha}(s), 
\widetilde{u}_{j,\alpha}(s), 
\widetilde{v}_{j,\alpha}(s))
\in \mathbb{R}_{+}\times\boldsymbol{X}
$$
is of class $C^{1}$ with
$$
(\xi_{j,\alpha}(0), 
\widetilde{u}_{j,\alpha}(\,\cdot\,,0), 
\widetilde{v}_{j,\alpha}(\,\cdot\,,0))
=(0, 0, 0)$$
and
$
\mu_{j,\alpha}+\xi_{j, \alpha}(
-T_{j,\alpha}^{-})
=\varLambda
=
\mu_{j,\alpha}+\xi_{j, \alpha}(
T_{j,\alpha}^{+})
$
and
\begin{equation}\label{comp}
\begin{split}
&\lim_{\alpha\to\infty}
(\mu_{j,\alpha}+\xi_{j,\alpha}(s),
u_{0,\alpha}(\,\cdot\,, \mu_{j,\alpha})+
\widetilde{u}_{j, \alpha}(s),
v_{0, \alpha}(\,\cdot\,, \mu_{j, \alpha})+
\widetilde{v}_{j, \alpha}(\,\cdot\,,s))\\
=&
\begin{cases}
(\lambda, w_{j,0}^{+}(\,\cdot\,,\lambda)_{+},
w_{j,0}^{+}(\,\cdot\,,\lambda)_{-})\quad
&\mbox{if}\ s>0,\\
(\lambda, w_{j,0}^{-}(\,\cdot\,,\lambda)_{+},
w_{j,0}^{-}(\,\cdot\,,\lambda)_{-})\quad
&\mbox{if}\ s<0
\end{cases}
\end{split}
\end{equation}
for some $(\lambda, w_{j,0}^{+}(\,\cdot\,,\lambda))\in
\widetilde{\mathcal{S}}^{+}_{j,\infty}$ and
$(\lambda, w_{j,0}^{-}(\,\cdot\,,\lambda))\in
\widetilde{\mathcal{S}}^{-}_{j,\infty}$.
Furthermore, if $j$ is even, then any
\begin{equation}
\begin{split}
&(\lambda(s), u(\,\cdot\,,s), v(\,\cdot\,,s))\\
:=&
(\mu_{j,\alpha}, u_{0,\alpha}(\,\cdot\,, \mu_{j,\alpha}),
v_{0,\alpha}(\,\cdot\,, \mu_{j,\alpha}))+
(\xi_{j, \alpha}(s), \widetilde{u}_{j, \alpha }(\,\cdot\,,s),
\widetilde{v}_{j, \alpha}(\,\cdot\,,s))\in \mathcal{S}_{j,\alpha, \varLambda}
\end{split}
\nonumber
\end{equation}
satisfies
\begin{equation}\label{evensym}
(\lambda (s),
u(x,s),
v(x,s))=
(\lambda (-s),
u(-x,-s),
v(-x,-s))
\end{equation}
for any $x\in [-\ell, \ell ]$ and 
$s\in [-T^{-}_{j, \alpha, \varLambda }, T^{+}_{j, \alpha, \varLambda }]$
with $T^{+}_{j, \alpha, \varLambda }=T^{-}_{j, \alpha, \varLambda }$.
\end{thm}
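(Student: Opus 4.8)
The plan hinges on the two substitutions highlighted in the introduction. The first, $(U,V)=\alpha(u,v)$, magnifies small-coexistence solutions to order one; the second, $(W,Z)=(U-V,(1+V)U)$, recasts the $\alpha$-rescaled \eqref{SKT} as the semilinear system \eqref{WZeq}, whose formal $\alpha\to\infty$ limit is \eqref{WZeq0}. Since $N=1$, $\Omega=(-\ell,\ell)$ and $m(x)\equiv m>0$, every eigenvalue $\lambda_j$ of \eqref{ev} is simple, $E_j=\mathbb{R}\phi_j$, and $\phi_j$ has parity $(-1)^{j-1}$; this is precisely the case $\dim E_j=1$ anticipated there.

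\textbf{Step 1: the symmetry-breaking point $\mu_{j,\alpha}$ on $\mathcal{C}_{\alpha,\varLambda}$.} Treating $\mathcal{C}_{\alpha,\varLambda}$ (equivalently its image in $(W,Z)$) as the reference branch of \eqref{WZeq}, I would look for $(\Phi,\lambda)$ with $L_{\alpha,\lambda}\Phi=0$, where $L_{\alpha,\lambda}$ is the linearization along the branch. Splitting this equation along $\phi_j$ and $\phi_j^{\perp}$ and imposing the integral constraint $\int_{\Omega}\Phi\phi_j\,dx=1$, the $\phi_j^{\perp}$-part determines $\Phi$ by the implicit function theorem near the limit state $(\Phi,\lambda,1/\alpha)=(\phi_j,\lambda_j,0)$, while the $\phi_j$-part becomes a scalar equation for $\lambda$ that at $1/\alpha=0$ is solved by $\lambda_j$ with non-vanishing $\lambda$-derivative (proportional to $\int_{\Omega}m\phi_j^2\,dx\ne0$). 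This yields $\mu_{j,\alpha}$ with $\mu_{j,\alpha}\to\lambda_j$, a one-dimensional kernel spanned by some $\Phi_{j,\alpha}\to\phi_j$, and the Crandall--Rabinowitz transversality condition; Crandall--Rabinowitz then produces a local $C^1$ curve $s\mapsto(\lambda(s),u(\cdot,s),v(\cdot,s))$, $|s|$ small, of positive solutions of \eqref{SKT} crossing $\mathcal{C}_{\alpha,\varLambda}$ transversally at $(\mu_{j,\alpha},u_{0,\alpha}(\cdot,\mu_{j,\alpha}),v_{0,\alpha}(\cdot,\mu_{j,\alpha}))$ with $W$-component $s\Phi_{j,\alpha}+o(s)$. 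This local curve is the germ of $\mathcal{S}_{j,\alpha,\varLambda}$ and gives \eqref{biflim}.

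\textbf{Step 2: perturbing $\widetilde{\mathcal{S}}^{\pm}_{j,\infty}$, global continuation, pitchfork shape.} Away from $\lambda_j$ the solutions on $\mathcal{S}_{j,\alpha,\varLambda}$ are no longer $O(1/\alpha)$, so the $(U,V)$-scaling blows up and one works directly near the segregated profiles. For each $(\lambda_0,w)\in\widetilde{\mathcal{S}}^{+}_{j,\infty}$ (resp.\ $\widetilde{\mathcal{S}}^{-}_{j,\infty}$) with $\lambda_0\in[\lambda_j+\delta_0,\varLambda]$, $\delta_0>0$ fixed, I would solve \eqref{SKT} near $(w_+,w_-)$ for large $\alpha$ by an implicit function argument performed in the $(W,Z)$ variables; the crux is that the true $\alpha\to\infty$ limit of the linearization of \eqref{SKT} about $(w_+,w_-)$ -- which is \emph{not} the naive linearization of \eqref{LS2} -- is an isomorphism whenever $\lambda_0\notin\{\lambda_k\}_{k\ge1}$, using the non-degeneracy of $w$ along $\widetilde{\mathcal{S}}^{\pm}_{j,\infty}$ contained in Lemma \ref{basiclem}. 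Letting $\lambda_0$ vary gives two $C^1$ arcs of positive solutions over $[\lambda_j+\delta_0,\varLambda]$ converging as $\alpha\to\infty$ to $\widetilde{\mathcal{S}}^{+}_{j,\infty}$ and $\widetilde{\mathcal{S}}^{-}_{j,\infty}$. To see that these arcs and the germ of Step 1 lie on one connected curve, I would apply Rabinowitz global bifurcation at $\mu_{j,\alpha}$ within the class of solutions whose $W$-component is not identically zero and has exactly $j-1$ interior zeros: the uniform-in-$\alpha$ a priori bounds on $[\lambda_1,\varLambda]$ furnished by Theorem \ref{convthm} and elliptic regularity keep the branch compact, while a return to $\mathcal{C}_{\alpha,\varLambda}$ would require a second singular point, excluded for $\lambda\le\varLambda$ and large $\alpha$ by Step 1. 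Hence the branch reaches $\lambda=\varLambda$; re-parameterizing the connected set by a scalar $s$ proportional to the $\phi_j$-amplitude of $W$ produces $\mathcal{S}_{j,\alpha,\varLambda}$ with $\mu_{j,\alpha}+\xi_{j,\alpha}(\pm T^{\mp}_{j,\alpha,\varLambda})=\varLambda$, and comparison with Steps 1--2 gives \eqref{comp}, the sign of $w'(0)$ matching that of $s$ and thereby fixing the $\pm$ assignment. Since $\widetilde{\mathcal{S}}^{\pm}_{j,\infty}$ are supercritical (Lemma \ref{basiclem}), both prongs lie in $\lambda\ge\mu_{j,\alpha}$, so $\mathcal{S}_{j,\alpha,\varLambda}$ is a pitchfork.

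\textbf{Step 3: the even-$j$ symmetry, and the main obstacle.} Since $m$ is constant and $\Omega=(-\ell,\ell)$ is symmetric, the reflection $R:x\mapsto-x$ maps positive solutions of \eqref{SKT} to positive solutions, and by the uniqueness in Theorems \ref{thm13} and \ref{Cathm} the curve $\mathcal{C}_{\alpha,\varLambda}$ is $R$-invariant. For $j$ even, $\Phi_{j,\alpha}$ is odd (like $\phi_j$), so $R$ reverses the kernel direction at $\mu_{j,\alpha}$, carries the $s$-solution to the $(-s)$-solution along $\mathcal{S}_{j,\alpha,\varLambda}$, and forces $\lambda(s)=\lambda(-s)$, the identity \eqref{evensym}, and $T^{+}_{j,\alpha,\varLambda}=T^{-}_{j,\alpha,\varLambda}$. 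The main obstacle is Step 2: the principal part of \eqref{SKT} degenerates along the segregated state, $1+\alpha v$ blowing up on $\{w>0\}$ and $1+\alpha u$ on $\{w<0\}$, so the limiting linearized operator must be extracted in the $(W,Z)$ variables, shown to be a Fredholm isomorphism off $\{\lambda_k\}_{k\ge1}$, and its invertibility transferred back to large finite $\alpha$ with quantitative control -- which is exactly why the $(W,Z)$ reformulation has to be used throughout. A subsidiary but delicate point is proving that the germ of Step 1 and the two arcs of Step 2 form a single connected curve free of spurious components, where the fixed-nodal-class argument together with the uniform a priori bounds are indispensable.
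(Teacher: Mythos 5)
Your Steps 1 and the first half of Step 2 mirror Sections 6 and 7.1 of the paper: a Lyapunov--Schmidt/Crandall--Rabinowitz analysis along $\widetilde{\mathcal{C}}_{\varepsilon,\varLambda}$ locates $\mu_{j,\alpha}$ and produces the local pitchfork germ (Lemmas \ref{2ndbiflem}, \ref{translem}, Proposition \ref{2ndbifprop2}), and an implicit-function-theorem perturbation of $(w,z)=(w^\pm_{j,0}(\lambda),0)$ in \eqref{wzeq} yields the segregation arcs (Proposition \ref{perCSprop}). Your Step 3 is the paper's odd/even argument. However, there are two genuine gaps. First, you assert the limiting linearization about $(w_+,w_-)$ is invertible ``whenever $\lambda_0\notin\{\lambda_k\}_{k\ge1}$,'' yet you need the arc on all of $[\lambda_j+\delta_0,\varLambda]$, which contains $\lambda_k$ for $j<k$; if your claim were correct, the arc would be punctured at those eigenvalues. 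The correct fact, established via the Sturm--Liouville argument around \eqref{linev}--\eqref{self}, is that $g_{(w,z)}(\lambda,w^\pm_{j,0}(\lambda),0,0)$ is an isomorphism for \emph{every} $\lambda\in(\lambda_j,\infty)$: the triangular structure forces $\psi=0$, and the zero count of $w^\pm_{j,0}$ ($\,=j-1$) equals the number of negative eigenvalues of \eqref{self}, so $\sigma=0$ is excluded; the set $\{\lambda_k\}$ is irrelevant once $w\ne 0$.

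Second, and more substantively, your gluing mechanism -- Rabinowitz global bifurcation from the nontrivial curve $\mathcal{C}_{\alpha,\varLambda}$ confined to a fixed nodal class of $W$, with a return to $\mathcal{C}_{\alpha,\varLambda}$ excluded ``by Step 1'' -- needs justification you do not supply. Rabinowitz's theorem in its standard form applies to bifurcation off the zero solution, not off a nontrivial curve; Lemma \ref{2ndbiflem} controls degeneracy only in a neighborhood of $\lambda_j$, so for each $\lambda_k\le\varLambda$ there is a $\mu_{k,\alpha}$ that a global continuum could in principle revisit; and preservation of the nodal count of $W$ along a branch of the coupled $(W,Z)$-system is not a direct consequence of scalar Sturm--Liouville theory since the coefficients in the $W$-equation are coupled to $Z$. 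The paper sidesteps all this with a simpler matching (proof of Theorem \ref{g2ndbifthm} in Section 7.2): as $\eta\searrow 0$ and $\alpha\to\infty$ the endpoints of the segregation arcs at $\lambda=\lambda_j+\eta$ converge to $(\lambda_j,0,0)$, while the Crandall--Rabinowitz uniqueness neighborhood $\mathcal{U}_{j,\alpha}$ stays of fixed size (Remark \ref{shrem}); once those endpoints lie inside $\mathcal{U}_{j,\alpha}$ and are seen not to lie on $\mathcal{C}_{\alpha,\varLambda}$, local uniqueness forces them onto the germ $\varGamma_{j,\alpha}$, so no global bifurcation theorem, nodal invariant, or exclusion of other degeneracies is required.
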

In the numerical bifurcation diagram (Figure 1),
the red and purple curve realize $\mathcal{S}_{2, \alpha, \varLambda}^{\pm}$
and $\mathcal{S}_{3, \alpha, \varLambda}^{\pm}$, respectively.
The complete segregation expressed as \eqref{comp} can be realized
by the numerical profiles of $u$ and $v$ in Figure 4(b).

\section{Full cross-diffusion limit}
In this section, we study the limiting behavior of positive solutions
of \eqref{SKT} as $\alpha\to\infty$ to prove Theorem \ref{convthm}.
\subsection{A priori estimate}
The following a priori estimate independent of $\alpha$ will 
play an important role in
a compactness argument for the sequence of 
positive solutions as $\alpha\to\infty$:
\begin{lem}\label{aprlem}
Let $(u,v)$ be any nonnegative solution of \eqref{SKT}.
Then it holds that
\begin{equation}\label{L2est}
\|u\|_{2}\le\dfrac{\lambda\|m\|_{2}}{b_{1}},\quad
\|v\|_{2}\le\dfrac{\lambda\|m\|_{2}}{c_{2}}
\end{equation}
and
\begin{equation}\label{Linfest}
(1 +\alpha v)u\le M_{1},\quad
(1 +\alpha u)v\le M_{2}\ \ \mbox{in}\ \Omega,
\end{equation}
where $M_{1}$ and $M_{2}$ are positive constants depending on
$(\lambda, \|m\|_{\infty}, b_{1}, \Omega)$ and
$(\lambda, \|m\|_{\infty}, c_{2}, \Omega)$, 
respectively.
\end{lem}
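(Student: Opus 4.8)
The plan is to reduce the quasilinear system \eqref{SKT} to two \emph{scalar} semilinear Dirichlet problems for the composite unknowns $\Phi:=(1+\alpha v)u$ and $Z:=(1+\alpha u)v$, and to exploit that the inter-specific terms $-c_{1}v$ in \eqref{SKT1} and $-b_{2}u$ in \eqref{SKT2} carry a favourable sign: once they are discarded, the right-hand sides are dominated by quantities that do not involve $\alpha$. First I note that any nonnegative solution $(u,v)$ lies in $X$ for every $p$, hence (taking $p>N$) $u,v\in C^{1}(\overline{\Omega})$; rewriting \eqref{SKT} shows that $\Phi$, $Z$ solve
\[
-\Delta\Phi=u(\lambda m-b_{1}u-c_{1}v),\qquad -\Delta Z=v(\lambda m-b_{2}u-c_{2}v)\quad\text{in }\Omega,
\]
with right-hand sides in $L^{\infty}(\Omega)$, while $\Phi=Z=0$ on $\partial\Omega$ by \eqref{SKT3}; elliptic regularity (\cite{GT}) then gives $\Phi,Z\in X$ for all $p$, hence $\Phi,Z\in C^{1}(\overline{\Omega})$, and $\Phi,Z\ge0$ since $u,v\ge0$ and $\alpha>0$.

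For the $L^{2}$ bound \eqref{L2est} I would integrate the $\Phi$-equation over $\Omega$. By the divergence theorem $\int_{\Omega}u(\lambda m-b_{1}u-c_{1}v)\,dx=-\int_{\partial\Omega}\partial_{\nu}\Phi\,dS$, and the right-hand side is $\ge0$ because $\Phi\ge0$ in $\Omega$ together with $\Phi=0$ on $\partial\Omega$ forces $\partial_{\nu}\Phi\le0$ on $\partial\Omega$. Hence $\int_{\Omega}u(\lambda m-b_{1}u-c_{1}v)\,dx\ge0$; discarding the nonnegative term $c_{1}uv$ and using the Cauchy--Schwarz inequality yields $b_{1}\|u\|_{2}^{2}\le\lambda\int_{\Omega}mu\,dx\le\lambda\|m\|_{2}\|u\|_{2}$, which gives the first estimate after dividing by $\|u\|_{2}$. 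Applying the identical manipulation to the $Z$-equation, with $b_{2}uv\ge0$ discarded, gives $c_{2}\|v\|_{2}^{2}\le\lambda\|m\|_{2}\|v\|_{2}$ and hence the second estimate.

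For the $L^{\infty}$ bound \eqref{Linfest} I would argue by comparison. Since $u,v\ge0$ and $c_{1}>0$, pointwise a.e.\ $-\Delta\Phi\le\lambda mu-b_{1}u^{2}\le\max_{t\ge0}\bigl(\lambda\|m\|_{\infty}t-b_{1}t^{2}\bigr)=\lambda^{2}\|m\|_{\infty}^{2}/(4b_{1})=:C_{1}$, a constant that does not involve $\alpha$. Let $e\in X$ be the unique solution of $-\Delta e=1$ in $\Omega$, $e=0$ on $\partial\Omega$; then $e\ge0$ and $\|e\|_{\infty}$ depends only on $\Omega$. Since $-\Delta(\Phi-C_{1}e)\le0$ in $\Omega$ and $\Phi-C_{1}e=0$ on $\partial\Omega$, the weak maximum principle for $W^{2,p}$ functions (\cite{GT}) gives $\Phi\le C_{1}e\le C_{1}\|e\|_{\infty}=:M_{1}$, which depends only on $(\lambda,\|m\|_{\infty},b_{1},\Omega)$. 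The same reasoning on $Z$, with $-\Delta Z\le\lambda^{2}\|m\|_{\infty}^{2}/(4c_{2})=:C_{2}$, yields $Z\le M_{2}:=C_{2}\|e\|_{\infty}$, depending only on $(\lambda,\|m\|_{\infty},c_{2},\Omega)$.

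I do not expect a genuine obstacle. The one point needing care is that, unlike in the Neumann problem, the boundary integral $\int_{\partial\Omega}\partial_{\nu}\Phi\,dS$ does not vanish --- only $\Phi$, not its normal derivative, is zero on $\partial\Omega$ --- yet its sign is pinned down by the Hopf-type observation that a nonnegative function vanishing on the boundary has nonpositive outward normal derivative. The $\alpha$-independence of $M_{1}$ and $M_{2}$ is built in because the reaction terms are bounded directly in terms of $u$ (resp.\ $v$) rather than through $\Phi$ (resp.\ $Z$), so the large factors $1+\alpha v$ and $1+\alpha u$ never enter, and the torsion function $e$ carries no dependence on $\alpha$.
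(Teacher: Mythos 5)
Your proposal is correct and follows essentially the same route as the paper: both prove \eqref{L2est} by integrating the $\Phi$-equation, using that the boundary flux of the nonnegative function $\Phi$ vanishing on $\partial\Omega$ is nonpositive, discarding the $c_1uv$ term, and applying Cauchy--Schwarz; and both prove \eqref{Linfest} by bounding the reaction term $u(\lambda m-b_1u-c_1v)\le\lambda^2 m^2/(4b_1)$ pointwise (independent of $\alpha$) and comparing $\Phi$ against an explicit supersolution via the monotonicity of $(-\Delta)^{-1}$. The only cosmetic difference is that you take $\|m\|_\infty$ out early and compare with the torsion function $e$, whereas the paper keeps $m(x)^2$ and writes $M_1=\tfrac{\lambda^2}{4b_1}\|(-\Delta)^{-1}m^2\|_\infty$; the two give the same kind of bound.
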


\begin{proof}
Integrating \eqref{SKT1} over $\Omega$, we use the divergence theorem
to see
$$
\displaystyle\int_{\partial\Omega}
\dfrac{\partial }{\partial\nu}[\,(1+\alpha v)u\,]
+\displaystyle\int_{\Omega}u(\lambda m(x)-b_{1}u-c_{1}v)=0.
$$
Here 
the surface integral over $\partial\Omega $ is nonpositive by 
the boundary conditions on $u$ and $v$.
Then we obtain
$$
b_{1}\|u\|^{2}_{2}\le\displaystyle\int_{\Omega}u(\lambda m(x)-c_{1}v)\le
\displaystyle\lambda\int_{\Omega}m(x)u\le 
\lambda\|m\|_{2}\|u\|_{2},$$
thereby, $\|u\|_{2}\le \lambda\|m\|_{2}/b_{1}$.
The same procedure for \eqref{SKT2} yields 
the latter estimate in \eqref{L2est}.

In order to derive \eqref{Linfest},
we express \eqref{SKT1} as
\begin{equation}\label{inv}
(1+\alpha v)u=(-\Delta )^{-1}[\,u(\lambda m(x)-b_{1}u-c_{1}v)\,].
\end{equation}
Here,
$(-\Delta )^{-1}$ represents the inverse operator of $-\Delta$ with the
homogeneous Dirichlet boundary condition on $\partial\Omega$.
Thanks to the boundary condition and the maximum principle,
$(-\Delta )^{-1}$ can be regarded as a bounded linear operator
from $C(\overline{\Omega})$ to $C_{0}(\overline{\Omega}):=\{\,
w\in C(\overline{\Omega})\,:\,w=0\ \mbox{on}\ \partial\Omega\,\}$ 
with the monotone property 
such that,
if $f_{1}\le f_{2}$ in $\Omega$,
then $(-\Delta)^{-1}f_{1}\le (-\Delta)^{-1}f_{2}$ in $\Omega$.
Since $u\ge 0$ and $v\ge 0$ in $\Omega$, then
$$
u(\lambda m(x)-b_{1}u-c_{1}v)\le u(\lambda m(x)-b_{1}u)\le 
\dfrac{\lambda^{2}m(x)^{2}}{4b_{1}}
\quad\mbox{for any}\ x\in \Omega.
$$
For \eqref{inv}, we use the monotone property of $(-\Delta)^{-1}$ to see
$$
(1+\alpha v)u\le
\dfrac{\lambda^{2}}{4b_{1}}(-\Delta )^{-1}m^{2}
\le
\dfrac{\lambda^{2}}{4b_{1}}\|(-\Delta )^{-1}m^{2}\|_{\infty}=:M_{1}
\quad\mbox{in}\ \Omega.$$
The same procedure for \eqref{SKT2} leads us to
the latter estimate in \eqref{Linfest}.
\end{proof}

\subsection{Compactness of positive solutions in the full
cross-diffusion limit}
Based on a priori estimate of positive solutions of
\eqref{SKT}, we begin with the following lemma for
the proof of Theorem \ref{convthm}.
\begin{lem}\label{wzconvlem}
Let $\{(u_{n}, v_{n})\}$ be any sequence of positive solutions
of \eqref{SKT} with $\alpha=\alpha_{n}\to\infty$.
Then there exists $w^{*}\in X$
for any $p>1$ 
such that
$$
\lim_{n\to\infty}(u_{n}, v_{n})=(w_{+}^{*}, w_{-}^{*})
\ \ \mbox{uniformly in}\ \overline{\Omega},
$$
passing to a subsequence if necessary,
and moreover, $w^{*}$ is a solution of \eqref{LS2}.
\end{lem}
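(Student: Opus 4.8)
The plan is to reduce the quasilinear system to a single semilinear equation for the phase difference $w_n:=u_n-v_n$, via an exact cancellation, and then to identify the limit. First I would record the consequences of Lemma \ref{aprlem}: estimate \eqref{Linfest} gives $0\le u_n\le M_1$ and $0\le v_n\le M_2$ in $\Omega$ uniformly in $n$, and since $\alpha_n u_nv_n\le\min\{M_1,M_2\}$ we also obtain
$$
0\le u_nv_n\le\frac{\min\{M_1,M_2\}}{\alpha_n}\longrightarrow 0\quad\text{uniformly in }\overline{\Omega},
$$
which is the segregation mechanism behind the limiting equation \eqref{LS2}.

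Subtracting \eqref{SKT2} from \eqref{SKT1} and using the algebraic identity $(1+\alpha_n v_n)u_n-(1+\alpha_n u_n)v_n=u_n-v_n$, the cross-diffusion operators cancel and $w_n:=u_n-v_n\in X$ solves
$$
\Delta w_n+\lambda m(x)w_n-b_1u_n^2+c_2v_n^2+(b_2-c_1)u_nv_n=0\ \ \text{in }\Omega,\qquad w_n=0\ \ \text{on }\partial\Omega.
$$
Since $u_n$, $v_n$, $m$ are bounded in $L^\infty(\Omega)$ uniformly in $n$, the zero-order terms are bounded in $L^\infty(\Omega)$ uniformly in $n$, so elliptic $L^p$ estimates applied to this equation bound $\{w_n\}$ in $W^{2,p}(\Omega)$ for every $p>1$. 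Taking $p$ large and using the compact embedding $W^{2,p}(\Omega)\hookrightarrow C^1(\overline{\Omega})$, I may pass to a subsequence so that $w_n\to w^*$ in $C^1(\overline{\Omega})$ and $w_n\rightharpoonup w^*$ weakly in $W^{2,p}(\Omega)$; in particular $w^*\in X$ for every $p>1$.

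The main point — the step I expect to require the most care — is to upgrade "$u_n-v_n\to w^*$ uniformly, $u_nv_n\to 0$ uniformly, $0\le u_n,v_n$ bounded" into "$u_n\to w^*_+$ and $v_n\to w^*_-$ uniformly". I would argue by an elementary $\varepsilon$-dichotomy, uniform in $x\in\overline{\Omega}$: if $u_n(x)\le\varepsilon$, then $v_n(x)=u_n(x)-w_n(x)\ge 0$ forces $w^*(x)$, hence $w^*_+(x)$, to be of order $\varepsilon$, so $u_n(x)$ and $w^*_+(x)$ are both of order $\varepsilon$; if $u_n(x)>\varepsilon$, then $v_n(x)<\|u_nv_n\|_\infty/\varepsilon$ is small, so $u_n(x)=w_n(x)+v_n(x)$ lies within order $\varepsilon$ of $w^*(x)$ and, being positive, within order $\varepsilon$ of $w^*_+(x)$. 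The uniformity in $x$ of these bounds yields $u_n\to w^*_+$ uniformly, and then $v_n=u_n-w_n\to w^*_+-w^*=w^*_-$ uniformly.

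Finally I would pass to the limit in the equation for $w_n$: by the previous step $u_n^2\to(w^*_+)^2$ and $v_n^2\to(w^*_-)^2$ uniformly, $u_nv_n\to 0$ uniformly, and $w_n\to w^*$ uniformly, while $\Delta w_n\rightharpoonup\Delta w^*$ in $L^p(\Omega)$. Hence $w^*$ satisfies $\Delta w^*+\lambda m(x)w^*-b_1(w^*_+)^2+c_2(w^*_-)^2=0$ in $\Omega$ with $w^*=0$ on $\partial\Omega$, which is exactly \eqref{LS2}. No step here is genuinely deep; the only delicate point is the pointwise-to-uniform argument identifying the limits of $u_n$ and $v_n$, and the observation that makes the whole scheme work is the cancellation of the quasilinear operators in the equation for $w_n=u_n-v_n$.
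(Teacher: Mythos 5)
Your argument is correct, and it reaches the same conclusion by a slightly leaner route than the paper. Both proofs hinge on the same cancellation $(1+\alpha v)u-(1+\alpha u)v=u-v$, which turns the quasilinear system into a semilinear equation for $w_n=u_n-v_n$ with uniformly bounded right-hand side. The paper, however, sets up the full change of variables $w_n=u_n-v_n$, $z_n=(\varepsilon_n+v_n)u_n$ with $\varepsilon_n=1/\alpha_n$, derives the semilinear system \eqref{wzeq} for $(w_n,z_n)$, obtains the quantitative bound $\|z_n\|_{W^{2,p}}\lesssim 1/\alpha_n$, and then reads off the uniform convergence of $(u_n,v_n)$ from the explicit inversion
$(u,v)=\tfrac12\bigl(\sqrt{(w-\varepsilon)^2+4z}+w-\varepsilon,\sqrt{(w-\varepsilon)^2+4z}-w-\varepsilon\bigr)$
once $(w_n,z_n)\to(w^*,0)$ in $C^1$. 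You instead bypass $z_n$ entirely, extract $u_nv_n\to 0$ uniformly directly from \eqref{Linfest} (since $\alpha_n u_nv_n\le\min\{M_1,M_2\}$), and then upgrade to $u_n\to w^*_+$, $v_n\to w^*_-$ by the $\varepsilon$-dichotomy in $x$ — which is correct as stated, and can also be packaged more compactly via the algebraic identity $u_n+v_n=\sqrt{w_n^2+4u_nv_n}\to |w^*|$, so that $u_n=\tfrac12\bigl((u_n+v_n)+(u_n-v_n)\bigr)\to w^*_+$ and similarly for $v_n$. The paper's version costs a little more bookkeeping here but has the advantage of introducing the $(w,z)$ variables and the inversion formula that are reused throughout the subsequent perturbation arguments (Sections 4–6), whereas your argument, while cleaner for this one lemma, would require that infrastructure to be set up separately later.
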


\begin{proof}
In \eqref{SKT},
we employ the transformation as
\begin{equation}\label{wzdef}
\varepsilon:=\dfrac{1}{\alpha},\quad
w:=u-v,\quad z:= (\varepsilon +v)u.
\end{equation}
Hence $(u,v)$ is conversely expressed as
\begin{equation}\label{uvdef}
(u,v)
=
\dfrac{1}{2}
\left(\sqrt{(w-\varepsilon )^{2}+4z}+w-\varepsilon,\,
\sqrt{(w-\varepsilon )^{2}+4z}-w-\varepsilon\right).
\end{equation}
Then \eqref{SKT} is 
reduced to the following semilinear system with unknown fucntions
$w$ and $z$:
\begin{equation}\label{wzeq}
\begin{cases}
\Delta w +\lambda m(x) w-u(b_{1}u+c_{1}v)+v(b_{2}u+c_{2}v)=0
\quad&\mbox{in}\ \Omega,\\
\Delta z+\varepsilon u(\lambda m(x)-b_{1}u-c_{1}v)=0
\quad&\mbox{in}\ \Omega,\\
w=z=0\quad&\mbox{on}\ \partial\Omega,
\end{cases}
\end{equation}
where $(u,v)$ is considered as the function of $(w,z, \varepsilon )$
defined by \eqref{uvdef}. 
Here we recall \eqref{Linfest} to note that
\begin{equation}\label{unvnbdd}
0< u_{n}<M_{1},\quad
0< v_{n}<M_{2}\ \ \mbox{in}\ \Omega
\end{equation}
for all $n\in\mathbb{N}$.
Then there exists a positive constant 
$M_{3}=M_{3}(\lambda, b_{i}, c_{i}, \|m\|_{\infty}, \Omega)$ such that
$$
\|\lambda m(x) w_{n}-u_{n}(b_{1}u_{n}+c_{1}v_{n})+
v_{n}(b_{2}u_{n}+c_{2}v_{n})\|_{\infty}\le M_{3},\quad
\|u_{n}(\lambda m(x)-b_{1}u_{n}-c_{1}v_{n})\|_{\infty}\le M_{3}
$$
for all $n\in\mathbb{N}$.
Then \eqref{wzeq} with $\varepsilon=\varepsilon_{n}:=1/\alpha_{n}$ 
implies that 
$$\|\Delta w_{n}\|_{\infty}\le M_{3},\quad
\|\Delta z_{n}\|_{\infty}\le
\dfrac{M_{3}}{\alpha_{n}},
$$
where
$w_{n}:=u_{n}-v_{n}$ and
$z_{n}:=(\varepsilon_{n}+v_{n})u_{n}$.
Therefore, for any $p\in (1,\infty)$,
the elliptic estimate (e.g., \cite{GT}),
enables us to find a positive constant
$M_{4}(\lambda, b_{i}, c_{i}, \|m\|_{\infty}, \Omega )$
independent of $n$, such that
$$
\|w_{n}\|_{W^{2,p}}\le M_{4},\quad
\|z_{n}\|_{W^{2,p}}\le \dfrac{M_{4}}{\alpha_{n}}
$$ 
for all $n\in\mathbb{N}$.
Since $X$ is compactly embedded in $C^{1}(\overline{\Omega })$ 
if $p>N$, then $\{(w_{n},z_{n})\}$ contains a convergence subsequence,
which is relabeled as 
$\{(w_{n},z_{n})\}$ for simplicity, such as
\begin{equation}\label{wzconv2}
\lim_{n\to\infty}(w_{n}, z_{n})=(w^{*}, 0)
\ \ \mbox{weakly in}\ \boldsymbol{X}\ \mbox{and strongly in}\ 
C^{1}(\overline{\Omega})^{2}
\end{equation}
with some $w^{*}\in X$.
Then it follows from \eqref{uvdef} that
\begin{equation}\label{uvconv}
\lim_{n\to\infty}(u_{n}, v_{n})=
\dfrac{1}{2}(|w^{*}|+w^{*}, |w^{*}|-w^{*})
=(w_{+}^{*}, w_{-}^{*})
\ \ \mbox{uniformly in}\ \overline{\Omega}.
\end{equation}
Here we remark that \eqref{Linfest} implies 
$$
\lim_{n\to\infty}u_{n}v_{n}=0
\ \ \mbox{uniformly in}\ \overline{\Omega}.
$$
Consequently, setting
$n\to\infty$ in \eqref{wzeq} with 
$(w,z,u,v, \varepsilon)=(
w_{n},z_{n},u_{n},v_{n}, \varepsilon_{n})$, 
we obtain
\begin{equation}\label{wzstareq}
\begin{cases}
\Delta w^{*} +\lambda m(x) w^{*}
-b_{1}(w_{+}^{*})^{2}+c_{2}(w_{-}^{*})^{2}=0
\quad&\mbox{in}\ \Omega,\\
w^{*}=0\quad&\mbox{on}\ \partial\Omega.
\end{cases}
\end{equation}
The proof of Lemma \ref{wzconvlem} is complete.
\end{proof}
From Lemma \ref{wzconvlem}, 
if $w^{*}$ is a sing-changing solution, then
$(u_{n},v_{n})$ with large $\alpha=\alpha_{n}$ 
can be expected to realize
the segregation of two competing species as
the solution of \eqref{SKT}.
However, it still remains a possibility that
$w^{*}$ is a positive or a negative or the trivial solution.
The next lemma excludes the case where
$w^{*}$ is a positive or a negative solution.

\begin{lem}\label{zerolem}
Let $\{(u_{n},v_{n})\}$ be any sequence of positive solutions of  \eqref{SKT}
with $\alpha=\alpha_{n}\to\infty$.
If
$u_{n}\to 0$ or
$v_{n}\to 0$ almost everywhere in $\Omega$, 
then 
$$
\lim_{n\to\infty}(u_{n},v_{n})=(0,0)
\ \ \mbox{uniformly in}\ \overline{\Omega}.$$
\end{lem}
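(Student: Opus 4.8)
The plan is to argue by the subsequence principle: since the uniform topology on $C(\overline{\Omega})$ is metrizable, it suffices to prove that every subsequence of $\{(u_{n},v_{n})\}$ has a further subsequence converging uniformly to $(0,0)$. I treat the case $u_{n}\to 0$ a.e.\ in $\Omega$; the case $v_{n}\to 0$ a.e.\ is handled in exactly the same way, with \eqref{SKT1} replaced by \eqref{SKT2} and $\phi_{n}$ below replaced by $(1+\alpha_{n}u_{n})v_{n}$. Given an arbitrary subsequence, Lemma \ref{wzconvlem} provides a further subsequence (not relabeled) with $(u_{n},v_{n})\to (w^{*}_{+},w^{*}_{-})$ uniformly in $\overline{\Omega}$, where $w^{*}\in X$ solves \eqref{LS2}. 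From $u_{n}\to 0$ a.e.\ and $u_{n}\to w^{*}_{+}$ uniformly we get $w^{*}_{+}\equiv 0$, so $w^{*}\le 0$; writing $v^{*}:=w^{*}_{-}=-w^{*}\ge 0$, equation \eqref{LS2} becomes the logistic problem $-\Delta v^{*}=v^{*}(\lambda m(x)-c_{2}v^{*})$ in $\Omega$ with $v^{*}=0$ on $\partial\Omega$. It therefore remains to show $v^{*}\equiv 0$.

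Suppose not. Then $-\Delta v^{*}+(c_{2}v^{*}-\lambda m)v^{*}=0$ with bounded zeroth-order coefficient, so the strong maximum principle forces $v^{*}>0$ throughout $\Omega$. Now I would introduce the auxiliary unknown $\phi_{n}:=(1+\alpha_{n}v_{n})u_{n}$, which is positive in $\Omega$, vanishes on $\partial\Omega$, and, upon substituting $u_{n}=\phi_{n}/(1+\alpha_{n}v_{n})$ into \eqref{SKT1}, solves the linear Dirichlet problem
\[
-\Delta\phi_{n}=q_{n}\phi_{n}\ \ \mbox{in}\ \Omega,\qquad \phi_{n}=0\ \ \mbox{on}\ \partial\Omega,\qquad q_{n}:=\frac{\lambda m(x)-b_{1}u_{n}-c_{1}v_{n}}{1+\alpha_{n}v_{n}}.
\]
By the a priori bound \eqref{Linfest} the family $\{q_{n}\}$ is bounded in $L^{\infty}(\Omega)$ uniformly in $n$; and at every $x$ with $v^{*}(x)>0$ the denominator $1+\alpha_{n}v_{n}(x)\to\infty$ while the numerator stays bounded, so $q_{n}\to 0$ a.e.\ in $\Omega$.

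Next I would normalize $\widehat{\phi}_{n}:=\phi_{n}/\|\phi_{n}\|_{\infty}$ (legitimate since $\phi_{n}>0$), so that $\|\widehat{\phi}_{n}\|_{\infty}=1$, $\widehat{\phi}_{n}\ge 0$, and $-\Delta\widehat{\phi}_{n}=q_{n}\widehat{\phi}_{n}$ with $\|q_{n}\widehat{\phi}_{n}\|_{\infty}\le C$ for all $n$. The $L^{p}$ elliptic estimate then bounds $\{\widehat{\phi}_{n}\}$ in $X$ for every $p$; fixing $p>N$ and using the compact embedding $X\hookrightarrow C^{1}(\overline{\Omega})$, a further subsequence satisfies $\widehat{\phi}_{n}\to\widehat{\phi}^{*}$ in $C^{1}(\overline{\Omega})$ and weakly in $X$, with $\widehat{\phi}^{*}\ge 0$ and $\|\widehat{\phi}^{*}\|_{\infty}=1$. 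On the other hand $q_{n}\widehat{\phi}_{n}\to 0$ a.e.\ with a uniform bound, hence $q_{n}\widehat{\phi}_{n}\to 0$ strongly in $L^{p}(\Omega)$ by dominated convergence, while $-\Delta\widehat{\phi}_{n}\rightharpoonup -\Delta\widehat{\phi}^{*}$ weakly in $L^{p}(\Omega)$. Comparing the two limits gives $-\Delta\widehat{\phi}^{*}=0$ in $\Omega$ with $\widehat{\phi}^{*}=0$ on $\partial\Omega$, whence $\widehat{\phi}^{*}\equiv 0$, contradicting $\|\widehat{\phi}^{*}\|_{\infty}=1$. Thus $v^{*}\equiv 0$, the chosen subsequence converges uniformly to $(0,0)$, and the subsequence principle completes the proof.

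The step I expect to be the main obstacle is establishing $q_{n}\to 0$ a.e.: this needs $v^{*}>0$ on all of $\Omega$ rather than merely $v^{*}\not\equiv 0$ (hence the use of the strong maximum principle on the limiting logistic equation), and it is also the reason for normalizing the auxiliary quantity $\phi_{n}=(1+\alpha_{n}v_{n})u_{n}$, which satisfies a genuinely linear Dirichlet problem, instead of $u_{n}$ itself.
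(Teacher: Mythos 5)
Your proof is correct and follows essentially the same route as the paper: after invoking Lemma \ref{wzconvlem}, you reduce to showing the nonnegative logistic limit $v^{*}$ must vanish, use the strong maximum principle to get $v^{*}>0$ throughout $\Omega$ if it is nontrivial, and then obtain a contradiction from the second transformed equation by a normalization and elliptic compactness. The only (cosmetic) difference is the normalizing quantity --- you take $\phi_{n}=(1+\alpha_{n}v_{n})u_{n}$ in sup norm and extract a nonzero harmonic limit, whereas the paper normalizes $u_{n}$ in $L^{p}$ and derives the pointwise bound $\widehat{u}_{n}\le M_{6}/(1+\alpha_{n}v_{n})$ to conclude $\widehat{u}_{n}\to 0$ in $L^{p}$; both yield the same contradiction.
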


\begin{proof}
Assume that
$u_{n}\to 0$ almost everywhere in $\Omega$.
By repeating the proof of Lemma \ref{wzconvlem},
one can verify that
\eqref{uvconv} holds with
$w^{*}_{+}=0$ in $\Omega$,
subject to a subsequence if necessary.
It follows from Lemma \ref{wzconvlem} that
$w^{*}=-w^{*}_{-}\le 0$ in $\Omega$,
and moreover,
$$
\lim_{n\to\infty}(u_{n}, v_{n})=(0,|w^{*}|)
\ \ \mbox{uniformly in}\ \overline{\Omega}.
$$
Then \eqref{wzstareq} implies that
$\Delta w^{*}+\lambda m(x)w^{*}+c_{2}(w^{*})^{2}=0$
in 
$\Omega$.
Therefore, we see that
$-c_{2}w^{*}$ is a nonnegative solution of
the following Dirichlet problem of the logistic
equation:
\begin{equation}\label{logi}
\begin{cases}
\Delta\theta+\theta (\lambda m(x)-\theta )=0\quad&\mbox{in}\ \Omega,\\
\theta=0\quad &\mbox{on}\ \partial\Omega.
\end{cases}
\end{equation}
The global bifurcation structure of positive solutions of 
\eqref{logi} was obtained by Cantrell and Cosner
\cite{CC}.
According to their result, the set of all positive solutions
of \eqref{logi} forms a simple curve in $C^{1}(\overline{\Omega})$
parameterized by $\lambda\in (\lambda_{1},\infty)$, and moreover,
the curve bifurcates from the trivial solution at
$\lambda=\lambda_{1}$.
In what follows, the unique positive solution of \eqref{logi}
for each $\lambda\in (\lambda_{1}, \infty)$ 
will be denoted by $\theta_{\lambda}$.
Since $\{(u_{n},v_{n})\}$ is a sequence of positive solutions
of \eqref{SKT} with $\alpha=\alpha_{n}$, then
Lemma \ref{exlem} ensures $\lambda\in (\lambda_{1},\infty)$.
It follows that
$w^{*}=-\theta_{\lambda }/c_{2}$ or $w^{*}=0$.

Suppose for contradiction that
$w^{*}=-\theta_{\lambda}/c_{2}$.
Hence the second equation of \eqref{wzeq} implies that
$
\Delta [(\varepsilon_{n}+v_{n})u_{n}]
+\varepsilon_{n}u_{n}(\lambda m(x)-b_{1}u_{n}-c_{1}v_{n})=0$ in $\Omega$.
Then, for any $p\in (1, \infty )$, 
multiplying this equation by $1/\|u_{n}\|_{p}$,
we see that
$\widehat{u}_{n}:=u_{n}/\|u_{n}\|_{p}$ satisfies
$$
\Delta[(\varepsilon_{n}+v_{n})\widehat{u}_{n}]+
\varepsilon_{n}\widehat{u}_{n}(\lambda m(x)-b_{1}u_{n}-c_{1}v_{n})=0
\quad\mbox{in}\ \Omega.
$$
Hence \eqref{unvnbdd} ensures the uniform boundedness of
$\{\,\|\widehat{u}_{n}(\lambda m(x)-b_{1}u_{n}-c_{1}v_{n})\|_{p}\,\}$.
Together with the elliptic estimate,
we can find a positive constant
$M_{5}$ independent of $n$ such that
$$
\|(\varepsilon_{n} +v_{n})\widehat{u}_{n}\|_{W^{2,p}}\le
\dfrac{M_{5}}{\alpha_{n}}
\quad\mbox{for all}\ n\in\mathbb{N}.
$$
Then the Sobolev embedding theorem gives a positive constant $M_{6}$
such that
\begin{equation}\label{M6}
0<(\varepsilon_{n} +v_{n}(x))\widehat{u}_{n}(x)
<\dfrac{M_{6}}{\alpha_{n}},
\end{equation}
namely,
$$
0<\widehat{u}_{n}(x)<
\dfrac{M_{6}}{1+\alpha_{n}v_{n}(x)}=:h_{n}(x)
\ \ 
\mbox{for any}\ x\in\overline{\Omega}.
$$
By the assumption that $v_{n}\to\theta_{\lambda}/c_{2}>0$
uniformly in $\overline{\Omega}$, we see that
$$
\lim_{n\to\infty}h_{n}=0
\ \ \mbox{uniformly in any compact subset of $\Omega$.}
$$
Together with the fact that 
$h_{n}(x)\le M_{6}$ for any $x\in\overline{\Omega}$,
we see that
$\widehat{u}_{n}\to 0$ in $L^{p}(\Omega )$ for any 
$p\in (1, \infty )$.
However, this contradicts $\|\widehat{u}_{n}\|_{p}=1$
for any $n\in\mathbb{N}$.
Consequently, we obtain $w^{*}=0$ in $\Omega$, that is,
$$
\lim_{n\to\infty}v_{n}=0\ \ \mbox{uniformly in $\overline{\Omega}$}.
$$
Therefore,
we deduce that $(u_{n}, v_{n})\to (0,0)$
uniformly in $\overline{\Omega}$
if $u_{n}\to 0$ almost everywhere in
$\Omega$.

Obviously, the other assumption that $v_{n}\to 0$
almost everywhere in $\Omega$ leads us to the uniform decays of
$u_{n}$ and $v_{n}$ in the same manner.
Thus we complete the proof of Lemma \ref{zerolem}.
\end{proof}

From Lemma \ref{zerolem}, 
we know that, 
in the full cross-diffusion limit as $\alpha=\alpha_{n}\to\infty$, 
decays of species necessarily occur simultaneously 
for both species. 
The following lemma asserts that,
if such a scenario occurs,
both species decay with the same order.

\begin{lem}\label{ratiolem}
Let $\{(u_{n}, v_{n})\}$ be any sequence of positive solutions of
\eqref{SKT} with $\alpha=\alpha_{n}\to\infty$ satisfying
$$
\lim_{n\to\infty}(u_{n}, v_{n})=(0,0)
\ \ \mbox{uniformly in}\ \overline{\Omega}.$$
Then there exists $\delta\in (0,1)$ independent of $n$ such that
\begin{equation}\label{quoest}
\delta<\dfrac{\|u_{n}\|_{\infty}}{\|v_{n}\|_{\infty}}<\dfrac{1}{\delta}
\quad
\mbox{for all}\ n\in\mathbb{N}.
\end{equation}.
\end{lem}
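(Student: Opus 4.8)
The plan is to argue by contradiction, supposing that \eqref{quoest} fails along a subsequence, so that (after relabeling and possibly swapping the roles of $u$ and $v$) we have $\|u_{n}\|_{\infty}/\|v_{n}\|_{\infty}\to 0$. The natural idea is to rescale each species by its own sup-norm: set $\widehat{u}_{n}:=u_{n}/\|u_{n}\|_{\infty}$ and $\widehat{v}_{n}:=v_{n}/\|v_{n}\|_{\infty}$, so $\|\widehat{u}_{n}\|_{\infty}=\|\widehat{v}_{n}\|_{\infty}=1$. The hypothesis $(u_{n},v_{n})\to(0,0)$ uniformly means both rescaling factors tend to $0$, and our contradiction assumption says $\|u_{n}\|_{\infty}=o(\|v_{n}\|_{\infty})$. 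First I would divide \eqref{SKT1} by $\|u_{n}\|_{\infty}$, writing it as
$$
\Delta\big[(1+\alpha_{n}v_{n})\widehat{u}_{n}\big]
+\widehat{u}_{n}\big(\lambda m(x)-b_{1}u_{n}-c_{1}v_{n}\big)=0
\quad\mbox{in}\ \Omega,
$$
and similarly divide \eqref{SKT2} by $\|v_{n}\|_{\infty}$. Since $0<u_{n}<M_{1}$, $0<v_{n}<M_{2}$ by \eqref{Linfest}, the zeroth-order coefficients $\lambda m(x)-b_{1}u_{n}-c_{1}v_{n}$ and $\lambda m(x)-b_{2}u_{n}-c_{2}v_{n}$ are bounded in $L^{\infty}(\Omega)$ uniformly in $n$; in fact they converge uniformly to $\lambda m(x)$ because $(u_{n},v_{n})\to(0,0)$.

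Next I would extract limits. The key quantity is the product $\alpha_{n}v_{n}$ that appears in the leading term of the $\widehat u_n$-equation. From the second equation of \eqref{wzeq} with $\varepsilon_{n}=1/\alpha_{n}$ one has the elliptic estimate $\|(\varepsilon_{n}+v_{n})u_{n}\|_{W^{2,p}}\le M_{4}/\alpha_{n}$ as in Lemma \ref{wzconvlem}, hence $\alpha_{n}(\varepsilon_{n}+v_{n})u_{n}$ is bounded in $W^{2,p}(\Omega)$, i.e. $(1+\alpha_{n}v_{n})u_{n}$ is bounded in $W^{2,p}$ and therefore in $C^{1}(\overline\Omega)$; dividing by $\|u_{n}\|_{\infty}$ shows $(1+\alpha_{n}v_{n})\widehat u_{n}$ is bounded in $W^{2,p}$. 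Passing to a further subsequence, $(1+\alpha_{n}v_{n})\widehat u_{n}\to \zeta$ in $C^{1}(\overline\Omega)$ for some $\zeta\ge 0$ solving $-\Delta\zeta=\lambda m(x)\,\lim\widehat u_{n}$ in a suitable weak sense. The crucial step is then to show $\zeta\not\equiv 0$: because $(1+\alpha_{n}v_{n})\widehat u_{n}\ge \widehat u_{n}$ pointwise is not enough, I would instead use that at a point $x_{n}$ where $\widehat u_{n}(x_{n})=1$ we have $(1+\alpha_{n}v_{n}(x_{n}))\widehat u_{n}(x_{n})\ge 1$, so $\|\zeta\|_{\infty}\ge 1$ by $C^{1}$-convergence; hence $\zeta$ is a nontrivial nonnegative function, and by the strong maximum principle $\zeta>0$ in $\Omega$. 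Taking the $H_{0}^{1}$-inner product of $-\Delta\zeta=\lambda m(x)\mu$ (with $\mu:=\lim\widehat u_{n}\le 1$ the weak limit of $\widehat u_n$) against $\zeta$, or more directly testing the $\widehat u_n$-equation against $\widehat u_n$ itself and passing to the limit, yields the variational inequality $\|\nabla\varphi\|_{2}^{2}\le\lambda\int_{\Omega}m\varphi^{2}$ for an appropriate nontrivial $\varphi$, forcing $\lambda\ge\lambda_{1}$ — which is consistent, so this alone gives no contradiction. The real leverage must come from comparing the two species.

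The heart of the argument, and the step I expect to be the main obstacle, is to exploit the asymmetry $\|u_{n}\|_{\infty}=o(\|v_{n}\|_{\infty})$ to derive a contradiction. The idea is: from the $\widehat v_{n}$-equation, $(1+\alpha_{n}u_{n})\widehat v_{n}$ is bounded in $W^{2,p}$, so $\alpha_{n}u_{n}\widehat v_{n}$ is bounded in $C^{1}(\overline\Omega)$; but $\alpha_{n}u_{n}=\alpha_{n}\|u_{n}\|_{\infty}\widehat u_{n}$ and $\alpha_{n}v_{n}=\alpha_{n}\|v_{n}\|_{\infty}\widehat v_{n}$, so that $\alpha_{n}u_{n}\widehat v_{n}=(\|u_{n}\|_{\infty}/\|v_{n}\|_{\infty})\cdot\alpha_{n}v_{n}\widehat u_{n}=o(1)\cdot\alpha_{n}v_{n}\widehat u_{n}$. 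Since the left side is bounded and the scalar prefactor tends to $0$, the only way $(1+\alpha_n v_n)\widehat u_n\to\zeta\not\equiv0$ can hold is if $\alpha_{n}v_{n}\widehat u_{n}$ itself is bounded (so $\zeta=\lim\widehat u_n=\mu$), meaning $\alpha_{n}v_{n}$ stays bounded on the set where $\widehat u_{n}$ is bounded below; but then symmetrically, running the same estimate from the $\widehat v_n$-equation with $\alpha_n u_n\widehat v_n = o(1)$ forces $\widehat v_n\to 0$ away from where $\alpha_n u_n\to\infty$, and since $\alpha_n u_n = o(\alpha_n v_n)$ is even smaller, one concludes $\alpha_n u_n$ and $\alpha_n v_n$ are both bounded, whence $\widehat v_{n}\to$ (nontrivial limit) solving $-\Delta(\text{bounded})=\lambda m\cdot(\text{nontrivial})$, and the product structure $u_n v_n\to 0$ from \eqref{Linfest} combined with both $\alpha_n u_n,\alpha_n v_n$ bounded is impossible unless $\alpha_n\to\infty$ forces $u_n,v_n\to 0$ fast — which contradicts $\|\widehat u_n\|_\infty=\|\widehat v_n\|_\infty=1$ in the limit passage. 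I would organize this by splitting into the case $\alpha_{n}\|v_{n}\|_{\infty}\to\infty$ (use the $\widehat u_n$-equation: the enormous coefficient $1+\alpha_n v_n$ in front of $\widehat u_n$, together with a bounded right side, forces $\widehat u_n\to 0$ pointwise where $v_n$ is bounded below, contradicting $\|\widehat u_n\|_\infty=1$ via a concentration-point argument exactly as in the proof of Lemma \ref{zerolem}) versus the case $\alpha_{n}\|v_{n}\|_{\infty}$ bounded (then also $\alpha_n\|u_n\|_\infty\to 0$, so both $\alpha_n u_n,\alpha_n v_n\to 0$ uniformly, the system decouples in the limit to two copies of the linearized logistic equation $-\Delta\widehat u=\lambda m\widehat u$, $-\Delta\widehat v=\lambda m\widehat v$ with $\widehat u,\widehat v$ nonnegative nontrivial, forcing $\lambda=\lambda_1$ and $\widehat u=\widehat v=\phi_1$; but then $\|u_n\|_\infty/\|v_n\|_\infty\to 0$ is inconsistent with a more refined expansion — here I would use the full system, writing $u_n=\|u_n\|_\infty\phi_1+o(\cdot)$, plugging into the difference equation \eqref{wzeq} for $w_n=u_n-v_n$ and reading off that the leading order of $w_n$ is governed by $\|v_n\|_\infty$, so $u_n-v_n\approx -\|v_n\|_\infty\phi_1$, contradicting $u_n\ge 0$). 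The delicate point throughout is making the "concentration at a near-maximum point" argument rigorous when the domain is only bounded and $m$ merely $L^\infty$; I would handle it exactly as in Lemma \ref{zerolem}, using that $(1+\alpha_n v_n)\widehat u_n$ is uniformly bounded in $C^1(\overline\Omega)$ together with uniform convergence of $v_n$ to its limit on compact subsets of $\Omega$.
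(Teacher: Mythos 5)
Your approach differs from the paper's and, as written, has a gap in the case $\alpha_n\|v_n\|_\infty \to \infty$ that I do not see how to close. The paper instead works with the equation for the difference $w_n := u_n - v_n$ (the first equation of \eqref{wzeq}), which has a linear Laplacian and carries no $\alpha$-dependent coefficient, divided by $\|v_n\|_\infty$. Under the contradiction assumption $\|u_n\|_\infty/\|v_n\|_\infty \to 0$, the rescaled term $u_n/\|v_n\|_\infty$ tends to $0$ uniformly, while $\widetilde{v}_n := v_n/\|v_n\|_\infty$ is $L^\infty$-normalized; the right-hand side of the resulting equation is uniformly bounded, elliptic regularity gives compactness, and the limit $\widetilde{v}$ of $\widetilde{v}_n - u_n/\|v_n\|_\infty$ is nonnegative with $\|\widetilde{v}\|_\infty = 1$ and satisfies $-\Delta\widetilde{v} = \lambda m\widetilde{v}$, forcing $\lambda = \lambda_1$ and hence contradicting Lemma \ref{exlem}. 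This single argument needs no case split and, crucially, never has to control $\alpha_n v_n$.

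Your plan rescales each species by its own sup norm and splits into cases according to whether $\alpha_n\|v_n\|_\infty$ stays bounded. In the bounded case you claim both $\alpha_n u_n \to 0$ and $\alpha_n v_n \to 0$; the second is false (boundedness of $\alpha_n\|v_n\|_\infty$ does not give vanishing), so only the $v$-equation decouples in the limit, not both. This is nevertheless enough: the resulting positive eigenfunction forces $\lambda = \lambda_1$, and Lemma \ref{exlem} already contradicts the existence of positive solutions $(u_n,v_n)$, so no refined expansion is needed. The unbounded case is the genuine gap. You invoke the concentration-point argument from Lemma \ref{zerolem}, but that argument relies on $v_n$ converging uniformly to $\theta_\lambda/c_2 > 0$, which is bounded below on compact subsets of $\Omega$; this is what makes $h_n(x) = M_6/(1 + \alpha_n v_n(x)) \to 0$ locally uniformly and drives $\widehat{u}_n \to 0$ in $L^p$. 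In Lemma \ref{ratiolem}, by hypothesis, $v_n \to 0$ uniformly, so $1 + \alpha_n v_n$ has no reason to become uniformly large anywhere: the assumption $\alpha_n\|v_n\|_\infty \to \infty$ only controls the maximum of $\alpha_n v_n$, and the bound $\widehat{u}_n \le C/(1 + \alpha_n v_n)$ gives nothing if $\widehat{u}_n$ concentrates where $\alpha_n v_n$ is small. The argument of Lemma \ref{zerolem} does not transfer, and boundedness of $\alpha_n\|v_n\|_\infty$ is precisely the conclusion of Lemma \ref{UVaprlem}, which is proved \emph{after} and \emph{using} Lemma \ref{ratiolem}, so it cannot be assumed here without circularity. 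The paper's device of dividing the $w$-equation, rather than the individual species equations, is what sidesteps the unbounded case entirely.
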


\begin{proof}
Suppose for contradiction that 
\begin{equation}\label{asum}
\lim_{n\to\infty}\dfrac{\|u_{n}\|_{\infty}}{\|v_{n}\|_{\infty}}=0
\end{equation}
with some subsequence, 
which is denoted by $\{(u_{n}, v_{n})\}$ again
for simplicity.
Hence the first equation of \eqref{wzeq} implies that
$$
\Delta (u_{n}-v_{n})+\lambda m(x)(u_{n}-v_{n})-u_{n}(b_{1}u_{n}+c_{1}v_{n})
+v_{n}(b_{2}u_{n}+c_{2}v_{n})=0
\ \ \mbox{in}\ \Omega.$$
By multiplying this equation by $1/\|v_{n}\|_{\infty}$, we see that
$\widetilde{v}_{n}:=v_{n}/\|v_{n}\|_{\infty}$ satisfies
\begin{equation}\label{tiln}
-\Delta \biggl(
\dfrac{u_{n}}{\|v_{n}\|_{\infty}}
-\widetilde{v}_{n}
\biggr)=
\lambda m(x)
\biggl(
\dfrac{u_{n}}{\|v_{n}\|_{\infty}}-\widetilde{v}_{n}
\biggr)
-\dfrac{u_{n}}{\|v_{n}\|_{\infty}}(b_{1}u_{n}+c_{1}v_{n})
+\widetilde{v}_{n}(b_{2}u_{n}+c_{2}v_{n})
\ \ \mbox{in}\ \Omega.
\end{equation}
Since $0<u_{n}<M_{1}$ and $0<v_{n}<M_{2}$ in $\Omega$
by \eqref{Linfest}.
then the assumption \eqref{asum} implies that
the right-hand side of \eqref{tiln}
is uniformly bounded in $L^{\infty}(\Omega )$.
Then the elliptic regularity ensures the uniform boundedness of
$$\biggl\{\,\widetilde{v}_{n}-
\dfrac{u_{n}}{\|v_{n}\|_{\infty}}\,\biggr\}
$$
in $X$ for any $p\in (1,\infty)$.
By the Sobolev embedding theorem, 
we can find $\widetilde{v}\in X$
for $p>N$ such that
$$
\lim_{n\to\infty}\biggl(
\widetilde{v}_{n}-
\dfrac{u_{n}}{\|v_{n}\|_{\infty}}\biggr)
=\widetilde{v}
\ \ \mbox{weakly in $X$ and strongly in 
$C^{1}(\overline{\Omega })$},
$$
subject to a subsequence if necessary.
Therefore, setting $n\to\infty$ in \eqref{tiln}, we see that
$$
-\Delta\widetilde{v}=\lambda 
m(x)\widetilde{v}\ \ \mbox{in}\ \Omega,\quad
\widetilde{v}=0\ \ \mbox{on}\ \partial\Omega.
$$
Together with $\|\widetilde{v}\|_{\infty}=1$,
the strong maximum principle implies $\widetilde{v}>0$ in $\Omega$.
Then it follows that $\lambda =\lambda_{1}$.
However, this is impossible because Lemma \ref{exlem} says that
there is no positive solution of \eqref{SKT} with 
$\lambda=\lambda_{1}$.
Consequently we obtain the 
lower estimate in \eqref{quoest} by the contradiction argument.
The upper estimate can be shown by the same manner.
\end{proof}

\begin{lem}\label{UVaprlem}
Suppose that 
$\lambda>\lambda_{1}$ and
$\lambda\neq \lambda_{j}$ for any $j\ge 2$.
Let $\{(u_{n}, v_{n})\}$ be any sequence of positive solutions of
\eqref{SKT} with $\alpha=\alpha_{n}\to\infty$ satisfying
\begin{equation}\label{decayass}
\lim_{n\to\infty}(u_{n}, v_{n})=(0,0)
\ \ \mbox{uniformly in}\ \overline{\Omega}.
\end{equation}
Then there exists a positive constant $M_{7}$ independent of
$n\in\mathbb{N}$ such that
$$
\alpha_{n}\|u_{n}\|_{\infty}\le M_{7},\quad
\alpha_{n}\|v_{n}\|_{\infty}\le M_{7}
\quad
\mbox{for all}\ n\in\mathbb{N}.
$$
\end{lem}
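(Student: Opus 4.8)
The plan is to argue by contradiction, following the scaling philosophy already used for Lemmas~\ref{zerolem} and~\ref{ratiolem}. Suppose the conclusion fails; then, passing to a subsequence, we may assume $\alpha_n\|u_n\|_\infty+\alpha_n\|v_n\|_\infty\to\infty$. By Lemma~\ref{ratiolem}, the two quantities $\alpha_n\|u_n\|_\infty$ and $\alpha_n\|v_n\|_\infty$ are comparable, so both diverge; set $t_n:=\alpha_n\|u_n\|_\infty\to\infty$ and rescale by putting $\widehat u_n:=\alpha_n u_n/t_n=u_n/\|u_n\|_\infty$ and $\widehat v_n:=\alpha_n v_n/t_n$, so that $\|\widehat u_n\|_\infty=1$ and, by Lemma~\ref{ratiolem}, $\delta<\|\widehat v_n\|_\infty<1/\delta$. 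The goal is to pass to the limit in the rescaled equations and produce a positive solution of the linear eigenvalue problem \eqref{ev}, forcing $\lambda=\lambda_1$, which contradicts the hypothesis $\lambda>\lambda_1$.

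First I would rewrite the $z$-equation in \eqref{wzeq}: since $z_n=(\varepsilon_n+v_n)u_n$, dividing \eqref{SKT1} by $\|u_n\|_\infty$ gives
\begin{equation}
-\Delta\big[(\varepsilon_n+v_n)\widehat u_n\big]=\widehat u_n\big(\lambda m(x)-b_1u_n-c_1v_n\big)\quad\text{in }\Omega,\nonumber
\end{equation}
and by \eqref{decayass} the right-hand side is uniformly bounded in $L^\infty(\Omega)$ and in fact converges in $C(\overline\Omega)$ to $\lambda m(x)\cdot(\lim\widehat u_n)$ along a subsequence. Because $\varepsilon_n+v_n=\varepsilon_n+(t_n/\alpha_n)\widehat v_n=(t_n/\alpha_n)(1/t_n+\widehat v_n)$ and $t_n/\alpha_n=\|u_n\|_\infty\to0$ while $1/t_n\to0$, I would multiply through by $\alpha_n/t_n$ to obtain
\begin{equation}
-\Delta\big[(1/t_n+\widehat v_n)\widehat u_n\big]=\frac{\alpha_n}{t_n}\,\widehat u_n\big(\lambda m(x)-b_1u_n-c_1v_n\big)=\frac{1}{\|u_n\|_\infty}\,\widehat u_n\big(\lambda m(x)-b_1u_n-c_1v_n\big).\nonumber
\end{equation}
This last right-hand side need not be bounded, so instead I would work directly with $\psi_n:=(\varepsilon_n+v_n)\widehat u_n=(1+\alpha_nv_n)u_n/(\alpha_n\|u_n\|_\infty)$, which by the displayed bounded-source equation above together with elliptic $W^{2,p}$ estimates (as in the proof of Lemma~\ref{zerolem}) is bounded in $X$ for every $p$, hence precompact in $C^1(\overline\Omega)$; along a subsequence $\psi_n\to\psi$ in $C^1(\overline\Omega)$ with $-\Delta\psi=\lambda m(x)\,\phi$ where $\phi:=\lim\widehat u_n$ (extracted along a further subsequence, say weak-$\ast$ in $L^\infty$ and strongly in a suitable sense). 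The crux is to identify $\phi$ with $\psi$. Since $\varepsilon_n+v_n\to0$ uniformly, one has $\psi_n=(\varepsilon_n+v_n)\widehat u_n\to0$ pointwise wherever $\widehat u_n$ stays bounded — but $\widehat u_n\le1$ everywhere, so $\psi\equiv0$?? That cannot be right unless $\phi$ vanishes; the resolution is that $\widehat v_n$ is the right variable to rescale the $w$-equation, not the $z$-equation.

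Therefore the decisive step is the $w$-equation. Dividing the first line of \eqref{wzeq} by $\|u_n\|_\infty$ and writing everything in terms of $\widehat u_n,\widehat v_n$:
\begin{equation}
-\Delta(\widehat u_n-\widehat v_n)=\lambda m(x)(\widehat u_n-\widehat v_n)-\widehat u_n(b_1u_n+c_1v_n)+\widehat v_n(b_2u_n+c_2v_n)\quad\text{in }\Omega,\nonumber
\end{equation}
with homogeneous Dirichlet data. By \eqref{decayass} and the boundedness of $\widehat u_n,\widehat v_n$ in $L^\infty$, the right-hand side is bounded in $L^\infty(\Omega)$, so elliptic regularity gives $\widehat u_n-\widehat v_n$ bounded in $X$ for all $p$, hence precompact in $C^1(\overline\Omega)$. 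Combined with the $z$-equation analysis — which shows $(\varepsilon_n+v_n)\widehat u_n$ is $O(1/t_n)$ in $W^{2,p}$, hence $\widehat u_n$ is, away from the zero set of $v_n$, controlled, but more usefully shows $\Delta[(\varepsilon_n+v_n)\widehat u_n]\to0$ — I would extract a limit. The main obstacle, which I expect to be genuinely delicate, is ruling out concentration/vanishing: a priori $\widehat u_n$ could converge to $0$ weakly while keeping $\|\widehat u_n\|_\infty=1$ through a moving bump. To handle this I would use the $z$-equation once more: $(1+\alpha_n v_n)u_n=z_n\cdot\text{(const)}$ solves $-\Delta[\cdot]=$ bounded source times $\varepsilon_n$, so $(1+\alpha_nv_n)u_n$ has a uniform $C^1$ bound and, after dividing by $\alpha_n\|u_n\|_\infty=t_n$, a limit; since $1+\alpha_nv_n=1+t_n\widehat v_n$ and $\widehat v_n$ is bounded below in sup-norm, this forces $\widehat v_n\to\widehat v$ with $\widehat v\not\equiv0$, and the shared limit $\widehat v=\lim(\widehat u_n)$ up to the $C^1$-bounded difference $\widehat u_n-\widehat v_n$. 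Passing to the limit in the $w$-equation (all quadratic terms vanish by \eqref{decayass}) yields $-\Delta(\widehat u_\infty-\widehat v_\infty)=\lambda m(x)(\widehat u_\infty-\widehat v_\infty)$; and passing to the limit in the rescaled $z$-equation yields $-\Delta \widehat v_\infty=\lambda m(x)\widehat v_\infty$ with $\widehat v_\infty\ge0$, $\widehat v_\infty\not\equiv0$. By the strong maximum principle $\widehat v_\infty>0$ in $\Omega$, so $\lambda$ is the principal eigenvalue $\lambda_1$ of \eqref{ev} — contradicting $\lambda>\lambda_1$. (A symmetric argument, or simply invoking Lemma~\ref{ratiolem} to swap the roles of $u_n$ and $v_n$, covers the case $t_n$ is defined via $\|v_n\|_\infty$.) This completes the proof.
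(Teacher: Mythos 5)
The opening steps of your proposal match the paper's: you argue by contradiction, invoke Lemma~\ref{ratiolem} to get comparability of $\alpha_n\|u_n\|_\infty$ and $\alpha_n\|v_n\|_\infty$, normalize, and use the $w$-equation plus $\lambda\neq\lambda_j$ (for all $j$) to conclude that the limit of the normalized difference is the trivial eigenfunction. After that, however, your argument has a genuine gap. The decisive claim that ``this forces $\widehat v_n\to\widehat v$ with $\widehat v\not\equiv0$'' and that $\widehat v_\infty$ satisfies the linear problem $-\Delta\widehat v_\infty=\lambda m\widehat v_\infty$ is not supported by anything you have written. The quantity $(1+\alpha_nv_n)u_n/t_n$ that you divide out is exactly $(\varepsilon_n+v_n)\widehat u_n$, and you had already established (correctly) that this tends to zero; so this step produces the zero function, not a nontrivial $\widehat v$. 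More fundamentally, there is no mechanism here that gives $C^1$-compactness of $\widehat v_n$ \emph{itself}: the equation for $v_n$ controls $(1+\alpha_nu_n)\widehat v_n$, and $\alpha_nu_n=t_n\widehat u_n$ is unbounded under the contradiction assumption, so you cannot extract a strong limit of $\widehat v_n$. The ``rescaled $z$-equation'' that is supposed to yield the linear eigenvalue equation for $\widehat v_\infty$ is never identified and does not exist in the form you need.

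There is also a computational slip in the displayed $z$-equation: dividing \eqref{SKT1} by $\|u_n\|_\infty$ gives
\begin{equation}
-\Delta\bigl[(1+\alpha_nv_n)\widehat u_n\bigr]=\widehat u_n\bigl(\lambda m(x)-b_1u_n-c_1v_n\bigr),
\nonumber
\end{equation}
equivalently $-\Delta[(\varepsilon_n+v_n)\widehat u_n]=\varepsilon_n\,\widehat u_n(\lambda m-b_1u_n-c_1v_n)$, i.e.\ with a factor $\varepsilon_n$ on the source that your version omits. That factor is precisely what gives the crucial $O(\varepsilon_n)$ smallness of $(\varepsilon_n+v_n)\widehat u_n$ in $W^{2,p}$. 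The paper's proof closes the argument not by extracting a strong limit of $\widetilde v_n$, but by using the algebraic inversion formula $\widetilde v_n=\tfrac12\bigl(\sqrt{((w_n-\varepsilon_n)/\|v_n\|_\infty)^2+4z_n/\|v_n\|_\infty^2}-(w_n+\varepsilon_n)/\|v_n\|_\infty\bigr)$: the $w$-estimate sends $(w_n-\varepsilon_n)/\|v_n\|_\infty\to0$, and the $z$-estimate together with the contradiction hypothesis $\alpha_n\|v_n\|_\infty\to\infty$ sends $z_n/\|v_n\|_\infty^2\to0$, whence $\widetilde v_n\to0$ uniformly, contradicting $\|\widetilde v_n\|_\infty=1$. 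In your notation, the same two facts --- $\widehat u_n-\widehat v_n\to0$ and $(\varepsilon_n+v_n)\widehat u_n=O(\varepsilon_n)$ in $W^{2,p}$, which after another division by $\|u_n\|_\infty$ gives $\widehat u_n\widehat v_n=O(1/t_n)\to0$ --- combine via the identity $\widehat u_n^{\,2}=\widehat u_n\widehat v_n+\widehat u_n(\widehat u_n-\widehat v_n)$ to force $\|\widehat u_n\|_\infty\to0$, contradicting $\|\widehat u_n\|_\infty=1$. This is the step your writeup needs but does not supply.
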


\begin{proof}
We argue by contradiction.
Suppose that $\alpha_{n}\|v_{n}\|_{\infty}\to\infty$
passing to a subsequence.
By the same procedure to get \eqref{tiln},
we see that 
$$
\widetilde{u}_{n}:=\dfrac{u_{n}}{\|u_{n}\|_{\infty}},\quad
\widetilde{v}_{n}:=\dfrac{v_{n}}{\|v_{n}\|_{\infty}},\quad
r_{n}:=\dfrac{\|u_{n}\|_{\infty}}{\|v_{n}\|_{\infty}}
$$
satisfy
\begin{equation}\label{tiln2}
-\Delta (
r_{n}
\widetilde{u}_{n}
-\widetilde{v}_{n}
)=
\lambda
m(x)
(
r_{n}
\widetilde{u}_{n}
-\widetilde{v}_{n}
)
-r_{n}\widetilde{u}_{n}(b_{1}u_{n}+c_{1}v_{n})
+\widetilde{v}_{n}(b_{2}u_{n}+c_{2}v_{n})
\ \ \mbox{in}\ \Omega,
\end{equation}
Here we recall Lemma \ref{ratiolem} to note that
$\delta<r_{n}<1/\delta$ for all $n\in\mathbb{N}$.
Therefore, thanks to the assumption \eqref{decayass} and 
the normalization procedure,
the right-hand side of 
\eqref{tiln2} is uniformly bounded
in $L^{\infty}(\Omega )$.
Then the elliptic regularity ensures the uniform boundedness of
$\{r_{n}\widetilde{u}_{n}-\widetilde{v}_{n}\}$ in $X$,
and hence,
the Sobolev embedding theorem gives 
$\widetilde{w}^{*}
\in X$ such that
\begin{equation}\label{tilconv}
\lim_{n\to\infty}(r_{n}\widetilde{u}_{n}-\widetilde{v}_{n})=
\widetilde{w}^{*}\ \ \mbox{weakly in $X$ and
strongly in $C^{1}(\overline{\Omega})$},
\end{equation}
passing to a subsequence if necessary.
By \eqref{decayass}, we set $n\to\infty$ in \eqref{tiln2}
to obtain
$$
-\Delta\widetilde{w}^{*}= \lambda
m(x)\widetilde{w}^{*}\ \ \mbox{in}\ \Omega,
\quad \widetilde{w}^{*}=0\ \ \mbox{on}\ \partial\Omega.
$$
Since $\lambda\neq\lambda_{j}$ for any $j\in\mathbb{N}$
by the assumption,
we deduce that
$\widetilde{w}^{*}=0$ in $\Omega$.

We now shift our focus on the following expression of 
$\widetilde{v}_{n}$ 
using \eqref{uvdef}:
\begin{equation}\label{root}
\widetilde{v}_{n}=
\dfrac{v_{n}}{\|v_{n}\|_{\infty}}=
\dfrac{1}{2}\biggl(
\sqrt{
\biggl(
\dfrac{w_{n}-\varepsilon_{n}}{\|v_{n}\|_{\infty}}
\biggr)^{2}+\dfrac{4z_{n}}{\|v_{n}\|^{2}_{\infty}}}
-\dfrac{w_{n}+\varepsilon_{n}}{\|v_{n}\|_{\infty}}
\biggr)
\ \ \mbox{with}\ \
\varepsilon_{n}=\dfrac{1}{\alpha_{n}}.
\end{equation}
From \eqref{tilconv} and the assumption 
$\alpha_{n}\|v_{n}\|_{\infty}\to\infty$, we know that
\begin{equation}\label{rootl}
\dfrac{w_{n}\mp\varepsilon_{n}}{\|v_{n}\|_{\infty}}
=
\dfrac{u_{n}-v_{n}\mp\alpha_{n}^{-1}}{\|v_{n}\|_{\infty}}
=
r_{n}\widetilde{u}_{n}-\widetilde{v}_{n}\mp\dfrac{1}
{\alpha_{n}\|v_{n}\|_{\infty}}\to \widetilde{w}^{*}
\end{equation}
weakly in $X$ and strongly in $C^{1}(\overline{\Omega})$.
Furthermore, following
the argument to get \eqref{M6},
we can verify that $z_{n}:=(\varepsilon_{n}+v_{n})u_{n}$ satisfies
$$
0<\dfrac{z_{n}}{\|v_{n}\|_{\infty}}
=r_{n}(\varepsilon_{n}+v_{n})\widetilde{u}_{n}<
r_{n}\dfrac{\widetilde{M}_{6}}{\alpha_{n}}
\ \ \mbox{in}\ \Omega
$$
with some positive constant $\widetilde{M}_{6}$.
In addition to the assumption $\alpha_{n}\|v_{n}\|_{\infty}\to\infty$,
using the fact that
$\delta<r_{n}<1/\delta$ for all $n\in\mathbb{N}$,
one can see that
\begin{equation}\label{rootr}
0<\dfrac{z_{n}}{\|v_{n}\|^{2}_{\infty}}<\dfrac{\widetilde{M}_{6}}
{\delta\alpha_{n}\|v_{n}\|_{\infty}}\to 0\quad\mbox{as}\ n\to\infty.
\end{equation}
By virtue of \eqref{rootl} and \eqref{rootr},
we set $n\to\infty$ in \eqref{root} to obtain
$$
\lim_{n\to\infty}\widetilde{v}_{n}=
\dfrac{|\widetilde{w}^{*}|-\widetilde{w}^{*}}{2}=\widetilde{w}^{*}_{-}=0
\quad\mbox{uniformly in}\ \overline{\Omega}.
$$
Obviously, this contradicts the fact that 
$\|\widetilde{v}_{n}\|_{\infty}=1$
for all $n\in\mathbb{N}$.
Consequently, this contradiction enables us to
conclude that $\{\alpha_{n}\|v_{n}\|_{\infty}\}$ 
is uniformly bounded.

It follows from Lemma \ref{ratiolem} that
$$
\delta<\dfrac{\alpha_{n}\|u_{n}\|_{\infty}}{\alpha_{n}\|v_{n}\|_{\infty}}<\dfrac{1}{\delta}\quad
\mbox{for all}\  n\in\mathbb{N}.
$$
Then the uniform boundedness of $\{\alpha_{n}\|v_{n}\|_{\infty}\}$ ensures
that of $\{\alpha_{n}\|u_{n}\|_{\infty}\}$.
Then the proof of Lemma \ref{UVaprlem} is complete.
\end{proof}
With the above lemmas, we are now ready to prove Theorem \ref{convthm}.
\begin{proof}[Proof of Theorem \ref{convthm}]
Suppose that $\lambda>\lambda_{1}$ and 
$\lambda\neq \lambda_{j}$ for any $j\ge 2$.
Let $\{(u_{n}, v_{n})\}$ be any sequence of positive solutions
of \eqref{SKT} with $\alpha=\alpha_{n}\to\infty$.
Then Lemma \ref{wzconvlem} gives 
$X$
for any $p\in (1,\infty )$ 
such that
$$
\lim_{n\to\infty}(u_{n}, v_{n})=(w_{+}^{*}, w_{-}^{*})
\ \ \mbox{uniformly in}\ \overline{\Omega},
$$
passing to a subsequence if necessary,
and moreover, $w^{*}$ is a solution of \eqref{LS2}.
Therefore, if both $w^{*}_{+}$ and $w^{*}_{-}$ are not identically equal to zero, 
then 
$w^{*}$ is a sign-changing solution of \eqref{LS2}.

Suppose that 
$w^{*}_{+}=0$ (that is, $u_{n}\to 0$ uniformly in $\overline{\Omega}$)
or $w^{*}_{-}=0$ (that is, $v_{n}\to 0$ uniformly in $\overline{\Omega}$).
Then, 
Lemma \ref{zerolem} implies
\begin{equation}\label{decay2}
\lim_{n\to\infty}(u_{n},v_{n})=(0,0)
\ \ \mbox{uniformly in}\ \overline{\Omega}.
\end{equation}
It follows from Lemma \ref{UVaprlem} that
$\{\alpha_{n}\|u_{n}\|_{\infty}\}$ and $\{\alpha_{n}\|v_{n}\|_{\infty}\}$
are uniformly bounded.
Multiplying \eqref{SKT} by $\alpha_{n}$, one can easily verify that
$$(U_{n}, V_{n}):=\alpha_{n}(u_{n}, v_{n})=\dfrac{1}{\varepsilon_{n}}(u_{n}, v_{n})$$ 
satisfies
$$
\begin{cases}
-\Delta [\,(1+V_{n})U_{n}\,]=U_{n}(\lambda m(x)-b_{1}u_{n}-c_{1}v_{n})
\ \ &\mbox{in}\ \Omega,\\
-\Delta [\,(1+U_{n})V_{n}\,]=V_{n}(\lambda m(x)-b_{2}u_{n}-c_{2}v_{n})
\ \ &\mbox{in}\ \Omega,\\
U_{n}=V_{n}=0\ \ &\mbox{on}\ \partial\Omega.
\end{cases}
$$
By employing the change of variables
\begin{equation}\label{WnZn1}
W_{n}:=U_{n}-V_{n},\quad
Z_{n}=(1+{V}_{n})U_{n},
\end{equation}
one can see that $(W_{n}, Z_{n})$ satisfies
\begin{equation}\label{WnZn2}
\begin{cases}
-\Delta W_{n}=\lambda m(x)W_{n}-U_{n}(b_{1}u_{n}+c_{1}v_{n})
+V_{n}(b_{2}u_{n}+c_{2}v_{n})
\quad&\mbox{in}\ \Omega,\\
-\Delta Z_{n}=U_{n}(\lambda m(x)-b_{1}u_{n}-c_{1}v_{n})
\quad&\mbox{in}\ \Omega,\\
W_{n}=Z_{n}=0
\quad&\mbox{on}\ \partial\Omega.
\end{cases}
\end{equation}
Since the right-hand sides of elliptic equations of \eqref{WnZn2}
are uniformly bounded in
$L^{\infty}(\Omega )$,
then the compactness argument using the elliptic regularity and
the Sobolev embedding theorem ensure
$(W^{*}, Z^{*})\in \boldsymbol{X}$ such that
\begin{equation}\label{WnZn3}
\lim_{n\to\infty}(W_{n}, Z_{n})=(W^{*}, Z^{*})
\ \ \mbox{weakly in $\boldsymbol{X}$ and 
strongly in $C^{1}(\overline{\Omega })^{2}$}.
\end{equation}
Together with \eqref{decay2} and
Lemma \ref{UVaprlem}, setting $n\to\infty$
in the first equation of \eqref{WnZn2}, we obtain
$$
-\Delta W^{*}=\lambda m(x)W^{*}\ \ \mbox{in}\ \Omega,\quad
W^{*}=0\ \ \mbox{on}\ \partial\Omega.
$$
Since $\lambda\neq \lambda_{j}$ for all $j\in\mathbb{N}$
by the assumption, then
one can see that
$W^{*}=0$ in $\Omega$.
It follows from \eqref{WnZn1} that
$$
U_{n}=\dfrac{1}{2}(
\sqrt{(1-W_{n})^{2}+4Z_{n}}-1+W_{n}),\quad
V_{n}=\dfrac{1}{2}
(\sqrt{(1-W_{n})^{2}+4Z_{n}}-1-W_{n}).
$$
Due to \eqref{WnZn3} and $W^{*}=0$, setting $n\to\infty$,
we have
\begin{equation}\label{uni}
\lim_{n\to\infty}U_{n}=\lim_{n\to\infty}V_{n}=
\dfrac{1}{2}(\sqrt{4Z^{*}+1}-1)(=:U^{*})
\ \ \mbox{in}\ C^{1}(\overline{\Omega}).
\end{equation}
Then setting $n\to\infty$ in $Z_{n}=(1+V_{n})U_{n}$,
we obtain
$Z^{*}=  (1+U^{*})U^{*}$.
Owing to \eqref{uni}, setting
$n\to\infty$ in the second equation of \eqref{WnZn2},
we see that $U^{*}$ is a nonnegative solution of
$$
-\Delta [\,(1+U)U\,]=\lambda m(x)U\ \ \mbox{in}\ \Omega,\quad
U=0\ \ \mbox{on}\ \partial\Omega.
$$

It remains to show $U^{*}>0$ in $\Omega$.
Suppose by contradiction that $U^{*}=0$.
Multiplying the second equation of \eqref{WnZn2} by $1/\|U_{n}\|_{\infty}$,
we see that
$$\widetilde{Z}_{n}:=(1+V_{n})\widetilde{U}_{n}
\quad\mbox{with}\quad
\tilde{U}_{n}:=\dfrac{U_{n}}{\|U_{n}\|_{\infty}}
$$
satisfies
\begin{equation}\label{norZ}
-\Delta\widetilde{Z}_{n}
=\dfrac{\widetilde{Z}_{n}}{1+V_{n}}
(\lambda m(x)-b_{1}u_{n}-c_{1}v_{n})\ \ \mbox{in}\ \Omega,\quad
\widetilde{Z}_{n}=0\ \ \mbox{on}\ \partial\Omega.
\end{equation}
In the same manner to get \eqref{WnZn3}, one can find
$\widetilde{Z}^{*}\in X$ such that
$
\widetilde{Z}_{n}\to\widetilde{Z}^{*}
$
weakly in $X$ and strongly in $C^{1}(\overline{\Omega })$,
passing to a subsequence.
With the assumption that 
$V_{n}\to 0$ uniformly in $\overline{\Omega}$
(see \eqref{uni} with $U^{*}=0$), we set $n\to\infty$ in 
\eqref{norZ} to see
$$
-\Delta\widetilde{Z}^{*}=\lambda m(x)\widetilde{Z}^{*}\ \ \mbox{in}\ \Omega,\quad
\widetilde{Z}^{*}=0\ \ \mbox{on}\ \partial\Omega.
$$
We observe that
$$\|\widetilde{Z}_{n}\|_{\infty}\ge\widetilde{Z}_{n}(x)=
(1+V_{n}(x))\widetilde{U}_{n}(x)
\ge\widetilde{U}_{n}(x)
\quad
\mbox{for all}\ x\in\Omega,
$$ 
and thereby,
$\|\widetilde{Z}_{n}\|_{\infty}\ge 
\|\widetilde{U}_{n}\|_{\infty}=1$
for all $n\in\mathbb{N}$.
It follows that
$\widetilde{Z}^{*}$ is a positive solution.
Therefore, we must deduce $\lambda=\lambda_{1}$.
However, this is a contradiction.
Consequently,
we obtain $U^{*}>0$ in $\Omega$ by contradiction.
The proof of Theorem \ref{convthm} is accomplished.
\end{proof}

\section{Small coexitence}
In the former half of this section, we construct the set of positive solutions
to the limiting system \eqref{LS1}.
In view of the altenative (i) of Theorem \ref{convthm},
one expects that positive solutions of \eqref{LS1}
can realize small coexistence
spatial profiles of both species with large cross-diffusion.
Actually, in the latter half of this section,
a subset of positive solutions
of the original SKT model \eqref{SKT} with 
large $\alpha$ will be constructed by the perturbation 
of the set of positive solutions of \eqref{LS1}.

\subsection{Global bifurcation structure for the 
limiting system of small coexistence}
To get the set of positive solutions of \eqref{LS1},
we employ the change of variables
\begin{equation}\label{Zdef}
Z:=(1+U)U,\quad\mbox{conversely,}\quad
U=\dfrac{1}{2}(\sqrt{4Z+1}-1).
\end{equation}
Then \eqref{LS1} is reduced to the following semilinear equation:
\begin{equation}\label{subell}
\begin{cases}
-\Delta Z=\dfrac{\lambda m(x)}{2}(\sqrt{4Z+1}-1)
\quad &\mbox{in}\ \Omega,\\
Z=0\quad &\mbox{on}\ \partial\Omega.
\end{cases}
\end{equation}
Thanks to the one-to-one correspondence \eqref{Zdef},
in order to know the set of positive solutions of \eqref{LS1},
it suffices to know that of \eqref{subell}.
By regarding $\lambda$ as the bifurcation parameter,
we show the following global bifurcation structure of 
positive solutions of \eqref{subell}.
\begin{prop}\label{subprop}
All positive solutions of \eqref{subell} form a bifurcation curve
parameterized by $\lambda\in (\lambda_{1}, \infty )$ such as
\begin{equation}\label{curve}
\{\,(\lambda, Z_{0}(\,\cdot\,,\lambda))\in (\lambda_{1}, \infty )\times 
X\,\},
\end{equation}
where
$(\lambda_{1}, \infty )\ni \lambda\mapsto Z_{0}(\,\cdot\,,\lambda)\in X$
is of class $C^{1}$.
Furthermore, it holds that
$$
\lim_{\lambda\searrow\lambda_{1}}Z_{0}(\,\cdot\,,\lambda)= 0
\ \ \mbox{in}\ X
$$
and
\begin{equation}\label{sin}
\lim_{\lambda\to\infty}\lambda^{-2}Z_{0}(\,\cdot\,,\lambda)= \zeta_{0}
\ \ \mbox{in}\ C^{1}(\overline{\Omega}),
\end{equation}
where $\zeta_{0}$ is the unique positive solution
to the sublinear elliptic equation
$$
-\Delta\zeta_{0}=m(x)\sqrt{\zeta_{0}}\ \ \mbox{in}\ \Omega,\quad
\zeta_{0}=0\ \ \mbox{on}\ \partial\Omega.
$$
\end{prop}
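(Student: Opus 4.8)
The plan is to treat \eqref{subell} as a sublinear Dirichlet problem $-\Delta Z=\lambda m(x)g(Z)$ with the concave nonlinearity $g(s):=\tfrac12(\sqrt{4s+1}-1)$; here $g(0)=0$, $g'(0)=1$, $g'>0$ and $g''<0$ on $(-\tfrac14,\infty)$, so $0<g(s)<s$ and $g'(s)<g(s)/s$ for $s>0$, and moreover $g(s)\le\sqrt s$. I would begin with nonexistence for $\lambda\le\lambda_{1}$: testing \eqref{subell} against the principal eigenfunction $\phi_{1}$ of \eqref{ev} and using $g(Z)<Z$ gives $\lambda_{1}\int_{\Omega}m\phi_{1}Z=\lambda\int_{\Omega}m\phi_{1}g(Z)<\lambda\int_{\Omega}m\phi_{1}Z$, hence $\lambda>\lambda_{1}$. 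For $\lambda>\lambda_{1}$ I would produce a positive solution by the sub--super solution method: $\underline Z:=\varepsilon\phi_{1}$ is a subsolution for $\varepsilon$ small (because $g(s)/s\to1$ as $s\to0^{+}$ while $\lambda>\lambda_{1}$), and $\overline Z:=\lambda^{2}\|e\|_{\infty}\,e$ is a supersolution, where $e$ solves $-\Delta e=m$, $e|_{\partial\Omega}=0$ (indeed $g(\overline Z)\le\sqrt{\overline Z}\le\lambda\|e\|_{\infty}$, so $-\Delta\overline Z=\lambda^{2}\|e\|_{\infty}m\ge\lambda m\,g(\overline Z)$); Hopf's lemma gives $\underline Z\le\overline Z$ once $\varepsilon$ is small. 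Uniqueness of the positive solution, which I will denote $Z_{0}(\,\cdot\,,\lambda)$, then follows from the concavity of $g$ via a Picone-type identity (the Brezis--Oswald argument), using that $s\mapsto g(s)/s$ is strictly decreasing and that, by Hopf's lemma, any two positive solutions decay linearly near $\partial\Omega$ so that the competing test functions lie in $W^{1,2}_{0}(\Omega)$.

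I would then extract the qualitative structure. Monotonicity $\lambda\mapsto Z_{0}(\,\cdot\,,\lambda)$ is immediate: for $\lambda_{1}<\lambda<\lambda'$, $Z_{0}(\,\cdot\,,\lambda)$ is a subsolution and $(\lambda')^{2}\|e\|_{\infty}e$ a supersolution of the $\lambda'$-problem, so uniqueness forces $Z_{0}(\,\cdot\,,\lambda)\le Z_{0}(\,\cdot\,,\lambda')$; in particular $0<Z_{0}(\,\cdot\,,\lambda)\le\lambda^{2}\|e\|_{\infty}e$ for every $\lambda>\lambda_{1}$. For the $C^{1}$-parametrization I would apply the implicit function theorem to $F(\lambda,Z):=-\Delta Z-\lambda m(x)g(Z)$, $F\colon(\lambda_{1},\infty)\times X\to Y$: at a positive solution the derivative $D_{Z}F=-\Delta-\lambda m\,g'(Z_{0})$ is an isomorphism, because its principal Dirichlet eigenvalue is strictly positive. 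Indeed $Z_{0}>0$ is a positive eigenfunction of $-\Delta-\lambda m\,g(Z_{0})/Z_{0}$ with eigenvalue $0$, and $g'(Z_{0})<g(Z_{0})/Z_{0}$ strictly wherever $m>0$, so strict monotonicity of the principal eigenvalue in the zero-order coefficient gives the claim. Combined with uniqueness, this produces the global $C^{1}$ curve \eqref{curve}. The limit $\lambda\searrow\lambda_{1}$ is then routine: by monotonicity $Z_{0}(\,\cdot\,,\lambda)$ decreases to some $Z_{*}\ge0$; passing to the limit in \eqref{subell} (dominated convergence on the right, elliptic estimates on the left) exhibits $Z_{*}$ as a nonnegative solution at $\lambda=\lambda_{1}$, hence $Z_{*}\equiv0$ by the nonexistence step and the strong maximum principle; Dini's theorem upgrades this to uniform convergence, so $\lambda m\,g(Z_{0})\to0$ in $L^{p}$ and therefore $Z_{0}(\,\cdot\,,\lambda)\to0$ in $X$ by elliptic regularity.

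For the behavior as $\lambda\to\infty$ I would rescale, putting $\zeta_{\lambda}:=\lambda^{-2}Z_{0}(\,\cdot\,,\lambda)$, which solves $-\Delta\zeta_{\lambda}=m(x)\bigl(\sqrt{\zeta_{\lambda}+\tfrac1{4\lambda^{2}}}-\tfrac1{2\lambda}\bigr)$ with homogeneous Dirichlet data. The bound above gives $0\le\zeta_{\lambda}\le\|e\|_{\infty}e$, so the right-hand side is bounded in $L^{\infty}$ uniformly in $\lambda$, and elliptic regularity makes $\{\zeta_{\lambda}\}_{\lambda\ge\lambda_{0}}$ precompact in $C^{1}(\overline\Omega)$. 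For a matching positive lower bound I would use the elementary inequality $2g(s)\le g(4s)$, which shows $4Z_{0}(\,\cdot\,,\lambda)$ is a subsolution of the $2\lambda$-problem and hence $Z_{0}(\,\cdot\,,2\lambda)\ge4Z_{0}(\,\cdot\,,\lambda)$; iterating and invoking monotonicity in $\lambda$ yields $\zeta_{\lambda}\ge\tfrac14\zeta_{\lambda_{0}}$ for all $\lambda\ge\lambda_{0}$. Consequently any $C^{1}$-limit of $\zeta_{\lambda_{n}}$ along $\lambda_{n}\to\infty$ is a \emph{positive} solution of $-\Delta\zeta=m(x)\sqrt\zeta$, $\zeta|_{\partial\Omega}=0$; since $\zeta\mapsto m\sqrt\zeta/\zeta$ is decreasing, this problem has a unique positive solution $\zeta_{0}$ (Brezis--Oswald), so the whole family converges, $\zeta_{\lambda}\to\zeta_{0}$ in $C^{1}(\overline\Omega)$, which is \eqref{sin}. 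The step I expect to be most delicate is the $\lambda\to\infty$ analysis, specifically the uniform \emph{lower} bound on the rescaled solutions: the effective nonlinearity $\lambda^{-1}g(\lambda^{2}s)$ converges to $\sqrt s$ in a manner that degenerates as $s\to0$, i.e.\ near $\partial\Omega$, and the doubling trick $2g(s)\le g(4s)$ is the device that circumvents this; the sublinear uniqueness (Picone / Brezis--Oswald) with $m$ permitted to vanish is the other point that needs some care.
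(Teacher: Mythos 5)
Your proof is correct, and it follows a genuinely different route than the paper on two of the key steps. The paper obtains existence by applying the Crandall--Rabinowitz local bifurcation theorem at $(\lambda_1,0)$ (Lemma~\ref{biflem}) and then extending the branch globally by the implicit function theorem together with the a priori bound of Lemma~\ref{aprlem2}; you instead establish existence directly by the sub--super solution method, with the pair $(\varepsilon\varPhi_1,\,\lambda^2\|e\|_\infty e)$, which is more elementary and yields the useful monotonicity $\lambda\mapsto Z_0(\cdot,\lambda)$ as a byproduct. Both proofs invoke Br\'ezis--Oswald for uniqueness and both obtain the $C^1$ regularity of the curve from the invertibility of the linearization, which in either presentation comes down to the strict inequality $g'(Z_0)<g(Z_0)/Z_0$ (equivalently, the paper's \eqref{var2}). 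The larger divergence is in the $\lambda\to\infty$ limit: the paper takes the rescaled family $\zeta_\lambda=\lambda^{-2}Z_0(\cdot,\lambda)$, extracts a subsequential limit, and rules out $\zeta_0\equiv0$ through the variational characterization \eqref{var3} of the principal weighted eigenvalue; you instead derive a uniform positive lower bound via the doubling inequality $2g(s)\le g(4s)$ (which one checks by squaring, reducing it to $1\le\sqrt{4s+1}$), giving $Z_0(\cdot,2\lambda)\ge 4Z_0(\cdot,\lambda)$ and hence $\zeta_\lambda\ge\tfrac14\zeta_{\lambda_0}$ for all $\lambda\ge\lambda_0$. The doubling argument is a nice self-contained device that avoids the eigenvalue machinery; the paper's approach is perhaps more natural in context since the bifurcation-theoretic formulation of \eqref{curve} is reused repeatedly in the rest of the paper. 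Both are sound.
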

For the sake of the proof of Proposition \ref{subprop},
we begin with the range of $\lambda$ for the nonexistence of
positive solutions of \eqref{subell}:
\begin{lem}\label{noexlem2}
If $\lambda\le\lambda_{1}$, then \eqref{subell} 
does not admit any positive solution.
\end{lem}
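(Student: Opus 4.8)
The plan is to mimic almost verbatim the nonexistence argument that appears (commented out) for the full system \eqref{SKT}, but now applied directly to the scalar semilinear equation \eqref{subell}. Suppose $Z$ is a positive solution of \eqref{subell} for some $\lambda>0$. The key elementary inequality is that, for $Z>0$,
\begin{equation*}
\dfrac{\sqrt{4Z+1}-1}{2Z}<1,
\end{equation*}
which holds because $\sqrt{4Z+1}<1+2Z$ for $Z>0$ (square both sides). Hence the right-hand side of \eqref{subell} satisfies
\begin{equation*}
-\Delta Z=\dfrac{\lambda m(x)}{2}(\sqrt{4Z+1}-1)<\lambda m(x)Z\quad\mbox{in}\ \Omega,
\end{equation*}
at every point where $m(x)>0$, and $-\Delta Z=0$ where $m(x)=0$; in either case $-\Delta Z\le \lambda m(x)Z$ with strict inequality on a set of positive measure since $m\not\equiv 0$ and $Z>0$.

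Next I would test this differential inequality against $Z>0$ itself in $L^2(\Omega)$ (or against the first eigenfunction $\phi_1$ of \eqref{ev}, either works; testing against $Z$ is cleanest). Integrating by parts using the homogeneous Dirichlet condition $Z=0$ on $\partial\Omega$ gives
\begin{equation*}
\|\nabla Z\|_2^2=\displaystyle\int_\Omega(-\Delta Z)\,Z<\lambda\displaystyle\int_\Omega m(x)Z^2.
\end{equation*}
Since $Z\in X\subset H^1_0(\Omega)\setminus\{0\}$, the variational characterization of the least eigenvalue of \eqref{ev} (see e.g. \cite{CC, Ni}) yields
\begin{equation*}
\lambda>\dfrac{\|\nabla Z\|_2^2}{\int_\Omega m(x)Z^2}\ge\inf_{\varphi\in H^1_0(\Omega)\setminus\{0\}}\dfrac{\|\nabla\varphi\|_2^2}{\int_\Omega m(x)\varphi^2}=\lambda_1.
\end{equation*}
This contradicts $\lambda\le\lambda_1$, so no positive solution can exist in that range.

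There is essentially no serious obstacle here; the only mild subtlety is handling the degenerate set $\{m=0\}$ correctly so that the inequality $-\Delta Z\le\lambda m(x)Z$ is genuinely strict somewhere — this is guaranteed because $m\ge 0$ is nontrivial and $Z>0$ throughout $\Omega$ (and $Z$ is not identically zero), so $\int_\Omega m(x)Z^2>0$ and the strict inequality in the displayed pointwise bound propagates to the integral inequality. One should also note that a positive solution $Z$ of \eqref{subell} automatically lies in $X$ for all $p$ by elliptic regularity, so the test function is admissible in $H^1_0(\Omega)$. With these remarks the proof is complete.
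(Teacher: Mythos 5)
Your proof is correct. The key inequality you use, $\sqrt{4Z+1}-1<2Z$ for $Z>0$, is exactly the inequality the paper exploits (written there as $\frac{2\lambda m Z}{\sqrt{4Z+1}+1}<\lambda m Z$). The only difference is the choice of test function: you pair the differential inequality $-\Delta Z<\lambda m Z$ with $Z$ itself and then invoke the variational (Rayleigh-quotient) characterization of $\lambda_1(m)$, whereas the paper multiplies \eqref{subell} by the principal eigenfunction $\varPhi_1$ and integrates by parts, yielding $(\lambda-\lambda_1)\int_\Omega m Z\varPhi_1>0$ directly without appealing to the variational principle. Both are four-line arguments and buy essentially the same thing; testing against $\varPhi_1$ avoids quoting the Rayleigh quotient, while your route avoids introducing $\varPhi_1$ at all. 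Your care in noting that the strict inequality survives integration because $m\ge 0$ is nontrivial and $Z>0$ throughout $\Omega$ is exactly the point one needs to make, and your observation that elliptic regularity puts $Z$ in $X\subset H^1_0(\Omega)$ makes the integration by parts legitimate.
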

\begin{proof}
Suppose that \eqref{subell} admits a positive solution $Z$.
Let $\varPhi_{1}$ be the 
$L^{\infty}$ normalized positive eigenfunction of \eqref{ev} with 
$\lambda=\lambda_{1}$. It follows that
\begin{equation}\label{egnor}
-\Delta\varPhi_{1}=\lambda_{1}m(x)\varPhi_{1},\ \ \varPhi_{1}>0\ \ \mbox{in}\  
\Omega,\quad
\varPhi_{1}=0\ \ \mbox{on}\ \partial\Omega,\quad
\|\varPhi_{1}\|_{\infty}=1.
\end{equation}
Multiplying \eqref{subell} by $\varPhi_{1}$ and integrating the resulting 
expression over $\Omega$, we have
$$
-\displaystyle\int_{\Omega}
\varPhi_{1}\Delta Z=
\dfrac{\lambda}{2}\int_{\Omega}
m(x)(\sqrt{4Z+1}-1)\varPhi_{1}
=
\int_{\Omega}
\dfrac{2\lambda m(x)Z}{\sqrt{4Z+1}+1}\varPhi_{1}<\lambda
\int_{\Omega}m(x)Z\varPhi_{1}.
$$
Hence the integration by parts implies
$$
(\lambda-\lambda_{1})
\int_{\Omega }m(x)Z\varPhi_{1}>0.
$$
It follows that $\lambda>\lambda_{1}$, in other words,
there is no positive solution of \eqref{subell} if
$\lambda\le\lambda_{1}$.
\end{proof}

Here we show the following a priori estimate of 
all positive solutions to \eqref{subell}.
\begin{lem}\label{aprlem2}
For any $p\in (1,\infty )$,
there exists
$M_{8}=M_{8}(\lambda, \|m\|_{\infty}, p, \Omega )>0$
such that any positive solution $Z$ of \eqref{subell} satisfies
$\|Z\|_{X}\le M_{8}$.
\end{lem}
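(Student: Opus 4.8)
The plan is to exploit the sublinearity of the right-hand side of \eqref{subell}: it grows like $\sqrt{Z}$, so a crude pointwise bound involving $\|Z\|_{\infty}$ closes on itself and produces an a priori $L^{\infty}$ bound, after which the $W^{2,p}$ estimate follows by a routine elliptic bootstrap. The only algebraic input is the elementary inequality $\sqrt{4Z+1}-1\le 2\sqrt{Z}$ for $Z\ge 0$, which follows by squaring $2\sqrt{Z}+1\ge\sqrt{4Z+1}$.

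First I would fix any positive solution $Z$ of \eqref{subell}. Using the inequality above together with $m\in L^{\infty}(\Omega)$ and $Z\le\|Z\|_{\infty}$,
$$
0\le\frac{\lambda m(x)}{2}\bigl(\sqrt{4Z+1}-1\bigr)\le\lambda\|m\|_{\infty}\sqrt{Z}\le\lambda\|m\|_{\infty}\sqrt{\|Z\|_{\infty}}\qquad\text{in }\Omega .
$$
Writing \eqref{subell} as $Z=(-\Delta)^{-1}\bigl[\tfrac{\lambda m}{2}(\sqrt{4Z+1}-1)\bigr]$ and invoking the monotonicity of $(-\Delta)^{-1}$ recalled in the proof of Lemma \ref{aprlem}, one obtains $Z\le\lambda\|m\|_{\infty}\sqrt{\|Z\|_{\infty}}\,(-\Delta)^{-1}1$ in $\Omega$, hence
$$
\|Z\|_{\infty}\le\lambda\|m\|_{\infty}\,\bigl\|(-\Delta)^{-1}1\bigr\|_{\infty}\sqrt{\|Z\|_{\infty}} .
$$
Since $\|Z\|_{\infty}>0$ for a positive solution, dividing through by $\sqrt{\|Z\|_{\infty}}$ yields
$$
\|Z\|_{\infty}\le\bigl(\lambda\|m\|_{\infty}\,\|(-\Delta)^{-1}1\|_{\infty}\bigr)^{2}=:M ,
$$
a constant depending only on $(\lambda,\|m\|_{\infty},\Omega)$.

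Next, with this $L^{\infty}$ bound in hand, the right-hand side $g:=\tfrac{\lambda m}{2}(\sqrt{4Z+1}-1)$ of \eqref{subell} satisfies $\|g\|_{\infty}\le\lambda\|m\|_{\infty}\sqrt{M}$, uniformly over all positive solutions. Since $Z\in X$ solves $-\Delta Z=g$ with $g\in L^{\infty}(\Omega)\subset L^{p}(\Omega)$, the elliptic $L^{p}$ estimate (e.g.\ \cite{GT}) gives
$$
\|Z\|_{X}\le C(p,\Omega)\bigl(\|g\|_{p}+\|Z\|_{p}\bigr)\le C(p,\Omega)\,|\Omega|^{1/p}\bigl(\lambda\|m\|_{\infty}\sqrt{M}+M\bigr)=:M_{8},
$$
which is the asserted estimate. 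The argument presents no real obstacle; the one point deserving attention is the self-referential appearance of $\|Z\|_{\infty}$ on both sides, which works precisely because the nonlinearity is of order $\sqrt{Z}$, and everything downstream is a standard bootstrap.
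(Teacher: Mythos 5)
Your proof is correct, but takes a slightly different route from the paper's. Both arguments exploit the sublinear growth $\sqrt{4Z+1}-1=O(\sqrt Z)$, but they cash it in differently. You bound the right-hand side by $\lambda\|m\|_{\infty}\sqrt{\|Z\|_{\infty}}$ and invert $-\Delta$ directly, producing the self-closing inequality $\|Z\|_{\infty}\le C\sqrt{\|Z\|_{\infty}}$, which you then divide through to solve for $\|Z\|_{\infty}$. The paper instead shifts the operator: it rewrites the equation with $(-\Delta-\sigma_1/2)^{-1}$ where $\sigma_1$ is the principal Dirichlet eigenvalue of $-\Delta$, absorbs a piece $-\sigma_1 Z$ into the source, and observes that the resulting scalar function $h(\xi)=\lambda\|m\|_\infty(\sqrt{4\xi+1}-1)-\sigma_1\xi$ is bounded above on $(0,\infty)$ because the linear subtraction dominates the square root at infinity. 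The shifted resolvent is still positivity-preserving since $\sigma_1/2<\sigma_1$, so the same monotonicity argument gives a pointwise bound without any self-referential step. Your version is arguably more elementary — no shifted operator, no appeal to $\sigma_1$ — at the mild cost of the ``divide by $\sqrt{\|Z\|_\infty}$'' maneuver. Both then finish identically by elliptic $L^p$ regularity. One small remark: for the homogeneous Dirichlet problem $-\Delta Z=g$, the estimate $\|Z\|_{W^{2,p}}\le C\|g\|_p$ holds without the extra $\|Z\|_p$ term you included; it is harmless but unnecessary.
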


\begin{proof}
In what follows,
we denote
$\sigma_{1}:=\lambda_{1}(1)$ for \eqref{ev},
namely,
$\sigma_{1}$ represents the least eigenvalue of $-\Delta $ with the
homogeneous Dirichlet boundary condition on $\partial\Omega$.
Hence \eqref{subell} can be expressed as
$$
Z
=\dfrac{1}{2}
\biggl(-\Delta-\dfrac{\sigma_{1}}{2}\biggr)^{-1}
\left[
\lambda m(x)(\sqrt{4Z+1}-1)-\sigma_{1}Z\right].
$$
By the maximum principle,
$(-\Delta -\sigma_{1}/2)^{-1}$ can be regarded as a bounded linear operator
from $C(\overline{\Omega})$ to $C_{0}(\overline{\Omega})$ 
with the monotone property 
such that,
if $f_{1}\le f_{2}$ in $\Omega$,
then $(-\Delta -\sigma_{1}/2)^{-1}f_{1}\le (-\Delta-\sigma_{1}/2)^{-1}f_{2}$ in $\Omega$.
Here we set
$$h(\xi):=\lambda\|m\|_{\infty}
(\sqrt{4\xi +1}-1)-\sigma_{1}\xi
\quad\mbox{for}\ \xi>0.
$$
There obviously exists a positive constant $M_{9}$,
which depends on $\lambda$ and $\|m\|_{\infty}$,
such that
$h(\xi )<M_{9}$ for all $\xi>0$.
Then the above-mentioned monotone property implies that 
any positive solution $Z$ of \eqref{subell} satisfies
$$
0<Z(x)< \dfrac{1}{2}
\biggl(-\Delta-\dfrac{\sigma_{1}}{2}\biggr)^{-1}M_{9}
\quad\mbox{for all}\ x\in\Omega.
$$
With the aid of the elliptic regularity, we obtain the desired bound $M_{8}$.
\end{proof}
Next we construct a local branch of positive solutions
bifurcating from the trivial solution at $\lambda=\lambda_{1}$.
For use of the local bifurcation theorem
from simple eigenvalue established by Crandall and Rabinowitz
\cite[Theorem 1.7]{CR}, 
we define an operator $F:\mathbb{R}\times X\to Y$
associated with \eqref{subell} by
\begin{equation}\label{Fdef}
F(\lambda, Z):=-\Delta Z-\dfrac{\lambda m(x)}{2}(\sqrt{4Z+1}-1).
\end{equation}
\begin{lem}\label{biflem}
In a neighborhood of $(\lambda, Z)=(\lambda_{1}, 0)\in\mathbb{R}\times X$,
all positive solutions of $F(\lambda, Z)=0$ form a 
bifurcation curve parameterized as
$$
\varGamma_{\delta}:=
\{\,(\lambda, Z)=(\lambda (s), 
s(\varPhi_{1}+\zeta (s)))\in\mathbb{R}\times X\,:\,
0<s<\delta\,\}$$
with some small $\delta >0$,
where $\lambda (s)$ is a function of class $C^{1}$ satisfying 
$\lambda(+0)=\lambda_{1}$ and
$\lambda (s)>\lambda_{1}$ for all $s\in (0, \delta )$;
$\varPhi_{1}$ is the eigenfunction defined by \eqref{egnor};
$\zeta (s)$ is an $X$-valued function of class $C^{1}$
satisfying  $\zeta (+0)=0$ and
$\int_{\Omega}\zeta (s)\varPhi_{1}=0$
for all $s\in (0, \delta )$.
\end{lem}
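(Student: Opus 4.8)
The plan is to apply the Crandall--Rabinowitz bifurcation theorem from a simple eigenvalue \cite[Theorem~1.7]{CR} to the operator $F$ of \eqref{Fdef}. First I would verify its hypotheses. Since $p>N$ gives $X\hookrightarrow C^{1}(\overline{\Omega})$ and $\xi\mapsto\tfrac12(\sqrt{4\xi+1}-1)$ is real analytic near $\xi=0$, the map $F$ is of class $C^{\infty}$ from $\mathbb{R}\times U$ into $Y$ on a neighbourhood $U$ of $0$ in $X$, with $F(\lambda,0)=0$ for all $\lambda$. A direct computation gives $F_{Z}(\lambda,0)\phi=-\Delta\phi-\lambda m(x)\phi$ and $F_{\lambda Z}(\lambda,0)\phi=-m(x)\phi$. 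By \eqref{ev}--\eqref{ev2} the kernel of $F_{Z}(\lambda_{1},0)$ equals $\mathrm{span}\{\varPhi_{1}\}$ and is one-dimensional; since $-\Delta-\lambda_{1}m$ is formally self-adjoint, its range in $Y$ is $\{f\in Y:\int_{\Omega}f\varPhi_{1}=0\}$, of codimension one, and $\int_{\Omega}\bigl(F_{\lambda Z}(\lambda_{1},0)\varPhi_{1}\bigr)\varPhi_{1}=-\int_{\Omega}m\varPhi_{1}^{2}<0$ shows the transversality condition $F_{\lambda Z}(\lambda_{1},0)\varPhi_{1}\notin\mathrm{Range}\,F_{Z}(\lambda_{1},0)$. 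Choosing the closed complement of $\mathrm{span}\{\varPhi_{1}\}$ in $X$ to be $\{\phi\in X:\int_{\Omega}\phi\varPhi_{1}=0\}$, \cite[Theorem~1.7]{CR} yields $\delta>0$ and $C^{1}$ functions $\lambda(\cdot)$, $\zeta(\cdot)$ on $(-\delta,\delta)$ with $\lambda(0)=\lambda_{1}$, $\zeta(0)=0$, $\int_{\Omega}\zeta(s)\varPhi_{1}=0$, such that near $(\lambda_{1},0)$ the zero set of $F$ is exactly the trivial branch $\{(\lambda,0)\}$ together with $\{(\lambda(s),s(\varPhi_{1}+\zeta(s))):|s|<\delta\}$.

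Next I would isolate the positive solutions. As $\zeta(s)\to0$ in $C^{1}(\overline{\Omega})$ when $s\to0$, while $\varPhi_{1}>0$ in $\Omega$ and $\partial_{\nu}\varPhi_{1}<0$ on $\partial\Omega$ by Hopf's lemma, there is a $C^{1}(\overline{\Omega})$-neighbourhood of $\varPhi_{1}$ consisting only of functions positive in $\Omega$; after shrinking $\delta$ we get $\varPhi_{1}+\zeta(s)>0$ in $\Omega$ for $|s|<\delta$, so $s(\varPhi_{1}+\zeta(s))$ is positive in $\Omega$ if and only if $s\in(0,\delta)$. Since neither the trivial branch nor the portion $s\le0$ of the curve provides a positive solution, the positive solutions of $F(\lambda,Z)=0$ near $(\lambda_{1},0)$ form precisely $\varGamma_{\delta}$.

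Finally I would fix the bifurcation direction, i.e.\ show $\lambda(s)>\lambda_{1}$ for $s\in(0,\delta)$. Testing $-\Delta Z=\lambda m(x)g(Z)$, $g(\xi):=\tfrac12(\sqrt{4\xi+1}-1)$, against $\varPhi_{1}$ and integrating by parts gives the identity $\lambda_{1}\int_{\Omega}mZ\varPhi_{1}=\lambda\int_{\Omega}m\,g(Z)\varPhi_{1}$. On $\varGamma_{\delta}$ one has $Z=s(\varPhi_{1}+\zeta(s))$ with $\|Z\|_{\infty}=O(s)$, and plugging in the expansion $g(\xi)=\xi-\xi^{2}+O(\xi^{3})$ together with $\zeta(s)\to0$ in $C^{1}(\overline{\Omega})$ lets one expand both sides in powers of $s$ and solve for $\lambda$, obtaining $\lambda(s)=\lambda_{1}+\lambda_{1}\bigl(\int_{\Omega}m\varPhi_{1}^{3}\big/\int_{\Omega}m\varPhi_{1}^{2}\bigr)s+o(s)$; hence $\lambda'(0)>0$ because $m\ge0$ is nontrivial and $\varPhi_{1}>0$ in $\Omega$, so $\lambda(s)>\lambda_{1}$ for small $s>0$ and one last shrinking of $\delta$ completes the proof. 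The only point requiring care is this last computation: the expansion produces the weighted integral $\int_{\Omega}m\zeta(s)\varPhi_{1}$, which the normalization $\int_{\Omega}\zeta(s)\varPhi_{1}=0$ does \emph{not} eliminate, but it is $o(1)$ as $s\to0$ and therefore drops out of the leading-order coefficient; everything else is a straightforward verification of the Crandall--Rabinowitz hypotheses.
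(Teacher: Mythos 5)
Your application of Crandall--Rabinowitz is exactly the paper's: same operator, same identification of $\operatorname{Ker}F_{Z}(\lambda_{1},0)=\operatorname{span}\{\varPhi_{1}\}$ and its range via formal self-adjointness, same transversality check $\int_{\Omega}m\varPhi_{1}^{2}\neq 0$, and the same Hopf-lemma argument to isolate the positive solutions as the $s>0$ half of the bifurcating arc. (Incidentally, you get $F_{\lambda Z}(\lambda,0)\phi=-m\phi$, which is the correct sign; the paper writes $+m\phi$, a harmless slip.)

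The one place you diverge is the final step, establishing $\lambda(s)>\lambda_{1}$ for $s\in(0,\delta)$. You compute the bifurcation direction explicitly: testing the equation against $\varPhi_{1}$, expanding $g(\xi)=\tfrac12(\sqrt{4\xi+1}-1)=\xi-\xi^{2}+O(\xi^{3})$ and solving for $\lambda$ yields $\lambda'(0)=\lambda_{1}\int_{\Omega}m\varPhi_{1}^{3}\big/\int_{\Omega}m\varPhi_{1}^{2}>0$, which forces $\lambda(s)>\lambda_{1}$ for small $s>0$. Your computation is correct (the $\int_{\Omega}m\zeta(s)\varPhi_{1}$ term appears identically on both sides and cancels before the leading-order balance, so its being merely $o(1)$ rather than zero is indeed harmless, as you observe). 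The paper instead gets this for free by invoking Lemma~\ref{noexlem2}, which it has just proved: no positive solution of $F(\lambda,Z)=0$ can exist for $\lambda\le\lambda_{1}$, so a curve of positive solutions emanating from $(\lambda_{1},0)$ must satisfy $\lambda(s)>\lambda_{1}$. That route is shorter and avoids any computation; yours is more self-contained and additionally produces the explicit slope $\lambda'(0)$, which the paper's argument does not provide. Either is perfectly adequate for the statement.
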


\begin{proof}
In order to find a bifurcation point of positive solutions to
$F(\lambda, Z)=0$ on the branch of the trivial solution,
we introduce the linearized operator
$L(\lambda ):=F_{Z}(\lambda, 0)\in\mathcal{L}(X,Y)$
where $\mathcal{L}(X,Y)$ denotes the Banach space of 
bonded linear operators from $X$ to $Y$.
It is easily verified that
\begin{equation}\label{Fz}
F_{Z}(\lambda, Z)\phi=
-\Delta\phi-\dfrac{\lambda m(x)}{\sqrt{4Z+1}}\phi,
\end{equation}
and therefore,
$$
L(\lambda )\phi=-\Delta\phi-\lambda m(x)\phi.$$
Then, $\mbox{Ker}\,L(\lambda )$ is the set of solutions to
$$-\Delta\phi =\lambda m(x)\phi\ \ \mbox{in}\ \Omega,\quad
\phi=0\ \ \mbox{on}\ \partial\Omega.
$$
It follows that $\mbox{Ker}\,L(\lambda )$ is nontrivial
if and only if $\lambda =\lambda_{j}$ with some $j\in\mathbb{N}$.
By virtue of the Krein-Rutman theorem,
there exists a positive
$\phi\in\mbox{Ker}\,L(\lambda )$ if and only if $\lambda=\lambda_{1}$.
Since $L(\lambda )$ is a formally self-adjoint operator,
the Fredholm alternative theorem implies that
\begin{equation}\label{Ran}
\mbox{Ker}\,L(\lambda_{1})=\mbox{Span}\,\{\varPhi_{1}\}
\ \ \mbox{and}\ \ 
\mbox{Ran}\,L(\lambda_{1})=\biggl\{
\,\psi\in Y\,:\,
\int_{\Omega}\varPhi_{1}\psi =0\,\biggr\}.
\end{equation}
Then, in order to apply the local bifurcation theorem
\cite[Theorem 1.7]{CR}, we need to check the 
transversality condition:
$$F_{Z,\lambda}(\lambda_{1}, 0)
\varPhi_{1}\not\in\mbox{Ran}\,L(\lambda_{1}).$$
Differentiating \eqref{Fz} by $\lambda$, one can verify that
$$F_{Z,\lambda}(\lambda_{1},0)\varPhi_{1}=
m(x)\varPhi_{1}.$$
By taking the $L^{2}$-inner product with $\varPhi_{1}$, we see that
$$\int_{\Omega}
F_{Z,\lambda}(\lambda_{1},0)\varPhi_{1}^{2}
=
\int_{\Omega}
m(x)\varPhi_{1}^{2}>0.
$$
In view of \eqref{Ran}, we deduce that
$F_{Z,\lambda}(\lambda_{1}, 0)\varPhi_{1}\not\in\mbox{Ran}\,L(\lambda_{1})$.
Therefore, we can use the local bifurcation theorem to know that,
near $(\lambda,Z)=(\lambda_{1}, 0)\in\mathbb{R}\times X$,
all solutions to $F(\lambda,Z)=0$ consist of
the trivial solution branch and
the local curve
$$
\{\,(\lambda,Z)=(\lambda(s), 
s(\varPhi_{1}+\zeta (s)))\in\mathbb{R}\times X\,:\,
s\in (-\delta, \delta )\,\}$$
with some $\delta >0$, where
$\zeta (s)$ is an $X$-valued function
of class $C^{1}$
satisfying 
$\int_{\Omega}\zeta (s)\varPhi_{1}=0$
for all $s\in (-\delta, \delta )$ and $\zeta (0)=0$.
Then one can conclude that the subset with $s$ restricted on 
$s\in (0, \delta )$ 
forms the bifurcation curve of positive solutions from
$(\lambda,Z)=(\lambda_{1}, 0)$.
Together with Lemma \ref{noexlem2}, we know that
$\lambda (s)>\lambda_{1}$ for any $s\in (0,\delta )$.
The proof of Lemma \ref{biflem} is complete.
\end{proof}
Our aim is to extend $\varGamma_{\delta}$ 
(obtained in Lemma \ref{biflem}) to the range
$\lambda >\lambda_{1}$ 
as a simple curve of positive solutions of \eqref{subell}.

\begin{proof}[Proof of Proposition \ref{subprop}]
By virtue of the embedding $X\subset C^{1}(\overline{\Omega })$
for $p>N$, we introduce the following positive cone
$$P=\{\,Z\in X\,:\,Z>0\mbox{ in }\Omega\mbox{ and }
\partial_{\nu}Z <0\mbox{ on }\partial\Omega\,\}.$$
In view of the expression of $\varGamma_{\delta}$ obtained in 
Lemma \ref{biflem},
we may assume $\varGamma_{\delta}\subset (\lambda_{1}, \infty )\times P$.
Let $\varGamma$ be the maximal extension of 
$\varGamma_{\delta}$ in $(\lambda_{1}, \infty )\times P$
as the connected set of
positive solutions of \eqref{subell}.
By Lemma \ref{noexlem2}, we see that
the projection of $\varGamma$ on the $\lambda$-axis,
which is denoted by $\mbox{proj}\,\varGamma$, 
is contained in $(\lambda_{1},\infty)$.
Then we define $\overline{\lambda}\in (\lambda_{1}, \infty]$
by
$$\overline{\lambda}:=\sup\{\,\lambda\,:\,
(\lambda_{1}, \lambda )\subset\mbox{proj}\,\varGamma\,\}.$$
The following contradiction argument will show 
$\overline{\lambda}=\infty$.
Suppose $\overline{\lambda}\in (\lambda_{1}, \infty)$ by conradiction.
There exists a sequence 
$\{(\lambda_{n}, Z_{n})\}\subset\varGamma$ such that
$\lambda_{n}\nearrow\overline{\lambda}$.
It follows from Lemma \ref{aprlem2} that
$\|Z_{n}\|_{W^{2,p}}\le M_{8}$ for all $n\in\mathbb{N}$.
By the Sobolev embedding theorem, we can find a subsequence,
which is relabeled by $\{(\lambda_{n}, Z_{n})\}$ again,
such that
\begin{equation}\label{lim}
\lim_{n\to\infty}Z_{n}=\overline{Z}
\ \ \mbox{weakly in}\ X\ \mbox{and strongly in}\  
C^{1}(\overline{\Omega })
\end{equation}
with some nonnegative solution $\overline{Z}\in X$ to \eqref{subell}
with $\lambda=\overline{\lambda}$.
 
Assume the possibility that
$\varGamma$ attains a boundary point of $P$
at $(\overline{\lambda},\overline{Z})$, that is,
there exists $x^{*}\in\Omega$ such that $\overline{Z}(x^{*})=0$
or
there exists $x_{*}\in\partial\Omega$ such that
$\partial_{\nu}\overline{Z}(x_{*})=0$.
By the strong maximum principle or the Hopf boundary lemma,
one can see that
$\overline{Z}=0$ in $\Omega$.
It follows that $(\overline{\lambda}, 0)$ is a bifurcation point
of positive solutions to \eqref{subell}
from the trivial solution.
As in the proof of Lemma \ref{biflem}, 
one must deduce $\overline{\lambda}=\lambda_{1}$.
However, this contradicts the fact that $\overline{\lambda}>\lambda_{1}$.

Then, under the assumption $\overline{\lambda}\in (\lambda_{1}, \infty)$,
we know that the limit $\overline{Z}$ in \eqref{lim}
is a positive solution to \eqref{subell} with $\lambda=\overline{\lambda}$.
We now observe that
$$
-\Delta\overline{Z}
=\dfrac{\overline{\lambda}m(x)}{2}(\sqrt{4\overline{Z}+1}-1)
=\overline{\lambda}\dfrac{2m(x)}{\sqrt{4\overline{Z}+1}+1}
\overline{Z}\ \ \mbox{in}\ \Omega,
\quad \overline{Z}=0\ \ \mbox{on}\ \partial\Omega.
$$
By virtue of the Krein-Rutman theorem,
one can see
\begin{equation}\label{bar1}
\overline{\lambda}=
\lambda_{1}\biggl(\dfrac{2m}{\sqrt{4\overline{Z}+1}+1}\biggr),
\end{equation}
where
$\lambda_{1}(h)$
represents the least eigenvalue
of \eqref{ev} with $m$ replaced by 
$h$.
Here we note
the variational characterization of $\lambda_{1}(h)$
(e.g., \cite{CC, Ni})
as follows:
\begin{equation}\label{var}
\lambda_{1}(h)=\inf_{\phi\in H^{1}_{0}\setminus\{0\}}
\dfrac{\|\nabla \phi\|^{2}_{2}}{\int_{\Omega}h(x)\phi^{2}}.
\end{equation}
Therefore, we know from \eqref{bar1} that
\begin{equation}\label{var2}
\begin{split}
\overline{\lambda}=
\lambda_{1}\biggl(\dfrac{2m}{\sqrt{4\overline{Z}+1}+1}\biggr)
&=
\inf_{\phi\in H^{1}_{0}\setminus\{0\}}
\dfrac{\|\nabla \phi\|^{2}_{2}}
{\int_{\Omega }2m(x)\phi^{2}/(\sqrt{4\overline{Z}+1}+1)}\\
&<
\inf_{\phi\in H^{1}_{0}\setminus\{0\}}
\dfrac{\|\nabla \phi\|^{2}_{2}}
{\int_{\Omega }m(x)\phi^{2}/\sqrt{4\overline{Z}+1}}
=
\lambda_{1}\biggl(
\dfrac{m}{\sqrt{4\overline{Z}+1}}\biggr),
\end{split}
\end{equation}

Using this fact,
we aim to show that
the linearized operator 
$F_{Z}(\overline{\lambda}, \overline{Z})$ is an isomorphism
from $X$ to $Y$.
For this end, we consider
the equation 
$F_{Z}(\lambda, \overline{Z})\phi =0$, which is reduced to 
\begin{equation}\label{Feq}
-\Delta\phi=\lambda\dfrac{m(x)}{\sqrt{4\overline{Z}+1}}\phi
\ \ \mbox{in}\ \Omega,\quad\phi=0\ \ \mbox{on}\ \partial\Omega
\end{equation}
by \eqref{Fz}.
Hence \eqref{Feq} admits nontrivial solutions if and only if
\begin{equation}\label{Fz2}
\lambda=\lambda_{j}\biggl(
\dfrac{m}{\sqrt{4\overline{Z}+1}}\biggr)
\ \ \mbox{for some}\ j\in\mathbb{N}.
\end{equation}
It follows from \eqref{var2} and \eqref{Fz2} that
\eqref{Feq} with $\lambda=\overline{\lambda}$ 
cannot admit any nontrivial solution.
Hence we deduce that
$F_{Z}(\overline{\lambda}, \overline{Z})$
is an isomorphism from $X$ to $Y$.

Therefore, we can use the implicit function theorem to
see that, in a neighborhood of 
$(\overline{d},\overline{Z})\in
\mathbb{R}\times X$, all solutions of $F(\lambda,Z)=0$ form a curve
$$\{ (\lambda,Z(\lambda))\in (\overline{\lambda}-\delta_{1}, 
\overline{\lambda}+\delta_{1})\times X\,\},$$
where $(\overline{\lambda}-\delta_{1}, 
\overline{\lambda}+\delta_{1})\ni \lambda\mapsto Z(\lambda )\in X$ 
is a function of class $C^{1}$.
This fact implies that $\varGamma$ can be extended as the set of
positive solutions of $F(\lambda,Z)=0$ 
in $\lambda\in [\overline{\lambda},\overline{\lambda}+\delta_{1})$.
Obviously, this contradicts the definition of $\overline{\lambda}$.
Consequently, the contradiction argument enables us to
conclude $\overline{\lambda}=\infty$, namely,
$\varGamma$ can be extended as the set of positive solutions of \eqref{subell}
over $\lambda\in (\lambda_{1},\infty)$.

Next we take $(\lambda^{*}, Z^{*})\in\varGamma$ arbitrarily.
Then repeating the above argument, 
one can see that $F_{Z}(\lambda^{*}, Z^{*})\in\mathcal{L}(X,Y)$
is an isomorphism.
The implicit function theorem ensures that, 
in a neighborhood of $(\lambda^{*}, Z^{*})\in\mathbb{R}\times X$,
all positive solutions of $F(\lambda,Z)=0$ form a curve
parameterized as
$\{\,(\lambda, Z(\lambda))\in 
(\lambda^{*}-\delta^{*}, \lambda^{*}+\delta^{*})\times X\,\}$
with some $\delta^{*}>0$.
Therefore, by the usual patchwork procedure,
we know that
$\varGamma$ forms a curve 
\eqref{curve}
parameterized by $\lambda\in (\lambda_{1},\infty)$.
 
We need to show that 
all positive solutions of \eqref{subell}
are exhausted by the bifurcation curve $\varGamma$.
To to so, it suffices to show the uniqueness
of positive solutions of \eqref{subell}
for each $\lambda\in (\lambda_{1}, \infty)$.
In view of the right-hand side of \eqref{subell},
we set
$$
h_{1}(Z):=\sqrt{4Z+1}-1.
$$
Hence \eqref{subell} is represented as
$$-\Delta Z=\dfrac{\lambda m(x)}{2}h_{1}(Z)\quad\mbox{in}\ \Omega,\quad
Z=0\quad\mbox{on}\ \partial\Omega.
$$
We observe that, for any fixed $\lambda\in (\lambda_{1}, \infty)$,
$$
\dfrac{h_{1}(Z)}{Z}=\dfrac{4}{\sqrt{4Z+1}+1}
$$
is monotone decreasing with respect to $Z>0$.
Then, thanks to the result by Br\'ezis and Osward
\cite[Theorem 1]{BO},
there is at most one positive solution of \eqref{subell}
for each $\lambda\in (\lambda_{1}, \infty)$.
Consequently, we deduce that
all positive solutions of \eqref{subell} form the unique simple curve
expressed as 
\eqref{curve}.

In order to show \eqref{sin}, 
we set $\zeta(\lambda ):=\lambda^{-2}Z(\lambda)$ 
for $Z(\lambda)\in\varGamma$.
It is easy to see that
\begin{equation}\label{zetaeq}
-\Delta \zeta=\dfrac{m(x)}{2}(\sqrt{4\zeta+\lambda^{-2}}-
\lambda^{-1})=
\dfrac{2  m(x)}{\sqrt{4\zeta+\lambda^{-2}}+\lambda^{-1}}\zeta
\ \ \mbox{in}\ \Omega,\quad
\zeta=0\ \ \mbox{on}\ \partial\Omega.
\end{equation}
It follows from Lemma \ref{aprlem2} that
$\|\zeta (\lambda )\|_{W^{2,p}}\le \lambda^{-2}M_{8}=:\widetilde{M}_{8}$
for all $\lambda\in (\lambda_{1},\infty)$.
Furthermore, by virtue of the proof of Lemma \ref{aprlem2},
it is possible to verify that
$\widetilde{M}_{8}$ is independent of 
$\lambda\in (\lambda_{1},\infty )$.
By the Sobolev embedding theorem,
there exist a sequence $\{\mu_{n}\}$
with $\lim_{n\to\infty}\mu_{n}=\infty$ and 
$\zeta_{0}\in X$ such that $\zeta_{n}:=\zeta (\mu_{n})$ satisfy
$$
\lim_{n\to\infty}\zeta_{n}=\zeta_{0}
\ \ \mbox{weakly in $X$ 
and strongly in $C^{1}(\overline{\Omega })$.}
$$
Since $\zeta_{n}$ is a positive solution of \eqref{zetaeq}
with $\lambda=\mu_{n}$ for each $n\in\mathbb{N}$, 
then \eqref{var} implies
\begin{equation}\label{var3}
1=\lambda_{1}\biggl(
\dfrac{2m}{\sqrt{4\zeta_{n}+\mu_{n}^{-2}}+\mu_{n}^{-1}}\biggr)
=\inf_{\phi\in H^{1}_{0}\setminus\{0\}}
\dfrac{\|\nabla \phi\|^{2}_{2}}
{\int_{\Omega }2m(x)\phi^{2}/(\sqrt{4\zeta_{n}+\mu_{n}^{-2}}+\mu_{n}^{-1})}.
\end{equation}
Suppose that $\zeta_{0}=0$ in $\Omega$.
Then, by the variational characterization in \eqref{var3},
one can see that 
$\lambda_{1}(2m/(\sqrt{4\zeta_{n}+\mu_{n}^{-2}}+\mu_{n}^{-1}))\to 0$
as $n\to\infty$.
This obviously contradicts the first equality in \eqref{var3}.
Then we see that $\zeta_{0}\ge (\not\equiv ) \,0$ satisfies
$-\Delta\zeta_{0}=m(x)\sqrt{\zeta_{0}}$ in $\Omega$ and
$\zeta_{0}=0$ on $\partial\Omega$, and therefore,
the strong maximum principle implies that $\zeta_{0}>0$ in $\Omega$. 
Thanks to the uniqueness of positive solutions of the sublinear
elliptic problem (e.g., \cite{BO}), one can verify that 
the family $\{\zeta (\lambda )\}_{\lambda>\lambda_{1}}$
itself
tends to $\zeta_{0}$ 
as $\lambda\to\infty$.  
The proof of Proposition \ref{subprop} is accomplished. 
\end{proof}
By the change of variables \eqref{Zdef},
Proposition \ref{subprop}
gives the global bifurcation curve $\mathcal{C}_{\infty}$ of 
positive solutions of \eqref{LS1} stated in Theorem \ref{thm13}:

\begin{proof}[Proof of Theorem \ref{thm13}]
All assertions immediately follow from Proposition \ref{subprop}
and \eqref{Zdef}.
\end{proof}

\subsection{Perturbation 
for the limiting system of small coexistence}
In this subsection,
we construct a subset of positive solutions of \eqref{SKT}
as the perturbation of $\mathcal{C}_{\infty}$ when $\alpha$
is sufficiently large.
By the changes of variables
\begin{equation}\label{uvUV}
(u,v)=\varepsilon (U,V)\quad\mbox{with}\quad\varepsilon=
\dfrac{1}{\alpha}
\end{equation}
and 
\begin{equation}\label{WZdef}
W=U-V,\quad
Z=(1+V)U
\end{equation}
in \eqref{SKT},
we know that $(W,Z)$ satisfies
the following system of semilinear elliptic equations:
\begin{equation}\label{WZeq}
\begin{cases}
\Delta W +\lambda m(x)W-\varepsilon \{\,
U(b_{1}U+c_{1}V)-V(b_{2}U+c_{2}V)\,\}=0
\ \ &\mbox{in}\ \Omega,\\
\Delta Z+\lambda m(x)U-\varepsilon U(b_{1}U+c_{1}V)=0,
\quad Z\ge 0
\ \ &\mbox{in}\ \Omega,\\
W=Z=0\ \ &\mbox{on}\ \partial\Omega,
\end{cases}
\end{equation}
where $(U,V)$ is considered as the
function of $(W,Z)$ defined by \eqref{WZdef} such as
\begin{equation}\label{UVdef}
(U(W,Z),V(W,Z))=
\dfrac{1}{2}(
\sqrt{(1-W)^{2}+4Z}-1+W,
\sqrt{(1-W)^{2}+4Z}-1-W).
\end{equation}
Thanks to the one-to-one correspondence \eqref{uvUV}-\eqref{WZdef}
between $(u,v)$ and $(W,Z)$, 
we have a policy of proving Theorem \ref{Cathm} 
by constructing a subset of solutions to \eqref{WZeq}
as the perturbation of the bifurcation curve \eqref{curve} 
of solutions to
\eqref{subell} obtained in Proposition \ref{subprop}.
\begin{prop}\label{perprop}
For any large $\varLambda>0$,
there exists a small $\underline{\delta}=\underline{\delta}(\varLambda )>0$ 
such that,
if $|\varepsilon |<\underline{\delta}$, then 
there exists a subset of solutions to \eqref{WZeq},
which consists of a bifurcation curve
$\widetilde{\mathcal{C}}_{\varepsilon, \varLambda }
(\subset (\lambda_{1}, \varLambda\,]\times\boldsymbol{X})$ paramterized by 
$\lambda\in (\lambda_{1}, \varLambda\,]$ as follows:
$$\widetilde{\mathcal{C}}_{\varepsilon, \varLambda }
=\{\,(\lambda ,W(\lambda,\varepsilon ), Z(\lambda,\varepsilon ))\in 
(\lambda_{1}, \varLambda\,]\times\boldsymbol{X}\,:\,|\varepsilon|<\,
\underline{\delta}\,\},
$$
where
$$
(\lambda_{1},\varLambda\,]\times (-\underline{\delta},\underline{\delta})\ni
(\lambda,\varepsilon )\mapsto
(W(\lambda,\varepsilon ), Z(\lambda,\varepsilon ))
\in\boldsymbol{X}
$$
is of class $C^{1}$ satisfying
$$
Z(\lambda,\varepsilon )>0\quad\mbox{in}\  \Omega
\quad\mbox{for any}\ 
(\lambda,\varepsilon )\in (\lambda_{1}, \varLambda\,]
\times
(-\underline{\delta}, \underline{\delta})$$
and
\begin{equation}\label{1stbifpt}
\lim_{\lambda\searrow\lambda_{1}}(W(\lambda,\varepsilon ), 
Z(\lambda,\varepsilon ))=(0,0)
\quad\mbox{in}\ \boldsymbol{X}\ \ \mbox{for any} 
\ \varepsilon\in (-\underline{\delta}, \underline{\delta})
\end{equation}
and
$$(W(\lambda, 0), Z(\lambda, 0))=(0,Z_{0}(\lambda ))\ \ 
\mbox{for any}\ \lambda\in (\lambda_{1},\varLambda\,]$$
with the solution $Z_{0}(\lambda )$ of \eqref{subell} 
obtained in Proposition \ref{subprop}.
Furthermore,
$\widetilde{\mathcal{C}}_{\varepsilon, \varLambda}$ can be extended 
to the range $\lambda\in (\varLambda, \infty)$
as a
connected subset of solutions of \eqref{WZeq} with $Z>0$ in $\Omega$.
\end{prop}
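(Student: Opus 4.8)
The plan is to view \eqref{WZeq} as an $\varepsilon$-perturbation (with $\varepsilon=1/\alpha$) of its limiting form at $\varepsilon=0$, for which, by \eqref{UVdef} together with the reduction \eqref{Zdef}, the solutions with $W\equiv0$ are exactly the curve $\varGamma=\{(\lambda,Z_{0}(\lambda))\}$ of Proposition~\ref{subprop}. Accordingly I would let $\mathcal{G}(\lambda,\varepsilon,W,Z)$ denote the left-hand side of \eqref{WZeq} (with $(U,V)$ given by \eqref{UVdef}), regard it as a $C^{1}$ map from $(\lambda_{1},\infty)\times\mathbb{R}\times\boldsymbol{X}$ into $\boldsymbol{Y}$ with $\mathcal{G}(\lambda,0,0,Z_{0}(\lambda))=0$ for all $\lambda\in(\lambda_{1},\infty)$, and then solve $\mathcal{G}=0$ for $(W,Z)$ as a function of $(\lambda,\varepsilon)$ near this known family.

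The key computation is the partial linearization $\mathcal{G}_{(W,Z)}(\lambda,0,0,Z_{0}(\lambda))$. Since at $\varepsilon=0$ the first component of $\mathcal{G}$ is $\Delta W+\lambda m(x)W$ and the second is $\Delta Z+\lambda m(x)U(W,Z)$ with $U(0,Z)$ coinciding with the nonlinearity in \eqref{subell}, this operator is block lower-triangular, with diagonal blocks $\Delta+\lambda m(x)$ (in the $W$-slot) and $F_{Z}(\lambda,Z_{0}(\lambda))$ from \eqref{Fz} (in the $Z$-slot); the latter is an isomorphism of $X$ onto $Y$ for every $\lambda\in(\lambda_{1},\infty)$ by the argument already used in the proof of Proposition~\ref{subprop}, while the former is an isomorphism precisely when $\lambda\neq\lambda_{j}$ for all $j\ge2$. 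Hence, for each $\lambda_{*}\in(\lambda_{1},\varLambda\,]$ with $\lambda_{*}\neq\lambda_{j}$, the implicit function theorem yields a $C^{1}$ family $(W(\lambda,\varepsilon),Z(\lambda,\varepsilon))$ of solutions of \eqref{WZeq} near $(\lambda_{*},0)$ with $(W,Z)(\lambda,0)=(0,Z_{0}(\lambda))$; uniform continuity of the linearization and of its inverse on compact $\lambda$-subsets away from the $\lambda_{j}$ makes the admissible range of $\varepsilon$ uniform there, and, since $Z_{0}(\lambda)>0$ with $\partial_{\nu}Z_{0}(\lambda)<0$, closeness in $\boldsymbol{X}\subset C^{1}(\overline{\Omega})^{2}$ forces $Z(\lambda,\varepsilon)>0$ after shrinking $\varepsilon$. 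Near $\lambda=\lambda_{1}$ the apparent degeneracy is harmless: there both $Z_{0}(\lambda)$ and $U(0,Z_{0}(\lambda))$ tend to $0$, so the forcing $\varepsilon\{U(b_{1}U+c_{1}V)-V(b_{2}U+c_{2}V)\}$ in the $W$-equation is quadratically small in $Z_{0}(\lambda)$, the $W$-component stays slaved to it, and a bifurcation-from-simple-eigenvalue argument as in Lemma~\ref{biflem} applied to the full system \eqref{WZeq} delivers the local branch together with \eqref{1stbifpt}.

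The main obstacle is the passage of the curve through each eigenvalue $\lambda=\lambda_{j}$ with $2\le\lambda_{j}\le\varLambda$ — only finitely many — at which $\Delta+\lambda_{j}m(x)$ has the nontrivial kernel $E_{j}$ and the IFT step above breaks down. Here I would perform a Lyapunov--Schmidt reduction at $(\lambda_{j},0,0,Z_{0}(\lambda_{j}))$: the kernel of $\mathcal{G}_{(W,Z)}$ is $(\dim E_{j})$-dimensional, and by the triangular structure the complement of its range is the copy of $E_{j}$ sitting in the first slot; one solves the codimension-$\dim E_{j}$ auxiliary equation by the implicit function theorem and then studies the resulting finite-dimensional bifurcation equation, using the description — obtained in Section~5 — of the solution set of the $\varepsilon=0$ system with $W\neq0$ that emanates at $\lambda=\lambda_{j}$ perpendicularly to the $\lambda$-axis, in order to continue $\widetilde{\mathcal{C}}_{\varepsilon,\varLambda}$ across $\lambda_{j}$ (when $\dim E_{j}=1$ this is the implicit-function argument with the integral normalization $\int_{\Omega}W\varPhi_{j}$ alluded to in the introduction). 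Patching the finitely many local pieces then produces a single $C^{1}$ curve $\widetilde{\mathcal{C}}_{\varepsilon,\varLambda}$ over $(\lambda_{1},\varLambda\,]$ with a common $\underline{\delta}=\underline{\delta}(\varLambda)$; this reduction is the delicate part of the argument.

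Finally, to extend $\widetilde{\mathcal{C}}_{\varepsilon,\varLambda}$ to the range $\lambda>\varLambda$, I would undo the scaling \eqref{uvUV}--\eqref{WZdef} so as to return to positive solutions of \eqref{SKT} with $\alpha=1/\varepsilon$ large, and invoke the $\alpha$-independent a priori bounds of Lemma~\ref{aprlem}: combined with elliptic regularity they confine any connected family of positive solutions continued from the endpoint to a fixed bounded set in $\boldsymbol{X}$, so a standard global-continuation argument yields a connected set of solutions of \eqref{WZeq} with $Z>0$ reaching every $\lambda>\varLambda$. Re-expressing $\widetilde{\mathcal{C}}_{\varepsilon,\varLambda}$ through \eqref{uvUV}--\eqref{WZdef} then gives the branch of positive solutions of \eqref{SKT} required for Theorem~\ref{Cathm}.
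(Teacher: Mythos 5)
Your overall strategy matches the paper's for most of the $\lambda$-range: define the operator $G$ from \eqref{WZeq}, observe $G(\lambda, 0, Z_0(\lambda), 0) = 0$, note that $G_{(W,Z)}(\lambda,0,Z_0(\lambda),0)$ is block lower triangular with diagonal blocks $-\Delta - \lambda m$ (the $W$-slot) and $F_Z(\lambda, Z_0(\lambda))$ (the $Z$-slot), apply the implicit function theorem at each $\lambda_*\in(\lambda_1,\varLambda\,]$ away from the $\lambda_j$, use the Crandall--Rabinowitz theorem at $\lambda=\lambda_1$, and patch a uniform $\underline{\delta}$ by a finite-subcover argument. The extension past $\varLambda$ also agrees in spirit: the paper invokes Rabinowitz's global bifurcation theorem, which is the same ingredient that your a-priori-bound-plus-continuation argument packages informally.

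Where you diverge from the paper --- and where your proposal has a genuine gap --- is at the finitely many interior eigenvalues $\lambda_j\in(\lambda_1,\varLambda\,]$, $j\ge 2$. You correctly read off from \eqref{ker} that $\mbox{Ker}\,G_{(W,Z)}(\lambda_j,0,Z_0(\lambda_j),0)=E_j\times H_j$ and propose a Lyapunov--Schmidt reduction, but you then stop at exactly the step that matters. At $\varepsilon=0$ the bifurcation function vanishes identically on the cross $\{\phi_0=0\}\cup\{\lambda=\lambda_j\}$ (the latter being the Section-5 perpendicular branch $s\mapsto(s\varPhi_j,Z_j(s))$), so it is degenerate in $\phi_0$ at $\lambda=\lambda_j$; whether a solution branch with $\phi_0\approx 0$ persists as a $C^1$ function of $\lambda$ across $\lambda_j$ for small $\varepsilon\neq 0$ is precisely the nontrivial content, and ``this reduction is the delicate part'' acknowledges rather than fills the gap. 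Moreover, the parenthetical appeal to the integral constraint $\int_\Omega W\varPhi_j$ is misplaced: in the paper that normalization is used in Sections 5--6 to track the perpendicular branch (Proposition \ref{sapprop}) and the secondary bifurcation point $\mu_j(\varepsilon)$ (Lemma \ref{2ndbiflem}), not to carry $\widetilde{\mathcal{C}}_{\varepsilon,\varLambda}$ through $\lambda_j$. The paper instead sidesteps Lyapunov--Schmidt with a shorter device: it restricts the $W$-unknown to the closed subspace $X_j:=X\cap Y_j$, where $Y_j:=\{\phi\in Y : \int_\Omega\phi\varPhi_j=0\ \mbox{for all}\ \varPhi_j\in E_j\}$, so that the restricted linearization $L_j:=G_{(W,Z)}(\lambda_j,0,Z_0(\lambda_j),0)|_{X_j\times X}$ becomes invertible (onto $Y_j\times Y$), and applies the implicit function theorem to the restricted operator $G^j$ on $\mathbb{R}_+\times X_j\times X\times\mathbb{R}$, never writing a bifurcation equation. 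You should either adopt that restriction device or actually carry out the analysis of your bifurcation equation near $(\lambda_j,0,0)$; as written, the passage of $\widetilde{\mathcal{C}}_{\varepsilon,\varLambda}$ through each interior $\lambda_j$ is asserted but not proved.
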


\begin{proof}
We define a nonlinear operator 
$G\,:\, \mathbb{R}_{+}\times \boldsymbol{X}\times \mathbb{R}\to \boldsymbol{Y}$
associated with \eqref{WZeq}
by
\begin{equation}\label{Gdef}
G(\lambda,W,Z,\varepsilon )=-
\left[
\begin{array}{l}
\Delta W +\lambda m(x)W\\
\Delta Z+\lambda m(x)U
\end{array}
\right]
+\varepsilon \left[
\begin{array}{l}
U(b_{1}U+c_{1}V)-V(b_{2}U+c_{2}V)\\
U(b_{1}U+c_{1}V)
\end{array}
\right],
\end{equation}
where $(U,V)$ is defined by \eqref{UVdef}.
In view of Proposition \ref{subprop},
one can see that
$$
G(\lambda, 0, Z_{0}(\lambda ), 0)=
\biggl[
\begin{array}{l}
0\\
F(\lambda,Z_{0}(\lambda ))
\end{array}
\biggr]
=
\biggl[
\begin{array}{l}
0\\
0
\end{array}
\biggr]
\ \ \mbox{for any}\ \lambda\in (\lambda_{1},\infty ),
$$
where $F$ is the operator defined by \eqref{Fdef}.
Then 
the curve 
\begin{equation}\label{Z0}
\{\,(\lambda,W,Z)\in (\lambda_{1},\infty)\times \boldsymbol{X}\,:\,
W=0,\ Z=Z_{0}(\lambda )\ \}
\end{equation}
gives a subset of solutions of $G(\lambda,W,Z,0)=0$.

Our aim is to construct a subset of solutions of
$G(\lambda,W,Z,\varepsilon )=0$ with small $|\varepsilon |$
as a perturbation of the curve expressed as \eqref{Z0}.
For an application of the implicit function theorem,
we differentiate $G$ by $(W,Z)$ as follows:
\begin{equation}\label{Gwz}
\begin{split}
&G_{(W,Z)}(\lambda,W,Z,\varepsilon )\left[
\begin{array}{c}
\phi\\
\psi
\end{array}
\right]\\
=&-
\biggl[
\begin{array}{l}
\Delta\phi+\lambda m(x)\phi\\
\Delta\psi+\lambda m(x)(U_{W}\phi+U_{Z}\psi)
\end{array}
\biggl]\\
&+\varepsilon 
\biggl[
\begin{array}{ll}
2b_{1}U+(c_{1}-b_{2})V & (c_{1}-b_{2})U-2c_{2}V\\
2b_{1}U+c_{1}V & c_{1}U
\end{array}
\biggr]
\biggl[
\begin{array}{cc}
U_{W} & U_{Z}\\
V_{W} & V_{Z}
\end{array}
\biggr]
\biggl[
\begin{array}{c}
\phi\\
\psi
\end{array}
\biggr],
\end{split}
\end{equation}
where
\begin{equation}\label{Uwz}
\begin{split}
&U_{W}=U_{W}(W,Z)=
\dfrac{1}{2}\biggl(
1-\dfrac{1-W}{\sqrt{(1-W)^{2}+4Z}}\biggr),\quad
U_{Z}=U_{Z}(W,Z)=
\dfrac{1}{\sqrt{(1-W)^{2}+4Z}},\\
&V_{W}=V_{W}(W,Z)=
-\dfrac{1}{2}\biggl(
1+\dfrac{1-W}{\sqrt{(1-W)^{2}+4Z}}\biggr),\quad
V_{Z}=V_{Z}(W,Z)=
\dfrac{1}{\sqrt{(1-W)^{2}+4Z}}.\\
\end{split}
\end{equation}
In view of \eqref{curve},
we next consider whether 
$G_{(W,Z)}(\lambda,0,Z_{0}(\lambda ),0)$ is an isomorphism
from $\boldsymbol{X}$ to $\boldsymbol{Y}$ or not.
It follows from \eqref{Fz} and \eqref{Gwz} that
\begin{equation}
G_{(W,Z)}(\lambda,0,Z_{0}(\lambda ),0 )\left[
\begin{array}{c}
\phi\\
\psi
\end{array}
\right]
=
\biggl[
\begin{array}{l}
-\Delta\phi-\lambda m(x)\phi\\
F_{Z}(\lambda,Z_{0}(\lambda))\psi-\lambda m(x)U_{W}(0,Z_{0}(\lambda))\phi
\end{array}
\biggl]
\quad\mbox{for}\ \lambda\in (\lambda_{1},\infty).
\nonumber
\end{equation}
Then,
to get $\mbox{Ker}\,G_{(W,Z)}(\lambda, 0, Z_{0}(\lambda ), 0)$,
it suffices to solve the following linear system
\begin{equation}\label{Gwz0}
\begin{cases}
-\Delta\phi =\lambda m(x)\phi \quad &\mbox{in}\ \Omega,\\
F_{Z}(\lambda, Z_{0}(\lambda ))\psi =
\dfrac{\lambda m(x)}{2}\biggl(1-\dfrac{1}{\sqrt{4Z_{0}(\lambda )+1}}\biggr)\phi
\quad &\mbox{in}\ \Omega,\\
\phi =\psi =0
\quad &\mbox{on}\ \partial\Omega.
\end{cases}
\end{equation}
In view of the argument around \eqref{Fz2},
we recall that,
if $\lambda\in (\lambda_{1},\infty)$,
then
$F_{Z}(\lambda,Z_{0}(\lambda ))$ is an isomorphism from $X$ to $Y$,
and therefore, the inverse $F_{Z}(\lambda, Z_{0}(\lambda ))^{-1}$
exists as a bounded linear operator from $Y$ to $X$.
If $\lambda\neq \lambda_{j}$ for any $j\in\mathbb{N}$,
then the first equation of \eqref{Gwz0} leads to $\phi=0$,
and then, the second equation gives
$\psi = F_{Z}(\lambda, Z_{0}(\lambda ))^{-1}\,0 =0$.
On the other hand,
if $\lambda=\lambda_{j}$ with some $j\ge 2$,
then the first equation of \eqref{Gwz0}
gives $\phi=s\varPhi_{j}$
for any $L^{\infty}$ normalized eigenfunction of
\eqref{ev} with $\lambda=\lambda_{j}$
and $s\in\mathbb{R}$, and thereby,
the second equation leads to
$
\psi=s\varPsi_{j}$, where
$$
\varPsi_{j} :=
F_{Z}(\lambda_{j}, Z_{0}(\lambda_{j}))^{-1}\biggl[\,
\dfrac{\lambda_{j} m(x)}{2}
\biggl(1-\dfrac{1}{\sqrt{4Z_{0}(\lambda_{j})+1}}\biggr)\varPhi_{j}\,\biggr].
$$
Let 
$E_{j}$ be the eigenspace of \eqref{ev} with $\lambda=\lambda_{j}$, that is,
$$
E_{j}:=\{\,\phi\in X\,:\,
\mbox{$\phi$ satisfies \eqref{ev} with $\lambda=\lambda_{j}$}\,\}.
$$
Consequently, we know that,
for any $\lambda\in (\lambda_{1}, \infty)$,
\begin{equation}\label{ker}
\mbox{Ker}\,G_{(W,Z)}(\lambda,0, Z_{0}(\lambda ), 0)=
\begin{cases}
0\quad &\mbox{if}\ \lambda\neq \lambda_{j},\\
E_{j}\times H_{j}
\quad &\mbox{if}\ \lambda=\lambda_{j},
\end{cases}
\end{equation}
where
$$
H_{j}:=F_{Z}(\lambda_{j}, Z_{0}(\lambda_{j}))^{-1}
\biggl[\,
\dfrac{\lambda_{j}m(x)}{2}
\biggl(1-\dfrac{1}{\sqrt{4Z_{0}(\lambda_{j})+1}}\biggr)
E_{j}\,\biggr].
$$

Take any $\lambda^{*}\in (\lambda_{1}, \infty)\setminus
\cup^{\infty}_{j=2}\{\lambda_{j}\}$.
Then \eqref{ker} enables us to use the implicit function theorem
to find a neighborhood $\mathcal{U}^{*}(\lambda^{*})$ 
of $(\lambda,W,Z,\varepsilon  )=(\lambda^{*}, 0, Z_{0}(\lambda^{*}), 0)
\in\mathbb{R}\times\boldsymbol{X}\times\mathbb{R}$
and a small $\delta^{*}=\delta^{*}(\lambda^{*}) >0$ such that
\begin{equation}\label{cover1}
\begin{split}
&\{\,(\lambda, W,Z,\varepsilon )\in\mathcal{U}^{*}(\lambda^{*})\,:\,
G(\lambda, W,Z,\varepsilon )=0\,\}\\
=&
\{\,(\lambda, W,Z,\varepsilon )\,:\,
(W,Z)=(W(\lambda,\varepsilon ), Z(\lambda, \varepsilon )),\
(\lambda,\varepsilon )\in (\lambda^{*}-\delta^{*}, 
\lambda^{*}+\delta^{*})\times
(-\delta^{*}, \delta^{*})\,\}\,(=:\varGamma(\lambda^{*}))
\end{split}
\end{equation}
with some function 
$
(\lambda^{*}-\delta^{*}, \lambda^{*}+\delta^{*})\times
(-\delta^{*}, \delta^{*})\ni
(\lambda,\varepsilon )\mapsto
(W(\lambda,\varepsilon ), Z(\lambda,\varepsilon ))\in\boldsymbol{X}
$
of class $C^{1}$ satisfying
\begin{equation}\label{per0}
(W(\lambda,0), Z(\lambda, 0))=(0, Z_{0}(\lambda ))
\quad \mbox{for any}\ 
\lambda\in (\lambda^{*}-\delta^{*}, \lambda^{*}+\delta^{*}).
\end{equation}

Assume $\lambda=\lambda_{j}$ with some $j\ge 2$.
Since
$G_{(W,Z)}(\lambda_{j},0,Z_{0}(\lambda_{j}), 0)$ is not
an isomorphism from $\boldsymbol{X}$ to $\boldsymbol{Y}$
by \eqref{ker},
we restrict the domain of $G_{(W,Z)}(\lambda_{j},0,Z_{0}(\lambda_{j}), 0)$
such that $G_{(W,Z)}(\lambda_{j},0,Z_{0}(\lambda_{j}), 0)$ becomes an isomorphism.
To so so, we introduce the closed subsets
$Y_{j}\subset Y$
and
$X_{j}\subset X$
as
$Y_{j}:=\{\,\phi\in Y\,:\,\int_{\Omega}\phi\varPhi_{j}=0\ 
\mbox{for any}\ \varPhi_{j}\in E_{j}\}$
and
$X_{j}:=X\cap Y_{j}$.
Hence it follows that
$L_{j}:=G_{(W,Z)}(\lambda_{j},0,Z_{0}(\lambda_{j}), 0)|_{X_{j}\times X}$
becomes
an isomorphism from $X_{j}\times X$ to $Y_{j}\times Y$.
Then we denote by $G^{j}$ the nonlinear operator $G$
with the domain restricted to $\mathbb{R}_{+}\times X_{j}\times X\times
\mathbb{R}$ such as
\begin{equation}\label{Gjdef}
\mathbb{R}_{+}\times X_{j}\times X\times\mathbb{R}\ni
(\lambda, W,Z,\varepsilon )\mapsto G^{j}(\lambda, W,Z,\varepsilon ):=
G(\lambda, W,Z,\varepsilon )
\in \boldsymbol{Y}.
\end{equation}
Together with the fact that
$G^{j}(\lambda_{j}, 0, Z_{0}(\lambda_{j}), 0)=0$,
we can use the implicit function theorem
for
$
G^{j}\,:\,\mathbb{R}_{+}\times X_{j}\times X\times\mathbb{R}\to \boldsymbol{Y}
$
to obtain a neighborhood $\mathcal{U}^{j}(\lambda_{j})$ of 
$(\lambda_{j}, 0, Z_{0}(\lambda_{j}), 0)$ in 
$\mathbb{R}\times X_{j}\times X\times\mathbb{R}$
and a small $\delta_{j}>0$ such that
\begin{equation}\label{cover2}
\begin{split}
&\{\,(\lambda, W,Z,\varepsilon )\in\mathcal{U}^{j}(\lambda_{j})\,:\,
G^{j}(\lambda, W,Z,\varepsilon )=0\,\}\\
=&
\{\,(\lambda, W,Z,\varepsilon )\,:\,
(W,Z)=(W(\lambda,\varepsilon ), Z(\lambda, \varepsilon )),\
(\lambda,\varepsilon )\in (\lambda_{j}-\delta_{j}, \lambda_{j}+\delta_{j})\times
(-\delta_{j}, \delta_{j})\,\}\,(=:\varGamma_{j})
\end{split}
\end{equation}
with some function 
$$
(\lambda_{j}-\delta_{j}, \lambda_{j}+\delta_{j})\times
(-\delta_{j}, \delta_{j})\ni
(\lambda,\varepsilon )\mapsto
(W(\lambda,\varepsilon ), Z(\lambda,\varepsilon ))\in X_{j}\times X
$$
of class $C^{1}$ satisfying
$(W(\lambda,0), Z(\lambda, 0))=(0, Z_{0}(\lambda ))$
for any $\lambda\in (\lambda_{j}-\delta_{j}, \lambda_{j}+\delta_{j})$.

Assume $\lambda=\lambda_{1}$.
By \eqref{UVdef} and \eqref{Gdef}, we see from \eqref{Gjdef} that
$G^{1}(\lambda_{1},0,0,\varepsilon )=0$
for any $\varepsilon \in\mathbb{R}$.
It follows from \eqref{Gwz} and \eqref{Uwz} that
$G^{1}_{(W,Z)}(\lambda_{1},0,0,\varepsilon )\in 
\mathcal{L}(X_{1}\times X, \boldsymbol{Y})$ is expressed as
$$
G^{1}_{(W,Z)}(\lambda_{1},0,0,\varepsilon )
\biggl[
\begin{array}{c}
\phi\\
\psi
\end{array}
\biggr]
=
-\biggl[
\begin{array}{l}
\Delta\phi +\lambda_{1}m(x)\phi\\
\Delta\psi +\lambda_{1}m(x)\psi
\end{array}
\biggr].
$$
Therefore, for the linear operator
$L_{1}(\varepsilon ):=G^{1}_{(W,Z)}(\lambda_{1},0,0,\varepsilon )
\in\mathcal{L}(X_{1}\times X, \boldsymbol{Y})$,
we see that
$$
\mbox{Ker}\,L_{1}(\varepsilon )=\mbox{span}\biggl\{
\biggl[
\begin{array}{c}
0\\
\varPhi_{1}
\end{array}
\biggr]
\biggr\},
$$
where $\varPhi_{1}$ is the eigenfunction defined by \eqref{egnor}.
Differentiating \eqref{Gwz} by $\lambda$ and setting
$(\lambda, W,Z)=(\lambda_{1}, 0, 0)$, we see 
$$
G^{1}_{(W,Z),\lambda}(\lambda_{1},0,0,\varepsilon )
\biggl[
\begin{array}{c}
\phi\\
\psi
\end{array}
\biggr]
=-m(x)
\biggl[
\begin{array}{c}
\phi\\
\psi
\end{array}
\biggr].
$$
For the use of the bifurcation theorem
from simple eigenvalues
\cite[Theorem 1.7]{CR},
we need to check the transversality condition
\begin{equation}\label{tra2}
G^{1}_{(W,Z),\lambda}(\lambda_{1},0,0,\varepsilon )
\biggl[
\begin{array}{c}
0\\
\varPhi_{1}
\end{array}
\biggr]
=-m(x)
\biggl[
\begin{array}{c}
0\\
\varPhi_{1}
\end{array}
\biggr]
\not\in\mbox{Ran}\,
L_{1}(\varepsilon ).
\end{equation}
Suppose by contradiction that there exists
$\psi\in X$ such that
$$
-\Delta\psi-\lambda_{1}m(x)\psi
=-m(x)\varPhi_{1}
\ \ \mbox{in}\ \Omega,\quad
\psi=0\ \ \mbox{on}\ \partial\Omega.
$$
Multiplying the differential equation by $\varPhi_{1}$ and 
integrating the resulting expression over $\Omega$, we have
$$\displaystyle\int_{\Omega}
m(x)\varPhi_{1}^{2}=0,$$
which is impossible.
This contradiction enables us to conclude that
\eqref{tra2} holds true.
Then by the bifurcation theorem \cite[Theorem 1.7]{CR},
for any fixed small $|\varepsilon |$,
there exist
a neighborhood $\mathcal{U}^{1}_{\varepsilon}(\lambda_{1})$ of
$(\lambda, W,Z)=(\lambda_{1}, 0, 0)$
in
$\mathbb{R}\times X_{1}\times X$,
a small $\delta_{1}>0$
and functions for $s\in (-\delta_{1}, \delta_{1})$
of class $C^{1}$ as follows:
$$
\begin{cases}
\widetilde{w}(\,\cdot\,,s)\in X_{1}\ \ \mbox{with}\ \ 
\widetilde{w}(\,\cdot\,,0)=0,\\
\widetilde{z}(\,\cdot\,,s)\in X\ \ \mbox{with}\ \ 
\widetilde{z}(\,\cdot\,,0)=0,\\
\mu (s)\in\mathbb{R}\ \ \mbox{with}\ \ 
\mu (0)=0
\end{cases}
$$
such that
\begin{equation}\label{bifloc}
\begin{split}
&\{\,(\lambda, W,Z)\in\mathcal{U}^{1}_{\varepsilon }(\lambda_{1})
\,:\,G^{1}(\lambda, W,Z,\varepsilon )=0\,\}\\
=&
\{\,(\lambda,0,0)\in\mathcal{U}^{1}_{\varepsilon }(\lambda_{1})\,\}\cup
\{\,(\lambda, W,Z)=(\lambda_{1}+\mu (s), s\widetilde{w}(\,\cdot\,,s), 
s(\varPhi_{1}+\widetilde{z}(\,\cdot\,,s)))\,:\,
s\in (-\delta_{1}, \delta_{1})\,\}.
\end{split}
\end{equation}
Here we may assume $\mu' (s)>0$
for any $s\in (0,\delta_{1})$ because there is no solution
of \eqref{WZeq} with $Z>0$ in $\Omega$ when $\lambda\le\lambda_{1}$.

For any large $\varLambda>0$, we take
$\lambda^{*}\in [\lambda_{1}+\mu (\delta_{1}/2), \varLambda ]$
such that $\lambda^{*}\neq\lambda_{j}$ for any $j\in\mathbb{N}$.
By virtue of \eqref{cover1}, we introduce a 
compact set
$\widetilde{\varGamma}(\lambda^{*})$ in 
$\mathbb{R}\times\boldsymbol{X}\times\mathbb{R}$ as
\begin{equation}
\begin{split}
\widetilde{\varGamma}(\lambda^{*}):=\{\,
&(\lambda, W,Z,\varepsilon )\in\mathbb{R}_{+}\times
\boldsymbol{X}\times\mathbb{R}\,:\,\\
&(W,Z)=(W(\lambda,\varepsilon ), Z(\lambda, \varepsilon )),\,
(\lambda,\varepsilon )\times [\lambda^{*}-\delta^{*}/2, \lambda^{*}+\delta^{*}/2]
\times [-\delta^{*}/2, \delta^{*}/2]\,\}.
\end{split}
\nonumber
\end{equation}
We set $j_{0}:=\max\{\,j\in\mathbb{N}\,:\,\lambda_{j}\le\varLambda\,\}$.
For each $j\in\{\,2,3,\ldots,j_{0}\,\}$,
we similarly introduce a compact set contained in $\varGamma_{j}$ 
(see \eqref{cover2}) as follows:
\begin{equation}
\begin{split}
\widetilde{\varGamma}_{j}:=\{\,
&(\lambda, W,Z,\varepsilon )\in\mathbb{R}_{+}\times
\boldsymbol{X}\times\mathbb{R}\,:\,\\
&(W,Z)=(W(\lambda,\varepsilon ), Z(\lambda, \varepsilon )),\,
(\lambda,\varepsilon )\times [\lambda_{j}-\delta_{j}/2, 
\lambda_{j}+\delta_{j}/2]
\times [-\delta_{j}/2, \delta_{j}/2]\,\}.
\end{split}
\nonumber
\end{equation}
Then we observe that
$$
\left(\bigcup\limits_{\lambda^{*}\in [\lambda_{1}+
\mu(\delta_{1}/2),
\varLambda]
\setminus\cup^{j_{0}}_{j=2}\{\lambda_{j}\}}
\widetilde{\varGamma}(\lambda^{*})\cup
\bigcup^{j_{0}}_{j=2}\widetilde{\varGamma}_{j}\right)
\subset
\left(
\bigcup\limits_{\lambda^{*}\in (\lambda_{1}+
\mu(\delta_{1}/3), 
\varLambda+\eta)
\setminus\cup^{j_{0}}_{j=2}\{\lambda_{j}\}}
\varGamma(\lambda^{*})\cup
\bigcup^{j_{0}}_{j=2}\varGamma_{j}
\right)
$$
with some small $\eta>0$.
Hence,
and the right-hand side gives an open covering of
the compact set expressed as the left-hand side.
Then we can find a finite number $\{\ell_{k}\}^{k_{0}}_{k=1}$
such that
$$
\lambda_{1}+\mu\biggl(\dfrac{\delta_{1}}{3}\biggr)
<\ell_{1}<\ell_{2}<\cdots<\ell_{k_{0}}<
\varLambda+\eta
$$
and
\begin{equation}\label{patch}
\left(\bigcup\limits_{\lambda^{*}\in [\lambda_{1}+\mu(\delta_{1}/2),
\varLambda]
\setminus\cup^{j_{0}}_{j=2}\{\lambda_{j}\}}
\widetilde{\varGamma}(\lambda^{*})\cup
\bigcup^{j_{0}}_{j=2}\widetilde{\varGamma}_{j}\right)
\subset
\left(
\bigcup^{k_{0}}_{k=1}\varGamma(\ell_{k})
\cup
\bigcup^{j_{0}}_{j=2}\varGamma_{j}
\right).
\end{equation}
We set
$$
\underline{\delta}:=\min_{1\le j\le j_{0},\,
1\le k\le k_{0}}\bigg\{\,
\dfrac{\delta_{j}}{2},\dfrac{\delta^{*}(\ell_{k})}{2}\,\biggr\}>0.
$$
By virtue of \eqref{patch},
for each $\varepsilon\in (-\underline{\delta},\underline{\delta})$,
a usual patchwork procedure produces a simple curve
of solutions to $G(\lambda, W,Z,\varepsilon )=0$ as
$$\{\,
(\lambda, W(\lambda,\varepsilon ), 
Z(\lambda, \varepsilon ), \varepsilon )\,:\,
\lambda\in [\lambda_{1}+\mu(\delta_{1}/2), \varLambda ]\,\}.$$
We know from \eqref{cover1} that,
for each $\varepsilon\in (-\underline{\delta}, \underline{\delta })$,
there exists a neighborhood $\mathcal{U}^{*}_{\varepsilon }
(\lambda_{1}+\delta_{1}/2)$ of
$$
(\lambda, W,Z)=\biggl(\lambda_{1}+\mu\biggl(\dfrac{\delta_{1}}{2}\biggr),
W\biggl(\lambda_{1}+\mu\biggl(\dfrac{\delta_{1}}{2}\biggr), \varepsilon\biggr),
Z\biggl(\lambda_{1}+\mu\biggl(\dfrac{\delta_{1}}{2}\biggr),
\varepsilon\biggr)\biggr)\in\mathbb{R}\times\boldsymbol{X},$$
such that
all solutions to $G(\lambda, W,Z,\varepsilon )=0$ 
in $\mathcal{U}^{*}_{\varepsilon }(\lambda_{1}+\delta_{1}/2)$
form a simple curve
\begin{equation}\label{above}
(\lambda, W(\lambda, \varepsilon ), Z(\lambda, \varepsilon ))
\end{equation}
for $\lambda$ within a neighborhood of $\lambda_{1}+\mu(\delta_{1}/2)$.
On the other hand, we remark that the local bifurcation curve obtained in 
\eqref{bifloc} enters the neighborhood
$\mathcal{U}^{*}_{\varepsilon}(\lambda_{1}+\delta_{1}/2)$, and 
therefore,
the local bifurcation piece must meet the curve \eqref{above}.
Consequently, we know that
\begin{equation}\label{perbra}
\widetilde{\mathcal{C}}_{\varepsilon, \varLambda}
=\{\,(\lambda, W(\lambda,\varepsilon ), Z(\lambda, \varepsilon ))\,:\,
\lambda\in (\lambda_{1}, \varLambda\,]\,\}
\end{equation}
forms a simple curve of the subset of solutions of
$G(\lambda, W,Z,\varepsilon )=0$ if 
$\varepsilon \in (-\underline{\delta},\underline{\delta})$.

Furthremore, by virtue of the global bifurcation theorem by Rabinowitz \cite{Ra},
we know that $\widetilde{\mathcal{C}}_{\varepsilon, \varLambda}$ 
can be extended as 
a connected subset of solutions of 
$G(\lambda, W,Z,\varepsilon )=0$ to $\lambda\in (\varLambda, \infty )$ 
or reaches the 
trivial solution $(W,Z)=(0,0)$ at some $\lambda\ge\lambda_{1}$.
Considering that the bifurcation point from the trivial solution 
is limited to $\lambda=\lambda_{1}$ and that 
$\widetilde{\mathcal{C}}_{\varepsilon, \varLambda}$
actually bifurcates from the trivial solution at $\lambda=\lambda_{1}$, 
we can be sure that the latter possibility is ruled out.
Therefore, we can conclude that the former occurs.
Furthermore, we see that the positivity of the $Z$ 
component on the branch is preserved by the maximum principle.
Then the proof of Proposition \ref{perprop} is complete.
\end{proof}

\begin{proof}[Proof of Theorem \ref{Cathm}]
By translating the result of Proposition \ref{perprop} 
into the properties of the positive solution $(u,v)$ of \eqref{SKT} 
through the variable transformation \eqref{uvUV} and
\eqref{UVdef}, we obtain Theorem 1.4.
\end{proof}

\section{Bifurcation branches of the limit equation 
homeomorphic to eigenspaces}
This section will show that \eqref{WZeq} has solutions that do not
belong to $\widetilde{\mathcal{C}}_{\varepsilon, \varLambda }$
even when $\varepsilon >0$ is sufficiently small.
More precisely, in addition to 
$\widetilde{\mathcal{C}}_{\varepsilon, \varLambda}$,
we construct other branches of solutions of 
\eqref{WZeq} with
$(\lambda, W)$ is near $(\lambda_{j}, s\varPhi_{j})$
for sufficiently small $|s|>0$ and $\varepsilon >0$.
\subsection{Limiting case $\varepsilon =0$}
To do so,
we begin with the limiting system of \eqref{WZeq}
with $\varepsilon= 0$ as follows
\begin{equation}\label{WZeq0}
\begin{cases}
\Delta W+\lambda m(x)W=0\quad&\mbox{in}\ \Omega,\\
\Delta Z+\dfrac{\lambda m(x)}{2}
(\sqrt{(1-W)^{2}+4Z}-1+W)=0,\ \ Z\ge 0\quad&
\mbox{in}\ \Omega,\\
W=Z=0\quad&\mbox{on}\ \partial\Omega.
\end{cases}
\end{equation}
By the first equation,
if $\lambda\neq \lambda_{j}$,
then $W=0$, and moreover,
the set of positive solutions to 
the second equation with $W=0$ forms the
simple curve \eqref{curve}
parameterized by $\lambda\in (\lambda_{1}, \infty)$
obtained in Proposition \ref{subprop}.

On the other hand,
if $\lambda=\lambda_{j}$ with some $j\ge 2$,
all solutions of the first equation of \eqref{WZeq0}
can be expressed as
$W=s\varPhi_{j}$
for $s\in\mathbb{R}$,
where
$\varPhi_{j}\in E_{j}$ denotes 
any $L^{\infty}(\Omega)$ normalized 
eigenfunction of \eqref{ev}
with $\lambda=\lambda_{j}$, that is,
\begin{equation}\label{Phij}
-\Delta\varPhi_{j}=\lambda_{j}m(x)\varPhi_{j}\ \ \mbox{in}\ \Omega,
\quad\varPhi_{j}=0\ \ \mbox{on}\ \partial\Omega,\quad
\|\varPhi_{j}\|_{\infty}=1.
\end{equation}
Then in this case,
\eqref{WZeq0} is reduced to
the single elliptic equation:
\begin{equation}\label{sZeq}
\begin{cases}
\Delta Z+\dfrac{\lambda_{j}m(x)}{2}
h(Z,1-s\varPhi_{j})
=0,
\quad Z\ge 0\quad &\mbox{in}\ \Omega,\\
Z=0\quad &\mbox{on}\ \partial\Omega.
\end{cases}
\end{equation}
where $h(Z, \xi)$ is defined by
$h(Z, \xi):=
\sqrt{4Z+\xi^{2}}-\xi$.
\begin{lem}\label{eigenlem}
Let $j\in\mathbb{N}$ be $j\ge 2$.
For any fixed $s\in\mathbb{R}$ and $\varPhi_{j}\in E_{j}$,
there exists  
a unique positive solution $Z=Z_{j}(s)\in X$ of \eqref{sZeq}.
Furthermore, $\mathbb{R}\ni s\mapsto Z_{j}(s)\in X$
is of class $C^{1}$ with
$Z_{j}(0)=Z_{0}(\lambda_{j})$,
where $Z_{0}(\lambda_{j})$ is the positive solution of \eqref{subell}
with $\lambda=\lambda_{j}$ obtained in Proposition \ref{subprop}.
\end{lem}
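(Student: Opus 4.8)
The plan is to treat \eqref{sZeq} as the semilinear Dirichlet problem $-\Delta Z=f_{s}(x,Z)$ in $\Omega$, $Z=0$ on $\partial\Omega$, with
$$
f_{s}(x,Z):=\dfrac{\lambda_{j}m(x)}{2}\Bigl(\sqrt{4Z+(1-s\varPhi_{j}(x))^{2}}-(1-s\varPhi_{j}(x))\,\Bigr),\qquad Z\ge 0,
$$
and to record the structural properties: $f_{s}(x,Z)>0$ for $Z>0$, $f_{s}(x,0)\ge 0$, $f_{s}(x,\cdot)$ is nondecreasing ($\partial_{Z}f_{s}=\lambda_{j}m/\sqrt{4Z+(1-s\varPhi_{j})^{2}}\ge 0$), the sublinear bound $f_{s}(x,Z)\le\lambda_{j}\|m\|_{\infty}(\sqrt{Z}+\|1-s\varPhi_{j}\|_{\infty})$ holds, and $Z\mapsto f_{s}(x,Z)/Z$ is nonincreasing for a.e.\ $x$ and strictly decreasing wherever $m(x)>0$ (a direct computation, as for $h_{1}$ in the proof of Proposition \ref{subprop}). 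The last property, via the Br\'ezis--Oswald theorem \cite[Theorem~1]{BO} as already invoked in that proof, shows at once that \eqref{sZeq} has at most one positive solution; it remains to produce one and to establish the $C^{1}$ dependence on $s$.

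For existence I would use the sub/supersolution method. A supersolution is obtained exactly as in Lemma \ref{aprlem2}: since $\xi\mapsto\lambda_{j}\|m\|_{\infty}(\sqrt{\xi}+\|1-s\varPhi_{j}\|_{\infty})-\tfrac{\sigma_{1}}{2}\xi$ is bounded above on $(0,\infty)$ by some $M_{s}>0$, the function $\overline{Z}:=(-\Delta-\tfrac{\sigma_{1}}{2})^{-1}M_{s}>0$ is a positive supersolution of \eqref{sZeq}. For a subsolution I exploit the sign change of $\varPhi_{j}$ (available since $j\ge 2$). Assume first $s>0$ and put $\underline{Z}:=\delta\varPhi_{j}^{+}$, where $\varPhi_{j}^{+}=\max\{\varPhi_{j},0\}\not\equiv 0$ and $\delta\in(0,s]$ is small enough that $\underline{Z}\le\overline{Z}$ in $\Omega$ (possible because $\varPhi_{j}\in C^{1}(\overline{\Omega})$ vanishes on $\partial\Omega$, as does $\overline{Z}$ with negative outward derivative). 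Kato's inequality applied to the eigenfunction $\varPhi_{j}$ gives $-\Delta\varPhi_{j}^{+}\le\lambda_{j}m\varPhi_{j}^{+}$ weakly, and the elementary inequality
$$
\sqrt{4Z+(1-s\varPhi_{j})^{2}}-(1-s\varPhi_{j})\ \ge\ 2Z\qquad\text{whenever }0<Z\le s\varPhi_{j}
$$
(checked by squaring) gives $f_{s}(x,\delta\varPhi_{j}^{+})\ge\lambda_{j}m\,\delta\varPhi_{j}^{+}$ on $\{\varPhi_{j}>0\}$, while $f_{s}(x,0)\ge 0$ elsewhere; hence $\underline{Z}$ is a weak subsolution with $0\le\underline{Z}\le\overline{Z}$, $\underline{Z}\not\equiv 0$. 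The monotone iteration then produces a solution $Z=Z_{j}(s)\in[\underline{Z},\overline{Z}]$, and $Z_{j}(s)\ge\underline{Z}\not\equiv 0$ together with the strong maximum principle forces $Z_{j}(s)>0$ in $\Omega$. The case $s<0$ is symmetric (use $\varPhi_{j}^{-}$ and $\delta\in(0,|s|]$), and $s=0$ is precisely Proposition \ref{subprop} at $\lambda=\lambda_{j}$, whence $Z_{j}(0)=Z_{0}(\lambda_{j})$.

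For the $C^{1}$ regularity in $s$ I would apply the implicit function theorem to $(s,Z)\mapsto-\Delta Z-f_{s}(\cdot,Z)$ from $\mathbb{R}\times X$ to $Y$ at $(s,Z_{j}(s))$. Since $Z_{j}(s)>0$ in $\Omega$ while $1-s\varPhi_{j}\to 1$ on $\partial\Omega$, the quantity $4Z_{j}(s)+(1-s\varPhi_{j})^{2}$ stays bounded away from $0$ on $\overline{\Omega}$, so this map is of class $C^{1}$ near the relevant set. Its Fr\'echet derivative in $Z$ is $-\Delta-\partial_{Z}f_{s}(\cdot,Z_{j}(s))$; because $-\Delta Z_{j}(s)=f_{s}(\cdot,Z_{j}(s))$, the positive function $Z_{j}(s)$ is the principal eigenfunction of $-\Delta-f_{s}(\cdot,Z_{j}(s))/Z_{j}(s)$ for the eigenvalue $0$, and strict decrease of $f_{s}(x,\cdot)/\,\cdot$ where $m>0$ gives $\partial_{Z}f_{s}(x,Z_{j}(s))<f_{s}(x,Z_{j}(s))/Z_{j}(s)$ there; by strict monotonicity of the principal eigenvalue in the potential, $-\Delta-\partial_{Z}f_{s}(\cdot,Z_{j}(s))$ then has positive principal eigenvalue, hence is an isomorphism from $X$ to $Y$ (the same reasoning as around \eqref{Fz2}). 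The implicit function theorem thus yields a local $C^{1}$ branch through $(s,Z_{j}(s))$, and uniqueness of the positive solution patches these into a globally $C^{1}$ map $\mathbb{R}\ni s\mapsto Z_{j}(s)\in X$.

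I expect the main obstacle to be the construction of a positive, non-degenerate subsolution valid for \emph{every} $s\in\mathbb{R}$: for small $|s|$ a multiple of $\varPhi_{1}$ would do, but for large $s$ one must use $\varPhi_{j}^{\pm}$ together with the quadratic inequality above, which is exactly where the sign-changing nature of $\varPhi_{j}$ enters. A secondary technical point is that $f_{s}(x,\cdot)$ is only Lipschitz, not $C^{1}$, at $Z=0$ when $1-s\varPhi_{j}(x)=0$; this is harmless for the sub/supersolution step (which needs only continuity and monotonicity of $f_{s}$ in $Z$) and, as noted, does not affect the set on which the implicit function theorem is applied.
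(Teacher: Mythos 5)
Your proof is correct and establishes the lemma, but it takes a genuinely different route from the paper's for the existence part. The paper proves existence by \emph{continuation}: it starts from the known solution $Z_{0}(\lambda_{j})$ at $s=0$, applies the implicit function theorem to extend the branch locally, derives an a priori $W^{2,p}$ bound of the form $\|Z(s)\|_{X}\le M(|s|+1)$, and then argues by contradiction that the maximal interval of definition $(-\delta_{j},\overline{s})$ must satisfy $\overline{s}=\infty$, using at each step the same variational nondegeneracy estimate $\lambda_{j}<\lambda_{1}(m/\sqrt{4Z_{j}+(1-s\varPhi_{j})^{2}})$ that you invoke for the linearization. You instead prove existence \emph{directly} for each fixed $s$ by the sub/supersolution method: the supersolution is the one already built in Lemma~\ref{aprlem2}, and your subsolution $\delta\varPhi_{j}^{+}$ (for $s>0$; $\delta\varPhi_{j}^{-}$ for $s<0$) is a genuinely new ingredient, resting on Kato's inequality for $\varPhi_{j}^{\pm}$ together with the elementary squared inequality $\sqrt{4Z+\xi^{2}}-\xi\ge 2Z$ valid exactly when $Z\le 1-\xi=s\varPhi_{j}$; this is where the sign change of $\varPhi_{j}$ (available since $j\ge 2$) is used, and it cleanly sidesteps the need to control the branch along a continuation parameter. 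Both approaches then use Brézis--Oswald uniqueness and the implicit function theorem to get the global $C^{1}$ dependence on $s$, with the same spectral argument for the invertibility of $-\Delta-\partial_{Z}f_{s}$ (your principal-eigenvalue phrasing is equivalent to the paper's $\lambda_{1}(q)$ comparison). What your route buys is that existence is decoupled from continuation --- in particular you never need to rule out a priori that $Z_{j}(s)$ might degenerate to $0$ as $s$ increases, a point the paper's continuation argument glosses over slightly; what the paper's route buys is that no subsolution construction (and hence no use of Kato's inequality) is needed, and the continuation bookkeeping parallels the one used elsewhere in the paper.
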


\begin{proof}
We first observe that 
$$
\dfrac{h(Z,\xi)}{Z}=
\frac{\sqrt{4Z+\xi^{2}}-\xi}{Z}=\dfrac{4}{\sqrt{4Z+\xi^{2}}+\xi}
$$
is monotone decreasing with respect to $Z>0$.
Then it is well known that
\eqref{sZeq} admits at most one positive solution
for each $\varPhi_{j}\in E_{j}$ and $s\in\mathbb{R}$
(e.g., \cite{BO}).

In order to construct the set of solutions of \eqref{sZeq},
we define the operator
$F^{(j)}(s,Z)\,:\,\mathbb{R}\times X\to Y$ by
$$
F^{(j)}(s,Z):=-\Delta Z-\dfrac{\lambda_{j}m(x)}{2}h(Z,1-s\varPhi_{j}).
$$
It follows from Proposition \ref{subprop} that
$F^{(j)}(0, Z_{0}(\lambda_{j}))=0$
for each $j\in\mathbb{N}$ with $j\ge 2$.
Then we can see that the Fr\'echet derivative
$L_{j}(s):=F^{(j)}_{Z}(s, Z_{0}(\lambda_{j}))\in\mathcal{L}(X,Y)$ 
is represented as
\begin{equation}\label{Ljdef}
L_{j}(s)\phi=
-\Delta\phi-\dfrac{\lambda_{j}m(x)}{\sqrt{4Z_{0}(\lambda_{j})+(1-s\varPhi_{j})^{2}}}\phi.
\end{equation}
Then setting $s=0$,
we see that
\begin{equation}\label{Lj0def}
L_{j}(0)\phi=
-\Delta\phi-\dfrac{\lambda_{j}m(x)}{\sqrt{4Z_{0}(\lambda_{j})+1}}\phi=F_{Z}(\lambda_{j},Z_{0}(\lambda_{j}))\phi,
\end{equation}
where $F_{Z}(\lambda,Z)$ is defined by \eqref{Fz}.
By repeating the argument around \eqref{Feq} with
$(\overline{\lambda},\overline{Z})$ replaced by 
$(\lambda_{j}, Z_{0}(\lambda_{j}))$,
one can verify that
$L_{j}(0)$ is an isomorphism from $X$ to $Y$.
Then we can use the implicit function theorem 
for $F^{(j)}(s,Z)$ 
to find a neighborhood $\mathcal{U}_{j}$ of
$(s,Z)=(0,Z_{0}(\lambda_{j}))\in\mathbb{R}\times X$
and a small $\delta_{j}>0$ such that
$$\{\,(s,Z)\in\mathcal{U}_{j}\,:\,F^{(j)}(s,Z)=0\,\}
=\{\,(s,Z)\,:\,Z=Z_{j}(s),\ s\in (-\delta_{j},\delta_{j})\,\}
$$
with some continuously differentiable
function $(-\delta_{j}, \delta_{j})\ni s\mapsto
Z_{j}(s)\in X$.

Next we show that the local curve $Z_{j}(s)$ can be 
extended as a global curve defined for $s\in\mathbb{R}$
as the set of positive solutions of $F^{(j)}(s,Z)=0$.
To do so, we begin with the a priori estimate
of any positive solutions of \eqref{sZeq}.
Let $Z(s)$ be any positive solution of \eqref{sZeq}.
Then by a straight forward calculation,
one can find a positive constant $\widetilde{M}=
\widetilde{M}(j, \|m\|_{\infty})$,
which is independent of $s\in\mathbb{R}$, such that
$$
-\Delta Z(s)-\dfrac{\lambda_{1}}{2}Z(s)
=\dfrac{1}{2}\left(
\lambda_{j}m(x)h(Z(s), 1-s\varPhi_{j})-\lambda_{1}Z(s)\right)
\le
\widetilde{M}(|s|+1)
$$
for any $s\in\mathbb{R}$.
Then the elliptic regularity ensures a positive constant
$M=M(j, \|m\|_{\infty}, \Omega )>0$, 
which is independent of $s\in\mathbb{R}$, such that
$\|Z(s)\|_{X}\le M(|s|+1)$ for any $s\in\mathbb{R}$.
Here we define 
$\overline{s}\in [\delta_{j},\infty]$ by
the supremum of $\hat{s}$ such that
$Z_{j}(s)$ can be extended as a continuously differentiable
functions on $(-\delta_{j},\overline{s})$
forming a subset of
positive solutions of $F^{(j)}(s, Z)=0$.
Our aim is to show $\overline{s}=\infty$.
Suppose by contradiction that $\overline{s}<\infty$.
Then the a priori estimate and the strong maximum principle
ensure that $Z_{j}(\overline{s}):=\lim_{s\nearrow\overline{s}}Z_{j}(s)$ 
is a positive solution of \eqref{sZeq}
with $s=\overline{s}$. It follows that
\begin{equation}\label{Fj0}
F^{(j)}(\overline{s}, Z_{j}(\overline{s}))=0.
\end{equation}
Then it is easy to check that
\begin{equation}\label{Zjposi}
\begin{cases}
-\Delta Z_{j}(\overline{s})=
\lambda_{j}\dfrac{2m(x)}{\sqrt{4Z_{j}(\overline{s})+(1-\overline{s}\varPhi_{j})^{2}}+1
-\overline{s}\varPhi_{j}}Z_{j}(\overline{s})
\quad&\mbox{in}\ \Omega,\\
Z_{j}(\overline{s})=0\quad&\mbox{on}\ \partial\Omega.
\end{cases}
\end{equation}
Let $\lambda_{1}(q)$ be the least eigenvalue of \eqref{ev}
with $m(x)$ replaced by $q(x)$.
Observing that
$$
\dfrac{2m(x)}{\sqrt{4Z_{j}
(\overline{s})+(1-\overline{s}\varPhi_{j})^{2}}+1
-\overline{s}\varPhi_{j}}>
\dfrac{m(x)}{\sqrt{4Z_{j}(\overline{s})+(1-\overline{s}\varPhi_{j})^{2}}}
\quad\mbox{in}\ \Omega
$$
and
$\lambda_{1}(q)$ 
is monotone decreasing with respect to $q$,
we know from \eqref{Zjposi} that 
\begin{equation}\label{muj}
\lambda_{j}=\lambda_{1}\biggl(
\dfrac{2m}{\sqrt{4Z_{j}(\overline{s})+(1-\overline{s}\varPhi_{j})^{2}}+1
-\overline{s}\varPhi_{j}}\biggr)<
\lambda_{1}\biggl(
\dfrac{m}{\sqrt{4Z_{j}(\overline{s})+(1-\overline{s}\varPhi_{j})^{2}}}
\biggr).
\end{equation}
In view of \eqref{Ljdef}, we know that
\eqref{muj} ensures the invertibility of
$L_{j}(\overline{s})$, that is,
$L_{j}(\overline{s})$ is an isomorphism from $X$ to $Y$.
Together with \eqref{Fj0}, the application 
of the implicit function theorem
to $F^{(j)}(s, Z)$ near  
$(\overline{s}, Z_{j}(\overline{s}))$ 
enables us to see that $Z=Z_{j}(s)$ can be extended as 
a continuously differentiable zero-level curve of
$F^{(j)}(s,Z)$ for $s\in (-\delta_{j}, \overline{s}+\kappa )$
with some $\kappa >0$.
However, this contradicts the definition of $\overline{s}$.
Hence the contradiction argument leads to $\overline{s}=\infty$.
By a similar argument, we can reach a fact that
$Z_{j}(s)$ can be extended globally for $s\in \mathbb{R}$
as the set of positive solutions of \eqref{sZeq}.
The proof of Lemma \ref{eigenlem} is complete.
\end{proof}

In addition to Lemma \ref{eigenlem}, by using the sub-super
solution method,
we can get the following estimate
of $Z_{j}(s)$ in a special case where $|s|$ is sufficiently small.
\begin{lem}\label{subsuperlem}
Let $Z_{j}(s)$ be the positive solution of \eqref{sZeq} 
obtained in Lemma \ref{eigenlem}.
If $|s|>0$ is sufficiently small,
then
$$
(1+|s|)^{2}Z_{0}\biggl(
\dfrac{\lambda_{j}}{1+|s|}\biggr)<
Z_{j}(s)<(1-|s|)^{2}Z_{0}\biggl(
\dfrac{\lambda_{j}}{1-|s|}\biggr)
\quad\mbox{in}\ \Omega.
$$
\end{lem}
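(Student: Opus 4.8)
The plan is to trap the unique positive solution $Z_j(s)$ of \eqref{sZeq} between two explicit ordered functions obtained by rescaling the curve $Z_0(\,\cdot\,,\lambda)$ of Proposition \ref{subprop}. Throughout, fix $j\ge 2$ and abbreviate $\xi_{\pm}:=1\pm|s|$. Since $\|\varPhi_j\|_{\infty}=1$ we have $\xi_-\le 1-s\varPhi_j(x)\le \xi_+$ for every $x\in\overline{\Omega}$, and since $\lambda_j\ge\lambda_2>\lambda_1$, for all sufficiently small $|s|$ the two numbers $\lambda_j/\xi_{\pm}$ lie in $(\lambda_1,\infty)$, so that $Z_0(\,\cdot\,,\lambda_j/\xi_{\pm})$ are well defined.

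The key elementary observation is a self-similar rescaling: for a \emph{constant} $\xi>0$ with $\lambda_j/\xi>\lambda_1$, the function $\xi^{2}Z_0(\,\cdot\,,\lambda_j/\xi)$ solves $-\Delta Z=\tfrac12\lambda_j m(x)\,h(Z,\xi)$ in $\Omega$ with $Z=0$ on $\partial\Omega$; indeed $h(\xi^{2}\zeta,\xi)=\xi(\sqrt{4\zeta+1}-1)$, so this is just \eqref{subell} with $\lambda=\lambda_j/\xi$ multiplied by $\xi^{2}$. Because $h(Z,\,\cdot\,)$ is strictly decreasing on $(0,\infty)$ for each fixed $Z>0$, and $m\ge 0$, the two-sided bound $\xi_-\le 1-s\varPhi_j\le \xi_+$ then shows that
$$
\underline{Z}:=\xi_+^{\,2}\,Z_0\!\Big(\,\cdot\,,\tfrac{\lambda_j}{\xi_+}\Big),
\qquad
\overline{Z}:=\xi_-^{\,2}\,Z_0\!\Big(\,\cdot\,,\tfrac{\lambda_j}{\xi_-}\Big)
$$
are, respectively, a subsolution and a supersolution of \eqref{sZeq}, both positive in $\Omega$ and vanishing on $\partial\Omega$.

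Next I would exploit that \eqref{sZeq} is of sublinear type: $h(Z,1-s\varPhi_j)/Z=4\big(\sqrt{4Z+(1-s\varPhi_j)^{2}}+1-s\varPhi_j\big)^{-1}$ is decreasing in $Z>0$. Hence the comparison principle for sublinear elliptic problems (as in \cite{BO}) applies and gives that any subsolution lies below any supersolution; applied to $\underline{Z}$, $Z_j(s)$, $\overline{Z}$ this yields $\underline{Z}\le Z_j(s)\le\overline{Z}$ in $\Omega$. Alternatively, staying closer to the sub-supersolution method, one checks directly that $\underline{Z}\le\overline{Z}$ — which by the rescaling amounts to the monotonicity of $\mu\mapsto\mu^{-2}Z_0(\,\cdot\,,\mu)$, itself a consequence of the monotone dependence on $\mu$ of the sublinear problem \eqref{zetaeq} — then builds a solution of \eqref{sZeq} in the order interval $[\underline{Z},\overline{Z}]$, and identifies it with $Z_j(s)$ via the uniqueness part of Lemma \ref{eigenlem}. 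To upgrade to strict inequalities, note that for $s\ne 0$ the function $1-s\varPhi_j$ is not identically $\xi_+$ (resp.\ $\xi_-$) on $\{m>0\}$, since $\varPhi_j$ changes sign there for $j\ge 2$; thus neither $\underline{Z}$ nor $\overline{Z}$ solves \eqref{sZeq}, and writing the differences $Z_j(s)-\underline{Z}\ge 0$ and $\overline{Z}-Z_j(s)\ge 0$ as nonnegative superharmonic functions (the induced zeroth-order term being $\ge 0$ because $m\ge 0$ and $h$ is increasing in $Z$), the strong maximum principle forces strict positivity in $\Omega$, which is exactly the asserted estimate.

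The main obstacle is spotting the correct rescaling $Z\leftrightarrow\xi^{2}Z_0(\,\cdot\,,\lambda_j/\xi)$ that turns the frozen-coefficient version of \eqref{sZeq} into \eqref{subell}; after that, everything reduces to routine comparison and strong-maximum-principle arguments. A secondary technical point, relevant only if one insists on the constructive form of the sub-supersolution method rather than the comparison lemma, is verifying the ordering $\underline{Z}\le\overline{Z}$, i.e.\ the monotonicity of $\mu\mapsto\mu^{-2}Z_0(\,\cdot\,,\mu)$ in $\mu$.
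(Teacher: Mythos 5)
Your construction of $\underline{Z}=\xi_+^{2}Z_0(\cdot,\lambda_j/\xi_+)$ and $\overline{Z}=\xi_-^{2}Z_0(\cdot,\lambda_j/\xi_-)$ via the rescaling $h(\xi^{2}\zeta,\xi)=\xi(\sqrt{4\zeta+1}-1)$ and the monotonicity of $h(Z,\cdot)$ is exactly what the paper does, and your strong-maximum-principle upgrade to strict inequalities is fine (indeed slightly more explicit than the paper's). The issue is the step that actually pins $Z_j(s)$ between the two barriers, and both of your routes leave a real gap there.

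Your primary route asserts that ``the comparison principle for sublinear elliptic problems (as in \cite{BO}) applies and gives that any subsolution lies below any supersolution.'' That statement is not correct as phrased: for sublinear problems a subsolution need not lie below an arbitrary supersolution, and \cite{BO} Theorem~1 gives uniqueness of positive solutions, not a sub/super comparison. What \emph{is} true and would suffice is that a positive subsolution lies below the unique positive solution (and the solution lies below any positive supersolution), but this requires a separate sweeping or Picone-type argument — using that $g(x,Z)=\tfrac12\lambda_j m(x)h(Z,1-s\varPhi_j)$ is increasing in $Z$ while $g(x,Z)/Z$ is decreasing, together with Hopf's lemma at $\partial\Omega$ — which you neither cite precisely nor prove. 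Your alternative route, verifying $\underline{Z}\le\overline{Z}$ via ``monotonicity of $\mu\mapsto\mu^{-2}Z_0(\cdot,\mu)$,'' is in fact the paper's route, but the paper does not take this monotonicity for granted: it proves the needed local strict inequality by computing $\tfrac{d}{d\xi}\big[(1+\xi)^2 Z_0(\lambda_j/(1+\xi))\big]\big|_{\xi=0}$ explicitly in terms of $L_j(0)^{-1}$, using the representation \eqref{pri}--\eqref{inv2}, the sign computation \eqref{nega}, and the monotone-inverse property of $L_j(0)^{-1}$ that follows from \eqref{muj2}. This is a substantive part of the argument that you state but do not supply. So the architecture of your proof is right, but the ordering step — which is the crux — is unjustified in both of the routes you offer.
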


\begin{proof}
By the definition of $h(Z,\xi)$, we see that
$$
h_{\xi}(Z,\xi)=
\dfrac{\xi}{\sqrt{4Z+\xi^{2}}}-1\le
\dfrac{\xi}{|\xi|}-1\le 0\ \ \mbox{for any}\ 
Z\ge 0\ \ \mbox{and}\ \ \xi> 0.$$
It follows that, for each fixed $Z>0$,
$h(Z,\xi)$ is monotone decreasing with respect to
$\xi >0$.
Since $1-|s|\le 1-s\varPhi_{j}(x)\le 1+|s|$ 
for any $x\in\overline\Omega$ and $s\in\mathbb{R}$,
then
\begin{equation}\label{horder}
h(Z,1+|s|)\le h(Z, 1-s\varPhi_{j})\le h(Z,1-|s|)
\ \ \mbox{in}\ \Omega
\end{equation}
for any $Z\ge 0$ and $|s|<1$.

We define 
$$\overline{Z}_{j}(s):=
(1-|s|)^{2}Z_{0}\biggl(
\dfrac{\lambda_{j}}{1-|s|}\biggr)
$$
for $|s|<1$
and the solution 
$Z_{0}(\lambda_{j}/(1-|s|))$
of \eqref{subell} with $\lambda=\lambda_{j}/(1-|s|)$.
Then it is easily verified that
$$
-\Delta\overline{Z}_{j}(s)=
\dfrac{\lambda_{j}m(x)}{2}h(\overline{Z}_{j}(s), 1-|s|)\ \ 
\mbox{in}\ \Omega,\quad
\overline{Z}_{j}(s)=0\ \ \mbox{on}\ \partial\Omega.
$$
Together with \eqref{horder}, we see that
$$
-\Delta\overline{Z}_{j}(s)\ge
\dfrac{\lambda_{j}m(x)}{2}h(\overline{Z}_{j}(s), 1-s\varPhi_{j})\ \ 
\mbox{in}\ \Omega,\quad
\overline{Z}_{j}(s)=0\ \ \mbox{on}\ \partial\Omega,
$$
that is,
$\overline{Z}_{j}(s)$ is a super-solution
of \eqref{sZeq}.

We next define 
$$\underline{Z}_{j}(s):=
(1+|s|)^{2}Z_{0}\biggl(
\dfrac{\lambda_{j}}{1+|s|}\biggr)$$
for $s$ such that $|s|<1$ and $\lambda_{j}/(1+|s|)>\lambda_{1}$.
Then it can be checked that
$$
-\Delta\underline{Z}_{j}(s)=
\dfrac{\lambda_{j}m(x)}{2}h(\underline{Z}_{j}(s), 1+|s|)\ \ 
\mbox{in}\ \Omega,\quad
\underline{Z}_{j}(s)=0\ \ \mbox{on}\ \partial\Omega.
$$
Then \eqref{horder} implies that
$$
-\Delta\underline{Z}_{j}(s)\le
\dfrac{\lambda_{j}m(x)}{2}h(\underline{Z}_{j}(s), 1-s\varPhi_{j})\ \ 
\mbox{in}\ \Omega,\quad
\underline{Z}_{j}(s)=0\ \ \mbox{on}\ \partial\Omega,
$$
namely,
$\underline{Z}_{j}(s)$ is a sub-solution of \eqref{sZeq}.

In order to show 
$\underline{Z}_{j}(s)<\overline{Z}_{j}(s)$
in 
$\Omega$
near $s=0$,
we introduce the function $\widehat{Z}_{0}(\xi)$ as
$$
\widehat{Z}_{0}(\xi)=(1+\xi)^{2}Z_{0}
\biggl(\dfrac{\lambda_{j}}{1+\xi}\biggr)
\ \ \mbox{for}\ \ 
\xi\in\biggl(-1,-1+\dfrac{\lambda_{j}}{\lambda_{1}}\biggr).
$$
Hence it follows that
\begin{equation}\label{subsuper}
\underline{Z}_{j}(s)=\widehat{Z}_{0}(|s|),\quad
\overline{Z}_{j}(s)=\widehat{Z}_{0}(-|s|).
\end{equation}
Differentiating $\widehat{Z}_{0}(\xi)$ by $\xi$, we obtain
\begin{equation}\label{deri}
\dfrac{d\widehat{Z}_{0}}{d\xi}\bigg|_{\xi=0}
=
2Z_{0}(\lambda_{j})-\lambda_{j}Z_{0}'(\lambda_{j})
\quad\mbox{in}\ \Omega,
\end{equation}
where the prime symbol represents the derivative by $\lambda$.
Differentiating \eqref{subell} by $\lambda$, 
one can see that
\begin{equation}\label{Z0d}
L_{j}(0)Z_{0}'(\lambda_{j})=
\dfrac{m(x)}{2}\left(
\sqrt{4Z_{0}(\lambda_{j})+1}-1\right),
\end{equation}
where $L_{j}(0)\in\mathcal{L}(X,Y)$ is defined by \eqref{Lj0def}.
In the same manner as the argument getting \eqref{muj},
one can verify 
\begin{equation}\label{muj2}
\lambda_{j}<\lambda_{1}\biggl(
\dfrac{m}{\sqrt{4Z_{0}(\lambda_{j})+1}}\biggr).
\end{equation}
By virtue of the elliptic regularity and
the maximum principle,
\eqref{muj2} implies that
$L_{j}(0)$ is invertible and 
$L_{j}(0)^{-1}$ is monotone in the sense that
if $f_{1}< f_{2}$ in $\Omega$, then
$L_{j}(0)^{-1}f_{1}< L_{j}(0)^{-1}f_{2}$ in $\Omega$.
Therefore,
we know from \eqref{Z0d} that
\begin{equation}\label{pri}
Z_{0}'(\lambda_{j})=
L_{j}(0)^{-1}
\biggl[
\dfrac{m(x)}{2}(\sqrt{4Z_{0}(\lambda_{j})+1}-1)\biggr].
\end{equation}
We observe from \eqref{subell} that
$$
-\Delta Z_{0}(\lambda )-\dfrac{\lambda m(x)}{\sqrt{4Z_{0}(\lambda )+1}}Z_{0}(\lambda )=
\dfrac{\lambda m(x)}{2}(\sqrt{4Z_{0}(\lambda )+1}-1)-
\dfrac{\lambda m(x)}{\sqrt{4Z_{0}(\lambda )+1}}Z_{0}(\lambda ),
$$
and thereby,
\begin{equation}\label{inv2}
Z_{0}(\lambda_{j})=\lambda_{j}L_{j}(0)^{-1}
\biggl[
\dfrac{m(x)}{2}(\sqrt{4Z_{0}(\lambda_{j})+1}-1)
-\dfrac{m(x)}{\sqrt{4Z_{0}(\lambda_{j})+1}}Z_{0}(\lambda_{j})\biggr].
\end{equation}
Substituting \eqref{pri} and \eqref{inv2} into \eqref{deri}, we obtain
\begin{equation}\label{deri2}
\dfrac{d\widehat{Z}_{0}}{d\xi}\bigg|_{\xi=0}
=
L_{j}(0)^{-1}
\biggl[
\dfrac{m(x)}{2}(\sqrt{4Z_{0}(\lambda_{j})+1}-1)
-\dfrac{2m(x)}{\sqrt{4Z_{0}(\lambda_{j})+1}}Z_{0}(\lambda_{j})\biggr].
\end{equation}
By taking account for
\begin{equation}\label{nega}
\begin{split}
&\dfrac{m(x)}{2}(\sqrt{4Z_{0}(\lambda_{j})+1}-1)
-\dfrac{2m(x)}{\sqrt{4Z_{0}(\lambda_{j})+1}}Z_{0}(\lambda_{j})\\
=&
2m(x)Z_{0}(\lambda_{j})\biggl(
\dfrac{1}{\sqrt{4Z_{0}(\lambda_{j})+1}+1}-
\dfrac{1}{\sqrt{4Z_{0}(\lambda_{j})+1}}\biggr)<0
\ \ \mbox{in}\ \Omega
\end{split}
\end{equation}
and the monotone property of $L_{j}(0)^{-1}$ induced by
\eqref{muj2},
we know from \eqref{deri2} and \eqref{nega} that
$$\dfrac{d\widehat{Z}_{0}}{d\xi}\bigg|_{\xi=0}
<0\ \ \mbox{in}\ \Omega.$$
Together with \eqref{subsuper}, one deduces that
$$
\underline{Z}_{j}(s)<\overline{Z}_{j}(s)
\ \ \mbox{in}\ \Omega$$
if $|s|>0$ is sufficiently small.
Consequently, the usual sub-super solution method
ensures a solution $Z_{j}(s)$ of \eqref{sZeq} such that
$$
(\,0<\,)\,\underline{Z}_{j}(s)<Z_{j}(s)<\overline{Z}_{j}(s)
\ \ \mbox{in}\ \Omega$$
if $|s|$ is sufficiently small.
The proof of Lemma \ref{subsuperlem} is accomplished.
\end{proof}
Combining Proposition \ref{subprop}, Lemmas \ref{noexlem2},
\ref{eigenlem} and
\ref{subsuperlem},
we obtain the following result on 
solutions of \eqref{WZeq0} with $Z>0$ in $\Omega$:
\begin{cor}\label{limcor}
If $\lambda\in (0, \lambda_{1}]$, then \eqref{WZeq0} does not
admit any nontrivial solution.
If $\lambda\in (\lambda_{1}, \infty)
\setminus\cup^{\infty}_{j=2}\{\lambda_{j}\}$,
then there exists a unique solution $(W,Z)=(0, Z_{0}(\lambda ))$ of
\eqref{WZeq0} with $Z>0$ in $\Omega$.
If $\lambda=\lambda_{j}$ with $j\ge 2$,
then all solutions of \eqref{WZeq0} with $Z>0$ can be represented as
$$(W,Z)=(s\varPhi_{j}, Z_{j}(s))\quad
\mbox{for any}\ s\in\mathbb{R},$$
where $\varPhi_{j}\in E_{j}$ is any function satisfying \eqref{Phij}.
Here $\mathbb{R}\ni s\mapsto Z_{j}(s)\in X$ is a continuously differentiable 
function,
depending on $\varPhi_{j}$, such that 
$Z_{j}(0)=Z_{0}(\lambda_{j})$ and
$$
(1+|s|)^{2}Z_{0}\biggl(
\dfrac{\lambda_{j}}{1+|s|}\biggr)<
Z_{j}(s)<(1-|s|)^{2}Z_{0}\biggl(
\dfrac{\lambda_{j}}{1-|s|}\biggr)
\quad\mbox{in}\ \Omega
$$
if $|s|>0$ is sufficiently small.
\end{cor}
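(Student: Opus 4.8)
The plan is to obtain all three assertions by assembling the results already established, the only genuinely new ingredient being that the first equation of \eqref{WZeq0} decouples into the linear eigenvalue problem \eqref{ev}. So the first step is to note that any solution $(W,Z)$ of \eqref{WZeq0} satisfies $-\Delta W=\lambda m(x)W$ in $\Omega$ with $W=0$ on $\partial\Omega$; hence, by \eqref{ev}--\eqref{ev2}, either $\lambda\notin\{\lambda_k\}_{k\ge 1}$, forcing $W\equiv 0$, or $\lambda=\lambda_j$ for some $j\ge 1$ and $W\in E_j$. Writing a nonzero element of $E_j$ as $s\varPhi_j$ with $s\in\mathbb{R}$ and $\varPhi_j$ an $L^{\infty}$-normalized eigenfunction of \eqref{ev} at $\lambda=\lambda_j$, we have reduced \eqref{WZeq0} to a scalar equation for $Z$ in every case, after which the three statements follow one at a time.

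Next I would run through the three ranges of $\lambda$. For $\lambda\in(0,\lambda_1)$, $\lambda$ is not an eigenvalue, so $W\equiv 0$ and the second equation of \eqref{WZeq0} is exactly \eqref{subell}; its right-hand side being nonnegative, the strong maximum principle forces $Z\equiv 0$ or $Z>0$ in $\Omega$, and Lemma \ref{noexlem2} rules out the latter, so only $(0,0)$ survives. For $\lambda\in(\lambda_1,\infty)\setminus\bigcup_{j\ge 2}\{\lambda_j\}$, again $\lambda$ is not an eigenvalue, $W\equiv 0$, and $Z$ solves \eqref{subell}; by Proposition \ref{subprop} the unique positive solution is $Z_0(\lambda)$, so $(0,Z_0(\lambda))$ is the only solution with $Z>0$. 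For $\lambda=\lambda_j$ with $j\ge 2$, inserting $W=s\varPhi_j$ turns the second equation into precisely \eqref{sZeq}; Lemma \ref{eigenlem} then delivers the unique positive solution $Z_j(s)$ together with the $C^1$ dependence $\mathbb{R}\ni s\mapsto Z_j(s)\in X$ and the value $Z_j(0)=Z_0(\lambda_j)$, and Lemma \ref{subsuperlem} provides the two-sided bound on $Z_j(s)$ for small $|s|$. Collecting the three cases yields the corollary.

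I do not expect a substantial obstacle: the corollary is essentially a repackaging of Proposition \ref{subprop} and Lemmas \ref{noexlem2}, \ref{eigenlem}, \ref{subsuperlem}. The one spot that calls for extra care is the borderline value $\lambda=\lambda_1$, where $W$ need not vanish a priori and one is left with $W=s\varPhi_1$; there I would argue separately, reproducing the computation in the proof of Lemma \ref{noexlem2}---multiply the second equation of \eqref{WZeq0} by $\varPhi_1$, integrate, and weigh the strict monotonicity of $h(Z,\xi)/Z=4/(\sqrt{4Z+\xi^{2}}+\xi)$ in $Z$ against the variational characterization \eqref{var} of $\lambda_1$---to control the admissible companions $Z$ of $s\varPhi_1$ and thus close the case analysis.
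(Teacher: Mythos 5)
Your decomposition --- decouple the linear first equation of \eqref{WZeq0}, then solve the scalar $Z$-equation by cases using Proposition \ref{subprop}, Lemma \ref{noexlem2}, Lemma \ref{eigenlem} and Lemma \ref{subsuperlem} --- is exactly the paper's intended route, and your handling of $\lambda\in(0,\lambda_1)$, of $\lambda\in(\lambda_1,\infty)$ off the spectrum, and of $\lambda=\lambda_j$ with $j\ge 2$ is fine.

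The gap is at $\lambda=\lambda_1$, and your proposed fix does not close it. With $W=s\varPhi_1$ and $\xi:=1-s\varPhi_1$, the estimate driving Lemma \ref{noexlem2} amounts to $h(Z,\xi)/(2Z)=2/(\sqrt{4Z+\xi^2}+\xi)<1$, which requires $\xi\ge 1$: this holds when $s\le 0$, but for $s>0$ one has $\xi<1$ throughout $\Omega$, and the sign of $1-h(Z,\xi)/(2Z)$ is that of $Z-s\varPhi_1$, which may change sign, so testing against $\varPhi_1$ produces no contradiction. Indeed none can be extracted from \eqref{WZeq0} alone: for every $s>0$ the pair $(W,Z)=(s\varPhi_1,s\varPhi_1)$ solves \eqref{WZeq0} at $\lambda=\lambda_1$ with $Z>0$, since $(1-s\varPhi_1)^2+4s\varPhi_1=(1+s\varPhi_1)^2$ collapses the nonlinearity to $2s\varPhi_1$ and both equations reduce to $-\Delta(s\varPhi_1)=\lambda_1 m s\varPhi_1$. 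Through \eqref{UVdef} this is the semitrivial pair $(U,V)=(s\varPhi_1,0)$. A correct proof of the first clause must therefore exclude the semitrivial branch $Z=W$ (i.e.\ $V\equiv 0$) explicitly --- most cleanly by passing to $(U,V)$, testing $-\Delta Z=\lambda m(x)U$ against $Z$ itself, and using $U<Z$ (forced by $V>0$) together with \eqref{var} to obtain $\lambda>\lambda_1$ --- rather than by reproducing the $\varPhi_1$-weighted estimate of Lemma \ref{noexlem2}, which genuinely fails for $s>0$.
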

By the transformation \eqref{UVdef}, 
the limit equation \eqref{WZeq0} with $\varepsilon =0$ for $(W,Z)$ 
can be reduced
to the following equation for $(U,V)$:
\begin{equation}\label{UVlimeq}
\begin{cases}
\Delta [\,(1+V)U\,]+\lambda m(x)U=0\ \ &\mbox{in}\ \Omega,\\
\Delta [\,(1+U)V\,]+\lambda m(x)V=0\ \ &\mbox{in}\ \Omega,\\
U=V=0\ \ &\mbox{on}\ \partial\Omega.
\end{cases}
\end{equation}
Hence,
by \eqref{UVdef},
the set of solutions of \eqref{WZeq0} obtained in Corollary \ref{limcor}
is converted to the set of positive solutions of \eqref{UVlimeq}:
\begin{cor}\label{UVlimcor}
If $\lambda\in (0,\lambda_{1}]$, then \eqref{UVlimeq} does not
admit any positive solution.
If $\lambda\in (\lambda_{1}, \infty)
\setminus\cup^{\infty}_{j=2}\{\lambda_{j}\}$,
then there exists a unique positive solution
$(U,V)=(U(\,\cdot\,,\lambda), U(\,\cdot\,,\lambda))$,
where $U(\,\cdot\,,\lambda)$ is the solution of \eqref{LS1} obtained in
Theorem \ref{thm13}.
If $\lambda=\lambda_{j}$ with $j\ge 2$,
then all positive solutions of \eqref{UVlimeq} can be
represented as
$$
(U,V)=\dfrac{1}{2}
\left(
\sqrt{(1-s\varPhi_{j})^{2}+4Z_{j}(s)}-1+s\varPhi_{j},
\sqrt{(1-s\varPhi_{j})^{2}+4Z_{j}(s)}-1-s\varPhi_{j}
\right)
$$
for any $s\in\mathbb{R}$ and $\varPhi_{j}\in E_{j}$,
where $Z_{j}(s)$ is the function in Corollary \ref{limcor}.
\end{cor}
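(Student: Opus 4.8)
The plan is to deduce Corollary~\ref{UVlimcor} from Corollary~\ref{limcor} by transporting everything through the change of variables \eqref{WZdef}--\eqref{UVdef}. That transformation was arranged precisely so that \eqref{UVlimeq} and \eqref{WZeq0} correspond to one another, so the only point needing attention is how the positivity requirements match up: I claim that $(U,V)$ is a \emph{positive} solution of \eqref{UVlimeq} if and only if $(W,Z):=(U-V,(1+V)U)$ solves \eqref{WZeq0} with $Z>0$ in $\Omega$. Granting this, the three alternatives of Corollary~\ref{UVlimcor} are read off from Corollary~\ref{limcor}: for $\lambda\in(0,\lambda_1]$ a positive solution of \eqref{UVlimeq} would produce a nontrivial solution of \eqref{WZeq0} with $Z>0$, which is excluded; for $\lambda\in(\lambda_1,\infty)\setminus\bigcup_{j\ge2}\{\lambda_j\}$ the unique such $(W,Z)$ is $(0,Z_0(\lambda))$, so $W=0$ and \eqref{UVdef} together with the identity $U=\tfrac12(\sqrt{4Z_0(\lambda)+1}-1)$ behind Theorem~\ref{thm13} give $(U,V)=(U(\cdot,\lambda),U(\cdot,\lambda))$; and for $\lambda=\lambda_j$ the solutions $(W,Z)=(s\varPhi_j,Z_j(s))$ from Corollary~\ref{limcor} are carried by \eqref{UVdef} exactly into the asserted formula.

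The forward implication is immediate, since $U,V>0$ forces $Z=(1+V)U>0$ and the algebra turning \eqref{UVlimeq} into \eqref{WZeq0} is routine (and recorded in the text). For the converse, take a solution $(W,Z)$ of \eqref{WZeq0} with $Z>0$ and recover $(U,V)$ by \eqref{UVdef}; the inequality $\sqrt{(1-W)^2+4Z}>|1-W|\ge 1-W$ gives $U>0$ at once, so the real task is $V>0$. If $\lambda\ne\lambda_j$, the first equation of \eqref{WZeq0} forces $W=0$, hence $V=U>0$. If $\lambda=\lambda_j$ with $j\ge2$, then $W=s\varPhi_j$ and $Z=Z_j(s)$ for some $s\in\mathbb{R}$, and, writing $\widetilde V:=(1+U)V=Z-W=Z_j(s)-s\varPhi_j$, one has $V>0$ in $\Omega$ iff $Z_j(s)>s\varPhi_j$ in $\Omega$. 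I would prove this for \emph{every} $s\in\mathbb{R}$ by a continuation argument. From the (equivalent) system \eqref{UVlimeq} the second equation gives $-\Delta\widetilde V=\lambda_j m(x)V=\frac{\lambda_j m(x)}{1+U}\widetilde V$ in $\Omega$, with $\widetilde V=0$ on $\partial\Omega$. Set $S:=\{\,s\in\mathbb{R}:V(\cdot,s)>0\ \text{in}\ \Omega\,\}$. Then $0\in S$ because $V(\cdot,0)=\tfrac12(\sqrt{4Z_0(\lambda_j)+1}-1)>0$; $S$ is open, since $s\mapsto(s\varPhi_j,Z_j(s))$ is $C^1$ by Lemma~\ref{eigenlem} and, at $s_0\in S$, $\widetilde V(\cdot,s_0)$ is a nonnegative, nontrivial superharmonic function vanishing on $\partial\Omega$, so the Hopf lemma keeps $\widetilde V(\cdot,s)>0$ for $s$ near $s_0$; and $S$ is closed, since if $s_n\to s_0$ with $s_n\in S$ then $V(\cdot,s_0)\ge0$, and $V(\cdot,s_0)\not\equiv0$ (otherwise $Z_j(s_0)\equiv s_0\varPhi_j$, impossible because $Z_j(s_0)>0$ while $\varPhi_j$ is sign-changing for $j\ge2$ and $Z_j(0)=Z_0(\lambda_j)>0$), whence $\widetilde V(\cdot,s_0)\ge0$ is a nontrivial superharmonic function vanishing on $\partial\Omega$ and the strong maximum principle forces $\widetilde V(\cdot,s_0)>0$, i.e.\ $s_0\in S$. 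As $\mathbb{R}$ is connected, $S=\mathbb{R}$, so every $(s\varPhi_j,Z_j(s))$ corresponds to a positive $(U,V)$, and the exact description follows.

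I expect the positivity of $V$---equivalently $Z_j(s)>s\varPhi_j$ for all $s$, not merely for small $|s|$ where Lemma~\ref{subsuperlem} already supplies it---to be the only genuine obstacle; the continuation argument above dispatches it, the decisive structural input being that $\varPhi_j$ is sign-changing for $j\ge2$, which is precisely what forbids $\widetilde V$ from vanishing identically along the branch $s\mapsto(s\varPhi_j,Z_j(s))$. The remaining ingredients---reading $U>0$ off \eqref{UVdef}, the equivalence of \eqref{UVlimeq} and \eqref{WZeq0}, and the quotations of Corollary~\ref{limcor} and Theorem~\ref{thm13}---are routine.
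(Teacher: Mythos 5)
Your proof is correct, and it does more than the paper's own treatment, which dispatches Corollary~\ref{UVlimcor} with the single remark that it follows ``by \eqref{UVdef}'' from Corollary~\ref{limcor}. You are right to flag the direction that actually needs an argument: given a solution $(W,Z)=(s\varPhi_j,Z_j(s))$ of \eqref{WZeq0} with $Z>0$, the inverse change of variables \eqref{UVdef} gives $U>0$ automatically (as you note), but only $V>-1$, and $V>0$ is equivalent to $\widetilde V:=Z_j(s)-s\varPhi_j>0$, which is \emph{not} a formal consequence of $Z_j(s)>0$ since $s\varPhi_j$ is positive on part of $\Omega$. Your openness/closedness continuation in $s$ handles this cleanly: openness via the Hopf lemma applied to the nonnegative supersolution $\widetilde V(\cdot,s_0)$ together with the $C^1$ dependence of $Z_j(s)$ on $s$ from Lemma~\ref{eigenlem}; closedness via the strong maximum principle for $-\Delta\widetilde V=\tfrac{\lambda_j m}{1+U}\widetilde V$ once you rule out $\widetilde V\equiv0$, and that ruling-out is exactly where the hypothesis $j\ge2$ enters (a sign-changing $\varPhi_j$ cannot equal $Z_j(s_0)/s_0>0$). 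The remaining cases ($\lambda\le\lambda_1$, $\lambda\ne\lambda_j$) are, as you say, immediate from Corollary~\ref{limcor} once one notes $W=0\Rightarrow U=V$. In short: same route as the paper, but you close a genuine, if small, gap in the paper's exposition by actually verifying $V>0$ along the whole branch rather than only for $|s|$ small where Lemma~\ref{subsuperlem} would suffice.
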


\subsection{Perturbation to the case where $\varepsilon >0$ is small}
In this subsection, we construct solutions of \eqref{WZeq}
near $(\lambda, W,Z)=(\lambda_{j}, s\varPhi_{j}, Z_{j}(s))$ by regarding
$\varepsilon$ as a perturbation parameter.
\begin{prop}\label{sapprop}
Assume that $\dim E_{j}=1$
with some $j\ge 2$.
For any fixed $s^{*}\neq 0$, 
there exists $\delta^{*}_{j}>0$ such that
if $(s,\varepsilon ) \in 
(s^{*}-\delta_{j}^{*}, s^{*}+\delta_{j}^{*})
\times 
(-\delta_{j}^{*}, \delta_{j}^{*})$, 
then
\eqref{WZeq} admits a solution expressed as
$$(\lambda, W, Z)=(\lambda_{j}^{*}(s,\varepsilon), W_{j}^{*}(s,\varepsilon ), 
Z_{j}^{*}(s,\varepsilon ))
\quad\mbox{with}\quad
\|W_{j}^{*}(s,\varepsilon )\|_{2}=s\|\varPhi_{j}\|_{2},
$$
where the mapping
$$
(s^{*}-\delta^{*}_{j}, s^{*}+\delta^{*}_{j})
\times 
(-\delta^{*}_{j}, \delta^{*}_{j})
\ni (s, \varepsilon )\mapsto 
(\lambda_{j}^{*}(s, \varepsilon ), W_{j}^{*}(s, \varepsilon ), 
Z_{j}^{*}(s, \varepsilon ))
\in\mathbb{R}\times\boldsymbol{X}
$$
is of class $C^{1}$
with
$(\lambda_{j}^{*}(s, 0), W_{j}^{*}(s, 0), Z_{j}^{*}(s, 0))
=(\lambda_{j}, s\varPhi_{j}, Z_{j}(s))$.
\end{prop}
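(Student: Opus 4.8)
The plan is to reformulate \eqref{WZeq} near the base point $(\lambda,W,Z,\varepsilon)=(\lambda_j,s^*\varPhi_j,Z_j(s^*),0)$ as the zero set of an augmented operator that carries the normalization as an extra scalar equation and $\lambda$ as an extra scalar unknown, and then to apply the implicit function theorem with $(s,\varepsilon)$ as parameters. Fix $j\ge2$ with $\dim E_j=1$, write $E_j=\mathrm{span}\{\varPhi_j\}$ for $\varPhi_j$ as in \eqref{Phij}, and fix $s^*\neq0$; with $G$ as in \eqref{Gdef} I would work with
$$
\mathcal{G}(\lambda,W,Z,s,\varepsilon):=\bigl(G(\lambda,W,Z,\varepsilon),\ \|W\|_2^2-s^2\|\varPhi_j\|_2^2\bigr)\ \in\ \boldsymbol{Y}\times\mathbb{R},
$$
defined on an open subset of $\mathbb{R}\times\boldsymbol{X}\times\mathbb{R}\times\mathbb{R}$. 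By Corollary \ref{limcor} together with \eqref{UVdef}, the base point solves $\mathcal{G}=0$; since $Z_j(s^*)$ lies in the positive cone $P$, the radicand $(1-W)^2+4Z$ stays bounded below by a positive constant on $\overline\Omega$ in a neighbourhood of the base point, so $\mathcal{G}$ is there of class $C^1$ (as already used for $G$ in the proof of Proposition \ref{perprop}). The goal is to solve $\mathcal{G}=0$ for $(\lambda,W,Z)$ as a $C^1$ function of $(s,\varepsilon)$.

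The heart of the argument is to verify that $D_{(\lambda,W,Z)}\mathcal{G}$ at the base point is an isomorphism of $\mathbb{R}\times\boldsymbol{X}$ onto $\boldsymbol{Y}\times\mathbb{R}$. Putting $\varepsilon=0$ in \eqref{Gwz}, this derivative maps $(\mu,\phi,\psi)$ to
$$
\Bigl(-\Delta\phi-\lambda_j m\phi-\mu\,s^* m\varPhi_j,\ \ L_j(s^*)\psi-\lambda_j m\,U_W\phi-\mu\, m\,U,\ \ 2s^*\!\int_\Omega\varPhi_j\phi\Bigr),
$$
where $U,U_W$ are evaluated at $(s^*\varPhi_j,Z_j(s^*))$ via \eqref{Uwz}, and $L_j(s^*)$ is the linearization of the second component of $G$ in $Z$ at the base point, which is an isomorphism of $X$ onto $Y$ by the eigenvalue comparison used in the proof of Lemma \ref{eigenlem} (cf.\ \eqref{muj}, \eqref{Ljdef}). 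The operator $\phi\mapsto-\Delta\phi-\lambda_j m\phi$ is formally self-adjoint with kernel $\mathrm{span}\{\varPhi_j\}$, hence has range $Y_j:=\{f\in Y:\int_\Omega f\varPhi_j=0\}$; since $s^*\neq0$ and $\int_\Omega m\varPhi_j^2>0$, the function $m\varPhi_j$ does not lie in $Y_j$. Therefore, given $(f,g,r)\in\boldsymbol{Y}\times\mathbb{R}$, the first slot determines $\mu$ uniquely from the $\varPhi_j$-component of $f$ and then $\phi$ modulo $\mathrm{span}\{\varPhi_j\}$; the second slot then determines $\psi$ by inverting $L_j(s^*)$; and the third slot $2s^*\int_\Omega\varPhi_j\phi=r$ pins down the remaining scalar in $\phi$ (again using $s^*\neq0$). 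This gives bijectivity, and boundedness is clear. I would stress that $\dim E_j=1$ is essential: for a multidimensional eigenspace the residual kernel cannot be removed by a single scalar unknown $\lambda$ together with a single scalar constraint, which is exactly why the proposition is restricted to this case.

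Granting the isomorphism, the implicit function theorem produces $\delta_j^*>0$ and a $C^1$ map $(s,\varepsilon)\mapsto(\lambda_j^*(s,\varepsilon),W_j^*(s,\varepsilon),Z_j^*(s,\varepsilon))$ on $(s^*-\delta_j^*,s^*+\delta_j^*)\times(-\delta_j^*,\delta_j^*)$ with $\mathcal{G}=0$ along it and value $(\lambda_j,s^*\varPhi_j,Z_j(s^*))$ at $(s^*,0)$. Because $(\lambda_j,s\varPhi_j,Z_j(s))$ solves $\mathcal{G}(\,\cdot\,,s,0)=0$ for every $s$ (Corollary \ref{limcor}) and, by continuity of $s\mapsto Z_j(s)$ from Lemma \ref{eigenlem}, stays in the implicit-function neighbourhood for $s$ near $s^*$, local uniqueness forces $(\lambda_j^*(s,0),W_j^*(s,0),Z_j^*(s,0))=(\lambda_j,s\varPhi_j,Z_j(s))$, as required. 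The last component of $\mathcal{G}=0$ yields $\|W_j^*(s,\varepsilon)\|_2=|s|\,\|\varPhi_j\|_2$, which coincides with the stated $s\,\|\varPhi_j\|_2$ for $s^*>0$ (the case $s^*<0$ being symmetric under $\varPhi_j\mapsto-\varPhi_j$), and $Z_j^*(s,\varepsilon)>0$ in $\Omega$ follows by continuity from $Z_j(s^*)\in P$, so $(\lambda_j^*,W_j^*,Z_j^*)$ is indeed a solution of \eqref{WZeq}. The main obstacle is precisely the isomorphism claim in the middle step — correctly combining the Fredholm alternative for $-\Delta-\lambda_j m$, the invertibility of $L_j(s^*)$ supplied by Lemma \ref{eigenlem}, and the transversality furnished jointly by $\partial_\lambda G$ and the normalization; the remaining steps are routine applications of the implicit function theorem and continuity.
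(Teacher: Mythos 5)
Your proof follows essentially the same route as the paper: augment $G$ with the scalar normalization $\|W\|_2^2 - s^2\|\varPhi_j\|_2^2$, treat $\lambda$ as an additional unknown, show the linearization at $(\lambda_j, s^*\varPhi_j, Z_j(s^*), s^*, 0)$ is an isomorphism of $\mathbb{R}\times\boldsymbol{X}$ onto $\boldsymbol{Y}\times\mathbb{R}$ using $\dim E_j=1$, the transversality $\int_\Omega m\varPhi_j^2>0$, and the invertibility of $L_j(s^*)$, and then apply the implicit function theorem. The only cosmetic difference is that you establish bijectivity of the linearization directly by solving for $(\mu,\phi,\psi)$ given data, while the paper shows the kernel is trivial and invokes Fredholm index zero; both are valid.
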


\begin{proof}
We define a nonlinear operator 
$\widetilde{G}\,:\,
\mathbb{R}_{+}\times\boldsymbol{X}\times\mathbb{R}^{2}\to
\boldsymbol{Y}\times\mathbb{R}$
by
\begin{equation}
\widetilde{G}(\lambda, W,Z,s, \varepsilon ):=
\biggl[
\begin{array}{l}
G(\lambda, W,Z,\varepsilon)\\
\|W\|^{2}_{2}-s^{2}\|\varPhi_{j}\|^{2}_{2}
\end{array}
\biggr],
\nonumber
\end{equation}
where $G\,:\,\mathbb{R}_{+}\times\boldsymbol{X}\times\mathbb{R}\to
\boldsymbol{Y}$ is the operator defined by \eqref{Gdef}.
It follows from Corollary \ref{limcor} that
\begin{equation}\label{Gtil0}
\widetilde{G}(\lambda_{j}, s\varPhi_{j}, Z_{j}(s), s, 0)=
\biggl[
\begin{array}{c}
0\\
0
\end{array}
\biggr]
\ \ \mbox{for any}\ s\in\mathbb{R}.
\end{equation}
It will be shown that,
for any fixed $s^{*}\neq 0$,
the Fr\'echet derivative
$$\widetilde{L}^{*}_{j}:=\widetilde{G}_{(\lambda, W,Z)}
(\lambda_{j}, s^{*}\varPhi_{j}, Z_{j}(s^{*}),s^{*}, 0)
$$
is an isomorphism from $\mathbb{R}\times\boldsymbol{X}$ to
$\boldsymbol{Y}\times \mathbb{R}$.
Setting $(\lambda, W, Z, s, \varepsilon )=
(\lambda_{j}, s^{*}\varPhi_{j}, Z_{j}(s^{*}), s^{*}, 0)$
in
\begin{equation}
\widetilde{G}_{(\lambda, W,Z)}(\lambda, W,Z,s,\varepsilon)
\left[
\begin{array}{c}
\mu\\
\phi\\
\psi
\end{array}
\right]
=
\left[
\begin{array}{l}
-\mu m(x)
\biggl[
\begin{array}{c}
W\\
U
\end{array}
\biggr]
+G_{(W,Z)}(\lambda, W,Z,\varepsilon )\left[
\begin{array}{c}
\phi\\
\psi
\end{array}
\right]\\
2\int_{\Omega}W\phi
\end{array}
\right]
\nonumber
\end{equation}
with $G_{(W,Z)}(\lambda, W,Z,\varepsilon )$ defined by \eqref{Gwz},
one can see
\begin{equation}
\begin{split}
&\widetilde{L}^{*}_{j}
\left[
\begin{array}{c}
\mu\\
\phi\\
\psi
\end{array}
\right]=
\widetilde{G}_{(\lambda, W,Z)}(\lambda_{j}, s^{*}\varPhi_{j}, Z_{j}(s^{*}),s^{*}, 0)
\left[
\begin{array}{c}
\mu\\
\phi\\
\psi
\end{array}
\right]\\
=&
\left[
\begin{array}{l}
-\mu m(x)
\biggl[
\begin{array}{l}
s^{*}\varPhi_{j}\\
U(s^{*}\varPhi_{j}, Z_{j}(s^{*}))
\end{array}
\biggr]
-
\biggl[
\begin{array}{l}
\Delta\phi +\lambda_{j}m(x)\phi\\
\Delta\psi +\lambda_{j}m(x)\{\,
U_{W}(s^{*}\varPhi_{j}, Z_{j}(s^{*}))\phi+
U_{Z}(s^{*}\varPhi_{j}, Z_{j}(s^{*}))\psi\,\}
\end{array}
\biggr]\\
2s^{*}\int_{\Omega}\varPhi_{j}\phi
\end{array}
\right].
\end{split}
\nonumber
\end{equation}
Then $(\mu, \phi,\psi)\in\mbox{Ker}\,\widetilde{L}^{*}_{j}$
is equivalent to 
\begin{equation}\label{Ltil}
\begin{cases}
-\Delta\phi=\lambda_{j}m(x)\phi+s^{*}\mu m(x)\varPhi_{j}
\quad&\mbox{in}\ \Omega,\\
-\mu m(x)U(s^{*}\varPhi_{j}, Z_{j}(s^{*}))
-\Delta\psi-
\lambda_{j}m(x)
\{\,U_{W}(s^{*}\varPhi_{j}, Z_{j}(s^{*}))\phi+
U_{Z}(s^{*}\varPhi_{j}, Z_{j}(s^{*}))\psi\,\}=0
\quad&\mbox{in}\ \Omega,\\
\int_{\Omega}\varPhi_{j}\phi =0,\\
\phi=\psi=0\quad&\mbox{on}\ \partial\Omega.
\end{cases}
\end{equation}
Multiplying the first equation of \eqref{Ltil} by $\varPhi_{j}$ 
and integrating the resulting expression, one can see
$s^{*}\mu\int_{\Omega}m(x)\varPhi_{j}^{2}=0$.
Together with $s^{*}\neq 0$, we obtain $\mu =0$.
Using the first equation again, we see $\phi\in E_{j}$.
It follows from the integral constraint that
$\phi\in E_{j}^{\bot}$ if $\dim E_{j}=1$.
Therefore, we obtain $\phi =0$.
Substituting $\mu =0$ and $\phi =0$ into 
the second equation of \eqref{Ltil},
we have
$$
\begin{cases}
\Delta\psi +
\dfrac{\lambda_{j}m(x)}{\sqrt{4Z_{j}(s^{*})+(1-s^{*}\varPhi_{j})^{2}}}\psi =0
\quad &\mbox{in}\ \Omega,\\
\psi=0\quad &\mbox{on}\ \partial\Omega,
\end{cases}
$$
that is,
$L^{*}_{j}\psi=0$
for the operator $L^{*}_{j}:=L_{j}(s^{*})$ defined by \eqref{Ljdef}.
By the argument showing \eqref{muj}, one can see that
$$
\lambda_{j}<
\lambda_{1}\biggl(
\dfrac{m}{\sqrt{4Z_{j}(s^{*})+(1-s^{*}\varPhi_{j})^{2}}}
\biggr).
$$
This fact ensures the invertibility of $L^{*}_{j}$.
Hence it follows that $\psi =0$.

Consequently, we obtain
$\mbox{Ker}\,\widetilde{L}^{*}_{j}=\{\,(\mu, \phi, \psi)=(0,0,0)\,\}$,
and thereby,
$\widetilde{L}^{*}_{j}$ is an isomorphism from
$\mathbb{R}\times\boldsymbol{X}$ to $\boldsymbol{Y}\times\mathbb{R}$
if $s^{*}\ne 0$ and $\mbox{dim}E_{j}=1$.
Together with \eqref{Gtil0},
the application of the implicit function theorem
for $\widetilde{G}(\lambda, W, Z, s, \varepsilon )$
with each fixed $s^{*}\neq 0$
gives a neighborhood $\mathcal{U}_{j}^{*}$
of $(\lambda_{j}, s^{*}\varPhi_{j}, Z_{j}(s^{*}),s^{*}, 0)\in
\mathbb{R}\times\boldsymbol{X}\times\mathbb{R}^{2}$,
a small $\delta_{j}^{*}>0$ and
continuously differentiable functions
$$(\lambda^{*}_{j}(s, \varepsilon ),
W^{*}_{j}(s, \varepsilon ),
Z^{*}_{j}(s, \varepsilon ))
\quad
\mbox{for}\ \  
(s,\varepsilon)\in (s^{*}-\delta^{*}_{j}, s^{*}+\delta^{*}_{j})\times
(-\delta^{*}_{j},\delta^{*}_{j})$$
such that
\begin{equation}
\begin{split}
&\{\,(\lambda, W, Z, s, \varepsilon )\in\mathcal{U}^{*}_{j}\,:\,
\widetilde{G}(\lambda, W,Z,s, \varepsilon )=0\,\}\\
=\,&\{\,(\lambda, W, Z)=(\lambda^{*}_{j}(s, \varepsilon ),
W^{*}_{j}(s, \varepsilon ),
Z^{*}_{j}(s, \varepsilon ))\,:\,
(s,\varepsilon)\in (s^{*}-\delta^{*}_{j}, s^{*}+\delta^{*}_{j})\times
(-\delta^{*}_{j},\delta^{*}_{j})\,\}
\end{split}
\nonumber
\end{equation}
and
$$(\lambda^{*}_{j}(s, 0 ),
W^{*}_{j}(s, 0 ),
Z^{*}_{j}(s, 0 ))=
(\lambda_{j}, s\varPhi_{j}, Z_{j}(s))
\ \ \mbox{for any}\ \  
s\in (s^{*}-\delta^{*}_{j}, s^{*}+\delta^{*}_{j}).
$$
This fact completes the proof of Proposition \ref{sapprop}.
\end{proof}
By \eqref{uvUV} and \eqref{UVdef},
for each $(s,\varepsilon )\in (s^{*}-\delta^{*}_{j}, 
s^{*}+\delta^{*}_{j})$,
the solution to \eqref{WZeq} with $\lambda=\lambda^{*}_{j}(s,\varepsilon )$
is corresponding to a positive solution to
\eqref{SKT} with $\lambda=\lambda^{*}_{j}(s, \varepsilon )$:
\begin{cor}
Suppose that $\dim E_{j}=1$ with some $j\ge 2$.
For any fixed $s^{*}\neq 0$,
there exists $\delta^{*}_{j}>0$ such that,
if $s\in
(s^{*}-\delta^{*}_{j}, s^{*}+\delta^{*}_{j})$
and $\alpha>0$ is sufficiently large,
then \eqref{SKT} admits a positive solution
\begin{equation}
\begin{split}
&(u,v)=\\
&\dfrac{1}{2\alpha}
\left(
\sqrt{(1-W^{*}_{j}(s,\varepsilon ))^{2}+4Z^{*}_{j}(s,\varepsilon )}
-1+W^{*}_{j}(s,\varepsilon ),
\sqrt{(1-W^{*}_{j}(s,\varepsilon ))^{2}+4Z^{*}_{j}(s, \varepsilon )}
-1-W^{*}_{j}(s, \varepsilon )\right)
\end{split}
\nonumber
\end{equation}
with $\lambda=\lambda^{*}_{j}(s,\varepsilon )$,
where
$(\lambda^{*}_{j}(s, \varepsilon ), W^{*}_{j}(s, \varepsilon ),
Z^{*}_{j}(s, \varepsilon ))$
is the solution of \eqref{WZeq} obtained in Proposition \ref{sapprop}.
\end{cor}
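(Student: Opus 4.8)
The plan is to transport the solution of \eqref{WZeq} produced by Proposition \ref{sapprop} back to the unknowns of \eqref{SKT} through the substitutions \eqref{uvUV} and \eqref{UVdef}, noting that $\varepsilon=1/\alpha$, so that ``$|\varepsilon|$ small with $\varepsilon>0$'' is precisely ``$\alpha>0$ large''. Concretely, fix $s^{*}\neq 0$, take $\delta^{*}_{j}>0$ as in Proposition \ref{sapprop}, and for $s\in(s^{*}-\delta^{*}_{j},s^{*}+\delta^{*}_{j})$ and $\varepsilon\in(0,\delta^{*}_{j})$ set $\alpha=1/\varepsilon$, let $(W,Z)=(W^{*}_{j}(s,\varepsilon),Z^{*}_{j}(s,\varepsilon))$ be the solution of \eqref{WZeq} with $\lambda=\lambda^{*}_{j}(s,\varepsilon)$, define $(U,V)=(U(W,Z),V(W,Z))$ by \eqref{UVdef}, and put $(u,v)=\varepsilon(U,V)$. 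Since the changes of variables \eqref{uvUV}--\eqref{WZdef} were introduced precisely so that $(u,v)$ solves \eqref{SKT} if and only if $(W,Z)$ solves \eqref{WZeq} with the same $\lambda$, and this correspondence is invertible with inverse \eqref{UVdef}, the pair $(u,v)$ just defined solves \eqref{SKT} with $\lambda=\lambda^{*}_{j}(s,\varepsilon)$; substituting \eqref{UVdef} and $\varepsilon=1/\alpha$ yields exactly the expression in the statement.

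The only point that requires an argument is the positivity $u>0$, $v>0$ in $\Omega$, equivalently $U>0$, $V>0$ in $\Omega$. I would obtain it from the $C^{1}$-dependence on $\varepsilon$ in Proposition \ref{sapprop} together with a limiting argument at $\varepsilon=0$. As $\varepsilon\searrow 0$ with $s$ fixed, Proposition \ref{sapprop} gives $(W^{*}_{j}(s,\varepsilon),Z^{*}_{j}(s,\varepsilon))\to(s\varPhi_{j},Z_{j}(s))$ in $\boldsymbol{X}$, hence in $C^{1}(\overline{\Omega})^{2}$; since $(1-W)^{2}+4Z$ is bounded away from $0$ near these values, the map \eqref{UVdef} is $C^{1}$ there, so $(U,V)\to(U^{0},V^{0})$ in $C^{1}(\overline{\Omega})^{2}$, where $(U^{0},V^{0})$ is the positive solution of the limiting system \eqref{UVlimeq} associated with $(s\varPhi_{j},Z_{j}(s))$ in Corollary \ref{UVlimcor}. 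For that solution, set $Z^{0}=(1+V^{0})U^{0}$ and $\widetilde{Z}^{0}=(1+U^{0})V^{0}$; the two equations of \eqref{UVlimeq} read $-\Delta Z^{0}=\lambda_{j}m(x)U^{0}$ and $-\Delta\widetilde{Z}^{0}=\lambda_{j}m(x)V^{0}$, both $\ge(\not\equiv)\,0$ in $\Omega$ with zero boundary data, so the strong maximum principle and the Hopf lemma give $Z^{0},\widetilde{Z}^{0}>0$ in $\Omega$ and $\partial_{\nu}Z^{0},\partial_{\nu}\widetilde{Z}^{0}<0$ on $\partial\Omega$; since $U^{0}=V^{0}=0$ on $\partial\Omega$, one reads off $\partial_{\nu}U^{0}=\partial_{\nu}Z^{0}<0$ and $\partial_{\nu}V^{0}=\partial_{\nu}\widetilde{Z}^{0}<0$ on $\partial\Omega$, so $(U^{0},V^{0})$ lies in the interior of the positive cone of $C^{1}(\overline{\Omega})$. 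Persistence of strict positivity under small $C^{1}$-perturbations then yields $U>0$, $V>0$ in $\Omega$ once $\varepsilon>0$ is small enough, i.e.\ once $\alpha$ is large enough, possibly after shrinking $\delta^{*}_{j}$.

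I do not expect any substantial obstacle: the corollary is essentially a restatement of Proposition \ref{sapprop} in the variables $(u,v)$, and the one genuine verification — that the perturbed pair $(W^{*}_{j},Z^{*}_{j})$ is still carried by \eqref{UVdef} to a genuinely positive $(U,V)$ with the correct boundary behaviour — is exactly the continuity/limiting argument sketched above. Alternatively, once $u\ge 0$, $v\ge 0$ with $(u,v)\not\equiv(0,0)$ have been established, one may simply invoke the strong maximum principle (as recorded in the introduction) to conclude $u,v>0$ in $\Omega$.
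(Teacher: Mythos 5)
The paper provides no explicit proof of this corollary; it is offered as an immediate restatement of Proposition~\ref{sapprop} through the changes of variables \eqref{uvUV} and \eqref{UVdef}, exactly as you describe in your first paragraph. So your proposal is certainly the intended route. The additional positivity argument you supply (convergence in $C^{1}$ to the limiting pair $(U^{0},V^{0})$, interior-cone membership via the strong maximum principle and the Hopf lemma applied to $Z^{0}=(1+V^{0})U^{0}$ and $\widetilde{Z}^{0}=(1+U^{0})V^{0}$, then persistence of strict positivity under small perturbations) is something the paper leaves tacit, and it is genuinely worth spelling out: $(U,V)>0$ is not automatic from $Z^{*}_{j}>0$ alone, since in the $(W,Z)$ variables one has $U>0$ from $Z>0$ but $V>0$ requires the extra inequality $Z>W$ (equivalently $(1+U)V>0$). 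Your argument supplies this through the $C^{1}$-proximity to $(U^{0},V^{0})$, which in turn is taken to be a positive solution by Corollary~\ref{UVlimcor}; the paper relies on the same corollary implicitly. One small remark: your use of the Hopf lemma to get $\partial_{\nu}Z^{0}<0$ presupposes $U^{0}\ge 0$ (not $\equiv 0$), and likewise $\widetilde{Z}^{0}$ presupposes $V^{0}\ge 0$, so that step does not independently \emph{establish} positivity of $(U^{0},V^{0})$ but rather upgrades the positivity recorded in Corollary~\ref{UVlimcor} to interior-cone membership, which is exactly what the perturbation step needs. With that reading your argument is sound and matches the paper's (unstated) proof.
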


\section{Bifurcation from solutions of small coexistence}
In this section, 
we look for
bifurcation points on
the curve $\mathcal{C}_{\alpha, \varLambda}$ 
(obtained in Theorem \ref{Cathm})
of solutions to \eqref{SKT} with large $\alpha$.
By the change of variables \eqref{WZdef},
it is sufficient to find bifurcation points
on the curve $\widetilde{\mathcal{C}}_{\varepsilon, \varLambda}$ 
(obtained in \eqref{perbra}) of solutions to \eqref{WZeq}.

\begin{lem}\label{2ndbiflem}
Assume that $\dim E_{j}=1$ with some $j\ge 2$.
Let $G\,:\,\mathbb{R}_{+}\times
\boldsymbol{X}\times\mathbb{R}\to\boldsymbol{Y}$ 
be the nonlinear operator defined by \eqref{Gdef}.
There exists
a continuously differentiable function
$
(-\delta_{j}, \delta_{j})
\ni\varepsilon\mapsto
\mu_{j}(\varepsilon )\in\mathbb{R}$
with some small $\delta_{j}>0$
such that
\begin{equation}\label{mujtola}
\mu_{j}(0 )=\lambda_{j}
\end{equation}
and 
\begin{equation}\label{degper}
\mbox{{\rm dim\,Ker}}\,
G_{(W,Z)}(\mu_{j}(\varepsilon),
W(\mu_{j}(\varepsilon), \varepsilon),
Z(\mu_{j}(\varepsilon), \varepsilon), \varepsilon)= 1
\end{equation}
for any $(\mu_{j}(\varepsilon),
W(\mu_{j}(\varepsilon), \varepsilon),
Z(\mu_{j}(\varepsilon), \varepsilon))\in\widetilde{\mathcal{C}}_{\varepsilon, \varLambda}$ with $\varepsilon\in (-\delta_{j}, \delta_{j})$.
\end{lem}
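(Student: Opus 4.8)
The plan is to locate, along the branch $\widetilde{\mathcal{C}}_{\varepsilon,\varLambda}$, the point where the linearized operator degenerates by performing a Lyapunov--Schmidt reduction anchored at $(\lambda,W,Z,\varepsilon)=(\lambda_{j},0,Z_{0}(\lambda_{j}),0)$, and then following that degeneracy by the implicit function theorem in the parameter $\lambda$. Throughout I assume $\varLambda>\lambda_{j}$ (so $(\lambda_{j},0,Z_{0}(\lambda_{j}))$ lies on $\widetilde{\mathcal{C}}_{0,\varLambda}$) and I set $A(\lambda,\varepsilon):=G_{(W,Z)}(\lambda,W(\lambda,\varepsilon),Z(\lambda,\varepsilon),\varepsilon)\in\mathcal{L}(\boldsymbol{X},\boldsymbol{Y})$, which is of class $C^{1}$ in $(\lambda,\varepsilon)$ by Proposition \ref{perprop} and \eqref{Gwz}. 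The second-order part of $A(\lambda,\varepsilon)$ is $-\Delta\oplus(-\Delta)$ and the remaining terms are zeroth-order multiplication operators, hence compact from $\boldsymbol{X}$ to $\boldsymbol{Y}$ (via $\boldsymbol{X}\hookrightarrow C^{1}(\overline{\Omega})^{2}\hookrightarrow\boldsymbol{Y}$ with the first embedding compact), so each $A(\lambda,\varepsilon)$ is Fredholm of index $0$. At $(\lambda,\varepsilon)=(\lambda_{j},0)$ the operator $A(\lambda_{j},0)$ is block lower triangular in $(\phi,\psi)$, and the computation already carried out in \eqref{Gwz0}--\eqref{ker} shows, using $\dim E_{j}=1$, that $\mathrm{Ker}\,A(\lambda_{j},0)=\mathrm{Span}\{\Phi_{0}\}$ with $\Phi_{0}:=(\varPhi_{j},\varPsi_{j})$; moreover, since $F_{Z}(\lambda_{j},Z_{0}(\lambda_{j}))$ is an isomorphism (Proposition \ref{subprop}) and $-\Delta-\lambda_{j}m$ is formally self-adjoint with one-dimensional kernel, arguing as for \eqref{Ran} gives $\mathrm{Ran}\,A(\lambda_{j},0)=\{(f,g)\in\boldsymbol{Y}:\int_{\Omega}f\varPhi_{j}=0\}$. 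I therefore fix the complement $Y_{0}:=(\varPhi_{j},0)$ of $\mathrm{Ran}\,A(\lambda_{j},0)$ in $\boldsymbol{Y}$, the bounded projection $Q:\boldsymbol{Y}\to\mathrm{Ran}\,A(\lambda_{j},0)$ along $Y_{0}$, and the complement $\boldsymbol{X}_{1}:=\{(\phi,\psi)\in\boldsymbol{X}:\int_{\Omega}\phi\varPhi_{j}=0\}$ of $\mathrm{Span}\{\Phi_{0}\}$ in $\boldsymbol{X}$.

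Next I would solve $A(\lambda,\varepsilon)(\Phi_{0}+\Theta)=0$ for $\Theta\in\boldsymbol{X}_{1}$ by splitting it with $Q$. The equation $QA(\lambda,\varepsilon)(\Phi_{0}+\Theta)=0$ has the solution $\Theta=0$ at $(\lambda_{j},0)$, and its derivative in $\Theta$ there is $QA(\lambda_{j},0)|_{\boldsymbol{X}_{1}}$, an isomorphism from $\boldsymbol{X}_{1}$ onto $\mathrm{Ran}\,A(\lambda_{j},0)$, so the implicit function theorem yields a $C^{1}$ map $(\lambda,\varepsilon)\mapsto\Theta(\lambda,\varepsilon)\in\boldsymbol{X}_{1}$ near $(\lambda_{j},0)$ with $\Theta(\lambda_{j},0)=0$. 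Writing $(I-Q)A(\lambda,\varepsilon)(\Phi_{0}+\Theta(\lambda,\varepsilon))=g(\lambda,\varepsilon)Y_{0}$ defines a $C^{1}$ scalar $g$ with $g(\lambda_{j},0)=0$, and by construction, for $(\lambda,\varepsilon)$ near $(\lambda_{j},0)$, $\mathrm{Ker}\,A(\lambda,\varepsilon)\neq\{0\}$ if and only if $g(\lambda,\varepsilon)=0$, in which case $\mathrm{Ker}\,A(\lambda,\varepsilon)=\mathrm{Span}\{\Phi_{0}+\Theta(\lambda,\varepsilon)\}$; the bound $\dim\mathrm{Ker}\,A(\lambda,\varepsilon)\le1$ near $(\lambda_{j},0)$ follows from upper semicontinuity of the kernel dimension for a continuous family of index-$0$ Fredholm operators, so on $\{g=0\}$ the kernel is exactly one-dimensional.

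It remains to apply the implicit function theorem to $g(\lambda,\varepsilon)=0$ in the variable $\lambda$, for which the point to verify is $\partial_{\lambda}g(\lambda_{j},0)\neq0$. Differentiating $g(\lambda,\varepsilon)Y_{0}=(I-Q)A(\lambda,\varepsilon)(\Phi_{0}+\Theta(\lambda,\varepsilon))$ at $(\lambda_{j},0)$ and using that $(I-Q)$ annihilates $\mathrm{Ran}\,A(\lambda_{j},0)$, one gets $\partial_{\lambda}g(\lambda_{j},0)\,Y_{0}=(I-Q)[\,\partial_{\lambda}A(\lambda_{j},0)\,\Phi_{0}\,]$. Since the first component of $\partial_{\lambda}A(\lambda,0)\Phi_{0}$ is $\partial_{\lambda}(-\Delta\varPhi_{j}-\lambda m\varPhi_{j})=-m\varPhi_{j}$, and $(I-Q)$ reads off precisely the $\varPhi_{j}$-component of the first slot, this yields $\partial_{\lambda}g(\lambda_{j},0)=-(\int_{\Omega}m\varPhi_{j}^{2})/(\int_{\Omega}\varPhi_{j}^{2})\neq0$; this is the analogue of the transversality condition in \eqref{tra2} and is the crux of the proof. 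The implicit function theorem then produces a $C^{1}$ function $\varepsilon\mapsto\mu_{j}(\varepsilon)$ on a small interval $(-\delta_{j},\delta_{j})$, with $\delta_{j}$ also shrunk below $\underline{\delta}(\varLambda)$, such that $\mu_{j}(0)=\lambda_{j}$, $g(\mu_{j}(\varepsilon),\varepsilon)=0$, and $\mu_{j}(\varepsilon)\in(\lambda_{1},\varLambda]$ stays near $\lambda_{j}$; hence $(\mu_{j}(\varepsilon),W(\mu_{j}(\varepsilon),\varepsilon),Z(\mu_{j}(\varepsilon),\varepsilon))\in\widetilde{\mathcal{C}}_{\varepsilon,\varLambda}$, and \eqref{mujtola}--\eqref{degper} follow from the previous paragraph.

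The step I expect to be the main obstacle is exactly this transversality verification together with the bookkeeping that forces the kernel to have dimension \emph{one} rather than merely being nontrivial: both rest on the hypothesis $\dim E_{j}=1$, which is what makes $\mathrm{Ker}\,A(\lambda_{j},0)$ and its cokernel one-dimensional and lets a single scalar bifurcation function $g$ encode the whole degeneracy. A secondary technical point is to make the compactness of the zeroth-order part of $G_{(W,Z)}$ precise, so that $A(\lambda,\varepsilon)$ is genuinely Fredholm of index $0$ and the upper-semicontinuity argument for the exact kernel dimension is legitimate.
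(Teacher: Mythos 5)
Your proof is correct, and it takes a genuinely different route from the paper. The paper eliminates the $\psi$-component of the kernel equation by Schur complement: using the invertibility of $T(\lambda,\varepsilon)$ from Lemma~\ref{invlem} it writes $\psi=T^{-1}(\lambda,\varepsilon)\{\lambda m U_{W}-\varepsilon h_{21}\}\phi$, substitutes into the $\phi$-equation to get the nonlocal scalar problem \eqref{nonlocal}, and then applies the implicit function theorem to an operator $H(\lambda,\phi,\varepsilon)$ encoding that scalar equation together with a normalization constraint $\|\phi\|_{2}^{2}-1=0$. You instead perform a Lyapunov--Schmidt reduction directly on the two-by-two linearized operator $A(\lambda,\varepsilon)=G_{(W,Z)}(\lambda,W(\lambda,\varepsilon),Z(\lambda,\varepsilon),\varepsilon)$: first exhibit it as a compact perturbation of $(-\Delta)\oplus(-\Delta)$ to get Fredholmness of index $0$, split along the one-dimensional kernel $\mathrm{Span}\{(\varPhi_{j},\varPsi_{j})\}$ and the complementary range, and reduce the degeneracy to a scalar bifurcation function $g(\lambda,\varepsilon)$, to which the implicit function theorem is applied. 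The two arguments locate the same branch of degeneracy and rest on the same transversality computation (effectively $\int_{\Omega}m\varPhi_{j}^{2}\neq0$) and the same use of $\dim E_{j}=1$; your LS reduction is somewhat cleaner and requires no inversion of $T$, while the paper's Schur-complement route yields an explicit formula for the second kernel component $\psi_{j}(\varepsilon)$, which it exploits later in Lemma~\ref{translem} when verifying the transversality condition \eqref{tranper}. One small bookkeeping point worth making explicit in your version: you should also record the explicit form of the kernel generator $\Phi_{0}+\Theta(\mu_{j}(\varepsilon),\varepsilon)$ and its continuity in $\varepsilon$, since Lemma~\ref{translem} and Proposition~\ref{2ndbifprop1} use the functions $(\phi_{j}(\varepsilon),\psi_{j}(\varepsilon))$ obtained from this lemma.
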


For the proof of Lemma \ref{2ndbiflem},
we prepare the following lemma:
\begin{lem}\label{invlem}
For any large $\varLambda>0$,
there exists a small $\delta=\delta(\varLambda )>0$
such that,
if $|\varepsilon |<\delta$,
then for any $(\lambda, W(\lambda,\varepsilon ), Z(\lambda, \varepsilon ))\in 
\widetilde{\mathcal{C}}_{\varepsilon, \varLambda}$,
the following Dirichlet problem of the linear elliptic equation:
\begin{equation}\label{LD}
-\Delta\psi-\dfrac{\lambda m(x)}
{\sqrt{(1-W(\lambda, \varepsilon ))^{2}+4Z(\lambda, \varepsilon)}}\psi=0
\ \ \mbox{in}\ \Omega,\quad
\psi=0\ \ \mbox{on}\ \partial\Omega
\end{equation}
admits only the trivial solution.
\end{lem}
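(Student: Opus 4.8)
The plan is to first strip the quasilinear disguise off \eqref{LD}. Since $(U,V)$ corresponds to $(W(\lambda ,\varepsilon ),Z(\lambda ,\varepsilon ))$ through \eqref{UVdef}, one has the algebraic identity $\sqrt{(1-W)^{2}+4Z}=1+U+V$, so \eqref{LD} is exactly $-\Delta\psi=\frac{\lambda m(x)}{1+U+V}\psi$ in $\Omega$, $\psi=0$ on $\partial\Omega$, and proving it has only the trivial solution means proving that $\lambda$ is not an eigenvalue of the weighted problem with weight $m/(1+U+V)$. On the branch $U,V>0$ in $\Omega$ (from $Z=(1+V)U>0$), and the crucial auxiliary observation is that $Z=(1+V)U$ is itself a positive solution of $-\Delta Z=qZ$ with $q:=\frac{\lambda m-\varepsilon (b_{1}U+c_{1}V)}{1+V}$ — this is just the second equation of \eqref{WZeq} rewritten — so that $\int_{\Omega}|\nabla\phi|^{2}\ge\int_{\Omega}q\,\phi^{2}$ for every $\phi\in H^{1}_{0}(\Omega)$, with equality for $\phi=Z$.

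Using this, I would first dispose of the easy case $\varepsilon\le 0$ (which incidentally re-proves the case $\varepsilon=0$, where $U=V$). There $q\ge\frac{\lambda m}{1+V}\ge\frac{\lambda m}{1+U+V}$, with strict inequality wherever $m>0$. If $\psi\not\equiv 0$ solved the reduced equation, then $\psi\not\equiv 0$ on $\{m>0\}$ — otherwise $\int_{\Omega}|\nabla\psi|^{2}=\int_{\Omega}\frac{\lambda m}{1+U+V}\psi^{2}=0$ — and hence $\int_{\Omega}q\psi^{2}>\int_{\Omega}\frac{\lambda m}{1+U+V}\psi^{2}=\int_{\Omega}|\nabla\psi|^{2}\ge\int_{\Omega}q\psi^{2}$, which is absurd. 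So for $\varepsilon\le 0$ the lemma holds with any $\delta$.

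The remaining and substantial case is $\varepsilon>0$: there the pointwise comparison $q\ge\frac{\lambda m}{1+U+V}$ breaks down exactly on $\{m=0\}$, where $q=-\frac{\varepsilon (b_{1}U+c_{1}V)}{1+V}<0$ while the weight vanishes, and no convex combination of the two branch potentials repairs this. I would therefore argue by contradiction and compactness. Assume there are $\varepsilon_{n}\downarrow 0$, $\lambda_{n}\in(\lambda_{1},\varLambda]$ and nontrivial $\psi_{n}\in X$ solving \eqref{LD} for $(W,Z)=(W(\lambda_{n},\varepsilon_{n}),Z(\lambda_{n},\varepsilon_{n}))$, normalized by $\|\psi_{n}\|_{\infty}=1$. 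Since the branch is a $C^{1}$ image of $(\lambda_{1},\varLambda]\times(-\underline{\delta},\underline{\delta})$ with $(W,Z)\to(0,0)$ as $\lambda\searrow\lambda_{1}$ (Proposition \ref{perprop}), the $(W_{n},Z_{n})$ stay bounded in $\boldsymbol{X}$, so along a subsequence $\lambda_{n}\to\lambda_{*}\in[\lambda_{1},\varLambda]$, $(W_{n},Z_{n})\to(W_{*},Z_{*})$ and $\psi_{n}\to\psi_{*}$ in $C^{1}(\overline{\Omega})$ with $\|\psi_{*}\|_{\infty}=1$. If $\lambda_{*}>\lambda_{1}$, continuity of the branch together with $(W(\lambda,0),Z(\lambda,0))=(0,Z_{0}(\lambda))$ forces $(W_{*},Z_{*})=(0,Z_{0}(\lambda_{*}))$, hence $(U_{n},V_{n})\to(U(\cdot,\lambda_{*}),U(\cdot,\lambda_{*}))$ and the weight tends to $\lambda_{*}m/\sqrt{4Z_{0}(\lambda_{*})+1}$, so $\psi_{*}$ is a nonzero element of $\mathrm{Ker}\,F_{Z}(\lambda_{*},Z_{0}(\lambda_{*}))$ — contradicting that $F_{Z}(\lambda_{*},Z_{0}(\lambda_{*}))$ is an isomorphism (Proposition \ref{subprop}). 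If $\lambda_{*}=\lambda_{1}$, letting $n\to\infty$ in \eqref{WZeq} shows $(W_{*},Z_{*})$ solves \eqref{WZeq0} with $\lambda=\lambda_{1}$, hence $(W_{*},Z_{*})=(0,0)$ by Corollary \ref{limcor}; thus $(U_{n},V_{n})\to(0,0)$, the weight tends to $\lambda_{1}m$, and $\psi_{*}=\pm\varPhi_{1}$, so after a sign normalization $\psi_{n}\to\varPhi_{1}>0$ in $C^{1}(\overline{\Omega})$.

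To close the case $\lambda_{*}=\lambda_{1}$ — the hard part — I would rescale. Put $s_{n}:=\|Z_{n}\|_{\infty}\downarrow 0$. Since $Z_{n}/s_{n}$ solves $-\Delta (Z_{n}/s_{n})=q_{n}(Z_{n}/s_{n})$ with $q_{n}:=\frac{\lambda_{n}m-\varepsilon_{n}(b_{1}U_{n}+c_{1}V_{n})}{1+V_{n}}\to\lambda_{1}m$ uniformly and $\|Z_{n}/s_{n}\|_{\infty}=1$, elliptic regularity gives $Z_{n}/s_{n}\to\varPhi_{1}$ in $C^{1}(\overline{\Omega})$; hence $U_{n}/s_{n}=(Z_{n}/s_{n})/(1+V_{n})\to\varPhi_{1}$, and — using that near $\lambda_{1}$ the branch lies in $X_{1}\times X$ and the bifurcation there is transcritical, so that $\|W_{n}\|_{\infty}=O(\varepsilon_{n}s_{n})=o(s_{n})$ — also $V_{n}/s_{n}\to\varPhi_{1}$. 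Consequently $\frac{1}{s_{n}}\big(q_{n}-\frac{\lambda_{n}m}{1+U_{n}+V_{n}}\big)\to\lambda_{1}m\varPhi_{1}$ uniformly on $\overline{\Omega}$. On the other hand, inserting $\phi=\psi_{n}$ into $\int_{\Omega}|\nabla\phi|^{2}\ge\int_{\Omega}q_{n}\phi^{2}$ and using $\int_{\Omega}|\nabla\psi_{n}|^{2}=\int_{\Omega}\frac{\lambda_{n}m}{1+U_{n}+V_{n}}\psi_{n}^{2}$ gives $\int_{\Omega}\big(q_{n}-\frac{\lambda_{n}m}{1+U_{n}+V_{n}}\big)\psi_{n}^{2}\le 0$; dividing by $s_{n}$ and letting $n\to\infty$ (with $\psi_{n}\to\varPhi_{1}$) yields $\lambda_{1}\int_{\Omega}m\varPhi_{1}^{3}\le 0$, which is impossible since $m\ge 0$ is nontrivial and $\varPhi_{1}>0$ in $\Omega$. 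The morale is that the leading-order discrepancy between the two potentials comes from $\{m>0\}$ and has a definite sign, while the sign-violating contribution of $\{m=0\}$ is only $O(\varepsilon_{n}s_{n})$; the delicate technical point, and the main obstacle, is making the rate bound $\|W_{n}\|_{\infty}=o(\|Z_{n}\|_{\infty})$ rigorous, i.e. controlling how the branch approaches the bifurcation point $(\lambda_{1},0,0)$.
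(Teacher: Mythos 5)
Your argument is correct in outline and takes a genuinely different route from the paper's very short proof. The paper recalls from \eqref{var2} that $\lambda<\lambda_1\bigl(m/\sqrt{4Z_0(\lambda)+1}\bigr)$, invokes \eqref{per0} to assert that $(W(\lambda,\varepsilon),Z(\lambda,\varepsilon))$ is close to $(0,Z_0(\lambda))$, and finishes by continuity of $q\mapsto\lambda_1(q)$; it does not comment on uniformity as $\lambda\searrow\lambda_1$, where the eigenvalue gap $\lambda_1\bigl(m/\sqrt{4Z_0(\lambda)+1}\bigr)-\lambda$ shrinks to zero. Your compactness scheme --- splitting the accumulation point into $\lambda_*>\lambda_1$ (where the limit contradicts the invertibility of $F_Z(\lambda_*,Z_0(\lambda_*))$) and $\lambda_*=\lambda_1$ (rescaling by $s_n=\|Z_n\|_\infty$) --- together with the Picone-type observation that $Z_n>0$ solves $-\Delta Z_n=q_nZ_n$ and hence $\int_\Omega|\nabla\phi|^2\ge\int_\Omega q_n\phi^2$, turns that delicate endpoint into the clean sign condition $\lambda_1\int_\Omega m\varPhi_1^3>0$. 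The one step you flag as the main obstacle, $\|W_n\|_\infty=o(s_n)$, is in fact already available from the construction of $\widetilde{\mathcal{C}}_{\varepsilon,\varLambda}$ in Proposition~\ref{perprop}: near $\lambda_1$ the branch is built inside $X_1\times X$, so $\int_\Omega W_n\varPhi_1=0$; the first equation of \eqref{WZeq} then reads $(-\Delta-\lambda_nm)W_n=-\varepsilon_n\{U_n(b_1U_n+c_1V_n)-V_n(b_2U_n+c_2V_n)\}$ with right-hand side $O(\varepsilon_n(s_n^2+\|W_n\|_\infty^2))$, and since $-\Delta-\lambda m$ restricted to $X_1$ is an isomorphism with uniformly bounded inverse for $\lambda$ near $\lambda_1$, a short bootstrap gives $\|W_n\|_X=O(\varepsilon_ns_n^2)=o(s_n)$. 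With that supplement your argument closes; it is considerably longer than the paper's, but it makes explicit why the invertibility does not degenerate as $\lambda\searrow\lambda_1$, which the paper's appeal to continuity leaves implicit.
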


\begin{proof}
Let $(\lambda, Z_{0}(\lambda ))$ be the positive solution to \eqref{subell}
obtained in Proposition \ref{subprop}.
By the same manner as \eqref{var2}, we can see that
$$
\lambda<\lambda_{1}\biggl(
\dfrac{m}{\sqrt{1+4Z_{0}(\lambda )}}\biggr)
\ \ \mbox{for any}\  \lambda\in (\lambda_{1}, \infty).
$$
By virtue of \eqref{per0},
we note that
if $\varepsilon>0$ is sufficiently small,
then 
$(W(\lambda,\varepsilon), Z(\lambda, \varepsilon ))$
is close to 
$(0, Z_{0}(\lambda ))$
in $\boldsymbol{X}$.
By the continuity of $\lambda_{1}(q)$ 
with respect to $q$,
one can see that
$$
\lambda<\lambda_{1}\biggl(
\dfrac{m}{\sqrt{(1-W(\lambda,\varepsilon))^{2}
+4Z(\lambda, \varepsilon )}}\biggr)
$$
if $\varepsilon>0$ is sufficiently small.
This fact implies that
the linear operator
$$
-\Delta -\dfrac{\lambda m}{\sqrt{(1-W(\lambda,\varepsilon))^{2}
+4Z(\lambda, \varepsilon )}}\,:\,X\to Y
$$
is invertible.
Therefore, we see that
\eqref{LD} has only the trivial solution.
The proof of Lemma \ref{invlem} is accomplished.
\end{proof}

\begin{proof}[Proof of Lemma \ref{2ndbiflem}]
For any $(\lambda, W(\lambda,\varepsilon ), Z(\lambda, \varepsilon ))
\in\widetilde{\mathcal{C}}_{\varepsilon, \varLambda}$,
we define 
$L(\lambda,\varepsilon )\in\mathcal{L}(\boldsymbol{X}, \boldsymbol{Y})$
by the Fr\'echet derivative of $G$ by $(W,Z)$ at 
$(W(\lambda,\varepsilon ), Z(\lambda, \varepsilon ))$ as follows:
\begin{equation}\label{Ldedef}
L(\lambda,\varepsilon ):=G_{(W,Z)}(\lambda, W(\lambda, \varepsilon ), Z(\lambda, \varepsilon ),\varepsilon ).
\end{equation}
By virtue of \eqref{Gwz}, we see that
$(\phi, \psi)\in\mbox{Ker}\,L(\lambda,\varepsilon )$ is equivalent to
\begin{equation}\label{Lde}
\begin{cases}
-\Delta\phi=\lambda m(x)\phi-\varepsilon \{\,
h_{11}(\lambda,\varepsilon )\phi+h_{12}(\lambda, \varepsilon)\psi\,\}
\ \ &\mbox{in}\ \Omega,\\
-\Delta\psi-\lambda m(x)U_{Z}(\lambda,\varepsilon )\psi
+\varepsilon 
h_{22}(\lambda, \varepsilon )\psi=
\lambda m(x)U_{W}(\lambda, \varepsilon )\phi
-\varepsilon 
h_{21}(\lambda, \varepsilon )\phi
&\mbox{in}\ \Omega,\\
\phi=\psi=0\quad
&\mbox{on}\ \partial\Omega,
\end{cases}
\end{equation}
where
\begin{equation}\label{hijdef}
\begin{split}
&h_{11}(\lambda, \varepsilon ):=
\{\,2b_{1}U+(c_{1}-b_{2})V\,\}U_{W}+
\{\,(c_{1}-b_{2})U-2c_{2}V\,\}V_{W},\\
&h_{12}(\lambda, \varepsilon ):=
\{\,2b_{1}U+(c_{1}-b_{2})V\,\}U_{Z}+
\{\,(c_{1}-b_{2})U-2c_{2}V\,\}V_{Z},\\
&h_{21}(\lambda, \varepsilon ):=
(2b_{1}U+c_{1}V)U_{W}+c_{1}UV_{W},\\
&h_{22}(\lambda, \varepsilon ):=
(2b_{1}U+c_{1}V)U_{Z}+c_{1}UV_{Z}
\end{split}
\end{equation}
with
\begin{equation}\label{UVWZ}
\begin{split}
&(U,V)=(U(W(\lambda,\varepsilon ), Z(\lambda, \varepsilon )), 
V(W(\lambda, \varepsilon ), Z(\lambda, \varepsilon )))\\
=&\dfrac{1}{2}(
\sqrt{(1-W(\lambda,\varepsilon ))^{2}+4Z(\lambda, \varepsilon )}-1+W(\lambda,\varepsilon ),
\sqrt{(1-W(\lambda,\varepsilon ))^{2}+4Z(\lambda, \varepsilon )}-1-W(\lambda,\varepsilon )
)
\end{split}
\end{equation}
and
$(U_{W}, U_{Z}, V_{W}, V_{Z})$ defined by \eqref{Uwz} with
$(W,Z)=(W(\lambda,\varepsilon ), Z(\lambda, \varepsilon ))$.

We know from Lemma \ref{invlem} that, 
for each $\lambda\in (\lambda_{1}, \infty)$, 
if $|\varepsilon |$ is sufficiently small,
then the operator
\begin{equation}\label{Tdef}
T(\lambda,\varepsilon):=
-\Delta-\lambda m(x)U_{Z}(\lambda,\varepsilon )+\varepsilon h_{22}(\lambda,\varepsilon )
\in\mathcal{L}(X,Y)
\end{equation}
is invertible, so that
$T^{-1}(\lambda,\varepsilon )\in\mathcal{L}(Y,X)$ exists.
Then, the second equation of \eqref{Lde} can be expressed as
$$
\psi=T^{-1}(\lambda, \varepsilon )\{\,\lambda m(x)U_{W}(\lambda,\varepsilon )
-
\varepsilon h_{21}(\lambda, \varepsilon )\,\}\phi.
$$
Substituting this expression into the first equation of \eqref{Lde},
we know that \eqref{Lde} is reduced to
\begin{equation}\label{nonlocal}
\begin{cases}
-\Delta\phi=\lambda m(x)\phi-
\varepsilon h_{11}(\lambda, \varepsilon )\phi-
\varepsilon h_{12}(\lambda, \varepsilon )T^{-1}(\lambda,\varepsilon)
\{\,\lambda m(x)U_{W}(\lambda, \varepsilon )-\varepsilon 
h_{21}(\lambda, \varepsilon )\,\}
\phi\ \ &\mbox{in}\ \Omega,\\
\phi =0\ \ &\mbox{on}\ \partial\Omega.
\end{cases}
\end{equation}
Let $\widetilde{\varPhi}_{j}$ be any $L^{2}(\Omega )$ normalized eigenfunction
of \eqref{ev} with $\lambda=\lambda_{j}$.
Hence,
$\widetilde{\varPhi}_{j}\in E_{j}$ with $\|\widetilde{\varPhi}_{j}\|_{2}=1$.
In order to parameterize all solutions near 
$(\lambda, \phi,\varepsilon)=
(\lambda_{j}, \widetilde{\varPhi}_{j}, 0)$ to \eqref{nonlocal},
we define the operator
$H\,:\,\mathbb{R}_{+}\times X\times\mathbb{R}\to Y\times\mathbb{R}$
by
\begin{equation}
\begin{split}
&H(\lambda,\phi, \varepsilon )\\
=&
\biggl[
\begin{array}{l}
-\Delta\phi-\lambda m(x)\phi+
\varepsilon h_{11}(\lambda, \varepsilon )\phi+
\varepsilon h_{12}(\lambda, \varepsilon )T^{-1}(\lambda,\varepsilon)
\{\,\lambda m(x)U_{W}(\lambda, \varepsilon )-\varepsilon h_{21}(\lambda, \varepsilon )\,\}
\phi\\
\|\phi\|^{2}_{2}-1
\end{array}
\biggr].
\end{split}
\nonumber
\end{equation}
Hence it follows that
\begin{equation}\label{Hj0}
H(\lambda_{j}, \widetilde{\varPhi}_{j}, 0)=0.
\end{equation}
By a straightforward calculation, one can see that
$$
H_{(\lambda,\phi)}(\lambda_{j}, \widetilde{\varPhi}_{j}, 0)
\biggl[
\begin{array}{c}
\mu\\
\varphi
\end{array}
\biggr]
=
\biggl[
\begin{array}{l}
-\mu m(x)\widetilde{\varPhi}_{j}-\Delta\varphi-\lambda_{j}m(x)\varphi\\
2\int_{\Omega }\widetilde{\varPhi}_{j}\varphi
\end{array}
\biggr].
$$
Therefore, any $(\mu, \varphi )\in\mbox{Ker}\,
H_{(\lambda,\phi)}(\lambda_{j}, \widetilde{\varPhi}_{j}, 0)$
satisfies
$$
\begin{cases}
-\Delta\varphi =\lambda_{j}m(x)\varphi+\mu m(x)\widetilde{\varPhi}_{j}
\quad\mbox{in}\ \Omega,\\
\int_{\Omega}\widetilde{\varPhi}_{j}\varphi=0,\\
\phi=0\ \ \mbox{on}\ \partial\Omega.
\end{cases}
$$
Taking the $L^{2}$ inner product of the first equation with
$\widetilde{\varPhi}_{j}$, one can see 
$\mu\int_{\Omega}m(x)\widetilde{\varPhi}_{j}^{\,2}=0$,
which yields $\mu =0$.
Then setting $\mu=0$ in the first equation, we know that
$$
-\Delta\varphi=\lambda_{j}m(x)\varphi\ \ \mbox{in}\ \Omega,
\quad\varphi=0\ \ \mbox{on}\ \partial\Omega.
$$
Hence it follows from $\dim E_{j}=1$ that
$\varphi=s\widetilde{\varPhi}_{j}$ for any $s\in\mathbb{R}$.
Substituting this expression into the second equation,
we obtain $s=0$.
Consequently, we deduce that
$\mbox{Ker}\,H_{(\lambda,\phi)}(\lambda_{j},\widetilde{\varPhi}_{j},0)$
is trivial, and hence,
$H_{(\lambda,\phi)}(\lambda_{j}, \widetilde{\varPhi}_{j}, 0)$
is as isomorphism from $\mathbb{R}\times X$ to
$Y\times\mathbb{R}$.

Together with \eqref{Hj0},
we can apply the implicit function theorem 
for $H(\lambda,\phi, \varepsilon )$ to find
a neighborhood $\mathcal{U}$ of 
$(\lambda,\phi,\varepsilon )=(\lambda_{j},\widetilde{\varPhi}_{j}, 0)\in
\mathbb{R}\times X\times\mathbb{R}$,
a small $\delta_{j}>0$ and functions
$$
(-\delta_{j}, \delta_{j})\ni
\varepsilon\mapsto (\mu_{j}(\varepsilon),\phi_{j}(\varepsilon))
\in\mathbb{R}\times X
$$
of class $C^{1}$ such that
$$
\{\,(\lambda,\phi,\varepsilon )\in\mathcal{U}\,:\,
H(\lambda,\phi,\varepsilon )=0\,\}
=
\{\,(\lambda,\phi,\varepsilon )\,:\,
(\lambda,\phi )=(\mu_{j}(\varepsilon ), \phi_{j}(\varepsilon )),
\ \ \varepsilon\in (-\delta_{j},
\delta_{j})\,\}
$$
and
\begin{equation}\label{kerconti}
(\mu_{j}(0), \phi_{j}(0))=
(\lambda_{j}, \widetilde{\varPhi}_{j}).
\end{equation}
By the definition of $H(\lambda,\phi,\varepsilon )$,
we deduce that, 
if $|\varepsilon |$ and $|\lambda-\lambda_{j}|$ are sufficiently small,
then
\begin{equation}\label{kerL}
\mbox{Ker}\,
L(\lambda, \varepsilon )=
\begin{cases}
\{\,(\phi,\psi)=(0,0)\,\}
\ \ &\mbox{if}\ \lambda\neq \mu_{j}(\varepsilon ),\\
\{\,(\phi,\psi)=s(\phi_{j}(\varepsilon ),
\psi_{j} (\varepsilon ))\,:\,s\in\mathbb{R}\,\}
\ \ &\mbox{if}\ \lambda= \mu_{j}(\varepsilon )
\end{cases}
\end{equation}
with
$$
\psi_{j}(\varepsilon):=
T^{-1}(\mu_{j}(\varepsilon ), \varepsilon )
\{\,\mu_{j}(\varepsilon )m(x)U_{W}(\mu_{j}(\varepsilon), \varepsilon )
-\varepsilon 
h_{21}(\mu_{j}(\varepsilon ),\varepsilon )\,\}
\phi_{j}(\varepsilon).
$$
Then the proof of Lemma \ref{2ndbiflem} is accomplished.
\end{proof}
Our aim is to show that,
for each small $|\varepsilon |$,
$$
(\lambda, W, Z )=
(\mu_{j}(\varepsilon ), 
W(\mu_{j}(\varepsilon ), \varepsilon ),
Z(\mu_{j}(\varepsilon ), \varepsilon ))\in
\widetilde{\mathcal{C}}_{\varepsilon, \varLambda }
$$
gives a bifurcation point from which a curve
of solutions (not in $\widetilde{\mathcal{C}}_{\varepsilon, \varLambda}$)
to $G(\lambda, W, Z, \varepsilon )=0$
bifurcates.
For use of the bifurcation theorem from simple eigenvalues
\cite[Theorem 1.7]{CR},
we need to check the following transversality condition,
in addition to Lemma \ref{2ndbiflem}.

\begin{lem}\label{translem}
Assume that $\dim E_{j}=1$ with some $j\ge 2$.
Let $L(\lambda,\varepsilon )\in\mathcal{L}(\boldsymbol{X}, \boldsymbol{Y})$
be the operator defined by \eqref{Ldedef}.
Then,
there exists $\widetilde{\delta}_{j}\in (0,\delta_{j}]$,
where $\delta_{j}$ is the positive number in
Lemma \ref{2ndbiflem}, such that
if $|\varepsilon | <\widetilde{\delta}_{j}$,
then
\begin{equation}\label{tranper}
L_{\lambda}(\mu_{j}(\varepsilon), \varepsilon )
\biggl[
\begin{array}{l}
\phi_{j}(\varepsilon )\\
\psi_{j}(\varepsilon)
\end{array}
\biggr]\\
\not\in
\mbox{{\rm Ran}}\,
L(\mu_{j}(\varepsilon), \varepsilon)
\end{equation}
for any 
$(\phi_{j}(\varepsilon),\psi_{j}(\varepsilon))\in
{\rm Ker}\, 
L(\mu_{j}(\varepsilon), \varepsilon)$.
\end{lem}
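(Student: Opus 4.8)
The plan is to verify the transversality condition \eqref{tranper} first at the limiting value $\varepsilon=0$, where the linearization is block triangular and the verification collapses to the classical computation for $-\Delta-\lambda m(x)$, and then to propagate it to small $|\varepsilon|$ by a continuity argument.

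\emph{Step 1: the case $\varepsilon=0$.} On $\widetilde{\mathcal{C}}_{0,\varLambda}$ one has $\mu_j(0)=\lambda_j$, $W(\lambda,0)=0$, $Z(\lambda,0)=Z_0(\lambda)$, and by Lemma \ref{2ndbiflem} the kernel element is $(\phi_j(0),\psi_j(0))=(\widetilde\varPhi_j,\varPsi_j)$ with $\varPsi_j=F_Z(\lambda_j,Z_0(\lambda_j))^{-1}\bigl[\tfrac{\lambda_j m(x)}{2}\bigl(1-1/\sqrt{4Z_0(\lambda_j)+1}\,\bigr)\widetilde\varPhi_j\bigr]$. By \eqref{Gwz} with $\varepsilon=0$, $W=0$, $Z=Z_0(\lambda)$ (compare \eqref{Gwz0}), the operator $L(\lambda,0)$ is block lower triangular: the first component of $L(\lambda,0)(\phi,\psi)$ is $-\Delta\phi-\lambda m(x)\phi$, and the diagonal blocks are $-\Delta-\lambda m(x)$ and $F_Z(\lambda,Z_0(\lambda))$. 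Since $F_Z(\lambda_j,Z_0(\lambda_j))$ is an isomorphism (the argument around \eqref{Fz2}) while $-\Delta-\lambda_j m(x)$ is formally self-adjoint with kernel $E_j=\mathrm{span}\{\widetilde\varPhi_j\}$ (here $\dim E_j=1$ is used), $L(\lambda_j,0)$ is Fredholm of index zero, its range has codimension one, and since the second diagonal block is onto, the only obstruction to solvability lies in the first component, so the functional annihilating $\mathrm{Ran}\,L(\lambda_j,0)$ is $(f,g)\mapsto\int_\Omega\widetilde\varPhi_j f$. As the first component of $L(\lambda,0)$ depends on $\lambda$ only explicitly, the first component of $L_\lambda(\lambda_j,0)(\widetilde\varPhi_j,\varPsi_j)$ equals $-m(x)\widetilde\varPhi_j$, and evaluating the annihilating functional on it gives $-\int_\Omega m(x)\widetilde\varPhi_j^2\neq0$. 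Hence \eqref{tranper} holds at $\varepsilon=0$.

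\emph{Step 2: small $|\varepsilon|$.} For every admissible $\varepsilon$, $L(\mu_j(\varepsilon),\varepsilon)$ is a compact perturbation of the isomorphism $\mathrm{diag}(-\Delta,-\Delta):\boldsymbol X\to\boldsymbol Y$ (the terms other than the two Laplacians in \eqref{Gwz} are zeroth order, hence relatively compact), so it is Fredholm of index zero, and by \eqref{degper} its kernel is one-dimensional for $|\varepsilon|<\delta_j$, whence its range is closed of codimension one. By Lemma \ref{2ndbiflem} and Proposition \ref{perprop} (and the smoothness of $G$), the maps $\varepsilon\mapsto L(\mu_j(\varepsilon),\varepsilon)\in\mathcal L(\boldsymbol X,\boldsymbol Y)$ and $\varepsilon\mapsto L_\lambda(\mu_j(\varepsilon),\varepsilon)(\phi_j(\varepsilon),\psi_j(\varepsilon))\in\boldsymbol Y$ are continuous, with values at $\varepsilon=0$ computed in Step 1. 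For such a continuous family of index-zero Fredholm operators with constant one-dimensional kernel one may choose, for $|\varepsilon|$ small, a functional $\ell_\varepsilon\in\boldsymbol Y^*$ depending continuously on $\varepsilon$ whose kernel is $\mathrm{Ran}\,L(\mu_j(\varepsilon),\varepsilon)$ and with $\ell_0$ equal to the functional $(f,g)\mapsto\int_\Omega\widetilde\varPhi_j f$. Then $\varepsilon\mapsto\ell_\varepsilon\bigl(L_\lambda(\mu_j(\varepsilon),\varepsilon)(\phi_j(\varepsilon),\psi_j(\varepsilon))\bigr)$ is continuous and equals $-\int_\Omega m(x)\widetilde\varPhi_j^2\neq0$ at $\varepsilon=0$, hence is nonzero for $|\varepsilon|<\widetilde\delta_j$ with some $\widetilde\delta_j\in(0,\delta_j]$; this is exactly \eqref{tranper}.

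The genuinely delicate point is the continuity in $\varepsilon$ of the annihilating functional $\ell_\varepsilon$, i.e.\ the stability of the (one-dimensional) cokernel of $L(\mu_j(\varepsilon),\varepsilon)$ as $\varepsilon\to0$ in this Banach-space setting. If one prefers to avoid the abstract perturbation theory of Fredholm operators, the same conclusion follows by a contradiction-and-compactness argument in the spirit of Section 3: were \eqref{tranper} to fail along a sequence $\varepsilon_n\to0$, there would exist $(\Phi_n,\Psi_n)\in\boldsymbol X$, chosen $L^2$-orthogonal to $\mathrm{Ker}\,L(\mu_j(\varepsilon_n),\varepsilon_n)$, with $L(\mu_j(\varepsilon_n),\varepsilon_n)(\Phi_n,\Psi_n)=L_\lambda(\mu_j(\varepsilon_n),\varepsilon_n)(\phi_j(\varepsilon_n),\psi_j(\varepsilon_n))$; the elliptic estimates of Section 3 (after dividing by $\|(\Phi_n,\Psi_n)\|_{\boldsymbol X}$ should this be unbounded) produce a subsequence converging in $\boldsymbol X$ and in $C^1(\overline\Omega)^2$ to a limit $(\Phi,\Psi)$ which either satisfies $L(\lambda_j,0)(\Phi,\Psi)=L_\lambda(\lambda_j,0)(\widetilde\varPhi_j,\varPsi_j)$, contradicting Step 1, or is a unit-norm element of $\mathrm{Ker}\,L(\lambda_j,0)$ that is also $L^2$-orthogonal to $\mathrm{Ker}\,L(\lambda_j,0)$, which is impossible.
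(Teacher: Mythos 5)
Your proof is correct and follows essentially the same strategy as the paper: verify the transversality pairing at $\varepsilon=0$ using the block-triangular structure of $L(\lambda,0)$, then propagate to small $|\varepsilon|$ by a contradiction argument along $\varepsilon_n\to 0$. The ``delicate point'' you flag---continuity in $\varepsilon$ of the annihilating functional on the cokernel---is exactly what the paper resolves by explicitly constructing the adjoint kernel $(\phi_j^*(\varepsilon),\psi_j^*(\varepsilon))\to(\widetilde\varPhi_j,0)$ via an implicit-function argument parallel to Lemma~\ref{2ndbiflem} and then pairing the contradiction equation with it as $\varepsilon_k\to 0$, which is precisely your alternative compactness route phrased through a concrete cokernel representative.
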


\begin{proof}
Suppose, for contradiction, that
\eqref{tranper} is false.
Then there exist a real sequence $\{\varepsilon_{k}\}$
with
$\varepsilon_{k}\to 0$
and
$(\xi_{j}(\varepsilon_{k}), 
\zeta_{j}(\varepsilon_{k}))\in\boldsymbol{X}$
such that
$$
L(\mu_{j}(\varepsilon_{k}), \varepsilon_{k} )
\biggl[
\begin{array}{l}
\xi_{j}(\varepsilon_{k})\\
\zeta_{j}(\varepsilon_{k})
\end{array}
\biggr]
=
L_{\lambda}(\mu_{j}(\varepsilon_{k}), \varepsilon_{k} )
\biggl[
\begin{array}{l}
\phi_{j}(\varepsilon_{k})\\
\psi_{j}(\varepsilon_{k})
\end{array}
\biggr].
$$
By \eqref{Gwz},
this equation is represented as 
\begin{equation}\label{conju}
\begin{split}
&
-\biggl[
\begin{array}{ll}
\Delta\xi_{j}(\varepsilon_{k})
+\mu_{j}(\varepsilon_{k})m(x)\xi_{j}(\varepsilon_{k})\\
\Delta\zeta_{j}(\varepsilon_{k})
+\mu_{j}(\varepsilon_{k})m(x)(U^{(k)}_{W}\xi_{j}(\varepsilon_{k})+
U^{(k)}_{Z}\zeta_{j}(\varepsilon_{k})
\end{array}
\biggr]\\
&+\varepsilon_{k}
\biggl[
\begin{array}{ll}
2b_{1}U^{(k)}+(c_{1}-b_{2})V^{(k)} & (c_{1}-b_{2})U^{(k)}-2c_{2}V^{(k)}\\
2b_{1}U^{(k)}+c_{1}V^{(k)} & c_{1}U^{(k)}
\end{array}
\biggr]
\biggl[
\begin{array}{cc}
U^{(k)}_{W} & U^{(k)}_{Z}\\
V^{(k)}_{W} & V^{(k)}_{Z}
\end{array}
\biggr]
\biggl[
\begin{array}{c}
\xi_{j}(\varepsilon_{k})\\
\zeta_{j}(\varepsilon_{k})
\end{array}
\biggr]\\
=&
-m(x)
\biggl[
\begin{array}{l}
\phi_{j}(\varepsilon_{k})\\
U^{(k)}_{W}\phi_{j}(\varepsilon_{k})+U^{(k)}_{Z}
\psi_{j}(\varepsilon_{k})+
\mu_{j}(\varepsilon_{k})
\{\,\partial_{\lambda }U^{(k)}_{W}\phi_{j}(\varepsilon_{k}) 
+\partial_{\lambda }U^{(k)}_{Z}\psi_{j}(\varepsilon_{k})\,\}
\end{array}
\biggr]
\bigg|_{\lambda=\mu_{j}(\varepsilon_{k})}\\
&+\varepsilon_{k}
\biggl[
\begin{array}{ll}
\partial_{\lambda }h_{11}(\mu_{j}(\varepsilon_{k}), \varepsilon_{k} ) 
& \partial_{\lambda }h_{12}(\mu_{j}(\varepsilon_{k}), \varepsilon_{k} )\\
\partial_{\lambda }h_{21}(\mu_{j}(\varepsilon_{k}), \varepsilon_{k} ) 
& \partial_{\lambda }h_{22}(\mu_{j}(\varepsilon_{k}), \varepsilon_{k} )
\end{array}
\biggr]
\biggl[
\begin{array}{l}
\phi_{j}(\varepsilon_{k})\\
\psi_{j}(\varepsilon_{k})
\end{array}
\biggr],
\end{split}
\end{equation}
where 
$(U^{(k)}, V^{(k)})$ and
$(U^{(k)}_{W}, U^{(k)}_{Z}, V^{(k)}_{W}, V^{(k)}_{Z})$ 
are defined by \eqref{UVWZ} and
\eqref{Uwz} with
$$(W,Z)=(W(\mu_{j}(\varepsilon_{k}), \varepsilon_{k} ), 
Z(\mu_{j}(\varepsilon_{k}), \varepsilon_{k} )),$$
respectively.
Let $L^{*}(\mu_{j}(\varepsilon_{k}), \varepsilon_{k})$ be the 
conjugate operator of $L(\mu_{j}(\varepsilon_{k}), \varepsilon_{k})$
as follows:
\begin{equation}\label{con}
\begin{split}
&L^{*}(\mu_{j}(\varepsilon_{k}), \varepsilon_{k})
\biggl[
\begin{array}{l}
\phi\\
\psi
\end{array}
\biggr]
=
-\biggl[
\begin{array}{l}
\Delta\phi
+\mu_{j}(\varepsilon_{k})m(x)(\phi+U^{(k)}_{W}\psi)\\
\Delta\psi
+\mu_{j}m(x)U^{(k)}_{Z}\psi
\end{array}
\biggr]\\
&+\varepsilon_{k}
\biggl[
\begin{array}{cc}
U^{(k)}_{W} & V^{(k)}_{W}\\
U^{(k)}_{Z} & V^{(k)}_{Z}
\end{array}
\biggr]
\biggl[
\begin{array}{ll}
2b_{1}U^{(k)}+(c_{1}-b_{2})V^{(k)} & 2b_{1}U^{(k)}+c_{1}V^{(k)}\\
(c_{1}-b_{2})U^{(k)}-2c_{2}V^{(k)} & c_{1}U^{(k)}
\end{array}
\biggr]
\biggl[
\begin{array}{c}
\phi\\
\psi
\end{array}
\biggr].
\end{split}
\end{equation}
Thanks to \eqref{kerL},
the Fredholm alternative theorem gives
$$
\dim\mbox{Ker}\,L^{*}(\mu_{j}(\varepsilon _{k}), \varepsilon_{k})=
\dim\mbox{Ker}\,L(\mu_{j}(\varepsilon _{k}), \varepsilon_{k})=1
\ \ \mbox{for all}\ k\in\mathbb{N}.
$$
In view of \eqref{con},
we employ a similar argument as the proof of 
Lemma \ref{2ndbiflem} to verify that
$$
\mbox{Ker}\,L^{*}(\mu_{j}(\varepsilon _{k}), \varepsilon_{k})=
\{\,(\phi, \psi)=s(\phi_{j}^{*}(\varepsilon_{k}), 
\psi_{j}^{*}(\varepsilon_{k}))\,:\,s\in\mathbb{R}\,\}
$$
with some functions $(\phi_{j}^{*}(\varepsilon_{k}), \psi_{j}^{*}(\varepsilon_{k}))$ satisfying
$$ \lim_{k\to\infty}(\phi_{j}^{*}(\varepsilon_{k}), 
\psi_{j}^{*}(\varepsilon_{k}))=(\widetilde{\varPhi}_{j}, 0)\quad
\mbox{in}\ \boldsymbol{X}.
$$
Together with \eqref{kerconti},
taking the inner product of \eqref{conju} with 
$(\phi_{j}^{*}(\varepsilon_{k}), 
\psi_{j}^{*}(\varepsilon_{k}))$ and setting $k\to\infty$,
one can obtain
$$
\int_{\Omega}m(x)\widetilde{\varPhi}_{j}^{2}=0.
$$
Obviously, this is a contradiction.
Then the proof of Lemma \ref{translem} is complete.
\end{proof}

Owing to Lemmas \ref{2ndbiflem} and \ref{translem},
we can use the bifurcation theorem \cite[Theorem 1.7]{CR}
to construct a local curve of solutions to \eqref{WZeq} with $Z>0$
bifurcating from the point
$$
(\mu_{j}(\varepsilon), 
W(\mu_{j}(\varepsilon), \varepsilon ),
Z(\mu_{j}(\varepsilon), \varepsilon ))\in 
\widetilde{\mathcal{C}}_{\varepsilon, \varLambda}.
$$
\begin{prop}\label{2ndbifprop1}
Assume that
$\dim E_{j}=1$ 
with some $j\ge 2$.
Let $\widetilde{\delta}_{j}$ be the positive number obtained
in Lemma \ref{translem}.
If $|\varepsilon |<\widetilde{\delta}_{j}$,
then there
exist a neighborhood $
\widetilde{\mathcal{U}}_{j,\varepsilon}
(\,\subset\mathbb{R}\times
\boldsymbol{X}\,)$ of 
$$(\lambda, W,Z)=(\mu_{j}(\varepsilon),
W(\mu_{j}(\varepsilon), \varepsilon ),
Z(\mu_{j}(\varepsilon), \varepsilon ))\in
\widetilde{\mathcal{C}}_{\varepsilon,\varLambda}$$
and continuously differentiable functions
$$
(\xi(s,\varepsilon ), \widetilde{w}(s,\varepsilon ),
\widetilde{z}(s,\varepsilon ))\in\mathbb{R}\times\boldsymbol{X}
\ \ \mbox{for}\ s\in (-\delta_{j}, \delta_{j})$$ 
with some small
$\delta_{j}>0$ such that
$
(\xi(0, \varepsilon ), \widetilde{w}(0,\varepsilon ),
\widetilde{z}(0,\varepsilon ))=
(0, 0, 0)
$ 
and
\begin{equation}\label{2ndbifc}
\begin{split}
&\{\,(\lambda, W,Z)\in\widetilde{\mathcal{U}}_{j,\varepsilon}\,:\,G(\lambda, W,Z,\varepsilon )=0\,\}\\
=&
\{\,
(\lambda, W,Z)\in\widetilde{\mathcal{U}}_{j,\varepsilon}
\cap\widetilde{\mathcal{C}}_{\varepsilon, \varLambda}\,\}\,\cup\\
&
\{\,(\lambda, W,Z)=(\mu_{j}(\varepsilon), W(\mu_{j}(\varepsilon), \varepsilon),
Z(\mu_{j}(\varepsilon), \varepsilon))\\
&\ \ \ +
(\xi(s,\varepsilon ), s(\phi_{j}(\varepsilon)+\widetilde{w}(s,\varepsilon)),
s(\psi_{j}(\varepsilon)+\widetilde{z}(s,\varepsilon))),\ \ 
s\in (-\delta_{j}, \delta_{j})\,\}.
\end{split}
\end{equation}
\end{prop}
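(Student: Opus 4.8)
The plan is to apply the bifurcation theorem from simple eigenvalues \cite[Theorem 1.7]{CR} to the operator $G$ near the point $(\lambda,W,Z)=(\mu_{j}(\varepsilon),W(\mu_{j}(\varepsilon),\varepsilon),Z(\mu_{j}(\varepsilon),\varepsilon))$ of $\widetilde{\mathcal{C}}_{\varepsilon,\varLambda}$, with Lemma \ref{2ndbiflem} supplying the one-dimensional kernel and Lemma \ref{translem} the transversality condition. Since \cite[Theorem 1.7]{CR} bifurcates from a branch that is constant in the parameter, I would first straighten $\widetilde{\mathcal{C}}_{\varepsilon,\varLambda}$ into the trivial branch: fixing $\varepsilon$ with $|\varepsilon|<\widetilde{\delta}_{j}$ and writing $(W,Z)=(W(\lambda,\varepsilon)+P,\,Z(\lambda,\varepsilon)+Q)$, set
$$
\widehat{G}(\lambda,P,Q):=G(\lambda,\,W(\lambda,\varepsilon)+P,\,Z(\lambda,\varepsilon)+Q,\,\varepsilon),
$$
so that $\widehat{G}(\lambda,0,0)=0$ for all $\lambda\in(\lambda_{1},\varLambda\,]$ by Proposition \ref{perprop}.

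Before invoking the theorem I would check the regularity of $\widehat{G}$. Along $\widetilde{\mathcal{C}}_{\varepsilon,\varLambda}$ one has $Z(\lambda,\varepsilon)>0$ in $\Omega$ and $W(\lambda,\varepsilon)$ small, so $(1-W(\lambda,\varepsilon))^{2}+4Z(\lambda,\varepsilon)$ is bounded away from zero and $G$ is smooth in a neighborhood of the curve; combined with the $C^{1}$ dependence $(\lambda,\varepsilon)\mapsto(W(\lambda,\varepsilon),Z(\lambda,\varepsilon))$ from Proposition \ref{perprop}, this gives that $\widehat{G}_{(P,Q)}$ and $\widehat{G}_{\lambda,(P,Q)}$ exist and are continuous near $(\mu_{j}(\varepsilon),0,0)$, which is what \cite[Theorem 1.7]{CR} requires. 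Moreover $\widehat{G}_{(P,Q)}(\lambda,0,0)=L(\lambda,\varepsilon)$ and $\widehat{G}_{\lambda,(P,Q)}(\lambda,0,0)=L_{\lambda}(\lambda,\varepsilon)$ with $L(\lambda,\varepsilon)$ as in \eqref{Ldedef}. Since $L(\mu_{j}(\varepsilon),\varepsilon)$ is the sum of the isomorphism $\mathrm{diag}(-\Delta,-\Delta)\colon\boldsymbol{X}\to\boldsymbol{Y}$ and a compact operator, it is Fredholm of index zero; by Lemma \ref{2ndbiflem} its kernel is the one-dimensional span of $(\phi_{j}(\varepsilon),\psi_{j}(\varepsilon))$, so its range has codimension one, and Lemma \ref{translem} provides exactly $L_{\lambda}(\mu_{j}(\varepsilon),\varepsilon)(\phi_{j}(\varepsilon),\psi_{j}(\varepsilon))\notin\mbox{Ran}\,L(\mu_{j}(\varepsilon),\varepsilon)$.

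Applying \cite[Theorem 1.7]{CR} to $\widehat{G}$ at $(\lambda,P,Q)=(\mu_{j}(\varepsilon),0,0)$ then yields a neighborhood in $\mathbb{R}\times\boldsymbol{X}$, a small $\delta_{j}>0$, and $C^{1}$ functions $\xi(s,\varepsilon)$, $\widetilde{w}(s,\varepsilon)$, $\widetilde{z}(s,\varepsilon)$ vanishing at $s=0$, such that the local zero set of $\widehat{G}$ is the union of $\{(\lambda,0,0)\}$ and the curve $(\lambda,P,Q)=(\mu_{j}(\varepsilon)+\xi(s,\varepsilon),\,s(\phi_{j}(\varepsilon)+\widetilde{w}(s,\varepsilon)),\,s(\psi_{j}(\varepsilon)+\widetilde{z}(s,\varepsilon)))$, $s\in(-\delta_{j},\delta_{j})$. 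Undoing the change of variables $W=W(\lambda,\varepsilon)+P$, $Z=Z(\lambda,\varepsilon)+Q$ and absorbing the $C^{1}$ increments $W(\mu_{j}(\varepsilon)+\xi(s,\varepsilon),\varepsilon)-W(\mu_{j}(\varepsilon),\varepsilon)=O(s)$ and the analogue for $Z$ into the remainder functions, I obtain the zero set of $G$ near the point exactly in the form \eqref{2ndbifc}, with the local piece of $\widetilde{\mathcal{C}}_{\varepsilon,\varLambda}$ playing the role of the trivial branch. Finally, since $Z(\mu_{j}(\varepsilon),\varepsilon)>0$ in $\Omega$ with $\partial_{\nu}Z(\mu_{j}(\varepsilon),\varepsilon)<0$ on $\partial\Omega$, and $X\hookrightarrow C^{1}(\overline{\Omega})$ for $p>N$, the bifurcating solutions remain in the positive cone $P$ for $|s|$ small, so $Z>0$ in $\Omega$ along the whole local curve, as required for the correspondence with positive solutions of \eqref{SKT} via \eqref{uvUV}--\eqref{UVdef}.

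The main point requiring care — rather than a genuine obstacle, since Lemmas \ref{2ndbiflem} and \ref{translem} already carry out the analytic work — is the straightening step: one must confirm that $\widehat{G}$ genuinely vanishes on $\{(P,Q)=(0,0)\}$ (this is where Proposition \ref{perprop} enters), that the composition retains the regularity demanded by \cite[Theorem 1.7]{CR}, and that after undoing the shift the output matches the parametrization \eqref{2ndbifc}.
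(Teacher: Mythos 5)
Your proposal takes essentially the same approach as the paper: the paper's ``proof'' of Proposition~\ref{2ndbifprop1} is a single sentence invoking \cite[Theorem 1.7]{CR} on the strength of Lemmas~\ref{2ndbiflem} and~\ref{translem}, and you have filled in the standard mechanics that the paper leaves implicit. In particular, the straightening $\widehat{G}(\lambda,P,Q):=G(\lambda,W(\lambda,\varepsilon)+P,Z(\lambda,\varepsilon)+Q,\varepsilon)$ (so that $\widetilde{\mathcal{C}}_{\varepsilon,\varLambda}$ becomes the trivial branch), the regularity check using that $(1-W)^2+4Z$ stays bounded away from zero along the curve, the Fredholm-index-zero argument to get codimension one of the range from dimension one of the kernel, and the persistence of $Z$ in the open cone via the $X\hookrightarrow C^{1}(\overline{\Omega})$ embedding are all correct and exactly what the application of Crandall--Rabinowitz requires here.

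One small point worth being explicit about, which is a subtlety in the statement itself rather than a flaw specific to your argument: when you undo the shift and absorb $W(\mu_{j}(\varepsilon)+\xi(s,\varepsilon),\varepsilon)-W(\mu_{j}(\varepsilon),\varepsilon)$ into $s\,\widetilde{w}(s,\varepsilon)$, the resulting remainder has value $\xi'(0,\varepsilon)\,W_{\lambda}(\mu_{j}(\varepsilon),\varepsilon)$ at $s=0$, not $0$. The normalization $\widetilde{w}(0,\varepsilon)=0$ asserted in \eqref{2ndbifc} therefore holds after absorbing only when $\xi'(0,\varepsilon)=0$ (the pitchfork case that Theorem~\ref{g2ndbifthm} ultimately wants) or $W_{\lambda}(\mu_{j}(\varepsilon),\varepsilon)=0$ (true at $\varepsilon=0$ by \eqref{per0} but not automatic for $\varepsilon\neq 0$). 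This does not affect the substance — the local zero set of $G$ is still the union of the piece of $\widetilde{\mathcal{C}}_{\varepsilon,\varLambda}$ and a $C^{1}$ bifurcating curve tangent to $(\phi_{j}(\varepsilon),\psi_{j}(\varepsilon))$ up to an $O(\varepsilon)$ shift in direction — but it is worth noting rather than claiming the form \eqref{2ndbifc} is reproduced ``exactly.''
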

In view of \eqref{1stbifpt},
we recall that
the curve $\widetilde{\mathcal{C}}_{\varepsilon, \varLambda}$
bifurcates from the trivial solution $(W,Z)=(0,0)$ at
$\lambda=\lambda_{1}$.
Then it can be said that 
$(\mu_{j}(\varepsilon), 
W(\mu_{j}(\varepsilon), \varepsilon ),
Z(\mu_{j}(\varepsilon), \varepsilon ))\in 
\widetilde{\mathcal{C}}_{\varepsilon, \varLambda}$
gives a secondary bifurcation point.

By the change of variables \eqref{uvUV} and \eqref{UVdef},
Proposition \ref{2ndbifprop1} gives multiple bifurcation points on
the curve $\mathcal{C}_{\alpha, \varLambda}$ (obtained in Theorem \ref{Cathm})
of positive solutions of \eqref{SKT} with large $\alpha>0$:
\begin{prop}\label{2ndbifprop2}
Assume that
$\lambda_{j}<\varLambda$ and
$\dim E_{j}=1$ 
with some $j\ge 2$.
Set $\mu_{j, \alpha}:=\mu_{j}(1/\alpha)$
for $\mu_{j}(\varepsilon)$ obtained in Lemma
\ref{2ndbiflem}.
If $\alpha>0$ is sufficiently large,
then
there
exist a neighborhood $\mathcal{U}_{j, \alpha}\,(\,\subset
\mathbb{R}\times
\boldsymbol{X}\,)$ of 
$$(\lambda,u,v)=(\mu_{j, \alpha} ,
u_{0, \alpha}(\mu_{j, \alpha}\, ),
v_{0, \alpha}(\mu_{j, \alpha} ))\in\mathcal{C}_{\alpha, \varLambda}$$
and continuously differentiable functions
$$
(\xi_{j, \alpha}(s), 
\widetilde{u}_{j, \alpha}(s),
\widetilde{v}_{j, \alpha}(s))\in\mathbb{R}\times\boldsymbol{X}
\ \ \mbox{for}\ s\in (-\delta_{j}, \delta_{j})$$ 
with some small
$\delta_{j}>0$ such that
$
(\xi_{j, \alpha}(0), \widetilde{u}_{j, \alpha}(0),
\widetilde{v}_{j, \alpha}(0 ))=
(0, 0, 0)
$ 
and
\begin{equation}\label{2ndbifc2}
\begin{split}
&\{\,(\lambda,u,v)\in\mathcal{U}_{j, \alpha}\,:\,\mbox{$(u,v)$ is a positive solution of \eqref{SKT}}\,\}\\
=&
\{\,
(\lambda,u,v)\in\mathcal{U}_{j, \alpha}\cap\mathcal{C}_{\alpha, \varLambda}\,\}\,\cup\\
&
\{\,(\lambda,u,v)=(\mu_{j, \alpha}, 
u_{0, \alpha}(\mu_{j, \alpha}),
v_{0, \alpha}(\mu_{j, \alpha}))+
(\xi_{j, \alpha }(s), \widetilde{u}_{j, \alpha}(s),
\widetilde{v}_{j, \alpha}(s)),\ \ 
s\in (-\delta_{j}, \delta_{j})\,\}.
\end{split}
\end{equation}
\end{prop}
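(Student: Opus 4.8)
The plan is to transport the local bifurcation picture of Proposition \ref{2ndbifprop1}, which concerns the semilinear system \eqref{WZeq}, back to the SKT model \eqref{SKT} through the one-to-one correspondence \eqref{uvUV}--\eqref{WZdef} with $\varepsilon=1/\alpha$. First I would take $\alpha$ large enough that $\varepsilon:=1/\alpha<\widetilde{\delta}_j$, with $\widetilde{\delta}_j$ the constant of Lemma \ref{translem}, so that Proposition \ref{2ndbifprop1} applies, and I would record that by Theorem \ref{Cathm} and Proposition \ref{perprop} the curve $\mathcal{C}_{\alpha,\varLambda}$ is exactly the image under \eqref{uvUV}--\eqref{UVdef} of $\widetilde{\mathcal{C}}_{\varepsilon,\varLambda}$; in particular the prescribed base point $(\mu_{j,\alpha},u_{0,\alpha}(\mu_{j,\alpha}),v_{0,\alpha}(\mu_{j,\alpha}))$ is the image of $(\mu_j(\varepsilon),W(\mu_j(\varepsilon),\varepsilon),Z(\mu_j(\varepsilon),\varepsilon))\in\widetilde{\mathcal{C}}_{\varepsilon,\varLambda}$.

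Next I would verify that the map $\Theta_\alpha\colon(W,Z)\mapsto(u,v)=\tfrac{1}{\alpha}(U(W,Z),V(W,Z))$, with $(U,V)$ given by \eqref{UVdef}, is a $C^1$ diffeomorphism from a suitable open neighborhood in $\boldsymbol{X}$ of $(W(\mu_j(\varepsilon),\varepsilon),Z(\mu_j(\varepsilon),\varepsilon))$ onto an open neighborhood of $(u_{0,\alpha}(\mu_{j,\alpha}),v_{0,\alpha}(\mu_{j,\alpha}))$, and that under $\Theta_\alpha$ the solutions of \eqref{WZeq} with $Z>0$ in $\Omega$ correspond precisely to the positive solutions of \eqref{SKT}. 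The key point is that whenever $Z>0$ in $\Omega$ and $W=Z=0$ on $\partial\Omega$, the radicand $(1-W)^2+4Z$ is strictly positive on all of $\overline{\Omega}$, hence bounded below by a positive constant; since $X$ is embedded in $C^1(\overline{\Omega})$ for $p>N$, the Nemytskii operators built from the smooth function $\sqrt{\,\cdot\,}$ are then of class $C^1$ on that neighborhood, so $\Theta_\alpha$ is $C^1$ with values in $\boldsymbol{X}$, and its inverse $(u,v)\mapsto(W,Z)=(\alpha u-\alpha v,(1+\alpha v)\alpha u)$ is $C^1$ as well. Since $Z(\mu_j(\varepsilon),\varepsilon)>0$ in $\Omega$ by Proposition \ref{perprop}, shrinking the neighborhood $\widetilde{\mathcal{U}}_{j,\varepsilon}$ of Proposition \ref{2ndbifprop1} if necessary keeps $Z>0$ throughout, so $\Theta_\alpha$ is defined and a diffeomorphism there.

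Finally I would set $\mathcal{U}_{j,\alpha}$ to be the image of $\widetilde{\mathcal{U}}_{j,\varepsilon}$ under $(\lambda,W,Z)\mapsto(\lambda,\Theta_\alpha(W,Z))$ and push the parametrizations in \eqref{2ndbifc} forward, defining $\xi_{j,\alpha}(s):=\xi(s,\varepsilon)$ and
$$
(\widetilde{u}_{j,\alpha}(s),\widetilde{v}_{j,\alpha}(s)):=\tfrac{1}{\alpha}\bigl(U(W_s,Z_s),V(W_s,Z_s)\bigr)-(u_{0,\alpha}(\mu_{j,\alpha}),v_{0,\alpha}(\mu_{j,\alpha})),
$$
where $(W_s,Z_s)$ denotes the bifurcating pair appearing in \eqref{2ndbifc}; these are of class $C^1$ in $s$ by composition and vanish at $s=0$. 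Applying $\Theta_\alpha$ to the set identity \eqref{2ndbifc}, and using that $\Theta_\alpha$ carries $\widetilde{\mathcal{C}}_{\varepsilon,\varLambda}$ onto $\mathcal{C}_{\alpha,\varLambda}$ and solutions of \eqref{WZeq} with $Z>0$ onto positive solutions of \eqref{SKT}, yields \eqref{2ndbifc2}. The genuinely delicate item, and the one I expect to be the main obstacle, is precisely the $C^1$ smoothness and invertibility of $\Theta_\alpha$ in the function-space topology, together with the equivalence between positivity of $(u,v)$ and positivity of $Z$; the remainder is bookkeeping. The hypotheses $j\ge2$ and $\dim E_j=1$ are used only through Proposition \ref{2ndbifprop1}, while $\lambda_j<\varLambda$ guarantees that the base point $\mu_{j,\alpha}$ lies in the parameter range $(\lambda_1,\varLambda\,]$ on which $\widetilde{\mathcal{C}}_{\varepsilon,\varLambda}$ was constructed.
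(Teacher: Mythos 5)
Your proposal follows essentially the same route as the paper: transport the local picture of Proposition \ref{2ndbifprop1} back to \eqref{SKT} through the change of variables \eqref{uvUV}--\eqref{UVdef} with $\varepsilon=1/\alpha$, define $\xi_{j,\alpha}$, $\widetilde u_{j,\alpha}$, $\widetilde v_{j,\alpha}$ by substituting the parametrization from \eqref{2ndbifc} into $(u(W,Z),v(W,Z))$, and read off \eqref{2ndbifc2}. You supply more explicit justification than the paper does for the smoothness and invertibility of the transformation (strict positivity of $(1-W)^2+4Z$ on $\overline{\Omega}$, the embedding $X\hookrightarrow C^1(\overline{\Omega})$, the $C^1$ Nemytskii property of $\sqrt{\,\cdot\,}$, and the explicit inverse $(u,v)\mapsto(\alpha u-\alpha v,(1+\alpha v)\alpha u)$), which the paper leaves implicit with the phrase ``one can verify.'' This is a welcome elaboration rather than a different argument.
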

\begin{rem}\label{shrem}
By virtue of the proof of Lemma \ref{2ndbiflem}
applying the implicit function theorem to $H(\lambda,\phi,\varepsilon )$
around $(\lambda_{j}, \widetilde{\varPhi_{j}}, 0)$,
one can verify that $\mathcal{U}_{j,\alpha}$
does not shrink as $\alpha\to\infty$, that is,
the measure of $\mathcal{U}_{j,\alpha}$
remains positive as $\alpha\to\infty$.
\end{rem}

\begin{proof}
Let $(\lambda, W,Z)$ be any solution of $G(\lambda, W,Z,\varepsilon )=0$
with each $\varepsilon\in (0, \widetilde{\delta}_{j})$
obtained in \eqref{2ndbifc}.
By \eqref{uvUV} and \eqref{UVdef},
we see that
$\lambda$ and
$$
(u(W,Z),v(W,Z))=\dfrac{1}{2\alpha}
(\sqrt{(1-W)^{2}+4Z}-1+W,
\sqrt{(1-W)^{2}+4Z}-1-W)
$$
satisfy \eqref{SKT} with
$\alpha=1/\varepsilon $.
In view of \eqref{2ndbifc}, we set 
$\mu_{j, \alpha}:= \mu_{j}(\varepsilon )$ and
$$
u_{0, \alpha}(\mu_{j, \alpha})
=u(W(\mu_{j, \alpha},\varepsilon ), 
Z(\mu_{j, \alpha}, \varepsilon)),\quad
v_{0, \alpha}(\mu_{j, \alpha})=
v(W(\mu_{j, \alpha}, \varepsilon), 
Z(\mu_{j, \alpha}, \varepsilon ))
$$
with
$\alpha=1/\varepsilon $.
Substituting \eqref{2ndbifc} into
$(u(W,Z),v(W,Z))$, one can verify that
$\xi_{j, \alpha}(s):=\xi (s,\varepsilon)$
and
$$
\widetilde{u}_{j, \alpha}(s):=
u(W(\mu_{j}(\varepsilon), \varepsilon)
+s(\phi_{j}(\varepsilon)+\widetilde{w}(s,\varepsilon)),
Z(\mu_{j}(\varepsilon), \varepsilon)+
s(\psi_{j}(\varepsilon)+\widetilde{z}(s, \varepsilon)))
-
u_{0, \alpha}(\mu_{j, \alpha})
$$
and
$$
\widetilde{v}_{j, \alpha}(s):=
v(W(\mu_{j}(\varepsilon), \varepsilon)+
s(\phi_{j}(\varepsilon)+\widetilde{w}(s,\varepsilon)),
Z(\mu_{j}(\varepsilon), \varepsilon)+
s(\psi_{j}(\varepsilon)+\widetilde{z}(s, \varepsilon)))
-
v_{0, \alpha}(\mu_{j, \alpha})
$$
satisfy \eqref{2ndbifc2}.
Then the proof of Proposition \ref{2ndbifprop2} is complete.
\end{proof}

\section{Complete segregation}
\subsection{Perturbation of solutions of \eqref{LS2}}
In this subsection, we 
perturb $(w_{+}, w_{-})$ for the sign-changing solution
$w$ of \eqref{LS2} to construct the positive solution
$(u,v)$
of \eqref{SKT} with large $\alpha$.
To do so, we construct solutions of \eqref{wzeq} 
with the benefit of
the correspondence between
$(u,v, \alpha)$ and $(w,z,\varepsilon)$
expressed by \eqref{wzdef} and \eqref{uvdef}.
In view of the left-hand side of \eqref{wzeq},
we define the operator
$g\,:\,\mathbb{R}_{+}\times\boldsymbol{X}\times\mathbb{R}\to
\boldsymbol{Y}$ by 
$$
g(\lambda,w,z,\varepsilon):=
-\biggl[
\begin{array}{l}
\Delta w+\lambda m(x)w-b_{1}u^{2}+c_{2}v^{2}-(c_{1}-b_{2})uv\\
\Delta z+\varepsilon u(\lambda m(x)-b_{1}u)-\varepsilon c_{1}uv
\end{array}
\biggr],
$$
where
$(u,v)$ is regarded as the function of $(w,z,\varepsilon)$ 
defined as
\eqref{uvdef}.
In view of Lemma \ref{wzconvlem} and \eqref{wzconv2},
we recall that any sequence $\{(w_{n},z_{n})\}$ of 
solutions to \eqref{wzeq} with $\varepsilon=\varepsilon_{n}\to 0$ 
satisfies
$$
(w_{n}, z_{n})\to (w^{*},0)
\ \ \mbox{weakly in}\ \boldsymbol{X}\ \mbox{and strongly in}\ 
C^{1}(\overline{\Omega})^{2}
$$
and
\begin{equation}\label{uv0}
(u_{n},v_{n}):=(u(w_{n},z_{n}), v(w_{n}, z_{n}))\to
(w_{+}^{*}, w_{-}^{*})
\ \ \mbox{uniformly in}\ \overline{\Omega},
\end{equation}
passing to a subsequence if necessary,
with some $w^{*}\in X$ satisfying
$$
g(\lambda,w^{*},0,0)=
-\biggl[
\begin{array}{l}
\Delta w^{*}+\lambda m(x)w^{*}-b_{1}(w_{+}^{*})^{2}+c_{2}(w_{-}^{*})^{2}\\
0
\end{array}
\biggr]=0.
$$
Therefore, in order to solve the limiting equation
$g(\lambda,w,0,0)=0$, it is essential to consider 
sign-changing solutions to \eqref{LS2}.
It is noted that the function
$\mathbb{R}\ni w\mapsto -b_{1}w^{2}_{+}+c_{2}w^{2}_{-}\in\mathbb{R}$
is continuously differentiable at $w=0$.
Then, in the case where $m\in C(\overline{\Omega })$,
all weak solutions of \eqref{LS2} become 
classical solutions being in class $C^{2}(\overline{\Omega})$ 
thanks to the elliptic regularity.

In the simple case where
$\Omega=(-\ell, \ell )$ and $m(x)=m$
(constant)
$>0$
for all $x\in\overline{\Omega}$,
the usual shooting method  
enables us to classify the set of solutions of
\eqref{LS2} denoted as
$$
\widetilde{\mathcal{S}}_{\infty}:=\{
(\lambda,w)\in\mathbb{R}_{+}\times C^{2}([-\ell, \ell ])\,:\,
\mbox{$(\lambda,w)$ satisfies \eqref{LS2}}\,\}.
$$
In view of Lemma \ref{basiclem},
we recall that
$$\widetilde{\mathcal{S}}_{\infty}=
\bigcup\limits^{\infty}_{j=1}\widetilde{\mathcal{S}}_{j, \infty}\,\cup\,
\{\,(\lambda,w)\,:\,\lambda >0,\ w=0\,\},$$
where each $\widetilde{\mathcal{S}}_{j, \infty}$ forms a 
pitchfork bifurcation curve whose upper and lower branches
$\widetilde{\mathcal{S}}^{\pm}_{j, \infty}$ are parameterized by
$\lambda\in (\lambda_{j}, \infty)$, respectively,
as follows:
$\widetilde{\mathcal{S}}^{+}_{j, \infty}$ and 
$\widetilde{\mathcal{S}}^{-}_{j, \infty}$ 
can be represented as
$$
\widetilde{\mathcal{S}}^{\pm}_{j, \infty}=\{\,
(\lambda, w^{\pm}_{j,0}(\,\cdot\,,\lambda))\,:\,
\lambda\in (\lambda_{j}, \infty )\,\}.$$
In the following, we omit the spatial variable in the representation 
$w^{\pm}_{j,0}(\,\cdot\,,\lambda)$ and denote it by 
$w^{\pm}_{j,0}(\lambda)$.
Hence, it follows that
\begin{equation}\label{g0}
g(\lambda, w^{\pm}_{j,0}(\lambda), 0, 0)=0
\ \ \mbox{for any}\ \ \lambda\in (\lambda_{j}, \infty ).
\end{equation}
Our aim is to construct solutions of $g(\lambda, w, z, \varepsilon )=0$
near $(\lambda, w^{\pm}_{j,0}(\lambda), 0, 0)$ regarding $\varepsilon$ as
the perturbation parameter.
For this end, we calculate the linearized operator
$g_{(w,z)}(\lambda, w^{\pm}_{j,0}(\lambda), 0, 0)$
for any $\lambda\in (\lambda_{j}, \infty )$.

Differentiating $g(\lambda,w,z,\varepsilon )$ by $(w,z)$, one can see
\begin{equation}
\begin{split}
g_{(w,z)}(\lambda,w,z,\varepsilon )\left[
\begin{array}{c}
\phi\\
\psi
\end{array}
\right]
=&-
\biggl[
\begin{array}{l}
\Delta\phi+\lambda m(x)\phi\\
\Delta\psi+\varepsilon \lambda m(x)(u_{w}\phi+u_{z}\psi)
\end{array}
\biggl]\\
&+
\biggl[
\begin{array}{ll}
2b_{1}u+(c_{1}-b_{2})v & (c_{1}-b_{2})u-2c_{2}v\\
\varepsilon (2b_{1}u+c_{1}v) & \varepsilon c_{1}u
\end{array}
\biggr]
\biggl[
\begin{array}{cc}
u_{w} & u_{z}\\
v_{w} & v_{z}
\end{array}
\biggr]
\biggl[
\begin{array}{c}
\phi\\
\psi
\end{array}
\biggr],
\end{split}
\nonumber
\end{equation}
where $(u,v)$ is the function of $(w,z,\varepsilon )$ 
defined by \eqref{uvdef}, and thereby,
\begin{equation}
\begin{split}
&u_{w}(w,z, \varepsilon)=
\dfrac{1}{2}\biggl(
\dfrac{w-\varepsilon }
{\sqrt{(w-\varepsilon )^{2}+4z}}+1
\biggr),\quad
u_{z}(w,z, \varepsilon)=
\dfrac{1}{\sqrt{(w-\varepsilon )^{2}+4z}},\\
&
v_{w}(w,z, \varepsilon)=
\dfrac{1}{2}\biggl(
\dfrac{w-\varepsilon }
{\sqrt{(w-\varepsilon )^{2}+4z}}-1
\biggr),\quad
v_{z}(w,z, \varepsilon)=
\dfrac{1}{\sqrt{(w-\varepsilon )^{2}+4z}}.
\end{split}
\nonumber
\end{equation}
Here we note that
all zeros of $w^{\pm}_{j,0}(x,\lambda)$ 
on $[-\ell, \ell ]$ are not degenerate in the sense that,
if $w^{\pm}_{j,0}(x^{*},\lambda )=0$ with some $x^{*}\in [-\ell, \ell ]$, 
then
$(w^{\pm}_{j,0})_{x}(x^{*},\lambda)\neq 0$.
Owing to the non-degeneracy,
one can verify that $g(\lambda,w,z,\varepsilon )$ is 
differentiable with respect to $(w,z)$ 
at $(\lambda,w^{\pm}_{j,0}(\lambda),0,0)$ and the
derivative can be expressed as follows:
\begin{equation}\label{gwz0}
\begin{split}
&g_{(w,z)}(\lambda,w^{\pm}_{j,0}(\lambda),0,0)\left[
\begin{array}{c}
\phi\\
\psi
\end{array}
\right]\\
&=
\biggl[
\begin{array}{l}
-\phi''-\{\,\lambda m(x)-2(b_{1}(w^{\pm}_{j,0})_{+}+c_{2}
(w^{\pm}_{j,0})_{-})\,\}\phi+
\{\,2(b_{1}\chi_{\{w>0\}}-c_{2}\chi_{\{w<0\}})+c_{1}-b_{2}\,\}
\psi\\
-\psi''
\end{array}
\biggr],
\end{split}
\nonumber
\end{equation}
where $\chi_{\{w>0\}}$
(resp. $\chi_{\{w<0\}}$) denotes
the indicator function of the set where
$w^{\pm}_{j,0}(\lambda)$ takes positive (resp. negative) values.
Then the eigenvalue problem
\begin{equation}\label{gev}
g_{(w,z)}
(\lambda, w^{\pm}_{j,0}(\lambda), 0, 0)\left[
\begin{array}{c}
\phi\\
\psi
\end{array}
\right]
=
\sigma
\left[
\begin{array}{c}
\phi\\
\psi
\end{array}
\right]
\end{equation}
is reduced to the system of linear equations:
\begin{subequations}\label{linev}
\begin{align}
&\phi''+\{\,\lambda m(x)-2(b_{1}(w^{\pm}_{j,0})_{+}
+c_{2}(w^{\pm}_{j,0})_{-})+\sigma\,\}\phi
\nonumber
\\ 
\label{linev1}
=\,& 
\{\,2(b_{1}\chi_{\{w>0\}}-c_{2}\chi_{\{w<0\}})+c_{1}-b_{2}\,\}
\psi
\ \ \mbox{in}\ (-\ell, \ell ),\quad 
\phi(0)=\phi(\ell)=0,\\
\label{linev2}
&-\psi''=\sigma\psi\ \ \mbox{in}\ (-\ell, \ell ),\quad
\psi(0)=\psi(\ell)=0.
\end{align}
\end{subequations}
We show that $g_{(w,z)}
(\lambda, w^{\pm}_{j,0}(\lambda), 0, 0)\in
\mathcal{L}(\boldsymbol{X}, \boldsymbol{Y})$
is non-degenerate in the sense that
$\sigma =0$ is not an eigenvalue of \eqref{gev}.
If
$\sigma =0$, then \eqref{linev2} implies $\psi =0$.
Next we consider \eqref{linev1} with $\psi =0$:
\begin{equation}\label{self}
\phi''+\{\,\lambda m(x)-2(b_{1}(w^{\pm}_{j,0})_{+}
+c_{2}(w^{\pm}_{j,0})_{-})+\sigma\,\}\phi=0
\ \ \mbox{in}\ (-\ell, \ell ),\quad
\phi(0)=\phi(\ell)=0.
\end{equation}
By the self-adjointness of the operator associated with \eqref{self},
we know that all eigenvalues of \eqref{self} are real.
Furthermore, by applying the Sturm-Liouville theory to 
\eqref{LS2} and \eqref{self},
one can see that the number of zeros of $w^{\pm}_{j,0}(\lambda)$ on 
$(-\ell, \ell )$ coincides with
the number of negative eigenvalues of \eqref{self}.
Since the number of zeros of $w^{\pm}_{j,0}(\lambda)$ on 
$(-\ell, \ell )$ is equal to $j-1$, 
then all eigenvalues of \eqref{self} form a sequence
$\{\sigma^{\pm}_{k}(\lambda)\}^{\infty}_{k=1}$ such as
$$
\sigma^{\pm}_{1}(\lambda)<\sigma^{\pm}_{2}(\lambda)<\cdots<
\sigma^{\pm}_{j-1}(\lambda)\,(\,<0<\,)\, 
\sigma^{\pm}_{j}(\lambda)<\sigma_{j+1}^{\pm }(\lambda)<\cdots.
$$
Hence this fact implies that 
$g_{(w,z)}
(\lambda, w^{\pm}_{j,0}(\lambda), 0, 0)$
does not have the zero eigenvalue,
in other words,
$\sigma=0$ belongs to the resolvent set of 
$g_{(w,z)}(\lambda,w^{\pm}_{j,0}(\lambda),0,0)\in\mathcal{L}
(\boldsymbol{X}, \boldsymbol{Y})$.
Consequently, we deduce that $g_{(w,z)}
(\lambda, w^{\pm}_{j,0}(\lambda), 0, 0)\in
\mathcal{L}(\boldsymbol{X},\boldsymbol{Y})$ 
is an isomorphism.
Together with \eqref{g0}, 
for any fixed $\lambda_{*}\in (\lambda_{j}, \infty )$,
the implicit function theorem ensures
a neighborhood $\mathcal{U}_{*}^{\pm}=
\mathcal{U}_{*}^{\pm}(\lambda_{*})$ of 
$(\lambda,w,z,\varepsilon )=(\lambda_{*}, w^{\pm}_{j,0}(\lambda_{*}), 0,0)\in
\mathbb{R}_{+}\times\boldsymbol{X}\times\mathbb{R}$,
a small $\delta^{\pm}_{j}=\delta^{\pm}_{j}(\lambda_{*})>0$ 
and continuously differentiable functions
$$
(\lambda_{*}-\delta^{\pm}_{j}, \lambda_{*}+\delta^{\pm}_{j})\times
(-\delta^{\pm}_{j}. \delta^{\pm}_{j})\ni
(\lambda,\varepsilon )\mapsto
(w^{\pm}_{j}(\lambda,\varepsilon ),
z^{\pm}_{j}(\lambda,\varepsilon ))\in\boldsymbol{X}
$$
such that
\begin{equation}\label{wjzjdef}
\begin{split}
&\{\,(\lambda,w,z,\varepsilon )\in\mathcal{U}_{*}^{\pm}\,:\,
g(\lambda,w,z,\varepsilon )=0\,\}\\
=&\{\,(\lambda, w^{\pm}_{j}(\lambda, \varepsilon ),
z^{\pm}_{j}(\lambda, \varepsilon ), \varepsilon )\,:\,
(\lambda,\varepsilon )\in ( \lambda_{*}-\delta^{\pm}_{j}(\lambda_{*}), \lambda_{*}+\delta^{\pm}_{j}(\lambda_{*}))
\times (-\delta^{\pm}_{j}(\lambda_{*}), \delta^{\pm}_{j}(\lambda_{*}))\,\}
\end{split}
\end{equation}
with
$
(w^{\pm}_{j}(\lambda,0), z^{\pm}_{j}(\lambda,0))
=(w^{\pm}_{j,0}(\lambda), 0)$
for any $\lambda\in (\lambda_{*}-\delta^{\pm}_{j,0}(\lambda_{*}),
\lambda_{*}+\delta^{\pm}_{j,0}(\lambda_{*}))$.

\begin{prop}\label{perCSprop}
Assume that $\Omega=(-\ell, \ell)$ and $m(x)=m$
(constant)
$>0$
for all $x\in\overline{\Omega}$.
For any small $\eta>0$,
large $\varLambda >0$
and $j\in\mathbb{N}$,
there exists a small 
$\underline{\delta}^{\pm}_{j}(\eta, \varLambda )>0$ such that,
for each $\varepsilon\in 
(-\underline{\delta}^{\pm}_{j}, \underline{\delta}^{\pm}_{j})$,
there exists a pair of simple curves 
$\widetilde{\mathcal{S}}^{\pm}_{j,\varepsilon}(\lambda_{j}+\eta, \varLambda )$
consisting of solutions of \eqref{wzeq} parameterized by
$\lambda\in (\lambda_{j}+\eta, \varLambda)$ such as
$$
\widetilde{\mathcal{S}}^{\pm}_{j,\varepsilon}(\lambda_{j}+\eta, \varLambda )=\{\,
(\lambda,w,z)\in\mathbb{R}_{+}\times\boldsymbol{X}\,:\,
(w,z)=(w^{\pm}_{j}(\lambda,\varepsilon ), z^{\pm}_{j}(\lambda,\varepsilon)),
\ \ \lambda\in (\lambda_{j}+\eta, \varLambda )\,\},
$$
with $(w^{\pm}_{j}(\lambda,\varepsilon ), 
z^{\pm}_{j}(\lambda,\varepsilon))$
defined by \eqref{wjzjdef} satisfying
\begin{equation}\label{nakatugi}
(w^{\pm}_{j}(\lambda,0), z^{\pm}_{j}(\lambda,0))
=(w^{\pm}_{j,0}(\lambda), 0)\ \ \mbox{for any}\ 
\lambda\in (\lambda_{j}+\eta, \varLambda ).
\end{equation}
\end{prop}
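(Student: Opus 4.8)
The plan is to prove Proposition \ref{perCSprop} by a compactness-and-patchwork argument applied to the local solution curves \eqref{wjzjdef} that the implicit function theorem has already produced around each point $(\lambda_{*}, w^{\pm}_{j,0}(\lambda_{*}),0,0)$ with $\lambda_{*}\in(\lambda_{j},\infty)$. I would treat the upper branch in detail; the lower branch is handled identically, and at the end one may set $\underline{\delta}^{\pm}_{j}$ to be the smaller of the two radii if a single common constant is wanted.

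First I would fix $j$, $\eta>0$ and $\varLambda>0$ and observe, using Lemma \ref{basiclem}, that $\lambda\mapsto w^{+}_{j,0}(\lambda)$ is of class $C^{1}$ on all of $(\lambda_{j},\infty)$, so that the set
$$
K^{+}:=\{\,(\lambda,w^{+}_{j,0}(\lambda),0,0)\,:\,\lambda\in[\lambda_{j}+\tfrac{\eta}{2},\varLambda+1]\,\}
$$
is a compact subset of $\mathbb{R}_{+}\times\boldsymbol{X}\times\mathbb{R}$. The neighborhoods $\mathcal{U}^{+}_{*}(\lambda_{*})$ from \eqref{wjzjdef}, taken over $\lambda_{*}\in[\lambda_{j}+\tfrac{\eta}{2},\varLambda+1]$, form an open cover of $K^{+}$, so I would extract a finite subcover indexed by points $\lambda_{*}^{(1)},\dots,\lambda_{*}^{(N)}$ and put $\underline{\delta}^{+}_{j}(\eta,\varLambda):=\tfrac12\min_{1\le k\le N}\delta^{+}_{j}(\lambda_{*}^{(k)})>0$.

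Next, for each fixed $\varepsilon$ with $|\varepsilon|<\underline{\delta}^{+}_{j}$, I would glue the finitely many local $C^{1}$ pieces $(\lambda,\varepsilon)\mapsto(w^{+}_{j}(\lambda,\varepsilon),z^{+}_{j}(\lambda,\varepsilon))$ into a single $C^{1}$ curve over $\lambda\in(\lambda_{j}+\eta,\varLambda)$. Consistency on overlaps is exactly the uniqueness clause in \eqref{wjzjdef}: on the intersection of two of the neighborhoods, the zero set of $g(\lambda,\cdot,\cdot,\varepsilon)$ near $K^{+}$ is a single curve, so the two local parameterizations must coincide there. This yields the simple curve $\widetilde{\mathcal{S}}^{+}_{j,\varepsilon}(\lambda_{j}+\eta,\varLambda)$; it is automatically simple because it is the graph of a function of $\lambda$, and it remains parameterized by $\lambda$ because the implicit function theorem delivers $(w,z)$ as a function of $(\lambda,\varepsilon)$. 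The relation \eqref{nakatugi} is immediate since each local piece satisfies $(w^{+}_{j}(\lambda,0),z^{+}_{j}(\lambda,0))=(w^{+}_{j,0}(\lambda),0)$.

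The main obstacle is the bookkeeping of the overlaps: one must choose the finite subcover so that on each pairwise intersection the local solution set is genuinely a single curve (not two separate branches), and then one must shrink $\varepsilon$ \emph{uniformly} over the cover — this is why $\underline{\delta}^{+}_{j}$ is defined as a minimum over the finite family rather than as an $\varepsilon$-dependent radius. There is no new analytic difficulty, because the $(w,z)$-linearization of $g$ has already been shown to be an isomorphism at every point of the limiting branch $(\lambda,w^{\pm}_{j,0}(\lambda),0,0)$ via the Sturm--Liouville argument preceding this proposition; the construction is the standard patchwork procedure, entirely parallel to the one used in the proof of Proposition \ref{perprop}.
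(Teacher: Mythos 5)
Your proposal is correct and follows essentially the same route as the paper: cover a compact portion of the limiting branch by the implicit-function-theorem neighborhoods from \eqref{wjzjdef}, extract a finite subcover, take $\underline{\delta}^{\pm}_{j}$ as the minimum of the finitely many half-radii, and glue the local $C^1$ pieces using the uniqueness clause of the implicit function theorem. The only cosmetic difference is that you parameterize the compact set directly as the graph $K^{+}$, whereas the paper works with the intermediate closed boxes $\varGamma^{\pm}_{j}(\lambda_{*})$; the substance of the argument is identical.
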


\begin{proof}
In view of \eqref{wjzjdef}, we set
\begin{equation}
\begin{split}
\varGamma^{\pm}_{j}(\lambda_{*}):=\{\,
&(\lambda,w^{\pm}_{j}(\lambda,\varepsilon ), z^{\pm}_{j}(\lambda,\varepsilon), \varepsilon )\in
\mathbb{R}\times\boldsymbol{X}\times\mathbb{R}\,:\,\\
&(\lambda,\varepsilon )\in [\lambda_{*}-\delta^{\pm}_{j}(\lambda_{*})/2, 
\lambda_{*}+\delta^{\pm}_{j}(\lambda_{*})/2]\times [
-\delta^{\pm}_{j}(\lambda_{*})/2, \delta^{\pm}_{j}(\lambda_{*})/2]\,\}.
\end{split}
\nonumber
\end{equation}
Here we remark that, for each $j\in\mathbb{N}$,
$$
\bigcup_{\lambda_{j}+\eta\le \lambda_{*}\le 
\varLambda}\varGamma^{+}_{j}(\lambda_{*})
\quad\mbox{and}\quad
\bigcup_{\lambda_{j}+\eta\le \lambda_{*}\le 
\varLambda}\varGamma^{-}_{j}(\lambda_{*})
$$
give a pair of 
compact sets.
It follows from \eqref{wjzjdef} that
$$\varGamma^{\pm}_{j}(\lambda_{*})\subset
\mathcal{U}^{\pm}_{j}(\lambda_{*}),$$
and therefore,
$$
\bigcup_{\lambda_{j}+\eta\le \lambda_{*}\le 
\varLambda}\varGamma^{\pm}_{j}(\lambda_{*})
\subset
\bigcup_{\lambda_{j}+\eta\le \lambda_{*}\le 
\varLambda}
\mathcal{U}^{\pm}_{j}(\lambda_{*}).
$$
By the compactness of 
$\cup_{\lambda_{j}+\eta\le \lambda_{*}\le 
\varLambda}\varGamma^{\pm}_{j}(\lambda_{*})$,
we can find a finite number of $\{l_{i}\}_{i=1}^{k}$ such that
$$\lambda_{j}+\eta\le l_{1}<
l_{2}<\cdots<l_{k}\le \varLambda$$
and
$$
\bigcup_{\lambda_{j}+\eta\le \lambda_{*}\le 
\varLambda}\varGamma^{\pm}_{j}(\lambda_{*})
\subset
\bigcup_{i=1}^{k}\,
\mathcal{U}^{\pm}_{j}(l_{i}).
$$
Define
$$
\underline{\delta}^{\pm}_{j}:=\min_{1\le i\le k}
\dfrac{\delta^{\pm}_{j}(l_{i})}{2}.
$$
Here we recall that,
for each $i\in\{1,2,\ldots,k\}$,
all solutions of $g(\lambda,w,z,\varepsilon )=0$ with 
$(\lambda,w,z,\varepsilon )\in\mathcal{U}^{\pm}_{j}(l_{i})$
consist of $\varGamma^{\pm}_{j}(l_{i})$.
Therefore, we see that,
for any fixed $\varepsilon\in 
(-\underline{\delta}^{\pm}_{j},\underline{\delta}^{\pm}_{j})$,
$$
\widetilde{\mathcal{S}}^{\pm}_{j,\varepsilon}(\lambda_{j}+\eta, \varLambda ):=
\{\,(\lambda,w^{\pm}_{j}(\lambda,\varepsilon ), z^{\pm}_{j}(\lambda, \varepsilon ))\,:\,
\lambda_{j}+\eta< \lambda< \varLambda\,\}
$$
form a pair of simple curves of solutions of
$g(\lambda,w,z,\varepsilon )=0$, or equivalently, \eqref{wzeq}.
The proof of Proposition \ref{perCSprop} is complete.
\end{proof}
In the case where $\varepsilon\in (0,\underline{\delta}^{\pm}_{j})$,
by \eqref{uvdef}
one can transform $\widetilde{\mathcal{S}}^{\pm}_{j,\varepsilon}(\lambda_{j}+\eta, \varLambda )$
to a pair of branches of positive solutions of \eqref{SKT}:
\begin{cor}\label{persegcor}
Assume that 
$\Omega=(-\ell, \ell)$ and $m(x)=m$
(constant)
$>0$
for all $x\in\overline{\Omega}$. 
Let
$\underline{\delta}^{\pm}_{j}(\eta, \varLambda )>0$ 
be the positive number obtained in Proposition
\ref{perCSprop}
for any small $\eta>0$,
large $\varLambda >0$
and $j\in\mathbb{N}$.
Then,
for each $\alpha>1/\delta^{\pm}_{j}$,
there exists a pair of simple curves 
$\mathcal{S}^{\pm}_{j,\alpha}(\lambda_{j}+\eta, \varLambda)$
consisting of positive solutions to \eqref{SKT} parameterized by
$\lambda\in (\lambda_{j}+\eta, \varLambda)$ such as
$$
\mathcal{S}^{\pm}_{j,\alpha}(\lambda_{j}+\eta, \varLambda)=\{\,
(\lambda,u,v)\in\mathbb{R}_{+}\times\boldsymbol{X}\,:\,
(u,v)=(u^{\pm}_{j}(\lambda,\varepsilon ), v^{\pm}_{j}(\lambda,\varepsilon)),
\ \ \lambda\in (\lambda_{j}+\eta, \varLambda )\,\},
$$
where 
$\varepsilon=1/\alpha$ and
$$
(u^{\pm}_{j}(\lambda,\varepsilon ), v^{\pm}_{j}(\lambda,\varepsilon))
=
\dfrac{1}{2}(
\sqrt{(w-\varepsilon )^{2}+4z}+w-\varepsilon ,
\sqrt{(w-\varepsilon )^{2}+4z}-w-\varepsilon 
)
$$
with 
$(w,z)=(w_{j}^{\pm}(\lambda, \varepsilon ),
z_{j}^{\pm}(\lambda, \varepsilon ))$.
\end{cor}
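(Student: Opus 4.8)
The plan is to push forward, through the change of variables \eqref{wzdef}--\eqref{uvdef}, the pair of branches $\widetilde{\mathcal{S}}^{\pm}_{j,\varepsilon}(\lambda_{j}+\eta,\varLambda)$ of solutions of \eqref{wzeq} obtained in Proposition~\ref{perCSprop}. Fixing $\alpha>1/\underline{\delta}^{\pm}_{j}(\eta,\varLambda)$ and putting $\varepsilon:=1/\alpha$, for every $\lambda\in(\lambda_{j}+\eta,\varLambda)$ the proposition supplies the solution $(w,z)=(w^{\pm}_{j}(\lambda,\varepsilon),z^{\pm}_{j}(\lambda,\varepsilon))$ of \eqref{wzeq}, of class $C^{1}$ in $\lambda$, with $(w^{\pm}_{j}(\lambda,0),z^{\pm}_{j}(\lambda,0))=(w^{\pm}_{j,0}(\lambda),0)$. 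I would define $(u^{\pm}_{j}(\lambda,\varepsilon),v^{\pm}_{j}(\lambda,\varepsilon))$ by the displayed formula and reverse the computation of Lemma~\ref{wzconvlem}: from \eqref{uvdef} one has $u-v=w$, $(1+\alpha v)u=z/\varepsilon$ and $(1+\alpha u)v=z/\varepsilon-w$, and substituting these into the two equations of \eqref{wzeq} shows that $(u^{\pm}_{j},v^{\pm}_{j})$ solves \eqref{SKT} with this $\alpha$ and vanishes on $\partial\Omega$, as long as $(w-\varepsilon)^{2}+4z\ge0$. Since $(w,z)\mapsto(u,v)$ is a $C^{1}$ diffeomorphism wherever $u,v>0$, with inverse $(u,v)\mapsto(u-v,(\varepsilon+v)u)$, the image of the simple curve $\widetilde{\mathcal{S}}^{\pm}_{j,\varepsilon}(\lambda_{j}+\eta,\varLambda)$ is again a simple curve in $\mathbb{R}_{+}\times\boldsymbol{X}$ parameterized by $\lambda$; this is the curve $\mathcal{S}^{\pm}_{j,\alpha}(\lambda_{j}+\eta,\varLambda)$ in the statement. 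The whole assertion thus reduces to proving $u^{\pm}_{j}(\cdot,\lambda,\varepsilon)>0$ and $v^{\pm}_{j}(\cdot,\lambda,\varepsilon)>0$ in $\Omega$ once $\alpha$ is large enough (uniformly for $\lambda$ in compact subintervals).

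For the positivity, letting $\varepsilon\to0$ in \eqref{uvdef} gives $(u^{\pm}_{j},v^{\pm}_{j})\to((w^{\pm}_{j,0}(\lambda))_{+},(w^{\pm}_{j,0}(\lambda))_{-})$ uniformly in $\overline{\Omega}$, neither limit identically zero; hence, once $(u^{\pm}_{j},v^{\pm}_{j})$ is known to be a nonnegative solution of \eqref{SKT}, the strong maximum principle recalled in the Introduction forces $u^{\pm}_{j},v^{\pm}_{j}>0$ in $\Omega$. So it suffices to show $u^{\pm}_{j}\ge0$ and $v^{\pm}_{j}\ge0$ in $\overline{\Omega}$ for small $\varepsilon$. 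From \eqref{uvdef}, $u\ge0$ and $\varepsilon+v\ge0$ hold whenever $(w-\varepsilon)^{2}+4z\ge0$, and $v\ge0$ holds automatically where $w\le-\varepsilon$ and is equivalent to $z\ge w\varepsilon$ where $w>-\varepsilon$; since $w^{\pm}_{j}$ is $C^{1}$-close to the sign-changing $w^{\pm}_{j,0}(\lambda)$, the only region where this can fail lies in a fixed neighbourhood of $\overline{\{w^{\pm}_{j,0}(\lambda)\ge0\}}$. There I would use the $C^{1}$ dependence from Proposition~\ref{perCSprop} to expand, in $C^{1}(\overline{\Omega})$, $w^{\pm}_{j}(\lambda,\varepsilon)=w^{\pm}_{j,0}(\lambda)+O(\varepsilon)$ and $z^{\pm}_{j}(\lambda,\varepsilon)=\varepsilon\,\zeta^{\pm}_{j}(\lambda)+o(\varepsilon)$, where $\zeta^{\pm}_{j}:=\partial_{\varepsilon}z^{\pm}_{j}(\lambda,0)$ solves, upon differentiating the second equation of \eqref{wzeq} at $\varepsilon=0$,
\[
-\Delta\zeta^{\pm}_{j}=(w^{\pm}_{j,0})_{+}\bigl(\lambda m(x)-b_{1}(w^{\pm}_{j,0})_{+}\bigr)\ \ \mbox{in}\ \Omega,\qquad \zeta^{\pm}_{j}=0\ \ \mbox{on}\ \partial\Omega.
\]
By \eqref{LS2} the right-hand side equals the classical $-\Delta(w^{\pm}_{j,0})_{+}$ away from the nodal set, whereas the distributional Laplacian of $(w^{\pm}_{j,0})_{+}$ carries in addition a positive point mass $|(w^{\pm}_{j,0})'(x_{i})|\,\delta_{x_{i}}$ at each interior zero $x_{i}$ of $w^{\pm}_{j,0}$. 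Hence $h:=\zeta^{\pm}_{j}-(w^{\pm}_{j,0})_{+}$ satisfies $-\Delta h=\sum_{i}|(w^{\pm}_{j,0})'(x_{i})|\,\delta_{x_{i}}\ge0$ in $\Omega$ with $h=0$ on $\partial\Omega$, so $h\ge0$, and, when $w^{\pm}_{j,0}$ has an interior zero, $h>0$ in $\Omega$ with $\partial_{\nu}h<0$ on $\partial\Omega$ by the Hopf lemma. Since $\zeta^{\pm}_{j}-w^{\pm}_{j,0}=h+(w^{\pm}_{j,0})_{-}\ge h$, on every compact $K\subset\Omega$ the expansion yields $z^{\pm}_{j}-w^{\pm}_{j}\varepsilon\ge\varepsilon h+o(\varepsilon)\ge\tfrac{\varepsilon}{2}\min_{K}h>0$ for $\varepsilon$ small.

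The step I expect to be the main obstacle is the boundary (and, when interior zeros are present, the nodal) layer, where $z^{\pm}_{j}$, $w^{\pm}_{j}\varepsilon$ and $h$ all degenerate, so the crude expansion is inconclusive. The plan is to compare normal derivatives: from $z^{\pm}_{j}=w^{\pm}_{j}=0$ on $\partial\Omega$ and the uniform $C^{1}(\overline{\Omega})$ smallness of the remainders $w^{\pm}_{j}(\lambda,\varepsilon)-w^{\pm}_{j,0}(\lambda)$ and $\varepsilon^{-1}z^{\pm}_{j}(\lambda,\varepsilon)-\zeta^{\pm}_{j}(\lambda)$, together with $\partial_{\nu}h<0$ on $\partial\Omega$, one finds that both $z^{\pm}_{j}$ and $z^{\pm}_{j}-w^{\pm}_{j}\varepsilon$ vanish on $\partial\Omega$ but have strictly negative outward normal derivative there for $\varepsilon$ small, hence are positive just inside $\partial\Omega$; each interior nodal point $x_{i}$ is handled by the same device after a rescaling of length of order $\sqrt{\varepsilon}$ about $x_{i}$, the exceptional one-signed case $j=1$ (where $h\equiv0$) requiring instead the second-order term of the $\varepsilon$-expansion. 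Patching these layer estimates with the compact-subset estimate gives $u^{\pm}_{j},v^{\pm}_{j}\ge0$ in $\overline{\Omega}$, so $(u^{\pm}_{j},v^{\pm}_{j})$ is a nontrivial nonnegative solution of \eqref{SKT}, whence $u^{\pm}_{j},v^{\pm}_{j}>0$ in $\Omega$ by the strong maximum principle. Transcribing $\widetilde{\mathcal{S}}^{\pm}_{j,\varepsilon}(\lambda_{j}+\eta,\varLambda)$ under \eqref{uvdef} with $\varepsilon=1/\alpha$ then delivers the curves $\mathcal{S}^{\pm}_{j,\alpha}(\lambda_{j}+\eta,\varLambda)$ and the displayed representation of $(u^{\pm}_{j},v^{\pm}_{j})$, completing the proof.
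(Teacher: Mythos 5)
The paper itself supplies no proof of Corollary \ref{persegcor}; it is asserted as an immediate consequence of the change of variables \eqref{uvdef} applied to the curves from Proposition \ref{perCSprop}. Your instinct that the positivity of $(u^{\pm}_{j}, v^{\pm}_{j})$ is the genuinely nontrivial content is therefore correct and deserves credit: uniform convergence to $(w_{+},w_{-})$ only yields \emph{non}negativity away from the nodal set and the boundary, and the reduction to nonnegativity plus the strong maximum principle is exactly the right framework.

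However, there is a concrete error in your translation of positivity into conditions on $(w,z)$. You claim ``$u\ge0$ and $\varepsilon+v\ge0$ hold whenever $(w-\varepsilon)^{2}+4z\ge0$.'' This is false. Since $u$ and $\varepsilon+v$ satisfy $u+(\varepsilon+v)=\sqrt{(w-\varepsilon)^{2}+4z}$ and $u\,(\varepsilon+v)=z$, they are the two roots of $t^{2}-\sqrt{(w-\varepsilon)^{2}+4z}\,t+z=0$; both are nonnegative if and only if $z\ge0$. Nonnegativity of the discriminant alone permits $z<0$, in which case one of $u,\varepsilon+v$ is strictly negative. As a consequence, your subsequent restriction of the analysis to a neighbourhood of $\overline{\{w^{\pm}_{j,0}\ge0\}}$ overlooks that $z\ge0$ must also be established on $\{w^{\pm}_{j,0}<0\}$. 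This is a real gap, though a mendable one.

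The remedy, and in fact a shorter proof than your layered one, is to work with $\zeta:=\partial_{\varepsilon}z^{\pm}_{j}(\lambda,0)$ and the combination $\zeta-w^{\pm}_{j,0}$ rather than $h=\zeta-(w^{\pm}_{j,0})_{+}$. One has, as you computed, $-\Delta\zeta=(w^{\pm}_{j,0})_{+}\bigl(\lambda m-b_{1}(w^{\pm}_{j,0})_{+}\bigr)$, which is $\ge0$ and $\not\equiv0$ because on each positive nodal interval a touching argument gives $(w^{\pm}_{j,0})_{+}<\lambda m/b_{1}$. Subtracting \eqref{LS2} gives
\[
-\Delta\bigl(\zeta-w^{\pm}_{j,0}\bigr)=(w^{\pm}_{j,0})_{-}\bigl(\lambda m-c_{2}(w^{\pm}_{j,0})_{-}\bigr)\ge0,\quad\not\equiv0\ \ \mbox{for }j\ge2,
\]
with $\zeta-w^{\pm}_{j,0}=0$ on $\partial\Omega$. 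Hence both $\zeta$ and $\zeta-w^{\pm}_{j,0}$ are strictly positive in $\Omega$ and have strictly negative outward normal derivative on $\partial\Omega$ by the Hopf lemma, i.e.\ both lie in the interior of the positive cone $P\subset C^{1}(\overline{\Omega})$ used in the proof of Proposition \ref{subprop}. Since the $C^{1}$ dependence on $\varepsilon$ in Proposition \ref{perCSprop} gives $z^{\pm}_{j}/\varepsilon\to\zeta$ and $(z^{\pm}_{j}-\varepsilon w^{\pm}_{j})/\varepsilon\to\zeta-w^{\pm}_{j,0}$ in $X\hookrightarrow C^{1}(\overline{\Omega})$, openness of $P$ immediately yields $z^{\pm}_{j}>0$ and $z^{\pm}_{j}>\varepsilon w^{\pm}_{j}$ in $\Omega$ for $\varepsilon$ small. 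No compact-exhaustion, no $\sqrt{\varepsilon}$-rescaling at interior zeros, and no separate nodal-layer analysis is needed; observe also that at an interior zero $x_{i}$ the function $\zeta-w^{\pm}_{j,0}$ does not degenerate ($\zeta(x_{i})>0$), so the ``nodal layer'' you anticipated does not in fact arise.

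Finally, your observation that the first-order term vanishes when $j=1$ is sharper than you realize: for $j=1$, $(w^{+}_{1,0})_{-}\equiv0$, so $\zeta-w^{+}_{1,0}\equiv0$, and no second-order fix can produce a branch of positive solutions with $(u,v)\to(w^{+}_{1,0},0)$, because Lemma \ref{zerolem} forbids any sequence of positive solutions with $\alpha_{n}\to\infty$ and $v_{n}\to0$ unless $u_{n}\to0$ as well. The Corollary, like Theorem \ref{g2ndbifthm}, is really for $j\ge2$, and your proof should be stated under that restriction rather than attempting to patch the $j=1$ case.
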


\subsection{From small coexistence to complete segregation} 
This subsection is devoted to the proof of Theorem \ref{g2ndbifthm}.
Our strategy of the proof is to
construct a global branch of positive solutions of
\eqref{SKT} bifurcating from a positive solution on 
the branch $\mathcal{C}_{\alpha, \varLambda}$ 
(obtained in Theorem \ref{Cathm})
by connecting the local bifurcation branch \eqref{2ndbifc2} 
with a pair of branches $\mathcal{S}^{\pm}_{j, \alpha}$.

\begin{proof}[Proof of Theorem \ref{g2ndbifthm}]
By virtue of \eqref{2ndbifc2}, we focus on 
the neighborhood $\mathcal{U}_{j, \alpha}$
of 
$$
(\lambda^{*}, u^{*}, v^{*}):=
(\mu_{j,\alpha}, u_{0,\alpha}(\mu_{j, \alpha}), v_{0,\alpha}(\mu_{j,\alpha}))
\in\mathcal{C}_{\alpha, \varLambda}.$$
In view of Proposition \ref{2ndbifprop2},
we recall that
all positive solutions of \eqref{SKT} in $\mathcal{U}_{j, \alpha}$
consist of 
the union of the piece of $\mathcal{C}_{\alpha, \varLambda}$ and
the piece of the bifurcation curve
$$
\varGamma_{j, \alpha}:=
\{\,(\lambda^{*}, u^{*}, v^{*})+
(\xi_{j, \alpha }(s), \widetilde{u}_{j, \alpha}(s),
\widetilde{v}_{j, \alpha}(s)),\ \ 
s\in (-\delta_{j}, \delta_{j})\,\}.
$$
On the other hand, 
in view of Corollary \ref{persegcor},
we focus on the end points at $\lambda=\lambda_{j}+\eta$
of $\mathcal{S}^{\pm}_{j,\alpha}(\lambda_{j}+\eta, \varLambda)$:
$$
(\lambda_{*}, u^{\pm}_{*}, v^{\pm}_{*}):=
(\lambda_{j}+\eta, u_{j}^{\pm}(\lambda_{j}+\eta, \varepsilon ),
v^{\pm}_{j}(\lambda_{j}+\eta, \varepsilon )),
$$
where
\begin{equation}\label{ujvj}
\begin{split}
&(u_{j}^{\pm}(\lambda_{j}+\eta, \varepsilon ),
v_{j}^{\pm}(\lambda_{j}+\eta, \varepsilon ))\\
&=
\lim_{\lambda\searrow\lambda_{j}+\eta}
(u_{j}(\lambda, \varepsilon ), v_{j}(\lambda, \varepsilon ))\\
&=
\dfrac{1}{2}\lim_{\lambda\searrow\lambda_{j}+\eta}
\biggl(
\sqrt{(w^{\pm}_{j}(\lambda, \varepsilon )-\varepsilon )^{2}+
4z^{\pm}_{j}(\lambda, \varepsilon )}+
w^{\pm}_{j}(\lambda, \varepsilon )-\varepsilon ,\\
&\quad\quad\sqrt{(w^{\pm}_{j}(\lambda, \varepsilon )-\varepsilon )^{2}+
4z^{\pm}_{j}(\lambda, \varepsilon )}-
w^{\pm}_{j}(\lambda, \varepsilon )-\varepsilon 
\biggr)
\quad\mbox{with}\ \ \varepsilon =\dfrac{1}{\alpha}.
\end{split}
\end{equation}
It will be shown that
$(\lambda^{*}, u^{*}, v^{*})$ and
$(\lambda_{*}, u_{*}^{\pm}, v_{*}^{\pm})$
are very close if 
$\eta >0 $ is sufficiently small and 
$\alpha$ is sufficiently large.
Actually, we know from \eqref{ualto0} and \eqref{mujtola}
that
$(\lambda^{*}, u^{*}, v^{*})\to (\lambda_{j}, 0, 0)$
in $\mathbb{R}\times\boldsymbol{X}$
as $\alpha\to\infty$. 
We recall \eqref{nakatugi} to note
$$
\lim_{\lambda\searrow\lambda_{j}+\eta,\ \varepsilon\to 0}
(w_{j}^{\pm}(\lambda, \varepsilon ), z_{j}^{\pm}(\lambda, \varepsilon ))
=(w_{j,0}^{\pm}(\lambda_{j}+\eta ), 0).
$$
Furthermore, we see from Lemma \ref{basiclem} that
$w_{j,0}^{\pm}(\lambda_{j}+\eta )\to 0$
as $\eta\searrow 0$.
It follows from \eqref{ujvj} that
$(\lambda_{*}, u^{\pm}_{*}, v^{\pm}_{*})$ are very
close to $(\lambda_{j}, 0, 0)$ if
$\eta >0$ is sufficiently small and
$\alpha$ is sufficiently large.
Hence,
$(\lambda^{*}, u^{*}, v^{*})$ and
$(\lambda_{*}, u_{*}^{\pm}, v_{*}^{\pm})$
are very close in such a situation.
Therefore,
we see that
a pair of end points
$(\lambda_{*}, u^{+}_{*}, v^{+}_{*})$
and
$(\lambda_{*}, u^{-}_{*}, v^{-}_{*})$
are contained in the neighborhood $\mathcal{U}_{j,\alpha}$ 
of $(\lambda^{*},u^{*}, v^{*})$
if $\eta>0$ is sufficiently small and
$\alpha$ is sufficiently large.
Here it should be noted that
$\mathcal{U}_{j,\alpha}$ does not shrink when $\alpha\to\infty$
as stated in Remark \ref{shrem}.
Furthermore, one can verify from \eqref{uv0} that
$(\lambda_{*}, u^{\pm}_{*}, v^{\pm}_{*})\not\in
\mathcal{C}_{\alpha, \varLambda}$.
Therefore, we deduce 
that
$(\lambda_{*}, u^{+}_{*}, v^{+}_{*})$
and
$(\lambda_{*}, u^{-}_{*}, v^{-}_{*})$
connect with the bifurcation curve
$\varGamma_{j, \alpha}$, 
that is, 
$$
(\lambda_{*}, u^{+}_{*}, v^{+}_{*})
=(\lambda^{*}, u^{*}, v^{*})+
(\xi_{j, \alpha }(s^{+}), \widetilde{u}_{j, \alpha}(s^{+}),
\widetilde{v}_{j, \alpha}(s^{+}))
$$
and
$$
(\lambda_{*}, u^{-}_{*}, v^{-}_{*})
=(\lambda^{*}, u^{*}, v^{*})+
(\xi_{j, \alpha }(s^{-}), \widetilde{u}_{j, \alpha}(s^{-}),
\widetilde{v}_{j, \alpha}(s^{-}))
$$
with some $s^{-}<0<s^{+}$.
Hence we have constructed a pair 
of branches $\mathcal{S}^{\pm}_{j,\alpha, \varLambda}$
(stated in Theorem \ref{g2ndbifthm})
bifurcating from $(\lambda^{*}, u^{*}, v^{*})
\in\mathcal{C}_{\alpha, \varLambda}$.
Furthermore we remark that 
$w_{j,0}^{\pm}(x)$ are odd (resp.\,even) functions
on $(-\ell, \ell)$ if $j$ is even (resp.\,odd).
Therefore, if $j$ is even, then
$w_{j,0}^{-}(x)=w^{+}_{j,0}(-x)$ 
for all $x\in (-\ell, \ell)$.
Together with the fact that
$(\widehat{u}(x), \widehat{v}(x)):=(u(-x), v(-x))$
is a solution of \eqref{SKT} when
$(u(x), v(x))$ is a solution,
we can verify \eqref{evensym}.
The proof of Theorem \ref{g2ndbifthm} is complete.
\end{proof}

\section{Numerical results}
\subsection{Numerical bifurcation diagram with parameter $\lambda$}
In this subsection,
we numerically exhibit the bifurcation diagram of solutions of 
\eqref{SKT} 
by using the continuation software \texttt{pde2path}
\cite{BKS, DRUW, Ue, UWR} based on an FEM discretization
of the stationary problem.
It should be noted here that 
our numerical simulations are heavily based on 
the recent paper \cite{BKS} by Breden, Kuehn and Soresina 
that traces the bifurcation branches of stationary solutions 
of the SKT model.

For \eqref{SKT}, our setting of parameters in the numerical simulation 
is as follows:
$$
\Omega=(-0.5,0.5),
\quad \alpha=20,
\quad m(x)=1, 
\quad b_{1}=3, 
\quad b_{2}=2, 
\quad c_{1}=2, 
\quad c_{2}=1.
$$
The numerical bifurcation diagram is shown in Figure 1,
where
the horizontal axis represents the bifurcation parameter $\lambda$, 
and the vertical axis represents the $L^{2}$ norm of the 
$u$ component of positive solutions $(u,v)$ to \eqref{SKT}.
In Figure 1,
the blue curve corresponds to $\mathcal{C}_{20, \varLambda}$ 
(pure mathematically obtained in Theorem \ref{Cathm})
which bifurcates from the trivial solution at $\lambda=\lambda_{1}$.
The theoretical value of the bifurcation point in the
setting \eqref{ev2} is
$\lambda_{1}=\pi^2\,(\,\fallingdotseq 9.8696)$, 
which is naturally consistent with the numerical bifurcation point 
of the blue curve appeared in Figure 1.
It can be seen the blue curve extends 
in the direction of large 
$\lambda$ with small but gradually increasing 
$L^2$ norm,
and exhibits the branch of small coexistence.
Figure 2 shows the profile of a positive 
solution $(u,v)$ 
corresponding to the point at $\lambda=59.8286$
on the blue curve in Figure 1.
It can be observed from Figure 2 that
$u$ and $v$ are small and
very close to each other.

In Figure 1, 
the red curve corresponds to
the upper branch $\mathcal{S}_{2,20,\varLambda}^{+}$
and lower one $\mathcal{S}_{2,20,\varLambda}^{-}$ of 
the pitchfork bifurcation curve $\mathcal{S}_{2,20,\varLambda}$,
which is pure mathematically obtained in Theorem \ref{g2ndbifthm}.
It follows from Theorem \ref{g2ndbifthm}
that the pitchfork bifurcation curve $\mathcal{S}_{2, 20, \varLambda}$
bifurcates from
a solution on the blue curve at $\mu_{2,20}$.
Here it is noted the $L^{2}$ norm of each component
of $\mathcal{S}_{2,20, \varLambda}^{+}$ and 
$\mathcal{S}_{2,20, \varLambda}^{-}$
should be shown overlapped since \eqref{evensym}.
By \eqref{biflim}, the secondary bifurcation point 
$\mu_{2,\alpha}$ theoretically tends to 
$\lambda_{2}=(2\pi)^2\,(\,\fallingdotseq 39.4784)$
as $\alpha\to\infty$.
It can be seen in Figure 1 that
the numerical secondary bifurcation point 
is near the theoretical limit.

In Figure 3, (a) and (b) show the profiles of
solutions on the upper branch $\mathcal{S}^{+}_{2, 20, \varLambda}$ 
and the lower one $\mathcal{S}^{-}_{2, 20, \varLambda}$ 
of the red pitchfork bifurcation curve 
$\mathcal{S}_{2, 20, \varLambda}$
at
$\lambda=40.3421$, respectively.
One can see that $u$ and $v$ are somewhat spatially segregated.
Furthermore, as the solution moves away 
from the bifurcation point on the blue bifurcation curve, 
it is observed from the numerical simulation that 
the segregation between $u$ and $v$ becomes more overt. 
Actually, (c) and (d) in Figure 3 shows the profiles of
solutions on the upper branch $\mathcal{S}^{+}_{2, 20, \varLambda}$ 
and the lower one $\mathcal{S}^{+}_{2, 20, \varLambda}$ 
on the red pitchfork bifurcation curve 
$\mathcal{S}_{2, 20, \varLambda}$ 
at
$\lambda=43.0673$, 
where $u$ and $v$ considerably segregate each other.

\begin{figure}
\begin{center}
{\includegraphics*[scale=.5]{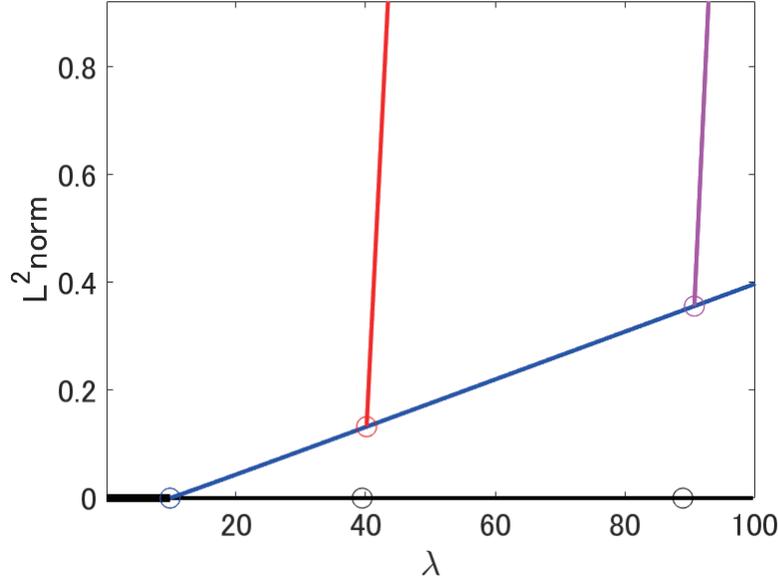}}\\
\caption{Bifurcation diagram of solutions of \eqref{SKT}.}
\end{center}\end{figure}

\begin{figure}
\begin{center}
{\includegraphics*[scale=.4]{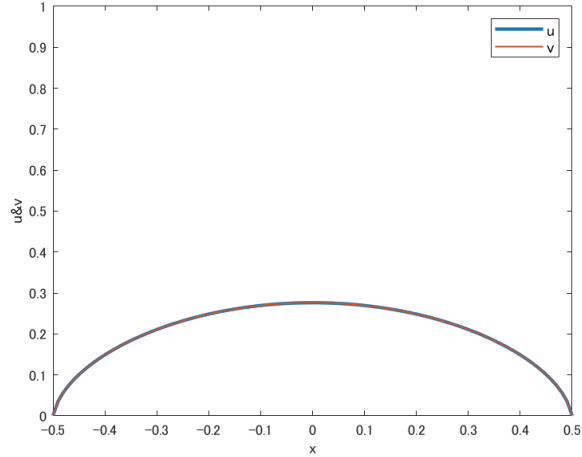}}\\
\caption{Profile of a solution on blue curve at $\lambda=59.8286$.}
\end{center}\end{figure}

\begin{figure}
\centering
\subfigure[Profile of a solution on red upper branch at $\lambda=40.3421$.]{
\includegraphics*[scale=.3]{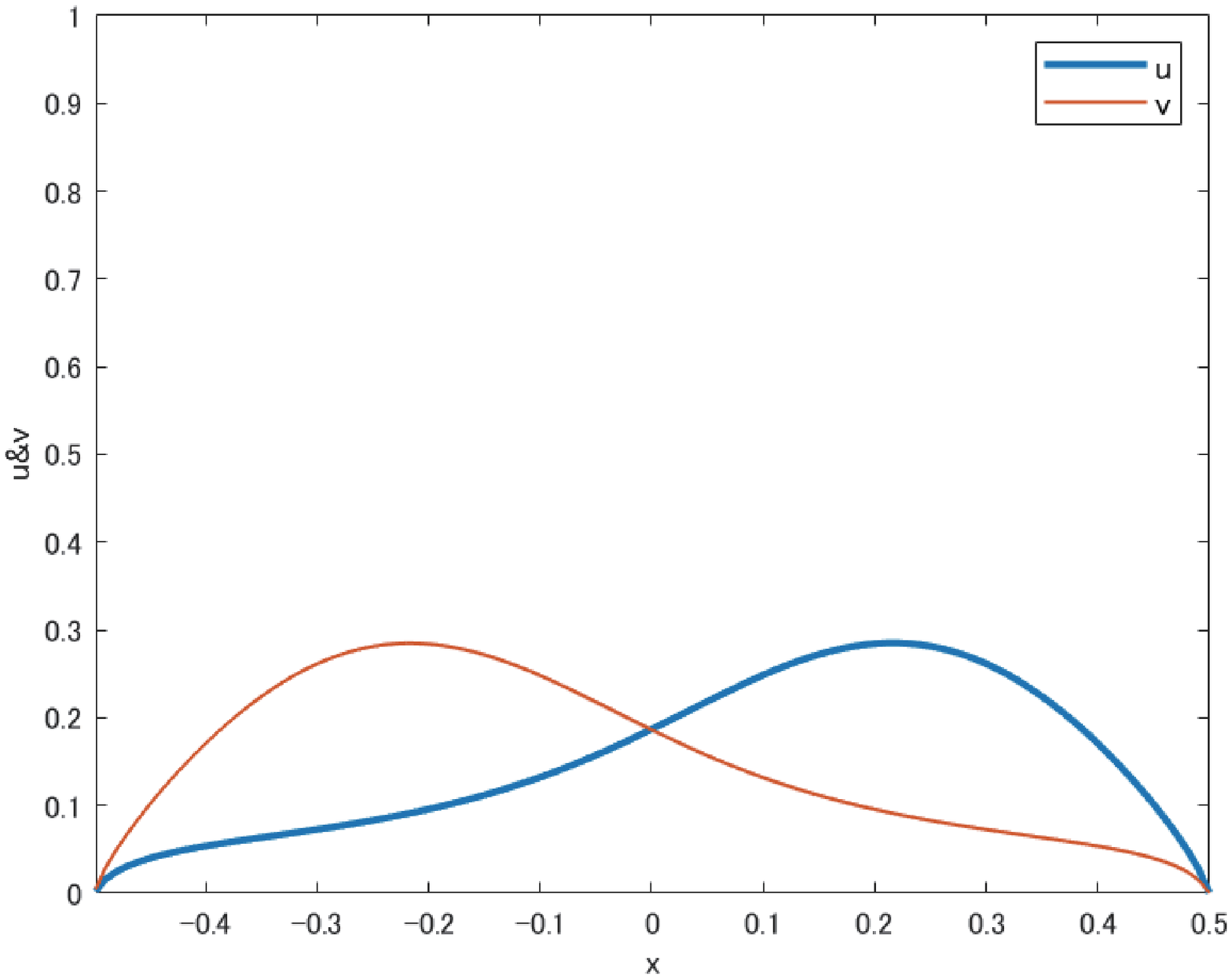}
\label{figa}}
\subfigure[Profile of a solution on red lower branch at $\lambda=40.3421$.]{
\includegraphics*[scale=.3]{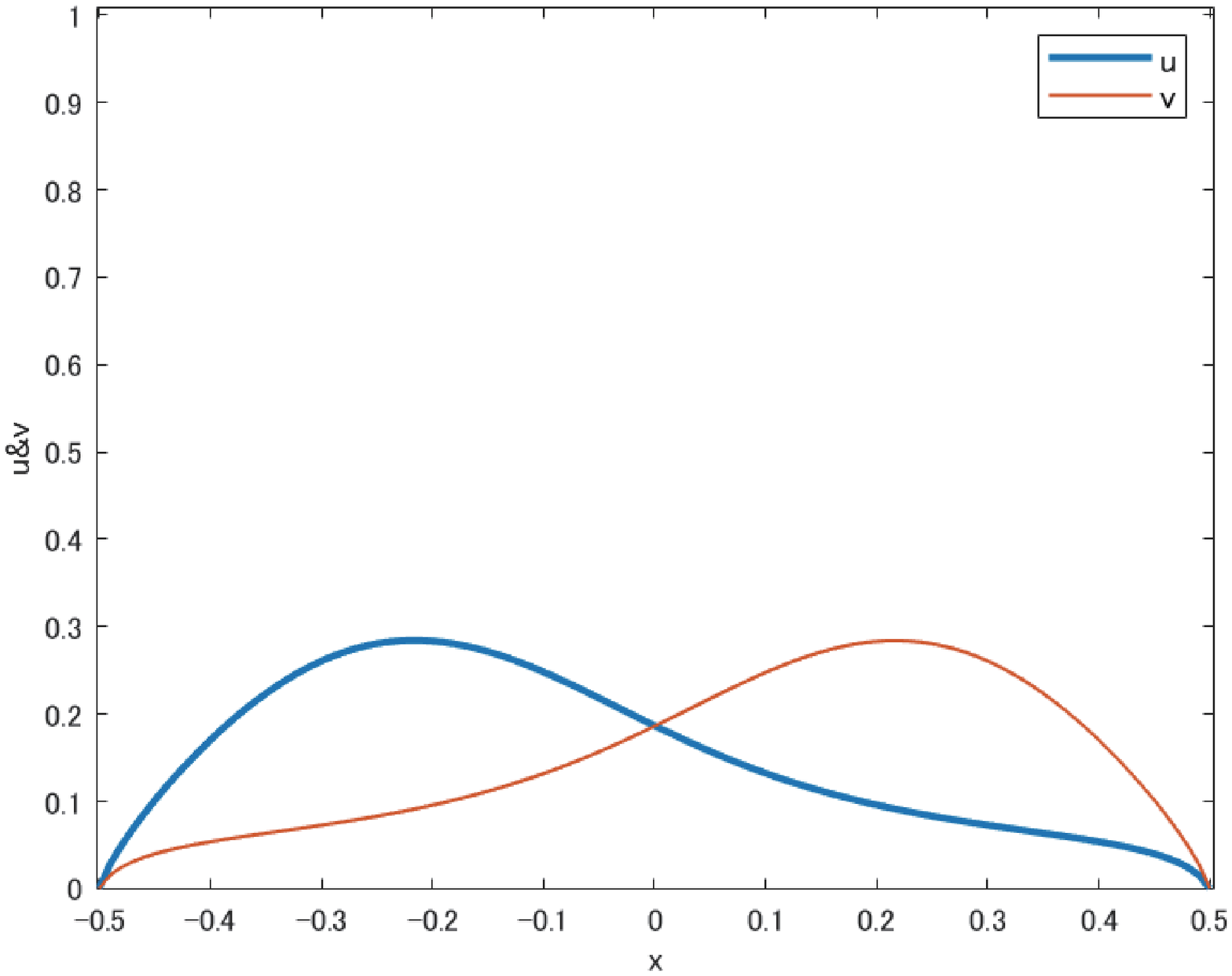}
\label{figb}
}
\centering
\subfigure[Profile of a solution on red upper branch at $\lambda=43.0673$.]{
\includegraphics*[scale=.3]{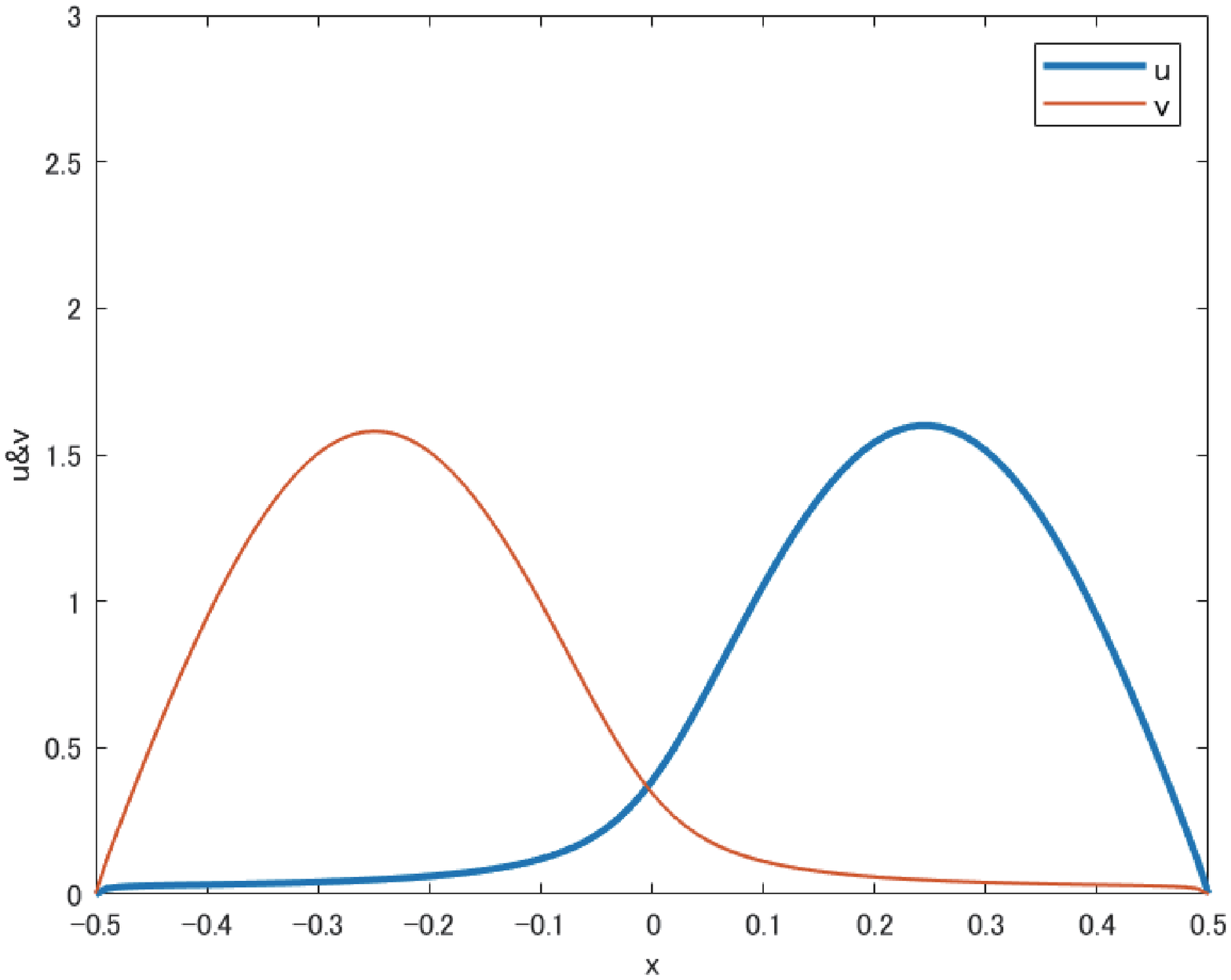}
\label{figc}}
\subfigure[Profile of a solution on red lower branch at $\lambda=43.0673$.]{
\includegraphics*[scale=.3]{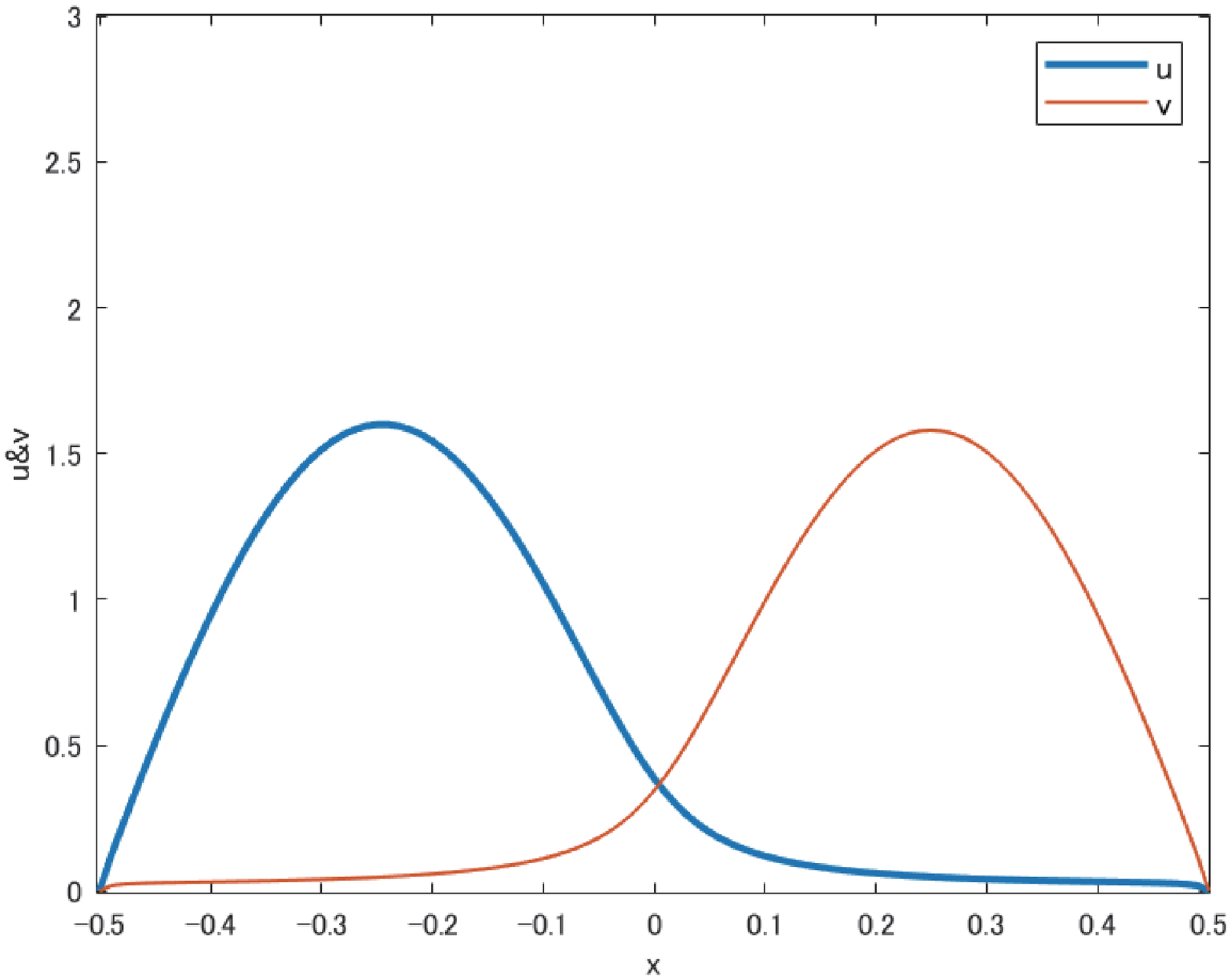}
\label{figd}
}
\centering
\subfigure[Profile of a solution on purple upper branch at $\lambda=91.5836$.]{
\includegraphics*[scale=.3]{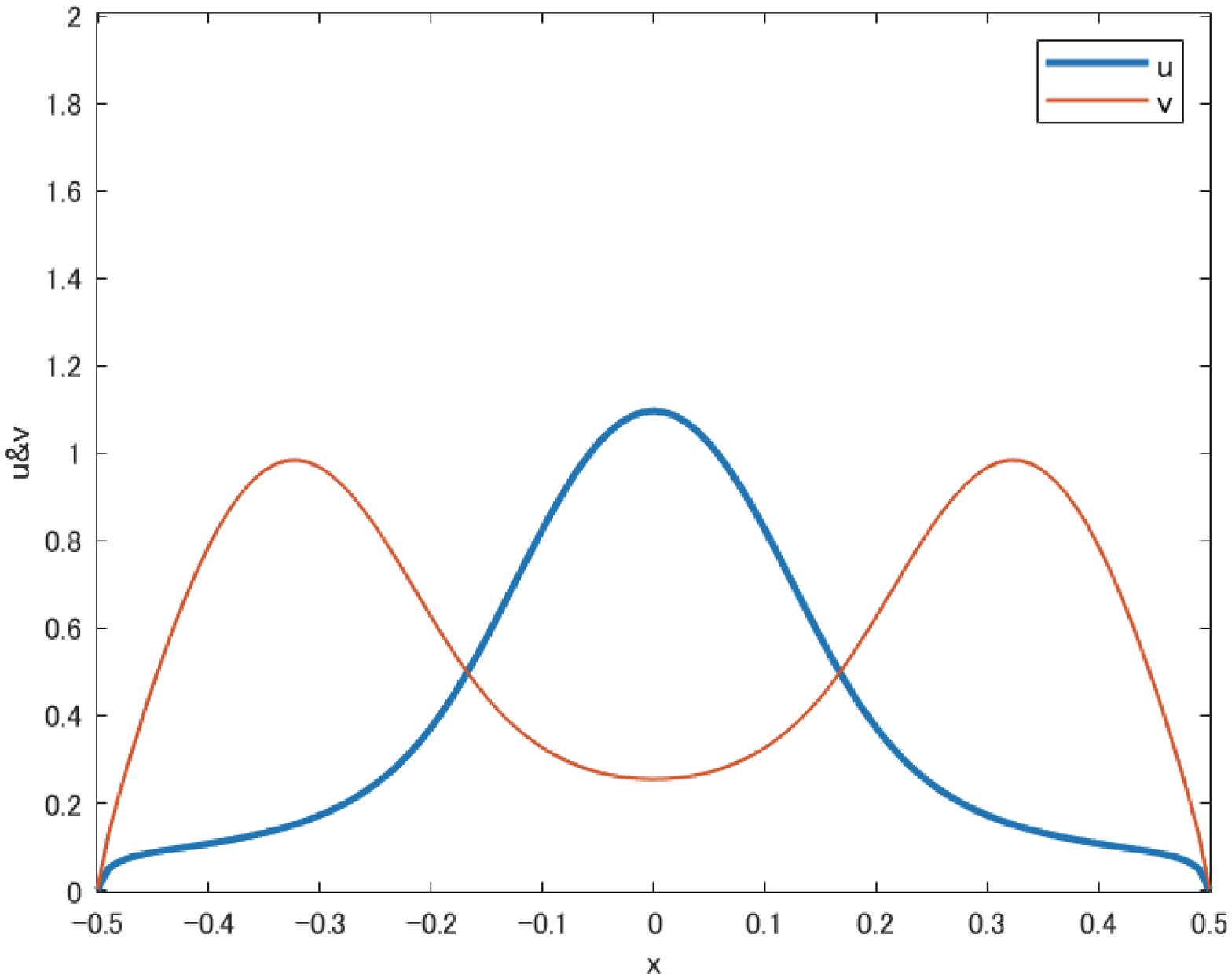}
\label{figc}}
\subfigure[Profile of a solution on purple lower branch at $\lambda=91.5836$.]{
\includegraphics*[scale=.3]{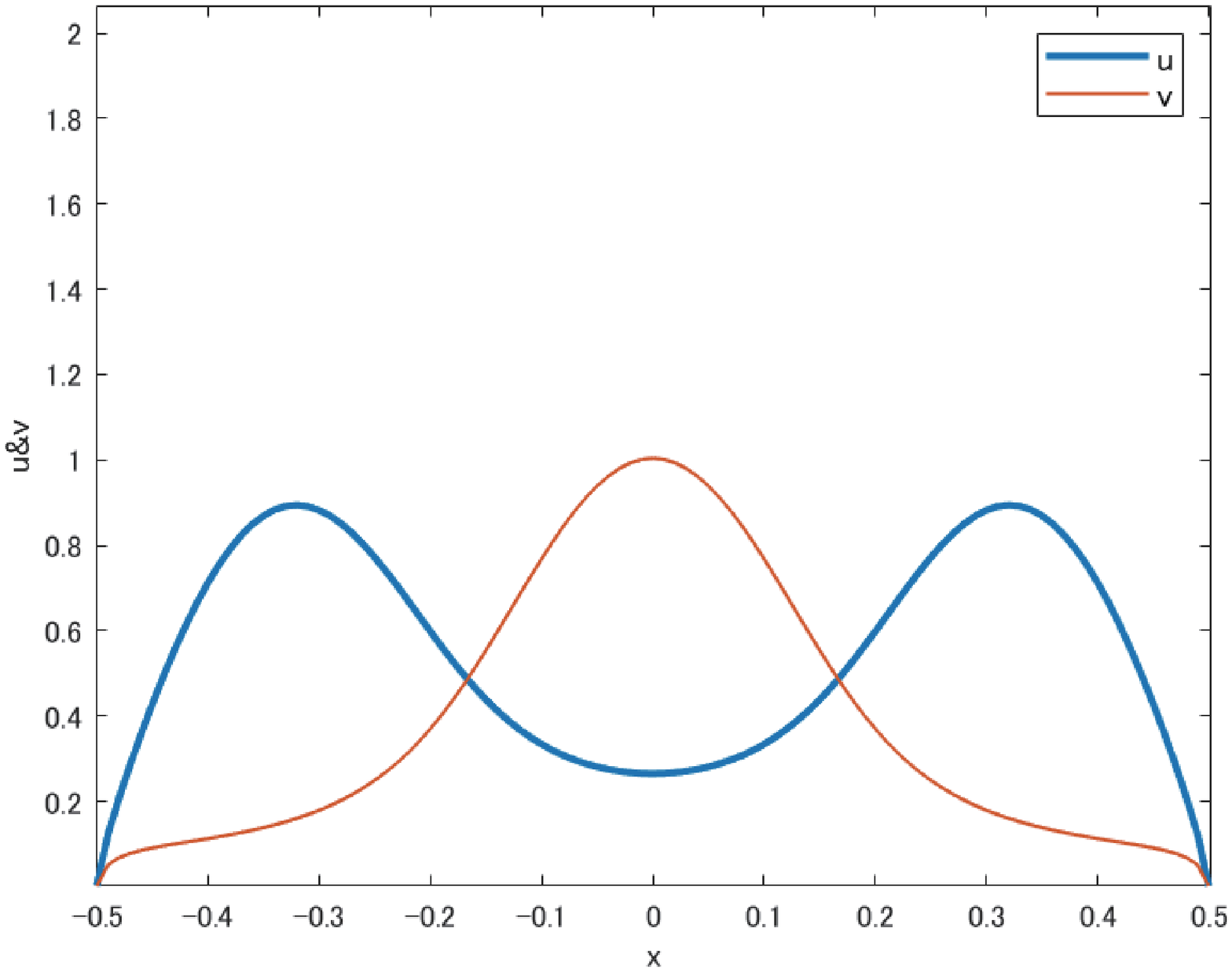}
\label{figd}
}
\caption{
Profiles of solutions on red and purple pitchfork bifurcation branches.}
\label{fig3}
\end{figure}

\begin{figure}
\centering
\subfigure[Bifurcation diagram of solutions of \eqref{SKT}.]{
\includegraphics*[scale=.3]{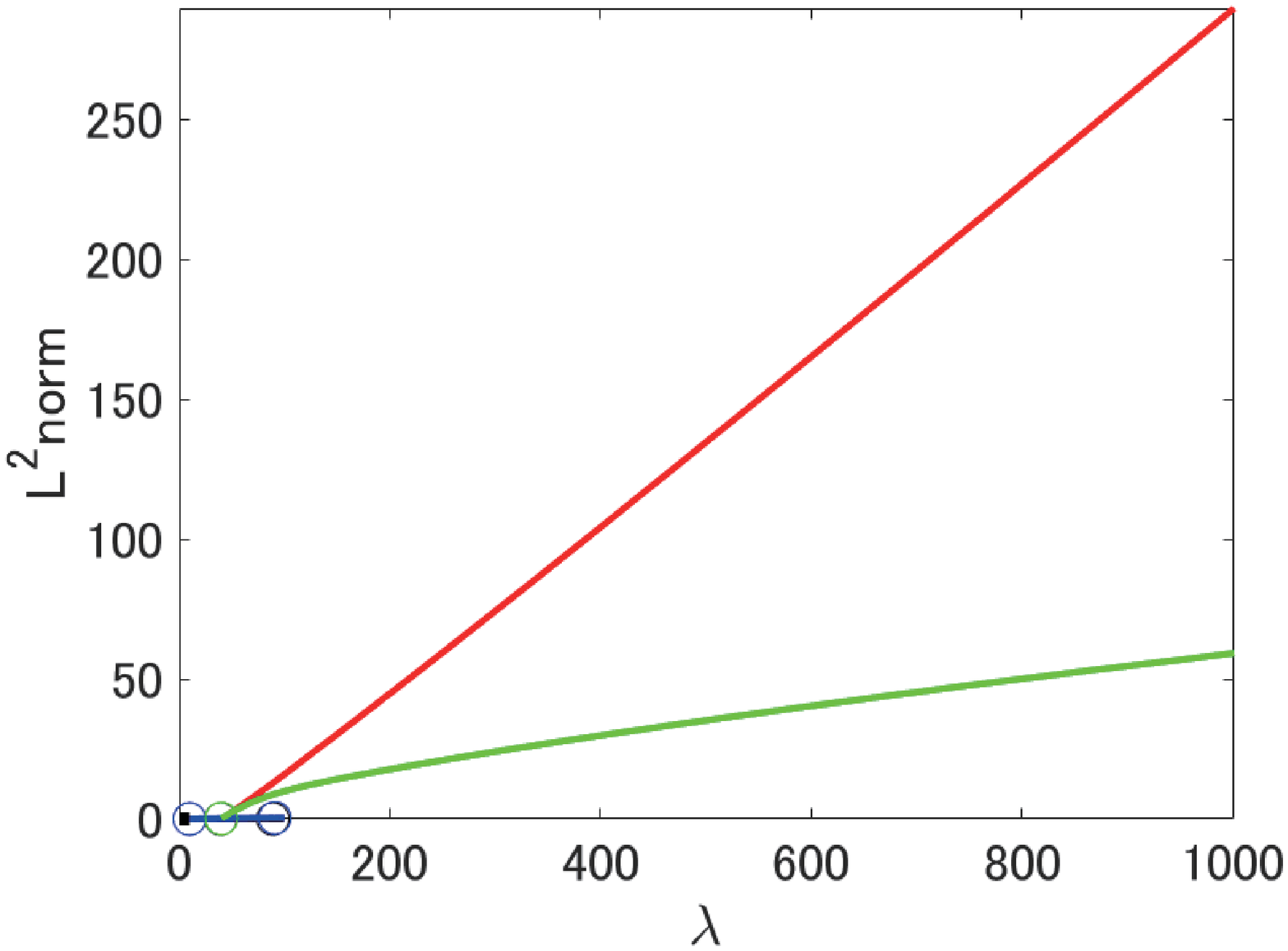}
\label{fig4a}}
\subfigure[Profile of a solution at $\lambda=1000$.]{
\includegraphics*[scale=.3]{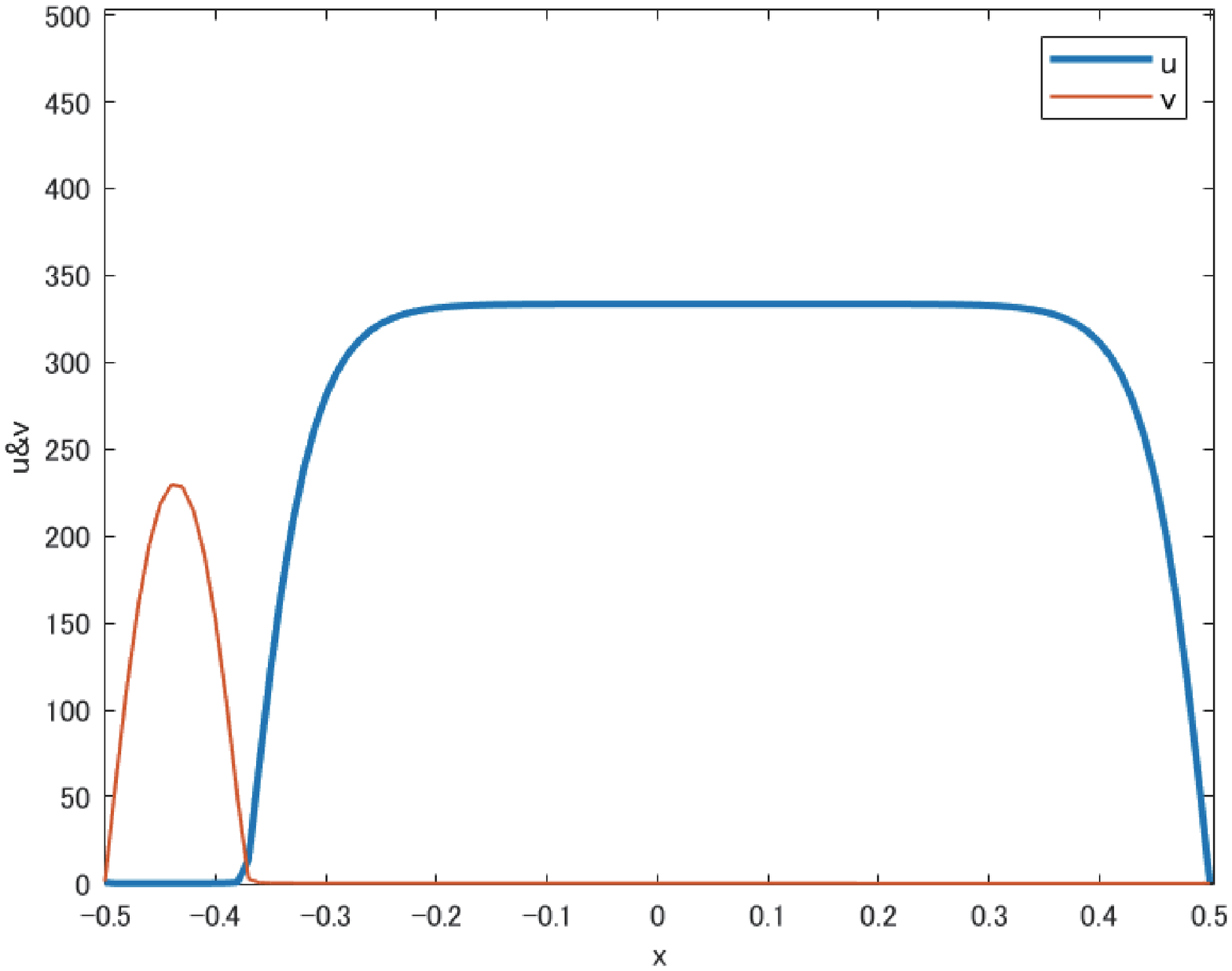}
\label{fig4b}
}
\caption{Bifurcation diagram and
profile of a solution on the branch.}
\label{fig4}
\end{figure}

In Figure 1, 
the purple curve indicates
the upper branch $\mathcal{S}_{3,20,\varLambda}^{+}$
and lower one $\mathcal{S}_{3,20,\varLambda}^{-}$ of 
the pitchfork bifurcation curve $\mathcal{S}_{3,20,\varLambda}$.
These upper and lower branches appear to be almost superimposed because 
they are not far from the bifurcation point.
The purple pitchfork bifurcation curve $\mathcal{S}_{3,20,\varLambda}$ 
bifurcates from
a solution on the blue curve at $\mu_{3,20}$.
By \eqref{biflim}, this bifurcation point 
$\mu_{3,\alpha}$ theoretically tends to 
$\lambda_{3}=(3\pi)^2\,(\,\fallingdotseq 88.8264)$
as $\alpha\to\infty$.
In Figure 3, (e) and (f) show the profiles of
solutions on the upper branch $\mathcal{S}_{3,20,\varLambda}^{+}$
and the lower one $\mathcal{S}_{3,20,\varLambda}^{-}$ of 
the purple pitchfork bifurcation curve, respectively.

In Figure 4(a),
the red (resp.\,green) branch represents the $L^{2}$ norm of 
$u$ (resp.\,$v$)
component of solutions on $\mathcal{S}_{2,20, \varLambda}^{\pm}$
bifurcating from a solution on $\mathcal{C}_{20, \varLambda}$
at $\lambda =\mu_{2,20}$ up to $\lambda=1000$. 
The gap of $L^{2}$ norms of $u$ and $v$ components on 
$\mathcal{S}_{2,20, \varLambda}^{\pm}$
can be observed when $\lambda $ is far from the secondary bifurcation point
at $\lambda =\mu_{2,20}$. 
Figure 4(b) shows the profile of the solution on the upper branch for 
$\lambda=1000$, where $u$ and $v$ are almost completely separated 
from each other.

Apparently, Figures 1-4 numerically support the bifurcation structure 
pure mathematically proved as Theorem \ref{g2ndbifthm} such that 
the branch $\mathcal{C}_{\alpha, \Lambda}$ of small coexistence 
bifurcates from the trivial solution, and moreover,
the pitchfork bifurcation branches $\mathcal{S}_{j, \alpha, \varLambda}^{\pm}$
of complete segregation bifurcates from solutions on 
$\widehat{\mathcal{C}}_{\alpha}$.

\subsection{Transformation to adopt diffusion coefficients as 
the bifurcation parameter}
In this subsection.
we apply a suitable transformation to the solution 
of \eqref{SKT} so that the random diffusion coefficient $d$ can 
be employed as a bifurcation parameter. Then, 
the range of $d$ can be restricted to a bounded interval.
From the viewpoint of 
numerical tracking of the bifurcation branches
the visual bifurcation diagram can be more observable and reasonable.

For any solution $(\widehat{u}, \widehat{v})$ of \eqref{SKT},
we employ the transformation
\begin{equation}\label{transform}
(u, v)=\dfrac{1}{\lambda}(\widehat{u}, \widehat{v}),\qquad
d=\dfrac{1}{\lambda }
\end{equation}
to verify that $(u,v)$ is a solution to 
\begin{equation}\label{SKTd}
\begin{cases}
\Delta [\,(d+\alpha v)u\,]+
u(m(x)-b_{1}u-c_{1}v)=0
\ \ &\mbox{in}\ \Omega,\\
\Delta [\,(d+\alpha u)v\,]+
v(m(x)-b_{2}u-c_{2}v)=0
\ \ &\mbox{in}\ \Omega,\\
u=v=0\ \ &\mbox{on}\ \partial\Omega.
\end{cases}
\end{equation}
Then,
Theorems \ref{Cathm} and \ref{g2ndbifthm} 
obtained as the bifurcation structure 
of solutions of \eqref{SKT} with parameter $\lambda$ 
can be rewritten as the following bifurcation structure 
of solutions of \eqref{SKTd} with parameter $d$ 
via the transformation \eqref{transform}.

\begin{cor}\label{cornum}
Define $d_{j}:=1/\lambda_{j}$.
For any small $\varepsilon >0$,
there exists a large $\overline{\alpha}=\overline{\alpha}
(\varepsilon )$ such that, 
if $\alpha>\overline{\alpha}$,
then there exists a bifurcation curve
$$
\widehat{C}_{\alpha}=
\{\,(d,u_{0,\alpha}(\,\cdot\,,d), v_{0,\alpha}(\,\cdot\,,d))
\in [\varepsilon, d_{1})\times\boldsymbol{X}\,\},
$$
where $[\varepsilon, d_{1})\times
(\overline{\alpha}, \infty )\ni (d, \alpha )\mapsto
(u_{0,\alpha}(\,\cdot\,,d), v_{0,\alpha}(\,\cdot\,,d))\in\boldsymbol{X}$
is of class $C^{1}$ such that
$$
\lim_{d\nearrow d_{1}}
(u_{0,\alpha}(\,\cdot\,,d), v_{0,\alpha}(\,\cdot\,,d))=(0,0)
\quad\mbox{in}\ \boldsymbol{X}
$$ 
and
$$
\lim_{\alpha\to\infty}
\alpha (u_{0,\alpha}(\,\cdot\,,d), v_{0,\alpha}(\,\cdot\,,d))
=(\widehat{U}(\,\cdot\,,d), \widehat{U}(\,\cdot\,,d))$$
with some positive function $\widehat{U}(\,\cdot\,,d)\in X$,
which is continuously differentiable for
$d\in (0,d_{1})$ with
$\lim_{d\nearrow d_{1}}\widehat{U}(\,\cdot\,,d)=0$
in $X$.
Furthermore, $\widehat{C}_{\alpha}$ can 
be extended as a connected subset of positive solutions
of \eqref{SKTd} to
the range $0<d<\varepsilon$.

In particular, if $\Omega=(-\ell, \ell)$ and 
$m(x)=m$
(constant)
$>0$
for all $x\in\overline{\Omega}$,
then, for each $j\in\mathbb{N}$ with $j\ge 2$,
there exists a pitchfork bifurcation curve
$\widehat{\mathcal{S}}_{j, \alpha }$
whose upper branch $\widehat{\mathcal{S}}_{j, \alpha}^{+}$
and lower branch $\widehat{\mathcal{S}}_{j, \alpha}^{-}$ are
parameterized as
$$
\widehat{\mathcal{S}}_{j, \alpha}^{+}=
\{\,(d, u,v)=(\delta_{j, \alpha}(s), 
u_{j, \alpha}(\,\cdot\,,s), 
v_{j,\alpha}(\,\cdot\,,s))
\in\mathbb{R}_{+}\times\boldsymbol{X}
\,:\,s\in (0,T_{j, \alpha, \varepsilon}^{+}\,]\,\}
$$
and 
$$
\widehat{\mathcal{S}}_{j, \alpha}^{-}=
\{\,(d, u,v)=(\delta_{j, \alpha}(s), 
u_{j, \alpha}(\,\cdot\,,s), 
v_{j, \alpha}(\,\cdot\,,s))
\in\mathbb{R}_{+}\times\boldsymbol{X}
\,:\,s\in [\,-T_{j, \alpha, \varepsilon}^{-},0)\,]\,\},
$$
respectively, with some positive numbers
$T_{j, \alpha, \varepsilon}^{\pm}$,
where
$$
[\,-T_{j,\alpha, \varepsilon}^{-}, 
T_{j, \alpha, \varepsilon}^{+}\,]
\ni s\mapsto 
(\delta_{j,\alpha}(s), 
u_{j,\alpha}(\,\cdot\,,s), 
v_{j,\alpha}(\,\cdot\,,s))
\in \mathbb{R}_{+}\times\boldsymbol{X}
$$
is of class $C^{1}$ satisfying 
$$
(\delta_{j,\alpha}(0), 
u_{j,\alpha}(\,\cdot\,,0), 
v_{j,\alpha}(\,\cdot\,,0))
=(\delta_{j,\alpha }^{*}, u_{0,\alpha}(\,\cdot\,, \delta_{j,\alpha}^{*}), 
v_{0,\alpha}(\,\cdot\,, \delta_{j, \alpha}^{*}))
\in\widehat{\mathcal{C}}_{\alpha}$$
with some $\delta_{j,\alpha}^{*}\in [\varepsilon, \lambda_{1})$
and
$\delta_{j,\alpha}(-T^{-}_{j,\alpha,\varepsilon})=
\delta_{j,\alpha}(T^{+}_{j,\alpha,\varepsilon})=\varepsilon$
and
$$
\lim_{\alpha\to\infty}
(\delta_{j, \alpha}(s), 
u_{j, \alpha}(\,\cdot\,,s), 
v_{j, \alpha}(\,\cdot\,,s))=
\begin{cases}
(d, w_{j,0}^{+}(\,\cdot\,,d)_{+},
w_{j,0}^{+}(\,\cdot\,,d)_{-})\quad
&\mbox{if}\ s>0,\\
(d, w_{j,0}^{-}(\,\cdot\,,d)_{+},
w_{j,0}^{-}(\,\cdot\,,d)_{-})\quad
&\mbox{if}\ s<0
\end{cases}
$$
for some $d\in [\varepsilon, d_{1})$ and
sign-changing functions 
$w_{j,0}^{+}(x, d)$ and
$w_{j,0}^{-}(x, d)$.
Here, 
$
\delta_{j,\alpha}^{*}$
is continuous with respect to $\alpha\in (\overline{\alpha}, \infty)$ and satisfies
\begin{equation}\label{jbif}
\lim_{\alpha\to\infty}\delta_{j,\alpha }^{*}=d_{j}.
\end{equation}
Furthermore, if $j$ is even, then
\begin{equation}\label{evensym2}
(\delta_{j, \alpha}(s),
u_{j, \alpha}(x,s),
v_{j, \alpha}(x,s))=
(\delta_{j,\alpha}(-s),
u_{j, \alpha}(-x,-s),
v_{j, \alpha}(-x,-s))
\end{equation}
for any $x\in [-\ell, \ell]$ and 
$s\in [-T^{-}_{j,\alpha,\varepsilon}, T^{+}_{j,\alpha,\varepsilon}]$
with $T^{-}_{j,\alpha,\varepsilon}=T^{+}_{j,\alpha,\varepsilon}$.
\end{cor}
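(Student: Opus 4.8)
The plan is to deduce Corollary \ref{cornum} from Theorems \ref{Cathm} and \ref{g2ndbifthm} by transporting those statements through the change of variables \eqref{transform}. The first step is to record the elementary identity underlying \eqref{transform}: writing $(\widehat{u},\widehat{v})=\lambda(u,v)$ and $d=1/\lambda$, one has $(1+\alpha\widehat{v})\widehat{u}=\lambda^{2}(d+\alpha v)u$ and $\widehat{u}(\lambda m(x)-b_{1}\widehat{u}-c_{1}\widehat{v})=\lambda^{2}u(m(x)-b_{1}u-c_{1}v)$, so dividing the first equation of \eqref{SKT} by $\lambda^{2}$ yields exactly the first equation of \eqref{SKTd}, and likewise for the second; the boundary conditions are unchanged. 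Hence \eqref{transform} is a bijection between positive solutions of \eqref{SKT} at parameter $\lambda>0$ and positive solutions of \eqref{SKTd} at parameter $d=1/\lambda>0$. Since $\lambda\mapsto 1/\lambda$ is a $C^{\infty}$ diffeomorphism of $(0,\infty)$, this correspondence carries $C^{1}$ solution curves to $C^{1}$ solution curves, isomorphisms of the relevant linearized operators to isomorphisms, and bifurcation points to bifurcation points, with the dictionary $\lambda\searrow\lambda_{1}\Leftrightarrow d\nearrow d_{1}$, $\lambda\to\infty\Leftrightarrow d\to 0$, and $(\lambda_{1},\varLambda]\Leftrightarrow[1/\varLambda,d_{1})$.

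With this dictionary I would translate Theorem \ref{Cathm} by setting $\varepsilon:=1/\varLambda$ and $\overline{\alpha}(\varepsilon):=\overline{\alpha}(1/\varepsilon)$. Denoting by $(u_{0,\alpha},v_{0,\alpha})(\,\cdot\,,\lambda)$ the solution of \eqref{SKT} on $\mathcal{C}_{\alpha,\varLambda}$ and by $(u_{0,\alpha},v_{0,\alpha})(\,\cdot\,,d)$ its image on \eqref{SKTd}, so that $(u_{0,\alpha},v_{0,\alpha})(\,\cdot\,,d)=d\,(u_{0,\alpha},v_{0,\alpha})(\,\cdot\,,1/d)$, the curve $\widehat{C}_{\alpha}$ parameterized by $d\in[\varepsilon,d_{1})$ results; the $C^{1}$ dependence on $(d,\alpha)$, the limit $(u_{0,\alpha},v_{0,\alpha})(\,\cdot\,,d)\to(0,0)$ as $d\nearrow d_{1}$, and the connected extension to $0<d<\varepsilon$ are immediate from the corresponding assertions in Theorem \ref{Cathm}. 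For the scaled limit, \eqref{ualto0} gives $\alpha(u_{0,\alpha},v_{0,\alpha})(\,\cdot\,,d)=d\cdot\alpha(u_{0,\alpha},v_{0,\alpha})(\,\cdot\,,1/d)\to d\,(U(\,\cdot\,,1/d),U(\,\cdot\,,1/d))$, so one sets $\widehat{U}(\,\cdot\,,d):=d\,U(\,\cdot\,,1/d)$, whose positivity, $C^{1}$ dependence on $d\in(0,d_{1})$, and $\widehat{U}(\,\cdot\,,d)\to 0$ as $d\nearrow d_{1}$ are inherited from the properties of $U(\,\cdot\,,\lambda)$ established in Theorem \ref{thm13}.

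Finally I would translate the one-dimensional statement. Put $\delta_{j,\alpha}^{*}:=1/\mu_{j,\alpha}$ and reparameterize the pitchfork branch by $\delta_{j,\alpha}(s):=1/(\mu_{j,\alpha}+\xi_{j,\alpha}(s))$, $u_{j,\alpha}(\,\cdot\,,s):=\delta_{j,\alpha}(s)\,(u_{0,\alpha}(\,\cdot\,,\mu_{j,\alpha})+\widetilde{u}_{j,\alpha}(\,\cdot\,,s))$, and likewise for $v_{j,\alpha}$; then $\mathcal{S}_{j,\alpha,\varLambda}^{\pm}$ becomes $\widehat{\mathcal{S}}_{j,\alpha}^{\pm}$, the endpoint relations $\mu_{j,\alpha}+\xi_{j,\alpha}(-T^{-})=\varLambda=\mu_{j,\alpha}+\xi_{j,\alpha}(T^{+})$ turn into $\delta_{j,\alpha}(-T^{-})=\varepsilon=\delta_{j,\alpha}(T^{+})$, the base point $(\delta_{j,\alpha}^{*},u_{0,\alpha}(\,\cdot\,,\delta_{j,\alpha}^{*}),v_{0,\alpha}(\,\cdot\,,\delta_{j,\alpha}^{*}))$ lies on $\widehat{C}_{\alpha}$ by construction, \eqref{biflim} yields $\delta_{j,\alpha}^{*}\to 1/\lambda_{j}=d_{j}$ as $\alpha\to\infty$, and \eqref{comp} becomes the stated limit with $w_{j,0}^{\pm}(x,d):=d\,w_{j,0}^{\pm}(x,1/d)$, which is still sign-changing and satisfies $(d\,w)_{\pm}=d\,w_{\pm}$ since $d>0$. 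Because \eqref{transform} acts trivially on $x$ and on the branch parameter $s$, only multiplying $(u,v)$ by the positive scalar $1/\lambda$, the evenness symmetry \eqref{evensym} passes verbatim to \eqref{evensym2}, with $T^{-}_{j,\alpha,\varepsilon}=T^{+}_{j,\alpha,\varepsilon}$ preserved. I expect no genuine analytic obstacle here; the only point needing care is the bookkeeping of the reparameterizations — in particular verifying that $s\mapsto\delta_{j,\alpha}(s)$ is $C^{1}$, which holds because $\xi_{j,\alpha}$ is $C^{1}$ and $\mu_{j,\alpha}+\xi_{j,\alpha}(s)$ remains in a compact subset of $(\lambda_{1},\infty)$ bounded away from $0$.
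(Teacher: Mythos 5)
Your proposal is correct and matches the paper's implicit argument: the paper gives no separate proof of Corollary~\ref{cornum}, stating only that it is the rewriting of Theorems~\ref{Cathm} and~\ref{g2ndbifthm} through the diffeomorphism $d=1/\lambda$, $(u,v)=(\widehat u,\widehat v)/\lambda$, and you have spelled out exactly that dictionary (the $\lambda^{2}$ scaling of both diffusion and reaction parts, the identifications $\varepsilon=1/\varLambda$, $\widehat U(\cdot,d)=dU(\cdot,1/d)$, $\delta_{j,\alpha}(s)=1/(\mu_{j,\alpha}+\xi_{j,\alpha}(s))$, and the compatibility of $(\,\cdot\,)_{\pm}$ with multiplication by a positive scalar). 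The one cosmetic point worth flagging, though it is a typo in the statement rather than in your argument, is that the range for $\delta_{j,\alpha}^{*}$ should read $[\varepsilon,d_{1})$, not $[\varepsilon,\lambda_{1})$, since under the transformation the bifurcation value $\mu_{j,\alpha}\in(\lambda_{1},\varLambda]$ maps to $\delta_{j,\alpha}^{*}=1/\mu_{j,\alpha}\in[1/\varLambda,d_{1})$.
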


\begin{figure}
\begin{center}
{\includegraphics*[scale=.3]{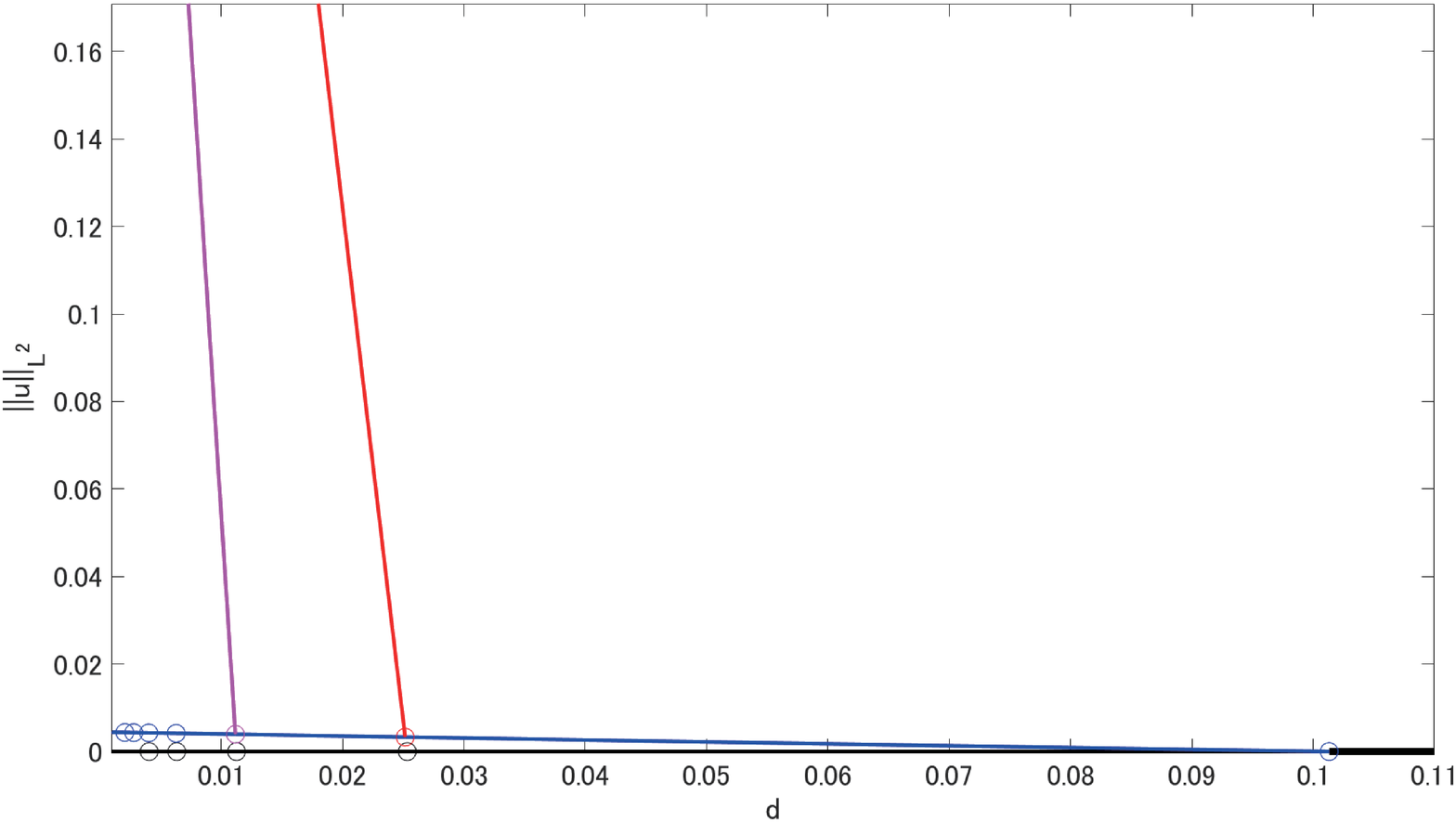}}\\
\caption{Bifurcation diagram of solutions of \eqref{SKTd}.}
\end{center}\end{figure}

\begin{figure}
\begin{center}
{\includegraphics*[scale=.4]{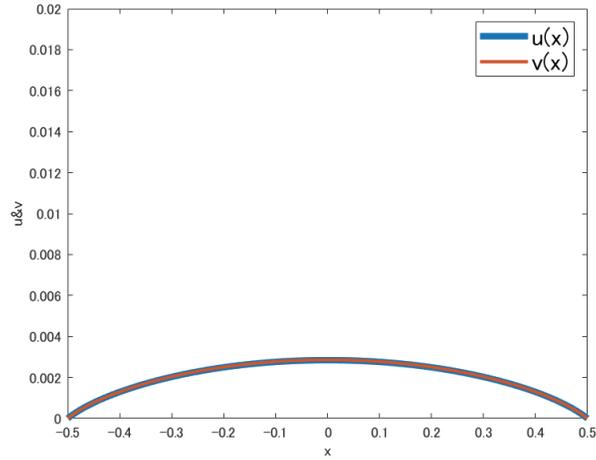}}\\
\caption{Profile of a solution on blue curve at $d=0.05$.}
\end{center}\end{figure}

\begin{figure}
\centering
\subfigure[Profile of a solution on red upper branch at $d=0.025$.]{
\includegraphics*[scale=.3]{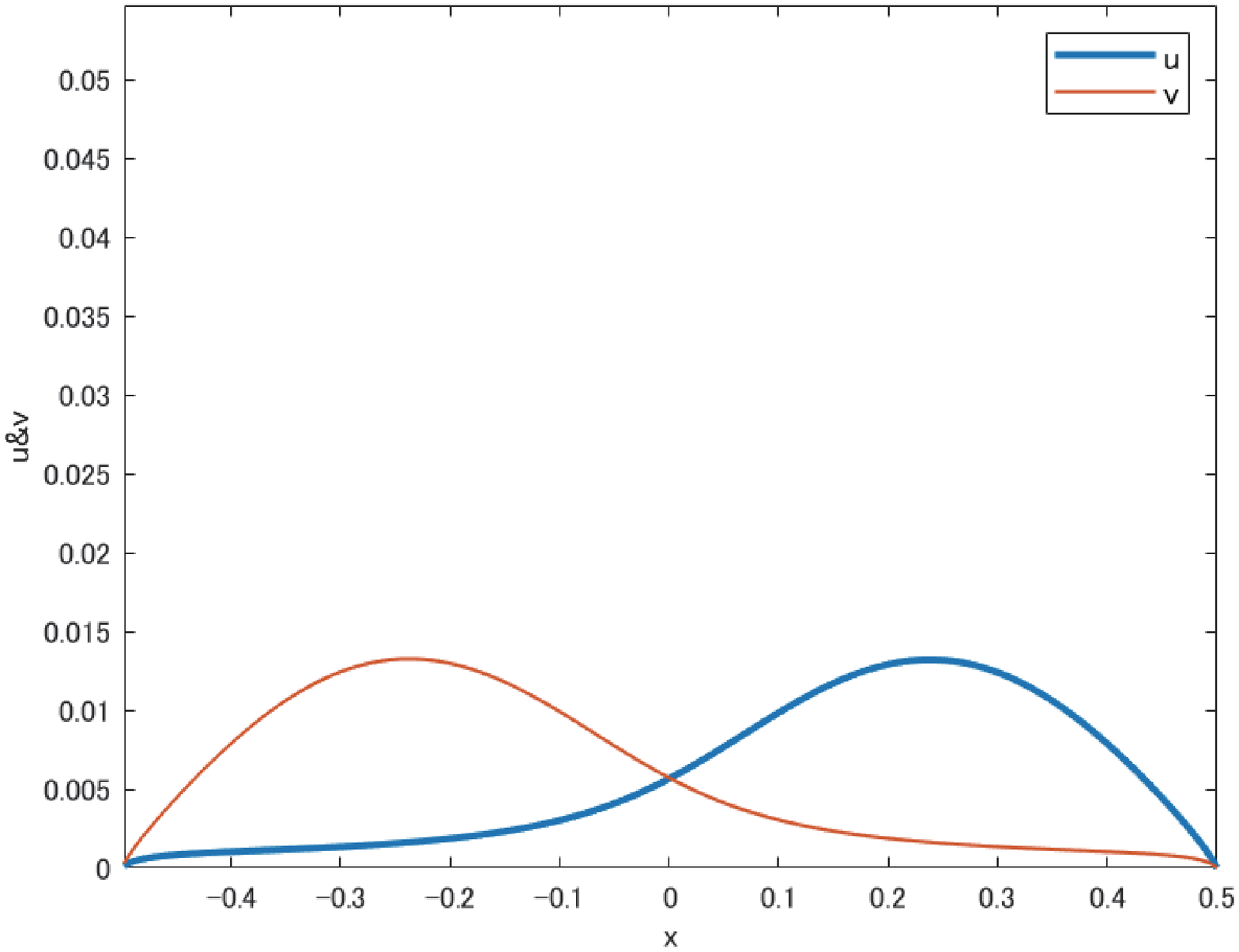}
\label{figa}}
\subfigure[Profile of a solution on red lower branch at $d=0.025$.]{
\includegraphics*[scale=.3]{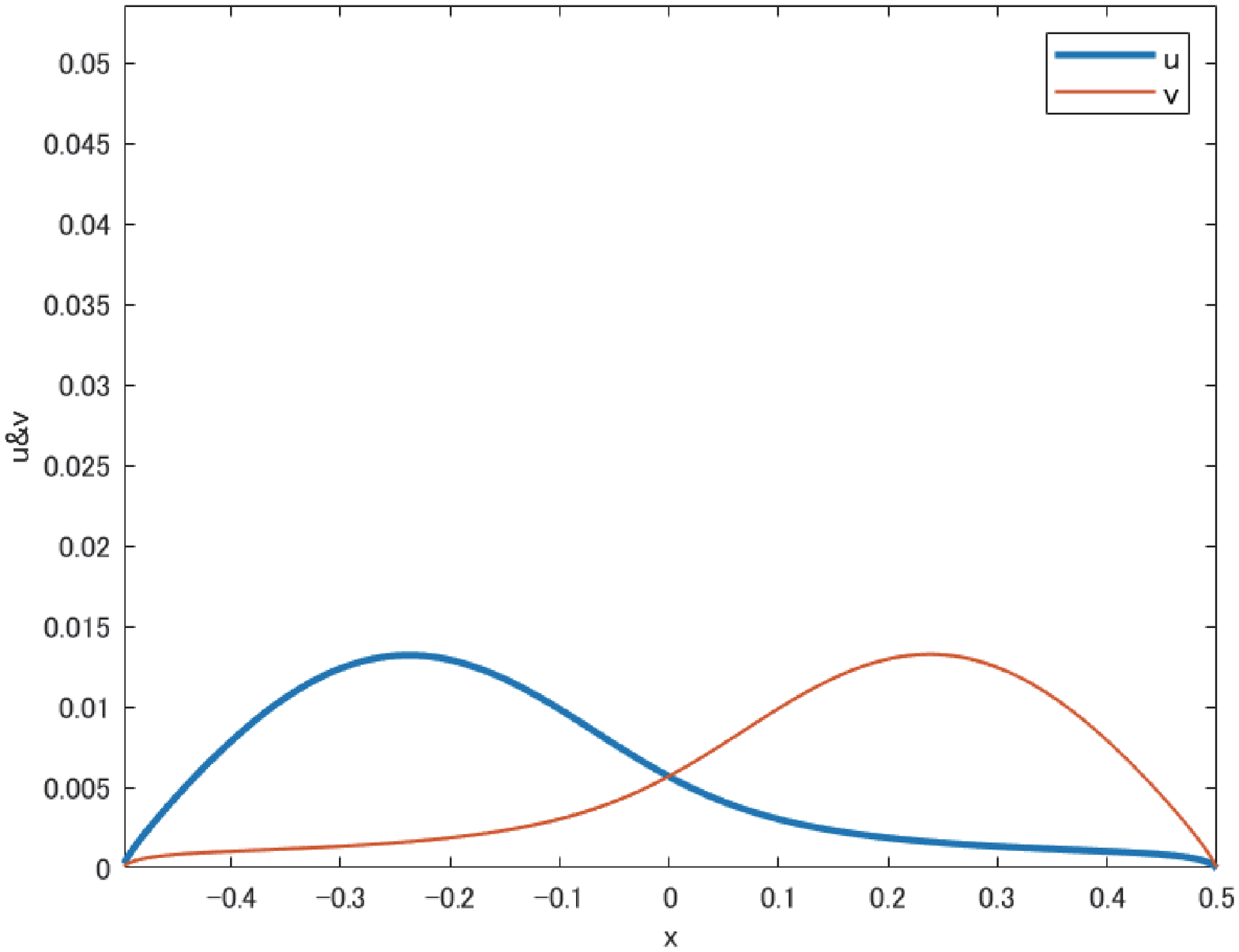}
\label{figb}
}
\centering
\subfigure[Profile of a solution on red upper branch at $d=0.0209$.]{
\includegraphics*[scale=.3]{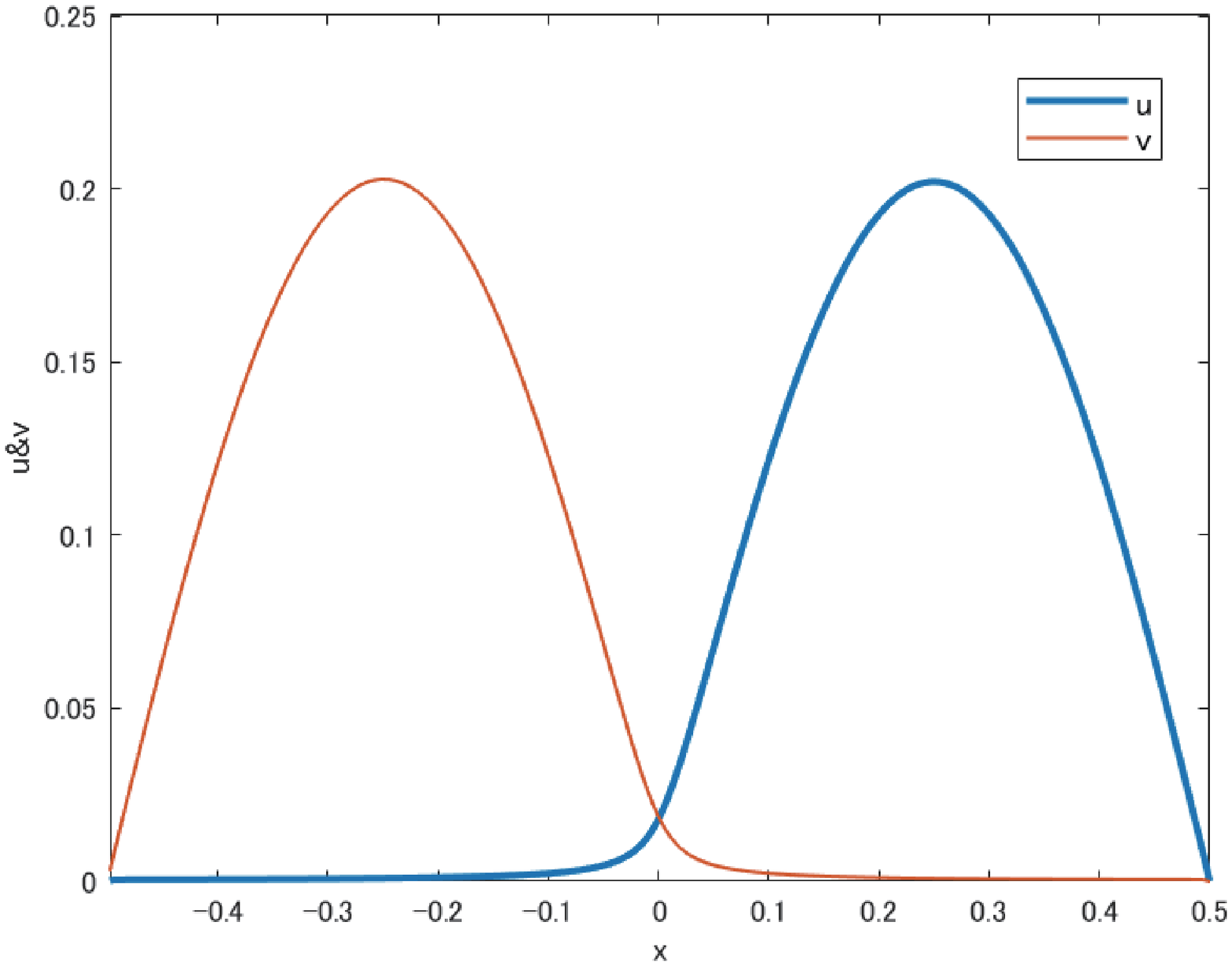}
\label{figc}}
\subfigure[Profile of a solution on red lower branch at $d=0.0209$.]{
\includegraphics*[scale=.3]{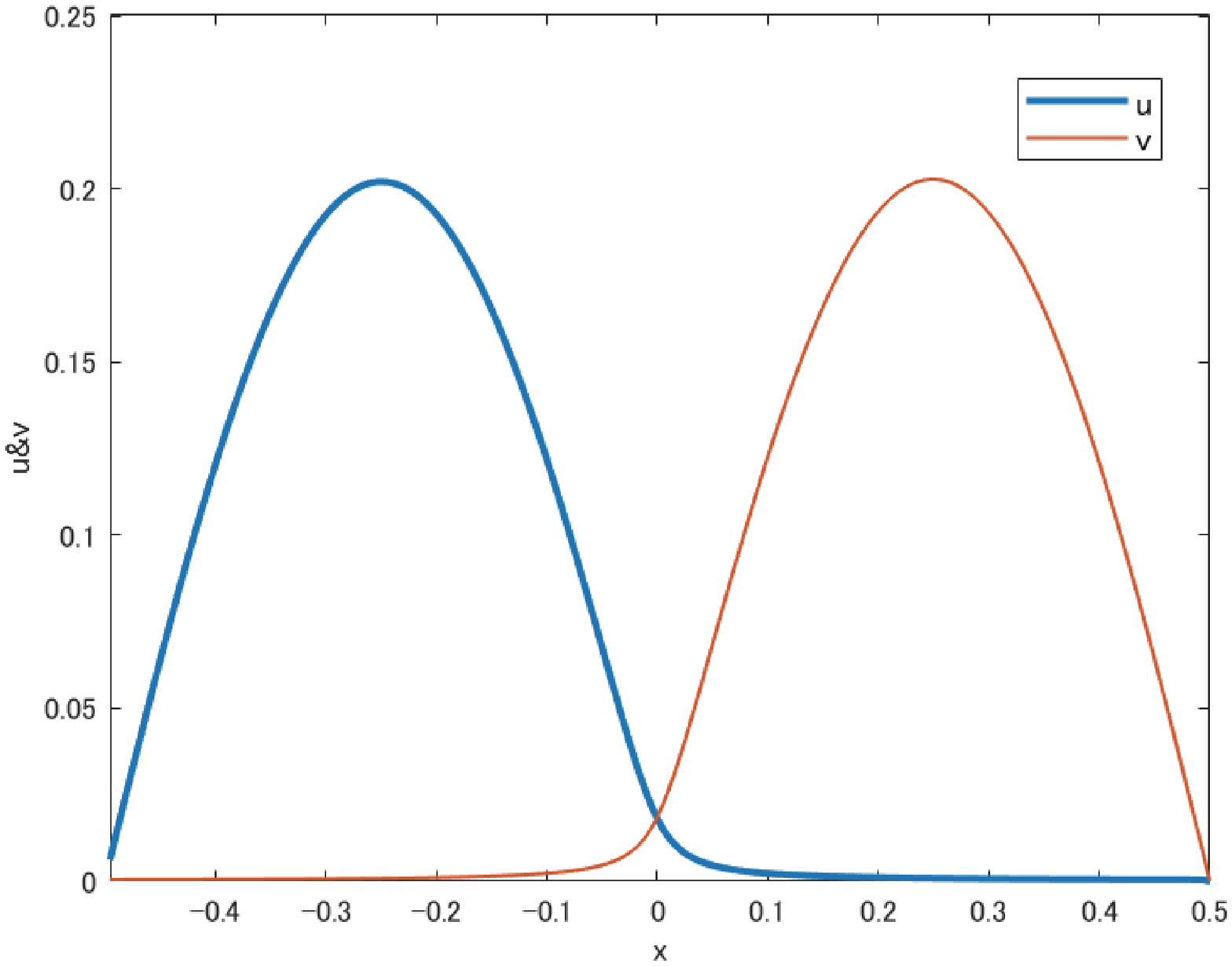}
\label{figd}
}
\centering
\subfigure[Profile of a solution on purple upper branch at $d=0.0074$.]{
\includegraphics*[scale=.3]{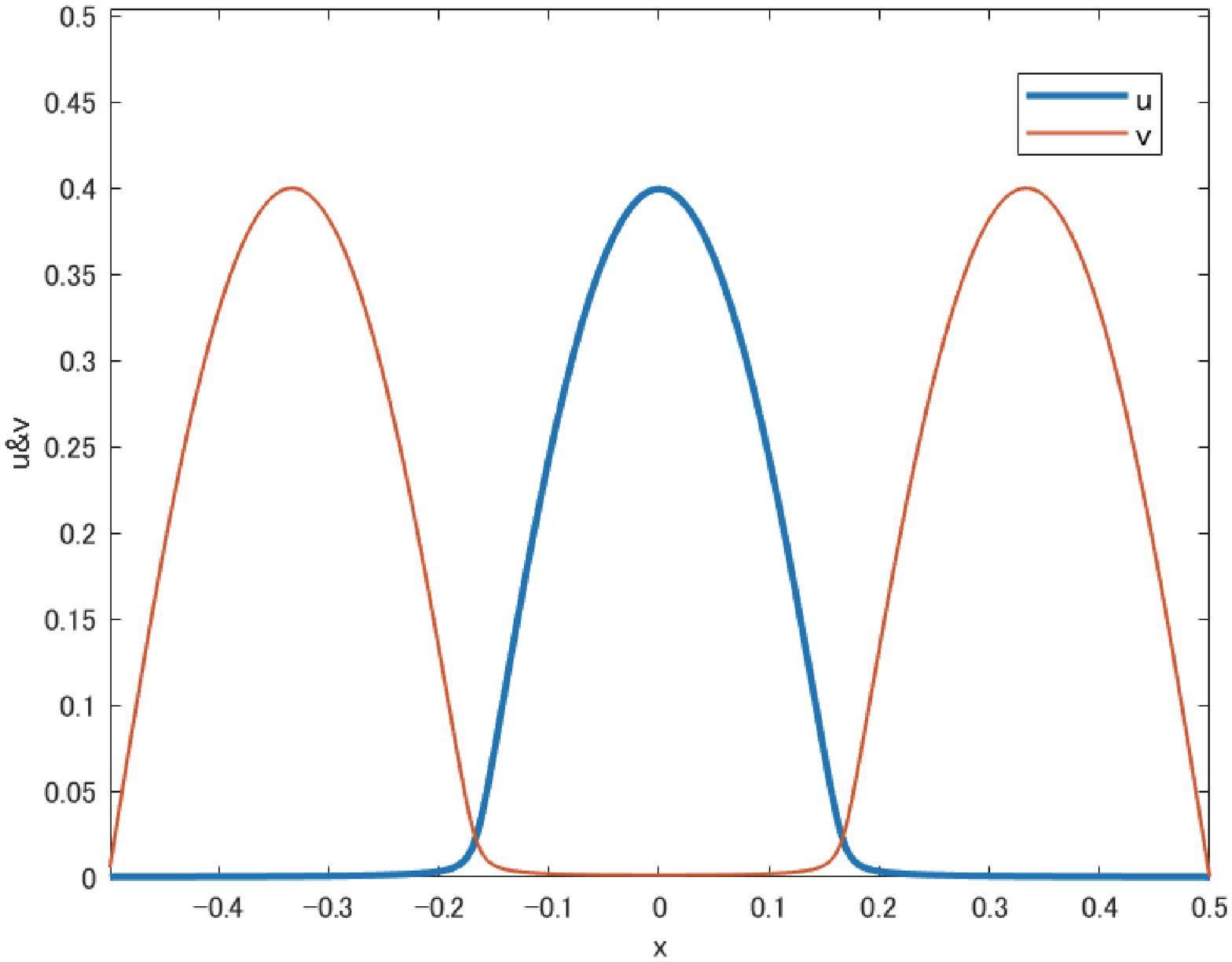}
\label{figc}}
\subfigure[Profile of a solution on purple lower branch at $d=0.0074$.]{
\includegraphics*[scale=.3]{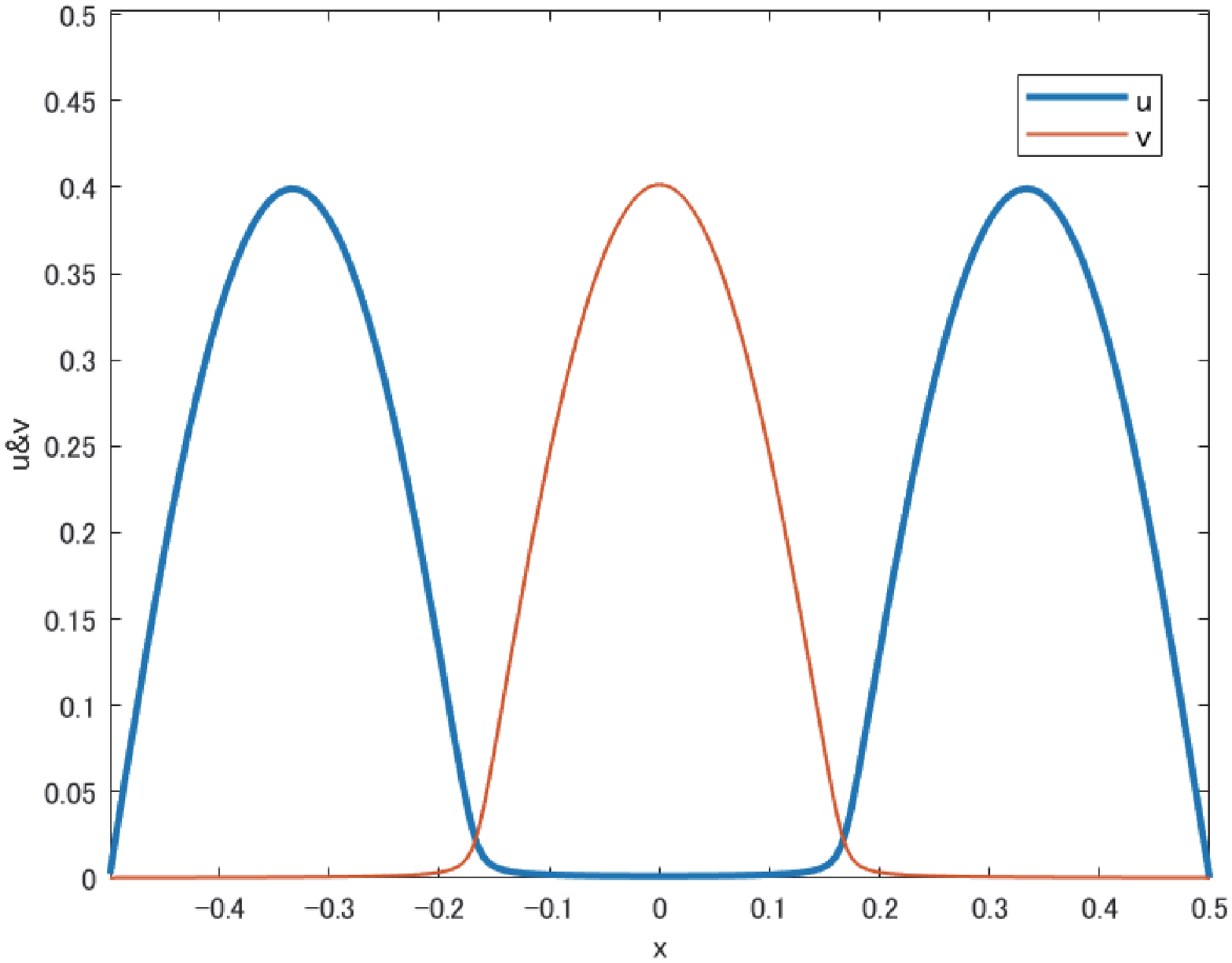}
\label{figd}
}
\caption{
Profiles of solutions on red and purple pitchfork bifurcation branches.}
\label{fig3}
\end{figure}

\subsection{Numerical bifurcation diagram with parameter $d$}
This subsection exhibits
the numerical bifurcation diagram with parameter $d$. 
For \eqref{SKTd}, our setting of parameters for the numerical simulation 
as follows:
$$
\Omega=(-0.5,0.5),
\quad \alpha=20,
\quad m(x)=1, 
\quad b_{1}=1, 
\quad b_{2}=2, 
\quad c_{1}=1, 
\quad c_{2}=1.
$$
Figure 5 shows the numerical bifurcation diagram in which
the horizontal axis represents the bifurcation parameter $d$, 
and the vertical axis represents the $L^{2}$ norm of the 
$u$ component of positive solutions $(u,v)$ of \eqref{SKTd}.
The blue curve is corresponding to $\widehat{\mathcal{C}}_{20}$ 
which bifurcates from the trivial solution at $d=d_{1}$.
The theoretical value of the bifurcation point is calculated as
$d_1=1/\pi^2\,(\,\fallingdotseq 0.10132)$, 
which seems to be very near the numerical bifurcation point 
of the blue curve in Figure 5.
Furthermore, the blue curve remains small and extend to
$d=0$,
and corresponds to the branch of small coexistence.
The profile of a positive 
solution $(u,v)$ on the blue curve at $d=0.05$
is shown in Figure 6,
where
one can find that $u$ and $v$ are small and
very close to each other.

In Figure 5, 
the red curve corresponds to
the upper and lower branches $\widehat{\mathcal{S}}_{2,20}^{\pm}$
of 
the pitchfork bifurcation curve $\widehat{\mathcal{S}}_{2,20}$.
In view of \eqref{evensym2}, 
we are convinced that these upper and lower branches are superimposed.
This red pitchfork bifurcation curve bifurcates from
a solution on the blue curve $\widehat{\mathcal{C}}_{2,20}$ 
at $d=\delta_{2,20}^{*}$.
By \eqref{jbif}, the secondary bifurcation point 
$\delta_{2,\alpha}^{*}$ theoretically converges to 
$d_{2}=1/(2\pi)^2\,(\,\fallingdotseq 0.02533)$
as $\alpha\to\infty$.
It can be observed in Figure 5 that
the numerical secondary bifurcation point 
is near the theoretical limit.
In Figure 7, (a) and (b) show the profiles of
solutions on the upper branch $\widehat{\mathcal{S}}_{2,20}^{+}$
and the lower one $\widehat{\mathcal{S}}_{2,20}^{-}$
on the red pitchfork bifurcation curve $\widehat{\mathcal{S}}_{2,20}$ at
$d=0.025$. It can be observed that $u$ and $v$ are somehow spatially segregate.
In Figure 7, (c) and (d) show the profiles of
solutions on the upper branch and the lower one
on the red pitchfork bifurcation curve at
$d=0.0209$, where $u$ and $v$ considerably segregate each other.

In Figure 5, 
the purple curve corresponds to
the upper branch $\widehat{\mathcal{S}}_{3,20}^{+}$
and lower one $\widehat{\mathcal{S}}_{3,20}^{-}$ of 
the pitchfork bifurcation curve $\widehat{\mathcal{S}}_{3,20}$,
which bifurcates from
a solution on the blue curve at $d=\delta_{3,20}$.
By \eqref{jbif}, this bifurcation point 
$\delta_{3,\alpha}$ theoretically tends to 
$d_{2}=1/(3\pi)^2\,(\,\fallingdotseq 0.01125)$
as $\alpha\to\infty$.
The numerical bifurcation point $\delta_{3,20}$ also seems to
be very close to the limit.
In Figure 7, (e) and (f) shows the profiles of
solutions on the upper branch $\widehat{\mathcal{S}}_{3,20}^{+}$
and the lower one $\widehat{\mathcal{S}}_{3,20}^{-}$ of 
the purple pitchfork bifurcation curve $\widehat{\mathcal{S}}_{3,20}$, 
respectively.
It can be seen that, on the upper (resp.\,lower) branch,
the territory of $u$ (resp.\,$v$) is formed in 
the left and right two-thirds of 
$\Omega$, while the territory $v$ (resp. $u$) 
is formed in the middle third of $\Omega$.


\end{document}